\theoremstyle{plain}
\newtheorem{teo}{Theorem}[section]
\newtheorem{lema}[teo]{Lemma}
\newtheorem{prop}[teo]{Proposition}
\newtheorem*{propsn}{Proposition}
\newtheorem{cor}[teo]{Corollary}
\theoremstyle{definition}
\newtheorem{dfn}[teo]{Definition}
\newtheorem{rem}[teo]{Remark}
\theoremstyle{definition}
\theoremstyle{definition}
\newtheorem{hecho}[teo]{Fact}
\theoremstyle{definition}
\theoremstyle{definition}
\theoremstyle{remark}
\newtheorem{cla}[teo]{Claim}
\theoremstyle{remark}
\newtheorem{ex}[teo]{Example}
\newcommand{\liep}{\mathfrak{p}}
\newcommand{\liek}{\mathfrak{k}}
\newcommand{\lieq}{\mathfrak{q}}
\newcommand{\lieh}{\mathfrak{h}}
\newcommand{\lieg}{\mathfrak{g}}
\newcommand{\lieb}{\mathfrak{b}}
\newcommand{\symg}{X_G}
\newcommand{\rr}{\mathbb{R}}
\newcommand{\gr}{\mathsf{Gr}_{d-1}(\mathbb{R}^d)}
\newcommand{\zz}{\mathbb{Z}}
\newcommand{\hh}{\mathbb{H}^2}
\newcommand{\hpq}{\mathbb{H}^{p,q-1}}
\newcommand{\rpq}{\mathbb{R}^{p,q}}
\newcommand{\prpq}{\mathbb{P}(\mathbb{R}^{p,q})}
\newcommand{\hp}{\mathbb{H}^{p}}
\newcommand{\pp}{\mathbb{P}}
\newcommand{\ppc}{\mathbb{P}^{(2)}}
\newcommand{\ppcu}{\mathbb{P}^{(4)}}
\newcommand{\bb}{\mathbb{B}}
\newcommand{\bc}{\begin{center}}
\newcommand{\ec}{\end{center}}
\newcommand{\too}{\longrightarrow}
\newcommand{\ppq}{\perp_{p,q}}
\newcommand{\bg}{\partial_\infty\Gamma}
\newcommand{\bgc}{\partial_\infty^{2}\Gamma}
\newcommand{\bgcu}{\partial_\infty^{4}\Gamma}
\newcommand{\gh}{\Gamma_{\tn{H}}}
\newcommand{\tn}{\textnormal}
\newcommand{\tc}{\textcolor}
\numberwithin{equation}{section}
\newcounter{tmp}
\begin{document}

\title[Counting in special-orthogonal symmetric spaces]{Counting problems for special-orthogonal Anosov representations}
\author{León Carvajales}
\thanks{Research partially funded by CSIC MIA (2017), CSIC Iniciación C068 (2018) and ANR-16-CE40-0025.}

\address[\textbf{León Carvajales}]{\newline Centro de Matemática\newline Universidad de la República\newline Iguá 4225, 11400, Montevideo, Uruguay.}
\email{lcarvajales@cmat.edu.uy}

\maketitle

\begin{abstract}

For positive integers $p$ and $q$ let $G:=\tn{PSO}(p,q)$ be the projective indefinite special-orthogonal group of signature $(p,q)$. We study counting problems in the Riemannian symmetric space $X_G$ of $G$ and in the pseudo-Riemannian hyperbolic space $\hpq$. Let $S\subset X_G$ be a totally geodesic copy of $X_{\tn{PSO}(p,q-1)}$. We look at the orbit of $S$ under the action of a projective Anosov subgroup of $G$. For certain choices of such a geodesic copy we show that the number of points in this orbit which are at distance at most $t$ from $S$ is finite and asymptotic to a purely exponential function as $t$ goes to infinity. We provide an interpretation of this result in $\hpq$, as the asymptotics of the amount of space-like geodesic segments of maximum length $t$ in the orbit of a point.

\end{abstract}

\tableofcontents
\addtocontents{toc}{\setcounter{tocdepth}{1}}

\section{Introduction} \label{sec introduction}
\setcounter{equation}{0}

Let $X$ be a proper non compact metric space and  $o$ be a point in $X$. Given a discrete group $\Delta$ of isometries of $X$, consider the \textit{orbital counting function}

\bc
$N_\Delta(o,t):=\#\lbrace g\in\Delta:\hspace{0,3cm} d_X(o,g\cdot o)\leq t \rbrace$,
\ec

\noindent where $t\geq 0$. The \textit{orbital counting problem} consists on the study of the asymptotic behaviour of $N_\Delta(o,t)$ as $t\too\infty$.

When $X=\rr^2$ and $\Delta=\zz^2$ this is known as the \textit{Gauss circle problem} (see Phillips-Rudnick \cite{PR}). For a negatively curved Hadamard manifold $X$ and $\Delta$ co-compact, this problem was studied by Margulis in his PhD Thesis \cite{Mar}: the author shows a purely exponential asymptotic for $N_\Delta(o,t)$, the exponent being the topological entropy of the geodesic flow of the quotient space $\Delta \backslash X$. Many authors have generalized the work of Margulis to different contexts, see Roblin \cite{Rob} and references therein for a fairly complete picture in the negatively curved setting.

When $X$ is a (not necessarily Riemannian) symmetric space associated to a semisimple Lie group $G$ and $\Delta <G$ is a lattice, these kind of problems were studied notably by Eskin-McMullen \cite{EMcM} and Duke-Rudnick-Sarnak \cite{DRS}. In the non-lattice case but restricted to Riemannian symmetric spaces, one also finds the work of Quint \cite{Qui} and Sambarino \cite{Sam2}. Quint deals with the case in which $\Delta$ is a Schottky group (in the sense of Benoist \cite{Ben4}). Sambarino treats more generally the case of Anosov subgroups (in the full flag variety of $G$) introduced by Labourie \cite{Lab}.

Before stating precise results we discuss in an informal way the problems adressed by this paper. Fix $d:=p+q$ where $p\geq 1$ and $q\geq 2$, and let $\langle\cdot,\cdot\rangle_{p,q}$ be the bilinear symmetric form on $\rr^{d}$ defined by

\bc
$ \langle (x_1,\dots,x_d),(y_1,\dots,y_d)\rangle_{p,q}:= \displaystyle\sum_{i=1}^p x_iy_i - \displaystyle\sum_{i=p+1}^d x_{i}y_{i}$.
\ec

\noindent We denote by $G:=\textnormal{PSO}(p,q)$ the group of projectivized matrices in $\tn{SL}(d,\rr)$ preserving $\langle\cdot,\cdot\rangle_{p,q}$ and by $X_G$ the \textit{Riemannian symmetric space} of $G$, that is, the space of $q$-dimensional subspaces of $\rr^{d}$ on which the form $\langle\cdot,\cdot\rangle_{p,q}$ is negative definite. Let $d_{X_G}$ be the distance in $X_G$ induced by a $G$-invariant Riemannian metric. For closed subsets $A$ and $B$ of $X_G$, set

\bc
$d_{X_G}(A,B):=\inf\lbrace d_{X_G}(a,b):\hspace{0,3cm} a\in A, b\in B\rbrace$.
\ec

\noindent On the other hand, the \textit{pseudo-Riemannian hyperbolic space of signature $(p,q-1)$} is the set

\bc
$\hpq:=\left\lbrace o=[\hat{o}]\in\mathbb{P}(\rr^d): \hspace{0,3cm} \langle \hat{o},\hat{o}\rangle_{p,q}<0  \right\rbrace$,
\ec

\noindent endowed with a $G$-invariant pseudo-Riemannian metric coming from restriction of the form $\langle\cdot,\cdot\rangle_{p,q}$ to tangent spaces.

Let $\Delta$ be a discrete subgroup of $G$ and fix a point $o$ in $\hpq$. In this paper we study counting problems in $X_G$ and in $\hpq$.

\begin{itemize}
\item \textbf{Counting in $X_G$:} Denote by

\bc
$S^o:=\lbrace \tau\in X_G:\hspace{0,3cm} o\subset \tau \rbrace $.
\ec

\noindent It is a totally geodesic sub-manifold of $X_G$ isometric to the Riemannian symmetric space of $\tn{PSO}(p,q-1)$. We define two counting functions in this setting. The first one is

\bc
$N_\Delta(S^o,t):=\#\lbrace g\in\Delta:\hspace{0,3cm} d_{X_G}(S^o,g \cdot S^o)\leq t\rbrace$.
\ec

\noindent For the second one we pick a point $\tau\in S^o$ and define

\bc
$N_\Delta(S^o,\tau,t):=\#\lbrace g\in\Delta:\hspace{0,3cm} d_{X_G}(\tau,g\cdot S^o)\leq t\rbrace$.
\ec

\item \textbf{Counting in $\hpq$:} We provide a geometric interpretation of the function $N_\Delta(S^o,t)$ in $\hpq$. It is the amount of \textit{space-like} geodesic segments\footnote{That is, geodesic segments which are tangent to positive vectors.} of length at most $t$, that connect $o$ with points of $\Delta\cdot o$. The function $N_\Delta(S^o,\tau,t)$ has a geometric interpretation in this setting as well, which is more involved, and that we postpone until Subsection \ref{subsec counting hpq introd}.

\end{itemize}

\begin{rem}
If $q=1$ one has $\hp=X_G=\hpq$ and $o=S^o=\tau$. We have as well the equalities
\bc
$N_\Delta(o,t)=N_\Delta(S^o,t)=N_\Delta(S^o,\tau,t)$
\ec 

\noindent and our results correspond to the classical and well-known counting theorems already quoted.
\begin{flushright}
$\diamond$
\end{flushright}
\end{rem}

In contrast with the counting function $N_\Delta(o,t)$ described at the beginning, the functions $N_\Delta(S^o,t)$ and $N_\Delta(S^o,\tau,t)$ could in general be equal to infinity for large values of $t$. Part of the results that we present here concern the study of conditions for the choice of $o$ (and $\tau$) that guarantee that the new counting functions are real-valued for every $t\geq 0$. Once this is established, one may ask if the \textit{exponential growth rate}

\bc
$\displaystyle\limsup_{t\too\infty}\dfrac{\log N_\Delta(S^o,t)}{t}$
\ec

\noindent is positive, finite and independent on the choice of $o$ (and the analogue questions for $N_\Delta(S^o,\tau,t)$). A more subtle problem is to find an asymptotic for the functions $N_\Delta(S^o,t)$ and $N_\Delta(S^o,\tau,t)$ as $t\too\infty$. The main goal of this paper is to give an answer to this more subtle problem for an interesting class of subgroups $\Delta$: images of word hyperbolic groups under \textit{projective Anosov representations}.

\subsection{Main results in $X_G$} \label{subsec counting XG introd}

In order to formally state our results we need to recall some basic facts concerning (projective) Anosov representations. Anosov representations are (a stable class of) faithful and discrete representations from word hyperbolic groups into semisimple Lie groups that share many geometrical and dynamical features with holonomies of convex co-compact hyperbolic manifolds. They were introduced by Labourie \cite{Lab} in his study of the Hitchin component and further extended to arbitrary word hyperbolic groups by Guichard-Wienhard \cite{GW}. After that, Anosov representations had been object of intensive research in the field of geometric structures on manifolds and their deformation spaces (see for instance the surveys of Kassel \cite{Kas} or Wienhard \cite{Wie} and references therein).

Let $P_1^{p,q}$ be the stabilizer of an isotropic line in $\rr^d$, i.e. a line on which the form $\langle\cdot,\cdot\rangle_{p,q}$ equals zero. Then $P_1^{p,q}$ is a parabolic subgroup of $G$ and the quotient space $\partial\hpq:= G/P_1^{p,q}$, called the \textit{boundary} of $\hpq$, identifies with the set of isotropic lines in $\rr^d$.

Fix a non elementary word hyperbolic group $\Gamma$ and let $\bg$ be its Gromov boundary. Let $\rho:\Gamma\too G$ be a $P_1^{p,q}$-Anosov representation. By definition (see Section \ref{sec anosov}) this means that there exists a continuous equivariant map

\bc
$\xi:\bg\too \partial\hpq$
\ec

\noindent with the following properties:

\begin{itemize}
\item \textbf{Transversality:} Let $\cdot^{\ppq}$ denote the orthogonal complement with respect to the form $\langle\cdot,\cdot\rangle_{p,q}$. Then the map $\eta:=\xi^{\ppq}$ satisfies $\xi(x)\oplus\eta(y)=\rr^d$ for every $x\neq y$ in $\bg$.
\item \textbf{Uniform hyperbolicity:} Some flow associated to $\rho$ satisfies a uniform contraction/dilation property (see \cite{Lab,GW}).
\end{itemize}

When $\rho$ is $P_1^{p,q}$-Anosov all infinite order elements in $\rho(\Gamma)$ are \textit{proximal}. This means that they act on $\pp(\rr^d)$ with a unique attractive fixed line and a unique repelling hyperplane. The \textit{limit set} of $\rho$ is, by definition, the closure  of the set of attractive fixed lines of proximal elements in $\rho(\Gamma)$. It is denoted by $\Lambda_{\rho(\Gamma)}$ and coincides with the image of $\xi$.

Define

\bc

$\pmb{\Omega}_{\rho}:=\lbrace o=[\hat{o}]\in\hpq: \langle \hat{o},\hat{\xi} \rangle_{p,q}\neq 0 \textnormal{ for all }\xi=[\hat{\xi}]\in\Lambda_{\rho(\Gamma)}\rbrace$.
\ec

In the study of discrete groups of projective transformations, it is standard to consider sets similar to $\pmb{\Omega}_\rho$ (see for instance Danciger-Guéritaud-Kassel \cite{DGK1,DGK2} and references therein). Without any further assumption the set $\pmb{\Omega}_\rho$ could be empty. An important class of Anosov representations for which $\pmb{\Omega}_\rho$ is non empty is given by \textit{$\hpq$-convex co-compact subgroups} introduced in \cite{DGK1}. However in our results we do not assume that $\rho$ is $\hpq$-convex co-compact, we only need that $\pmb{\Omega}_\rho\neq\emptyset$ (see Example \ref{ex omegarho no vacio}).

\begin{propsn}[Propositions \ref{prop counting with nu is well defined} and \ref{prop counting with lambdauno is well defined}]
Let $\rho:\Gamma\too G$ be a $P_1^{p,q}$-Anosov representation, a point $o\in\pmb{\Omega}_{\rho}$ and $\tau\in S^o$. Then for every $t\geq 0$ one has\footnote{Even though finiteness of $N_{\rho(\Gamma)}(S^o,\tau,t)$ follows directly from finiteness of $N_{\rho(\Gamma)}(S^o,t)$, in our proof we first show $N_{\rho(\Gamma)}(S^o,\tau,t)<\infty$ and use it to prove $N_{\rho(\Gamma)}(S^o,t)<\infty$.}

\bc
$N_{\rho(\Gamma)}(S^o,\tau,t)<\infty$ and $N_{\rho(\Gamma)}(S^o,t)<\infty$.
\ec

\end{propsn}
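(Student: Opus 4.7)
The overall strategy is to reduce both conditions $d_{X_G}(\cdot, g\cdot S^o)\leq t$ to an upper bound on the single pseudo-Riemannian invariant $|\langle\hat o, g\cdot\hat o\rangle_{p,q}|$, and then to exploit the Anosov dynamics together with the transversality hypothesis $o \in \pmb{\Omega}_\rho$ to rule out infinite subsets of $\rho(\Gamma)$ satisfying such a bound. Throughout, I will normalize representatives so that $\langle\hat o, \hat o\rangle_{p,q} = -1$.

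First, I would establish (or invoke from later in the paper) closed-form formulas for the two distances in $X_G$. A natural candidate is
\[
d_{X_G}(S^o, S^{o'}) \,=\, \operatorname{arccosh}\bigl(\max\{1,\, |\langle\hat o, \hat{o'}\rangle_{p,q}|\}\bigr),
\]
together with a $\tau$-dependent arccosh expression for $d_{X_G}(\tau, S^{o'})$ that always dominates it. In each case, the hypothesis $d_{X_G}(\cdot, g\cdot S^o)\leq t$ translates into an upper bound $|\langle\hat o, g\cdot\hat o\rangle_{p,q}|\leq C(t,\tau)$. I would also verify, at this stage, that the stabilizer $\rho(\Gamma)\cap\operatorname{Stab}_G(o)$ is finite: any infinite-order $g\in\rho(\Gamma)$ is proximal, with attracting line $\xi^+(g)\in\Lambda_{\rho(\Gamma)}$ and repelling hyperplane $\xi^+(g)^{\ppq}$. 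If $g\cdot o=o$, iterating $g$ forces either $o=\xi^+(g)$ (ruled out since $\xi^+(g)$ is isotropic while $o\in\hpq$) or $o\in\xi^+(g)^{\ppq}$ (ruled out by $o\in\pmb{\Omega}_\rho$). Hence this stabilizer is a torsion subgroup of the word-hyperbolic group $\Gamma$, and therefore finite.

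The crux is then to show that for every $C>0$ the set $\{g\in\rho(\Gamma) : |\langle\hat o, g\cdot\hat o\rangle_{p,q}|\leq C\}$ is finite. I argue by contradiction: assume an infinite sequence $(g_n)$ in $\rho(\Gamma)$ satisfies that bound. Discreteness gives $g_n\to\infty$, and after extraction the corresponding $\gamma_n\in\Gamma$ satisfy $\gamma_n\to\eta$, $\gamma_n^{-1}\to\eta'$ in $\bg$. Since $o\in\pmb{\Omega}_\rho$ one has $\hat o\notin\xi(\eta')^{\ppq}$, so the Anosov dilation/contraction property yields $g_n\cdot[\hat o]\to\xi^+:=\xi(\eta)\in\Lambda_{\rho(\Gamma)}$ in $\pp(\rr^d)$. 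In the hyperboloid model $\{\langle v,v\rangle_{p,q}=-1\}$, the representative $g_n\cdot\hat o$ therefore has Euclidean norm diverging to $+\infty$ (any bounded subsequence would have an accumulation point of squared pseudo-norm $-1$, contradicting the isotropy of $\xi^+$), with direction converging to $\pm\hat\xi^+/\|\hat\xi^+\|$; consequently
\[
\frac{|\langle\hat o,\, g_n\cdot\hat o\rangle_{p,q}|}{\|g_n\cdot\hat o\|} \;\longrightarrow\; \frac{|\langle\hat o,\, \hat\xi^+\rangle_{p,q}|}{\|\hat\xi^+\|} \,>\, 0,
\]
the positivity being once more a consequence of $o\in\pmb{\Omega}_\rho$. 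Hence $|\langle\hat o, g_n\cdot\hat o\rangle_{p,q}|\to\infty$, contradicting the uniform bound.

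Combining these ingredients, both counting functions are finite. The footnote's choice of order (first the $\tau$-version, then the $S^o$-version) presumably reflects that the point-to-submanifold formula is the cleaner one, from which the submanifold-to-submanifold version can be extracted using the finite stabilizer together with the transitivity of the $\operatorname{Stab}_G(S^o)$-action on $S^o$. The most delicate ingredient will be the convergence $g_n\cdot[\hat o]\to\xi(\eta)$ with a rate ensuring $\|g_n\cdot\hat o\|\to\infty$; this is where the Anosov hypothesis, and not merely discreteness of $\rho(\Gamma)$, is essential.
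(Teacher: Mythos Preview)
Your argument is correct, but it follows a different route from the paper's.

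The paper proves the $\tau$-count first by a properness argument: the polar projection $b^\tau$ descends to a proper map $\hpq\to\lieb^+$, so the condition $\vert b^\tau(\rho(\gamma))\vert^{1/2}\leq t$ confines $\rho(\gamma)\cdot o$ to a compact set of $\hpq$, and then proper discontinuity of the $\Gamma$-action on $\pmb{\Omega}_\rho$ (Proposition~\ref{prop action on omegarho is prop discont and limit set is limit set}) gives finiteness. For the $o$-count, the paper invokes the identities $\vert b^o(g)\vert^{1/2}=\tfrac12\lambda_1(J^ogJ^og^{-1})$ and $\vert b^\tau(g)\vert^{1/2}=\tfrac12\log\Vert J^ogJ^og^{-1}\Vert_\tau$, the $(r,\varepsilon)$-proximality of $J^o\rho(\gamma)J^o\rho(\gamma^{-1})$ (Lemma~\ref{lema jrhojrho prox uno}), and Benoist's comparison $\lambda_1\leq\log\Vert\cdot\Vert\leq\lambda_1+c_{r,\varepsilon}$ (Lemma~\ref{lema benoist proximal comp vasing y vap}) to transfer finiteness from $b^\tau$ to $b^o$.

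You instead reduce both conditions to the single inequality $\vert\langle\hat o,g\cdot\hat o\rangle_{p,q}\vert\leq C$ via the identity $d_{X_G}(S^o,S^{o'})=\operatorname{arccosh}\max\{1,\vert\langle\hat o,\hat{o'}\rangle_{p,q}\vert\}$ (together with the trivial bound $d_{X_G}(\tau,g\cdot S^o)\geq d_{X_G}(S^o,g\cdot S^o)$), and then argue directly that $\vert\langle\hat o,\rho(\gamma_n)\cdot\hat o\rangle_{p,q}\vert\to\infty$ along any divergent sequence, using only the projective dynamics of Anosov representations and the transversality $o\in\pmb{\Omega}_\rho$. This is more elementary for the finiteness statement in isolation: it sidesteps the proximality machinery entirely. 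The paper's route, on the other hand, is not wasteful in context, since Lemma~\ref{lema jrhojrho prox uno} and the $\lambda_1$/$\log\Vert\cdot\Vert$ comparison are precisely the ingredients needed later for the sharp asymptotics of Theorems~\ref{teorema A} and~\ref{teorema B}.

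Two minor remarks. First, your stabilizer argument is correct but redundant: the divergence $\rho(\gamma_n)\cdot[\hat o]\to\xi(x_+)\in\partial\hpq$ already contradicts $\rho(\gamma_n)\cdot o=o$. Second, your guess about the footnote's order is off: the paper treats the $\tau$-count first not because its formula is cleaner, but because $b^\tau$ is proper on $\hpq$ while $b^o$ is not (it is only defined on $\mathscr{C}_o^>$ and does not descend to a proper map there), so the properness-plus-proper-discontinuity argument only applies directly to $b^\tau$.
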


The main results of this paper in the Riemannian context are Theorems \ref{teorema A} and \ref{teorema B}. The notation $f(t) \sim g(t)$ stands for

\bc
$\displaystyle\lim_{t\too\infty}\frac{f(t)}{g(t)}= 1$.
\ec

\begingroup
\setcounter{tmp}{\value{teo}}
\setcounter{teo}{0} 
\renewcommand\theteo{\Alph{teo}}

\begin{teo} \label{teorema A}
Let $\rho:\Gamma\too G$ be a $P_1^{p,q}$-Anosov representation and $o\in\pmb{\Omega}_{\rho}$. There exist positive constants $h=h_\rho$ and $M=M_{\rho,o}$ such that

\bc
$N_{\rho(\Gamma)}(S^o,t)\sim \dfrac{e^{ht}}{M}$.
\ec

\end{teo}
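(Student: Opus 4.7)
The plan is to adapt the counting scheme for projective Anosov representations in Riemannian symmetric spaces (as developed by Sambarino) from the orbit of a point to the orbit of the totally geodesic copy $S^o$.

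\medskip

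\textbf{Step 1: A Cartan-type distance formula.}
Identify $S^o$ with the orbit $H\cdot\tau$ of a basepoint $\tau\in X_G$, where $H:=\mathrm{Stab}_G(o)\cong\tn{P}(\tn{O}(1)\times\tn{O}(p,q{-}1))$. Since $(G,H)$ is a reductive symmetric pair it admits a polar decomposition $G=HA^+H$, with $A^+$ a closed chamber in a Cartan subspace of $\lieq$ (the $(-1)$-eigenspace of the involution fixing $H$). Writing $g=h_1ah_2$ with $a\in A^+$ I expect
\[
d_{X_G}(S^o,\,g\cdot S^o)\;=\;\|\log a\|.
\]
The analytic input I would establish next is that $\|\log a\|$ agrees, up to an additive Busemann-type correction, with a distinguished linear functional $\varphi_1$ evaluated on the full Cartan projection $\mu(g)$ of $g$ in $\liea$. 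The natural candidate for $\varphi_1$ is the fundamental weight attached to $P_1^{p,q}$, since this form measures precisely the projective displacement along an isotropic direction, which is the datum $o^{\ppq}$ controls.

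\medskip

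\textbf{Step 2: The Busemann correction and the rôle of $o\in\pmb{\Omega}_\rho$.}
The additive correction in Step~1 should take the shape $b(g,o)$, a Busemann cocycle depending on $o$ and on the attracting flag $\eta(g)\in\gr$ (together with its transverse $\xi(g)$). For $g=\rho(\gamma)$ with $\gamma$ deep in $\Gamma$, the flags $\xi(\rho(\gamma)^\pm)$ accumulate on the limit set $\Lambda_{\rho(\Gamma)}=\xi(\bg)$. The hypothesis $o\in\pmb{\Omega}_\rho$ states that $\langle\hat o,\hat\xi\rangle_{p,q}\neq 0$ for every $\xi\in\Lambda_{\rho(\Gamma)}$; by compactness of $\bg$ and continuity of $\xi$, this upgrades to a uniform positive lower bound on $|\langle\hat o,\hat\xi\rangle_{p,q}|$. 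I would use this, together with the uniform hyperbolicity from the Anosov property, to prove that $b(\cdot,o)$ extends to a Hölder continuous and bounded function on $\bg$ in the relevant direction, making it a valid potential for the thermodynamic formalism.

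\medskip

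\textbf{Step 3: Equidistribution and the asymptotic.}
With Steps~1 and~2 in hand the counting function rewrites as
\[
N_{\rho(\Gamma)}(S^o,t)\;=\;\#\bigl\{\gamma\in\Gamma:\ \varphi_1(\mu(\rho(\gamma)))-b(\rho(\gamma),o)\,\le\, t+O(1)\bigr\}.
\]
This is exactly the type of weighted Cartan count to which Sambarino's orbital counting theorem (built on the Patterson-Sullivan/Bowen-Margulis-Sullivan construction for the refraction flow associated to the Ledrappier potential $\varphi_1\circ\mu\circ\rho$, and its mixing) applies. The exponent $h_\rho$ will be the critical exponent of the series $\sum_\gamma e^{-s\varphi_1(\mu(\rho(\gamma)))}$, equivalently the topological entropy of the associated flow. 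The multiplicative constant $M_{\rho,o}$ will arise as the total mass of a Patterson-Sullivan density paired with $e^{-h_\rho\,b(\cdot,o)}$ through a Roblin-type mixing argument.

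\medskip

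\textbf{Main obstacle.}
The delicate step is Step~1 combined with Step~2: producing a distance formula
$d_{X_G}(S^o,g\cdot S^o)=\varphi_1(\mu(g))-b(g,o)+O(1)$
in which the correction $b(g,o)$ is genuinely a bounded Hölder cocycle on $\bg$ when $o\in\pmb{\Omega}_\rho$. This requires a careful comparison of the Cartan decomposition of $G$ with the polar decomposition of the symmetric pair $(G,H)$, and quantitative transversality arguments to convert the qualitative non-orthogonality assumption defining $\pmb{\Omega}_\rho$ into the uniform estimates needed so that the Sambarino-type machinery of Step~3 is directly applicable.
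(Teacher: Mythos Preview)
Your overall strategy is the right one and matches the paper's: express $d_{X_G}(S^o,g\cdot S^o)$ via a polar-type decomposition, compare it to $\lambda_1(\rho(\gamma))$ (equivalently $\varphi_1(\mu(\rho(\gamma)))$) up to a correction, and run Sambarino's Roblin-style counting. Two points, however, need correction.

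First, the decomposition $G=HA^+H$ does not hold globally. Since $A^+\cdot o$ is a space-like geodesic ray, one has $HA^+H\cdot o\subset\mathscr{C}^>_o\cup\{o\}$, so only elements $g$ with $g\cdot o\in\mathscr{C}^>_o$ admit such a writing (this is Proposition~\ref{prop HBH} in the paper). This is not fatal: the paper shows (Corollary~\ref{cor gammao in cowmayor}) that for $o\in\pmb{\Omega}_\rho$ one has $\rho(\gamma)\in\mathscr{C}^>_{o,G}$ for all but finitely many $\gamma$. That is in fact the \emph{first} place the hypothesis $o\in\pmb{\Omega}_\rho$ enters.

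Second, and this is the real gap, the correction term is not a bounded one-variable cocycle on $\bg$. The paper's key identity (Propositions~\ref{prop ell dXG y vertboverto} and~\ref{prop linear alg interpr of bo}) is
\[
d_{X_G}(S^o,g\cdot S^o)=\tfrac12\,\lambda_1\bigl(J^ogJ^og^{-1}\bigr),
\]
which casts the problem as one of estimating the spectral radius of a product of two proximal matrices, $g_1=J^o\rho(\gamma)J^o$ and $g_2=\rho(\gamma^{-1})$. Benoist's theorem (Theorem~\ref{teo benoist}) then gives
\[
d_{X_G}(S^o,\rho(\gamma)\cdot S^o)=\lambda_1(\rho(\gamma))-[\gamma_-,\gamma_+]_o+o(1),
\]
where $[\cdot,\cdot]_o$ is a Gromov product on $\bgc$ (Lemma~\ref{lema computing gromov on gammapm o}). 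This correction depends on \emph{both} endpoints $(\gamma_-,\gamma_+)$ and blows up as they approach each other; it cannot be written as a bounded potential on $\bg$. The hypothesis $o\in\pmb{\Omega}_\rho$ is used not to bound the correction, but to guarantee that $J^o\cdot\xi(x)\notin\eta(x)$ uniformly over $\bg$, which is exactly the transversality input Benoist's theorem needs (Lemma~\ref{lema jrhojrho prox dos}). Consequently the final step is not a direct application of an orbital counting theorem with bounded potential, but rather Sambarino's distribution result on $\bgc$ (Proposition~\ref{prop sambarino distribution wrt periods}): the two-variable correction $[\cdot,\cdot]_o$ is absorbed by passing from $e^{-h[\cdot,\cdot]_o}\mu_o\otimes\mu_o$ to $\mu_o\otimes\mu_o$, with a separate Roblin-type argument near the diagonal (Proposition~\ref{prop distribution on bg for length}).
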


\begin{teo} \label{teorema B}
Let $\rho:\Gamma\too G$ be a $P_1^{p,q}$-Anosov representation, a point $o\in\pmb{\Omega}_{\rho}$ and $\tau\in S^o$. There exist positive constants $h=h_\rho$ and $M'=M'_{\rho,\tau}$ such that

\bc
$N_{\rho(\Gamma)}(S^o,\tau,t)\sim \dfrac{e^{ht}}{M'}$.
\ec

\end{teo}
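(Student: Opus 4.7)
The approach is to reduce Theorem B to the orbital counting theorems for projective Anosov representations (Sambarino, Quint), by expressing the point-to-submanifold distance $d_{X_G}(\tau, g\cdot S^o)$ in terms of a controlled linear functional of the Cartan projection of $\rho(g)$.

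First I would derive an explicit algebraic formula for the distance. Observing that $g\cdot S^o = S^{g\cdot o}$ is the submanifold of negative $q$-planes containing the line $g\cdot o$, the distance from $\tau$ to $S^{g\cdot o}$ should encode the ``pseudo-angular separation'' between the line $\rho(g)\hat{o}$ and the plane $\tau$. Working in the $KAK$-decomposition of $G$ based at $\tau$, and using that $S^{o'}$ is the orbit of $\tau$ under $\mathrm{Stab}_G(o')$, one obtains a formula
\begin{equation*}
d_{X_G}(\tau, g\cdot S^o) \;=\; F\!\left(\rho(g), o, \tau\right),
\end{equation*}
where $F$ is built from the pseudo-Riemannian components of $\rho(g)\hat{o}$ relative to the orthogonal splitting $\rr^d = \tau \oplus \tau^{\ppq}$.

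Second, using that $\rho$ is $P_1^{p,q}$-Anosov together with the hypothesis $o\in \pmb{\Omega}_\rho$, I would establish a decomposition
\begin{equation*}
d_{X_G}(\tau, g\cdot S^o) \;=\; \omega_1\!\bigl(\mu(\rho(g))\bigr) + \psi_\tau(g) + O(1),
\end{equation*}
where $\omega_1$ is the first fundamental weight of $\lieg$ (so that $\omega_1(\mu(\rho(g))) = \log\|\rho(g)\|$ up to normalization), $\mu$ denotes the Cartan projection, and $\psi_\tau$ is a H\"older cocycle capturing the ``relative position'' of $\tau$ with respect to the Cartan attracting data of $\rho(g)$. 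The Anosov property forces the Cartan attracting line and repelling hyperplane of $\rho(g)$ to approach the transverse pair $(\xi(x^+), \eta(x^-))$; the hypothesis $o\in \pmb{\Omega}_\rho$ then guarantees that $\hat{o}$ remains uniformly transverse to the limiting hyperplane, keeping $\psi_\tau(g)$ uniformly bounded.

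With this reduction in hand, Theorem B becomes equivalent to a counting statement of the form $\#\{g\in\Gamma : \omega_1(\mu(\rho(g))) + \psi_\tau(g) \leq t\}$, which fits into the framework of Sambarino's thermodynamic approach to orbital counting for Anosov representations: using mixing of the $\omega_1$-reparametrized geodesic flow of $\rho$ with respect to the Bowen--Margulis--Sullivan measure, and integrating $e^{-h\psi_\tau}$ against the Patterson--Sullivan density on $\bg$, one extracts the purely exponential asymptotic with exponent $h = h_\rho$ (the $\omega_1$-critical exponent of $\rho$) and constant $M' = M'_{\rho,\tau}$ given by that integral, divided by the total mass of the BMS measure. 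The main obstacle is the second step: rigorously proving the decomposition $d_{X_G}(\tau, g\cdot S^o) = \omega_1(\mu(\rho(g))) + \psi_\tau(g) + O(1)$ with an explicitly identifiable H\"older cocycle $\psi_\tau$. This requires combining careful linear algebra in the symmetric pair $(\mathrm{PSO}(p,q), \mathrm{PSO}(p,q-1))$ with the dynamical control of the Anosov property, and it is precisely where the hypothesis $o\in \pmb{\Omega}_\rho$ is essential---without it $\psi_\tau$ would be unbounded and the Sambarino machinery would fail.
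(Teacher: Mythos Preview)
Your overall strategy---reduce to Sambarino's orbital counting framework via a correction term and Patterson--Sullivan theory---is the paper's strategy too, but several key technical inputs are different from what you propose, and one of your claims is misleading.

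First, the paper does \emph{not} work with the $KAK$ decomposition of $G$. The point-to-submanifold distance is computed via the \emph{polar decomposition} $G=K^\tau\exp(\lieb^+)H^o$ adapted to the symmetric pair $(G,H^o)$ (Proposition~\ref{prop interpretation of btau in symg}), and the resulting exact formula is
\[
d_{X_G}(\tau,g\cdot S^o)=\tfrac{1}{2}\log\Vert J^o g J^o g^{-1}\Vert_\tau
\]
(Proposition~\ref{prop computing nu}). The involution $J^o$ is the essential object: one views $J^o\rho(\gamma)J^o\cdot\rho(\gamma^{-1})$ as a product of two proximal matrices and applies Benoist's estimates (Theorem~\ref{teo benoist}). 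This gives, for $(\gamma_-,\gamma_+)$ in any product $A\times B$ of disjoint compacta,
\[
\vert b^\tau(\rho(\gamma))\vert^{1/2}\;\approx\;\lambda_1(\rho(\gamma))\;-\;[\gamma_-,\gamma_+]_\tau,
\]
where $[\cdot,\cdot]_\tau$ is the Gromov product for the pair of cocycles $(c_o,c_\tau)$ (Lemma~\ref{lema jrhojrho prox dos}(5) and Lemma~\ref{lema computing gromov tauo on gammapm}). So the comparison is with the \emph{spectral radius} $\lambda_1$ (the period of the cocycle), not with $\omega_1\circ\mu$, and the correction is a Gromov product on $\bgc$, not a H\"older cocycle on $\Gamma\times\bg$.

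Second, your claim that the correction $\psi_\tau$ is ``uniformly bounded'' is either wrong or insufficient. The Gromov product $[\gamma_-,\gamma_+]_\tau$ diverges as $\gamma_-\to\gamma_+$ (the transversality terms $\theta_{\gamma_-}(v_{\gamma_+})$ vanish there); the hypothesis $o\in\pmb{\Omega}_\rho$ only controls the terms $\theta_x(J^o v_x)$, not these. Even if one could compare instead with $a_1^\tau$ and get a bounded error, boundedness alone would give the correct exponent $h$ but \emph{not} the precise constant $M'$. What actually produces the asymptotic is that the correction is \emph{exactly} the Gromov product of the cocycle pair whose Patterson--Sullivan measures define the Bowen--Margulis measure: one then invokes Sambarino's Roblin-type equidistribution of $(\gamma_-,\gamma_+)$ (Proposition~\ref{prop sambarino distribution wrt periods}/Remark~\ref{rem BM for ctau and distribution}) and absorbs the factor $e^{-h[\gamma_-,\gamma_+]_\tau}$ into the limit measure $e^{-h[\cdot,\cdot]_\tau}\mu_o\otimes\mu_\tau$, with a separate argument near the diagonal of $\bg\times\bg$. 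Your phrase ``integrating $e^{-h\psi_\tau}$ against the Patterson--Sullivan density'' gestures at this but misses both the specific identity needed and the diagonal issue.
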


\endgroup

The constant $h$ is the same in both Theorems \ref{teorema A} and \ref{teorema B} and it is independent on the choice of $o$ in $\pmb{\Omega}_\rho$ (and $\tau$ in $S^o$). It coincides with the topological entropy of the \textit{geodesic flow} $\phi^\rho$ of $\rho$, introduced by Bridgeman-Canary-Labourie-Sambarino \cite{BCLS}, and can be computed as

\bc
$h=\displaystyle\limsup_{t\too\infty}\dfrac{\log\#\lbrace[\gamma]\in[\Gamma]:\hspace{0,3cm}\lambda_1(\rho(\gamma))\leq t\rbrace}{t}$.
\ec

\noindent Here $[\gamma]$ denotes the conjugacy class of $\gamma$ and $\lambda_1(\rho(\gamma))$ denotes the logarithm of the spectral radius of $\rho(\gamma)$. The constants $M$ and $M'$ are related to the total mass of specific measures in the Bowen-Margulis measure class of $\phi^\rho$ (recall that the Bowen-Margulis measure class is the homothety class of measures maximizing entropy of $\phi^\rho$).

Since the work of Margulis \cite{Mar}, in order to obtain a counting result one usually studies the ergodic properties of a well chosen dynamical system. In order to find a dynamical system adapted to Theorem \ref{teorema A} we introduce a decomposition of a specific subset of $G$, analogue to the Cartan Decomposition, but replacing \textit{the} maximal compact subgroup of $G$ by $\tn{PSO}(p,q-1)$ and \textit{the} Cartan subspace by a smaller abelian subalgebra (Subsection \ref{subsec HexpliebH}). For Theorem \ref{teorema B} we use the more studied \textit{polar decomposition} of $G$ (Subsection \ref{subsec KexpliebH}).

\subsubsection{\tn{\textbf{Relation with the work of Oh-Shah}}} \label{subsub ohshah}

Motivated by the study of \textit{Apollonian circle packings} on the Riemann sphere, Oh-Shah \cite{OS} studied counting problems similar to ours. Indeed, let $p=1$ and $q=3$. Then $\mathbb{H}^{1,2}$ identifies with the space of circles of the Riemann sphere or, equivalently, the space of totally geodesic isometric copies of $\mathbb{H}^2$ inside $\mathbb{H}^3$. In \cite[Theorem 1.5]{OS} the cited authors prove that for a well-chosen $S^o\cong \hh\subset\mathbb{H}^3$ and any point $\tau\in\mathbb{H}^3$ one has

\bc
$\#\lbrace g\in\Delta:\hspace{0,3cm} d_{\mathbb{H}^3}(\tau,g\cdot S^o)\leq t  \rbrace\sim M^{-1}e^{ht}$.
\ec

\noindent Hence Theorem \ref{teorema B} can be interpreted as a higher rank generalization of this result. We note however that, for $p=1$ and $q=3$, our results only concern convex co-compact groups, while Oh-Shah's Theorem applies to a wider class of geometrically finite Kleinian subgroups. A slightly different counting theorem in $\mathbb{H}^{1,2}$ was obtained by the cited authors in \cite{OS3}. Effective versions of Oh-Shah's results (i.e. with an error term) have been obtained by Lee-Oh \cite{LO} and Mohammadi-Oh \cite{MO}. Our Theorem \ref{teorema A} seems to be new even in this setting.

The approach by Oh-Shah is similar to the one of Eskin-McMullen \cite{EMcM}: they study the equidistribution, with respect to certain measures, of the orthogonal translates of $S^o$ under the geodesic flow of $\Delta\backslash\mathbb{H}^3$ (see Oh-Shah \cite{OS2} for precisions). Here we use different techniques. We follow the approach by Sambarino \cite{Sam} and construct a dynamical system on a compact space that contains the required geometric information.

\subsection{Interpretation in $\hpq$} \label{subsec counting hpq introd}

Another part of our contributions concern geometric interpretations of Theorems \ref{teorema A} and \ref{teorema B} in $\hpq$. We now state these interpretations.

Geodesics in $\hpq$ are intersections of projectivized $2$-dimensional subspaces of $\rr^d$ with $\hpq$ and they are classified in three types, depending on the sign of the form $\langle\cdot,\cdot\rangle_{p,q}$ on its tangent vectors (see Subsection \ref{subsub geod hpq}). We are mainly interested in \textit{space-like geodesics}, i.e. geodesics associated to planes on which the form $\langle\cdot,\cdot\rangle_{p,q}$ has signature $(1,1)$. Let $o,o'\in\hpq$ be two points joined by a space-like geodesic and let $\ell_{o,o'}$ be the length of this geodesic segment (see Subsection \ref{subsub lengths of spacelike geod}). We denote by $\mathscr{C}^>_o$ the set of points of $\hpq$ that can be joined to $o$ by a space-like geodesic and we set

\bc
$\mathscr{C}^>_{o,G}:=\lbrace g\in G:\hspace{0,3cm} g\cdot o\in\mathscr{C}^>_o\rbrace$.
\ec

\begin{propsn}[Proposition \ref{prop ell dXG y vertboverto}]
Let $o\in\hpq$ and $g\in \mathscr{C}^>_{o,G}$. Then 

\bc
$\ell_{o,g\cdot o}=d_{X_G}(S^o,g\cdot S^o)$.
\ec
\end{propsn}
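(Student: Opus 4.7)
The plan is to exploit the ``$H\exp(\lieb)H$''-type decomposition of a specific subset of $G$ introduced in Subsection \ref{subsec HexpliebH}, where $H:=\tn{Stab}_{G}(o)$ is the stabilizer of the negative line $o$ and $\lieb\subset\liep$ is a one-dimensional abelian subspace. The assumption $g\in\mathscr{C}^{>}_{o,G}$ will be used precisely to ensure that $g$ lies in the subset on which this decomposition is defined, yielding a writing $g=h_{1}\exp(a)h_{2}$ with $h_{1},h_{2}\in H$ and $a\in\lieb$. Since $H$ fixes $o$ and preserves $S^{o}$ setwise, and since $\ell_{\cdot,\cdot}$ and $d_{X_{G}}$ are $G$-invariant, we deduce
\begin{equation*}
\ell_{o,g\cdot o}=\ell_{o,\exp(a)\cdot o},\qquad d_{X_{G}}(S^{o},g\cdot S^{o})=d_{X_{G}}(S^{o},\exp(a)\cdot S^{o}),
\end{equation*}
and it suffices to prove the equality in the one-parameter situation $g=\exp(a)$.

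I would then choose a pseudo-orthonormal basis $e_{1},\dots,e_{p},f_{1},\dots,f_{q}$ of $\rr^{d}$ with $o=[f_{q}]$, and arrange $\lieb$ to be generated by the infinitesimal Lorentz boost $X$ in the signature-$(1,1)$ plane $\tn{span}(e_{1},f_{q})$. A direct calculation on the hyperboloid $\{\langle v,v\rangle_{p,q}=-1\}$ shows that $t\mapsto\exp(tX)\cdot o$ is a space-like geodesic of $\hpq$. Taking the basepoint $\tau_{0}:=\tn{span}(f_{1},\dots,f_{q})\in S^{o}$, the curve $t\mapsto\exp(tX)\cdot\tau_{0}$ is also a geodesic of $X_{G}$, since $X\in\liep$ with respect to the Cartan decomposition at $\tau_{0}$. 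With the normalizations fixed in the paper the lengths of these two geodesics coincide, giving $\ell_{o,\exp(a)\cdot o}=d_{X_{G}}(\tau_{0},\exp(a)\cdot\tau_{0})$.

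The remaining step is to upgrade this to $d_{X_{G}}(\tau_{0},\exp(a)\cdot\tau_{0})=d_{X_{G}}(S^{o},\exp(a)\cdot S^{o})$. For this I would verify by a direct matrix computation in $\mathfrak{so}(p,q)$ that $X$ is Killing-orthogonal to $\lieh\cap\liep\cong T_{\tau_{0}}S^{o}$: the defining relations of $\lieh$ force the ``last column'' of the off-diagonal block of any element of $\lieh\cap\liep$ to vanish, which is precisely the direction in which $X$ lives. Hence $t\mapsto\exp(tX)\cdot\tau_{0}$ meets $S^{o}$ perpendicularly at $\tau_{0}$ and, by translation invariance under $\exp(\rr X)$, also meets $\exp(a)\cdot S^{o}$ perpendicularly at $\exp(a)\cdot\tau_{0}$. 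Since $X_{G}$ is CAT$(0)$ and both submanifolds are totally geodesic, such a common perpendicular geodesic realizes the distance between them, yielding the claimed equality.

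The main obstacle is the preparatory material of Subsection \ref{subsec HexpliebH}: one must show that $\lieb$ is a genuine complement to $\lieh\cap\liep$ in $\liep$ in the appropriate direction, that the decomposition $g=h_{1}\exp(a)h_{2}$ exists for exactly those $g$ lying in the space-like locus $\mathscr{C}^{>}_{o,G}$, and that the Riemannian norm on $\liep$ is calibrated against the pseudo-Riemannian norm on $T_{o}\hpq$ so that the two rank-one computations produce literally the same numerical length. Granting this set-up, the proposition reduces to the standard CAT$(0)$ common-perpendicular argument sketched above.
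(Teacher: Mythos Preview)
Your proposal is correct and follows essentially the same route as the paper: reduce via the $H^o\exp(\lieb^+)H^o$ decomposition (Proposition \ref{prop HBH}) to the one-parameter case $g=\exp(a)$, use the normalization (\ref{eq distance in XG and killing}) and (\ref{eq form on qo and killing}) to match the two lengths along $\exp(\lieb)$, and then argue that the geodesic $\exp(\lieb)\cdot\tau$ is orthogonal to both copies of $S^o$ so that it realizes the distance between them.

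The only minor difference is in this last step: you invoke the standard CAT$(0)$ fact that a common perpendicular between two totally geodesic (hence convex) submanifolds realizes the distance, whereas the paper reproves this in the situation at hand (Claim \ref{claim in prop interp geom of bo}) via Busemann's convexity of the distance function between geodesics. Both arguments are valid; yours is slightly more succinct since it appeals to a well-known general principle, while the paper's is self-contained.
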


In Corollary \ref{cor gammao in cowmayor} we prove that given a $P_1^{p,q}$-Anosov representation $\rho:\Gamma\too G$ and $o$ in $\pmb{\Omega}_\rho$, then apart from possibly finitely many exceptions $\gamma$ in $\Gamma$ one has $\rho(\gamma)\in \mathscr{C}^>_{o,G}$. By Proposition \ref{prop counting with lambdauno is well defined} we have

\bc
$\#\lbrace \gamma\in\Gamma: \hspace{0,3cm} \rho(\gamma)\in \mathscr{C}^>_{o,G} \tn{ and } \ell_{o,\rho(\gamma)\cdot o}\leq t\rbrace<\infty$
\ec

\noindent for every positive $t$. Moreover, Theorem \ref{teorema A} implies that this function is asymptotic to $M^{-1}e^{ht}$ as $t\too\infty$.

In order to state the corresponding geometric interpretation of Theorem \ref{teorema B} we follow Kassel-Kobayashi \cite[p.151]{KK}. Let $o\in\hpq$ and $\tau\in S^o$. Then

\bc
$\mathbb{H}^p_\tau:=(o\oplus\tau^{\ppq})\cap\hpq$
\ec

\noindent is a space-like totally geodesic copy of $\hp$ passing through $o$. Let $K^\tau$ be the (maximal compact) subgroup of $G$ stabilizing $\tau$. As we shall see, for every $g$ in $G$ the point $g\cdot o$ lies in the $K^\tau$-orbit of a point $o_g$ in $\hp_\tau$. The counterpart of Theorem \ref{teorema B} in $\hpq$ is provided by the following proposition.

\begin{propsn}[Proposition \ref{prop interpretation of btau in symg}]
For every $g$ in $G$ one has

\bc
$\ell_{o,o_g}=d_{X_G}(\tau,g\cdot S^o)$.
\ec

\end{propsn}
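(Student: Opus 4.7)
The plan is to reduce the identity to Proposition \ref{prop ell dXG y vertboverto} using the $K^\tau$-invariance of distances from $\tau$, and then to attain the infimum by exhibiting an explicit point of $S^{o_g}$ built from a perpendicular decomposition of $\tau$. By the defining property of $o_g$ there exists $k\in K^\tau$ with $kg\cdot o=o_g$; since $K^\tau$ acts on $X_G$ by isometries fixing $\tau$, and $h\cdot S^{o'}=S^{h\cdot o'}$ for every $h\in G$ and $o'\in\hpq$, we obtain
\[
d_{X_G}(\tau,g\cdot S^o)\;=\;d_{X_G}(\tau,kg\cdot S^o)\;=\;d_{X_G}(\tau,S^{o_g}).
\]
Because $\tau\in S^o$, Proposition \ref{prop ell dXG y vertboverto} immediately gives the lower bound
\[
d_{X_G}(\tau,S^{o_g})\;\geq\;d_{X_G}(S^o,S^{o_g})\;=\;\ell_{o,o_g}.
\]

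For the upper bound, decompose $\tau=o\oplus W$ with $W:=\tau\cap o^{\ppq}$, a $(q-1)$-dimensional negative definite subspace orthogonal both to $o$ and to $\tau^{\ppq}$. As $o_g\subset o\oplus\tau^{\ppq}$, we have $o_g\perp W$, so $\tau_{o_g}:=o_g\oplus W$ is a $q$-dimensional negative definite subspace of $\rr^d$, i.e.\ a point of $X_G$ that contains $o_g$ and hence belongs to $S^{o_g}$. Let $H:=\tn{SO}^0(o\oplus\tau^{\ppq})\subset G$, a subgroup isomorphic to $\tn{SO}^0(p,1)$ whose associated Riemannian symmetric space identifies with $\hp_\tau$, and let $\{\phi_t\}_{t\in\rr}$ be the one-parameter subgroup of $H$ producing the arclength-parametrized space-like geodesic in $\hp_\tau$ from $o$ to $o_g$. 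Since $H$ fixes $W=(o\oplus\tau^{\ppq})^{\ppq}$ pointwise, $\phi_t\cdot\tau=\phi_t(o)\oplus W$ for every $t$, and in particular $\phi_{\ell_{o,o_g}}\cdot\tau=\tau_{o_g}$. The infinitesimal generator of $\{\phi_t\}$ swaps $o\subset\tau$ with $\tau^{\ppq}$ and vanishes on $W$, so it is block-off-diagonal in the decomposition $\rr^d=\tau^{\ppq}\oplus\tau$ and therefore lies in the Cartan complement $\mathfrak{p}^\tau=T_\tau X_G$; hence $t\mapsto\phi_t\cdot\tau$ is a geodesic of $X_G$. Under the normalization of the Riemannian metric on $X_G$ implicit in Proposition \ref{prop ell dXG y vertboverto}, this geodesic has length exactly $\ell_{o,o_g}$, yielding
\[
d_{X_G}(\tau,S^{o_g})\;\leq\;d_{X_G}(\tau,\tau_{o_g})\;=\;\ell_{o,o_g},
\]
which together with the lower bound concludes the proof.

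The main obstacle is matching the two speeds $\|\dot\phi_t\cdot\tau\|_{X_G}$ and $\|\dot\phi_t\cdot o\|_{\hpq}$, equivalently showing that the totally geodesic embedding $\hp_\tau\hookrightarrow X_G$, $o'\mapsto o'\oplus W$, is an isometry onto its image. Both induced Riemannian metrics on $\hp_\tau$ are $H$-invariant and hence proportional; the proportionality constant equals $1$ by Proposition \ref{prop ell dXG y vertboverto} applied to pairs along the $\phi_t$-orbit (for the inequality $\geq 1$), together with an $\tn{SO}(W)\subset K^\tau$ symmetry argument that forces the infimum defining $d_{X_G}(S^o,S^{\phi_t\cdot o})$ to be attained at the pair $(\tau,\phi_t\cdot\tau)$, giving $\leq 1$ as well.
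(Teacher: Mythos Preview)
Your argument is correct in spirit but takes a different and somewhat more roundabout route than the paper, and the last paragraph is the shaky part.

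The paper's proof uses bi-invariance under $K^\tau\times H^o$ (via the polar decomposition $G=K^\tau\exp(\lieb^+)H^o$) to reduce immediately to $g=\exp(X)$ with $X\in\lieb^+$. It then observes that the geodesic $\exp(\lieb)\cdot\tau$ meets $S^o=\exp(\liep^\tau\cap\lieh^o)\cdot\tau$ orthogonally at $\tau$; since $X_G$ is CAT(0), this orthogonal geodesic realizes the distance from $\exp(-X)\cdot\tau$ to $S^o$, and one concludes by the normalization (\ref{eq distance in XG and killing}) together with Remark~\ref{rem form on qo}.

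Your approach instead uses only left $K^\tau$-invariance, obtains the lower bound from Proposition~\ref{prop ell dXG y vertboverto} (which in the paper comes \emph{after} the present statement, though it is logically independent), and produces the upper bound by exhibiting the explicit point $\tau_{o_g}=o_g\oplus W\in S^{o_g}$. This is valid. However, your final paragraph on ``matching the two speeds'' is both unnecessary and incomplete. Unnecessary, because the generator of $\{\phi_t\}$ is nothing other than a unit vector in $\lieb\subset\liep^\tau\cap\lieq^o$, and on that subspace the $X_G$-metric (\ref{eq distance in XG and killing}) and the $\hpq$-metric coincide \emph{by definition} via (\ref{eq form on qo and killing}); so $d_{X_G}(\tau,\tau_{o_g})=\ell_{o,o_g}$ follows immediately. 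Incomplete, because your ``$\tn{SO}(W)$ symmetry forces the infimum to be attained at $(\tau,\phi_t\cdot\tau)$'' is not justified as written: one needs a fixed-point/barycenter argument in CAT(0), and when $q=2$ the group $\tn{SO}(W)$ is trivial so the argument says nothing. The clean way to get that the infimum is attained at $(\tau,\phi_t\cdot\tau)$ is exactly the convexity argument of Claim~\ref{claim in prop interp geom of bo} in the paper---or, simpler still, to drop the whole paragraph and appeal directly to the normalization.
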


\subsubsection{\tn{\textbf{Relation with the work of Glorieux-Monclair and Kassel-Kobayashi}}}\label{subsub GM and KK in introd}

Glorieux-Monclair \cite{GM} introduced an orbital counting function for $\hpq$-convex co-compact representations that differs from

\bc
$t\mapsto\#\lbrace \gamma\in\Gamma: \hspace{0,3cm} \rho(\gamma)\in \mathscr{C}^>_{o,G} \tn{ and } \ell_{o,\rho(\gamma)\cdot o}\leq t\rbrace$
\ec

\noindent by a constant. Indeed, they define an \textit{$\hpq$-distance}

\bc
$d_{\hpq}(o,o'):=\left\{\begin{array}{cc} \ell_{o,o'} \hspace{0,1cm}\textrm{ if } o'\in\mathscr{C}^>_o \\ 0  \hspace{0,6cm}\textrm{ otherwise }\end{array}\right.$,
\ec 

\noindent and show that it satisfies a version of the triangle inequality in the convex hull of the limit set of $\rho$. This is used to prove that the exponential growth rate of the counting function

\bc
$t\mapsto\#\lbrace \gamma\in\Gamma: \hspace{0,3cm}  d_{\hpq}(o,\rho(\gamma)\cdot o)\leq t\rbrace$
\ec

\noindent is independent on the choice of the basepoint $o$. The authors interpret this exponential rate as a \textit{pseudo-Riemannian Hausdorff dimension} of the limit set of $\rho$, with the purpose of finding upper bounds for this number (\cite[Theorem 1.2]{GM}). A consequence of Theorem \ref{teorema A} and Proposition \ref{prop ell dXG y vertboverto} (see Remarks \ref{rem crit exponent coindes with the one of GM} and \ref{rem crit exponent coincides with the entropy}) is that this rate coincides with the topological entropy $h$ of $\phi^\rho$.

On the other hand, as we shall see in Section \ref{sec generalized cartan} the number $\ell_{o,o_g}$ is related to the \textit{polar projection} of $g$ and therefore Theorem \ref{teorema B} addresses the problems treated by Kassel-Kobayashi in \cite[Section 4]{KK}. In \cite{KK} the authors study the orbital counting function of Theorem \ref{teorema B} for \textit{sharp} subgroups of a real reductive symmetric space (see \cite[Section 4]{KK}). Kassel-Kobayashi obtain some estimates on the growth of this function, but no precise asymptotic is established.

The method of \cite{GM} is based on pseudo-Riemannian geometry: they construct analogues of Busemann functions, Gromov products and Patterson-Sullivan densities in $\hpq$ using this viewpoint. Our approach is inspired by \cite{KK} and has Lie-theoretic flavor: we study linear algebraic interpretations of the geometric quantities involved in the definition of the counting functions. This allows us to establish finiteness of these functions, to make a link between the different symmetric spaces and to apply Ledrappier's \cite{Led} framework to our setting.

\subsection{Outline of the proof} \label{subsec outline}

There are three major steps in the proof of Theorems \ref{teorema A} and \ref{teorema B}.

\subsubsection*{\textnormal{\textbf{First step}}} As we said, we interpret the geometric quantities involved in Theorems \ref{teorema A} and \ref{teorema B} as linear algebraic quantities.

Let us be more precise. Fix $o \in \hpq$ and denote by $H^o$ the stabilizer in $G$ of this point. If we consider the symmetry of $\rr^d$ given by $J^o:=\tn{id}_o\oplus\left(-\tn{id}_{o^{\ppq}}\right)$, we have that $H^o$ equals the fixed point set of the involution
\bc
$\sigma^o:g\mapsto J^ogJ^o$
\ec

\noindent of $G$ (see Subsection \ref{subsubsec struc sym hpq}). This identifies the tangent space at $o$ of $\hpq$ with the subspace of $\mathfrak{so}(p,q)$ defined by $\lieq^o:=\lbrace d\sigma^o=-1\rbrace$. In Propositions \ref{prop ell dXG y vertboverto} and \ref{prop linear alg interpr of bo} we prove that for every $g\in \mathscr{C}^>_{o,G}$ one has
\begin{equation}\label{eq igualddad distancia con vap}
d_{X_G}(S^o,g\cdot S^o)=\frac{1}{2}\lambda_1(J^ogJ^og^{-1}).
\end{equation}

\noindent The main ingredient in the proof of equality (\ref{eq igualddad distancia con vap}) is the following version of the classical Cartan Decomposition of $G$.

\begin{propsn}[Proposition \ref{prop HBH}]
Let $o\in\hpq$ and $\lieb^+\subset\lieq^o$ be a ray such that $\exp(\lieb^+)\cdot o$ is space-like. Given $g\in \mathscr{C}^>_{o,G}$ there exists $h,h'\in H^o$ and a unique $X\in\lieb^+$ such that

\bc
$g=h\exp(X)h'$.
\ec

\end{propsn}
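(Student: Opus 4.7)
My plan is to establish existence of the decomposition via transitivity of the isotropy $H^o$-action on space-like directions of fixed norm in $T_o\hpq$, and then deduce uniqueness from $\mathrm{Ad}(H^o)$-invariance of the natural quadratic form on $\lieq^o$. Standard structure theory for reductive symmetric pairs provides the identifications I need. The orbit map $G/H^o \to \hpq$, $gH^o \mapsto g\cdot o$, is a $G$-equivariant diffeomorphism, and the map $X \mapsto \frac{d}{dt}\big|_{t=0} \exp(tX)\cdot o$ identifies $T_o\hpq$ with $\lieq^o$, intertwining the isotropy action of $H^o$ with $\mathrm{Ad}|_{H^o}$ and carrying the pseudo-Riemannian form on $T_o\hpq$ to an $\mathrm{Ad}(H^o)$-invariant quadratic form $Q$ of signature $(p, q-1)$ on $\lieq^o$. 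In particular $H^o$ acts transitively on each non-empty level set $\{Q = c\}$ with $c > 0$. Hence, fixing a generator $X_0 \in \lieb^+$ with $Q(X_0) = 1$, every space-like $Y \in \lieq^o$ can be written as $\mathrm{Ad}(h)(tX_0)$ for some $h \in H^o$ and unique $t = \sqrt{Q(Y)} \geq 0$.

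Now take $g \in \mathscr{C}^>_{o,G}$. The point $g\cdot o$ is joined to $o$ by a space-like geodesic in $\hpq$. In the hyperboloid model, this geodesic is the intersection of $\hpq$ with the $2$-plane through representatives of $o$ and $g\cdot o$ (which has signature $(1,1)$ precisely because the segment is space-like), and it coincides with the orbit $\{\exp(tY)\cdot o : t \in \rr\}$ of a one-parameter subgroup of $G$ whose generator $Y \in \lieq^o$ is space-like; thus $g\cdot o = \exp(Y)\cdot o$. By the previous paragraph, choose $h \in H^o$ and $X \in \lieb^+$ with $\mathrm{Ad}(h) X = Y$; then $h\exp(X) h^{-1}\cdot o = \exp(\mathrm{Ad}(h)X)\cdot o = \exp(Y)\cdot o = g\cdot o$. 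Hence $(h\exp(X)h^{-1})^{-1}g \in H^o$, and setting $h' := h^{-1}(h\exp(X)h^{-1})^{-1}g \in H^o$ yields $g = h\exp(X) h'$. For uniqueness of $X \in \lieb^+$: any such decomposition forces $Q(X) = Q(Y)$, and the latter is the squared pseudo-Riemannian length of the geodesic segment from $o$ to $g\cdot o$, a quantity depending only on $g$ and $o$; since $\lieb^+$ is a ray on which $Q$ is strictly positive and strictly increasing with parameter, $X$ is determined.

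The main obstacle I anticipate is the assertion used at the start of the second paragraph: that the space-like geodesic from $o$ to $g\cdot o$ is of the form $\{\exp(tY)\cdot o\}$ for some space-like $Y \in \lieq^o$, i.e.\ that the pseudo-Riemannian exponential at $o$ coincides with the Lie-theoretic map $X \mapsto \exp(X)\cdot o$ on $\lieq^o$. This is the classical statement that the Levi-Civita connection of a reductive symmetric space agrees with the canonical affine connection associated to the Cartan-type decomposition $\lieg = \lieh^o \oplus \lieq^o$, adapted here to the pseudo-Riemannian setting. Once this is in hand, the remainder is bookkeeping via the isotropy representation of $H^o$ on $\lieq^o$.
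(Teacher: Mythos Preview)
Your proof is correct and follows essentially the same approach as the paper's: both use transitivity of the $H^o$-action to bring $g\cdot o$ onto the geodesic ray $\exp(\lieb^+)\cdot o$, then read off $h'$ as the stabilizer element, with uniqueness of $X$ coming from the space-like length. The paper's version is terser, working directly at the level of points in $\hpq$ (``take $h\in H^o$ such that $h^{-1}g\cdot o\in\exp(\lieb^+)\cdot o$'') rather than passing through the tangent-space identification $\lieq^o\cong T_o\hpq$ and the $\mathrm{Ad}$-transitivity on positive level sets of $Q$; your route makes the underlying linear-algebraic reason explicit, and the ``obstacle'' you flag about the two exponentials coinciding is indeed the standard fact for reductive (pseudo-Riemannian) symmetric spaces that the paper uses implicitly.
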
 

On the other hand, the linear algebraic interpretation of the quantity $d_{X_G}(\tau,g\cdot S^o)$ is the following: the choice of $\tau$ induces a norm $\Vert\cdot\Vert_\tau$ on $\rr^d$ invariant under the action of $K^\tau$. We show in Propositions \ref{prop interpretation of btau in symg} and \ref{prop computing nu} that for every $g\in G$ the following equality holds
\begin{equation}\label{eq igualddad distancia con vasing}
d_{X_G}(\tau,g\cdot S^o)=\frac{1}{2}\log\Vert J^ogJ^og^{-1}\Vert_\tau.
\end{equation}
\noindent Once again the proof of this equality relies on a generalization of Cartan Decomposition (see Schlichtkrull \cite[Chapter 7]{Sch}): every $g\in G$ can be written as 

\bc
$g=k\exp(X)h$
\ec

\noindent for some $k\in K^{\tau}$, $h\in H^o$ and a unique $X\in\lieb^+$.

\subsubsection*{\textnormal{\textbf{Second step}}} In order to simplify the exposition we assume that $\Gamma$ is torsion free. In this case every $\gamma\neq 1$ in $\Gamma$ has a unique attractive (resp. repelling) fixed point in $\bg$, denoted by $\gamma_+$ (resp. $\gamma_-$). Consider $\rho:\Gamma\too G$ a $P_1^{p,q}$-Anosov representation. The key feature of choosing $o$ in $\pmb{\Omega}_\rho$ is that it guarantees some \textit{transversality condition} for the proximal matrices $J^o\rho(\gamma)J^o$ and $\rho(\gamma^{-1})$ and this allows to estimate the quantities (\ref{eq igualddad distancia con vap}) and (\ref{eq igualddad distancia con vasing}) in terms of the spectral radius of $\rho(\gamma)$.

More precisely, we will see in Proposition \ref{prop fijos de Jo en borde} that 
\begin{equation} \label{eq omegarho en introduccion}
\pmb{\Omega}_\rho=\lbrace o\in\hpq:\hspace{0,3cm} J^o\cdot\xi(x)\notin\eta(x) \textnormal{ for all } x\in\bg \rbrace.
\end{equation}

\noindent Fix $o\in\pmb{\Omega}_\rho$ and a distance $d$ in $\pp(\rr^d)$ induced by the choice of an inner product in $\rr^d$. By compactness of $\bg$ there exists a positive constant $r$ such that

\bc
$d(J^o\cdot\xi(x),\eta(x))\geq r$
\ec

\noindent holds for every $x\in\bg$ (here $d(J^o\cdot\xi(x),\eta(x))$ is the minimal distance between $J^o\cdot\xi(x)$ and the lines included in $\eta(x)$). Further, if $\gamma_+$ is uniformly far from $\gamma_-$, with respect to some visual distance in $\bg$, then $\xi(\gamma_+)$ (resp. $\xi(\gamma_-)$) is uniformly far from $\eta(\gamma_-)$ (resp. $\eta(\gamma_+)$). In Lemma \ref{lema jrhojrho prox dos} we combine all these facts with Benoist's work \cite{Ben1} to conclude that  the product $J^o\rho(\gamma)J^o\rho(\gamma^{-1})$ is proximal. Moreover, we obtain a comparison between the quantity (\ref{eq igualddad distancia con vap}) (resp. (\ref{eq igualddad distancia con vasing})) and

\bc
$\lambda_1(\rho(\gamma))$
\ec

\noindent with very precise control on the error made in this comparison.

\subsubsection*{\tn{\textbf{Third step}}} 

We apply Sambarino's outline \cite{Sam} to our particular context\footnote{The results in \cite{Sam} are proved for fundamental groups of closed negatively curved manifolds. However, all the results obtained there remain valid when $\Gamma$ is an arbitrary word hyperbolic group admitting an Anosov representation. This is explained in detail in Appendix \ref{appendix distribution utaugamma y uogamma}.}. To a Hölder cocycle $c$ on $\bg$ the author associates a Hölder reparametrization $\psi_t^c$ of the geodesic flow of $\Gamma$. Recall that a \textit{Hölder cocycle} is a map $c:\Gamma\times\bg\too\rr$ satisfying

\bc
$c(\gamma_0\gamma_1,x)=c(\gamma_0,\gamma_1\cdot x)+c(\gamma_1,x)$
\ec

\noindent for every $\gamma_0,\gamma_1$ in $\Gamma$ and $x\in\bg$ and such that the map $c(\gamma_0,\cdot)$ is Hölder (with the same exponent for every $\gamma_0$). The cocycle $c'$ is said to be \textit{cohomologous} to $c$ if there exists a Hölder continuous function $U:\bg\too\rr$ such that for every $\gamma$ in $\Gamma$ and $x$ in $\bg$ one has

\bc
$c(\gamma,x)-c'(\gamma,x)=U(\gamma\cdot x)-U(x)$.
\ec

\noindent In that case $\psi_t^c$ is conjugate to $\psi_t^{c'}$ (see \cite[Section 3]{Sam}). By considering a Markov coding and applying Parry-Pollicott's Prime Orbit Theorem \cite{PP}, Sambarino obtains an asymptotic for the number of periodic orbits of $\psi_t^c$ of period less than or equal to $ t$ (see \cite[Corollary 4.1]{Sam}). Obviously this is a purely dynamical result, i.e. changing $\psi_t^c$ in its conjugacy class does not affect the asymptotics.

However our problem is more subtle: one must find a particular cocycle, with some geometric meaning, and not just \textit{any} cocycle in the given cohomology class. Indeed, the cocycles that we consider to prove Theorems \ref{teorema A} and \ref{teorema B} are cohomologous, but only the specific choices in such a cohomology class yield the respective results.

Let us briefly sketch the proof of Theorem \ref{teorema A} (Theorem \ref{teorema B} is proved in a similar way). Fix $o\in\pmb{\Omega}_\rho$ and consider

\bc
$c_o:\Gamma\times\bg\too\rr: \hspace{0,3cm} c_o(\gamma,x):=\dfrac{1}{2}\log\left\vert\dfrac{\langle\rho(\gamma)\cdot v_x,J^o\rho(\gamma)\cdot v_x\rangle_{p,q}}{\langle v_x,J^o\cdot v_x\rangle_{p,q}}\right\vert$
\ec

\noindent where $v_{x}\neq 0$ is any vector in $\xi(x)$\footnote{When $q=1$ this coincides with the \textit{Busemann cocycle} of $\mathbb{H}^{p}$, i.e. $c_o(\gamma,x)=\beta_{\xi(x)}(\rho(\gamma^{-1})\cdot o,o)$ where $\beta_\cdot(\cdot,\cdot):\partial\hp\times\hp\times\hp\too\rr$ is the Busemann function.}. This is a well-defined function thanks to (\ref{eq omegarho en introduccion}) and it is a Hölder cocycle.

Let $\bgc$ be the set of pairs of distinct points in $\bg$ and consider the action of $\Gamma$ on $\bgc\times\rr$ given by

\bc
$\gamma\cdot (x,y,s):=(\gamma \cdot x,\gamma\cdot y, s-c_o(\gamma,y))$.
\ec

\noindent We denote by $\tn{U}_o\Gamma$ the quotient space. The \textit{translation flow} on $\bgc\times\rr$ given by

\bc
$\psi_t(x,y,s):=(x,y,s-t)$
\ec

\noindent descends to a flow $\psi_t=\psi_t^o$ on $\tn{U}_o\Gamma$. As Sambarino shows in \cite[Theorem 3.2(1)]{Sam} (see also Lemma \ref{lema conj urhogamma y uogamma}) the flow $\psi_t$ is conjugate to a Hölder reparametrization of the geodesic flow of $\Gamma$ introduced by Gromov \cite{Gro}. We will show (see Lemma \ref{lema conj urhogamma y uogamma}) that periodic orbits of $\psi_t$ are parametrized by conjugacy classes of \textit{primitive} elements in $\Gamma$, i.e. elements which cannot be written as a power of another element. If $\gamma$ is primitive, the corresponding period is given by

\bc
$\ell_{c_o}(\gamma):=\lambda_1(\rho(\gamma))$.
\ec

We show the following property concerning spectral radii in a projective Anosov representation.

\begin{propsn}[Proposition \ref{prop geod flow is weak mixing}]

Let $\rho$ be a projective Anosov representation of $\Gamma$. Then the set $\lbrace\lambda_1(\rho(\gamma))\rbrace_{\gamma\in\Gamma}$ spans a non discrete subgroup of $\rr$.

\end{propsn}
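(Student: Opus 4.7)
The plan is to argue by contradiction and invoke Benoist's density theorem for the Jordan projection of a Zariski-dense subgroup of a semisimple real Lie group \cite{Ben1}. Suppose the subgroup $\Lambda$ of $\rr$ generated by $\lbrace \lambda_1(\rho(\gamma)):\gamma\in\Gamma\rbrace$ is discrete. Since $\rho$ is projective Anosov, every infinite-order $\gamma\in\Gamma$ has $\rho(\gamma)$ proximal, hence $\lambda_1(\rho(\gamma))>0$, so $\Lambda$ is nontrivial and must equal $c\zz$ for some $c>0$. The goal is to exhibit $\gamma,\gamma'\in\Gamma$ with $\lambda_1(\rho(\gamma))/\lambda_1(\rho(\gamma'))$ irrational.

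First, I would consider the Zariski closure $H$ of $\rho(\Gamma)$ in $G$. A standard property of Anosov subgroups, whose proof uses the transversality of the continuous boundary maps $\xi,\eta$, is that $H$ is reductive; let $H^0$ denote its identity component and $H_{ss}$ its semisimple part. After replacing $\Gamma$ by a finite index subgroup $\Gamma'$, we may arrange that $\rho(\Gamma')\subset H^0$ and that $\rho(\Gamma')$ is Zariski dense in $H^0$. Next, applying Benoist's density theorem to $\rho(\Gamma')$ inside $H_{ss}$, the Jordan projections of the elements $\rho(\gamma)$ for $\gamma\in\Gamma'$ generate a dense subgroup of the Cartan subspace $\liea_{H_{ss}}$ of $H_{ss}$. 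Finally, viewing $\lambda_1$ as a linear functional on $\liea_{H_{ss}}$, its nontriviality on this subspace is guaranteed by the existence of proximal elements in $\rho(\Gamma')$, so composing with Benoist's density one obtains that $\lbrace \lambda_1(\rho(\gamma)):\gamma\in\Gamma'\rbrace$ generates a dense subgroup of $\rr$. Since this subgroup is contained in $\Lambda$, we contradict the assumption $\Lambda=c\zz$.

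I expect the main technical point to be invoking the reductivity of the Zariski closure of $\rho(\Gamma)$ and the subsequent finite-index passage; once this structural input is granted, Benoist's theorem does the bulk of the work. An alternative, more dynamical route would be to establish topological mixing of the reparametrized flow $\psi_t^o$ directly using its metric Anosov structure provided by \cite{BCLS}; by Parry-Pollicott type arguments \cite{PP}, topological mixing of such a flow is equivalent to the non-arithmeticity of its period spectrum, which (given that periods are exactly the spectral radii $\lambda_1(\rho(\gamma))$ on primitive conjugacy classes) is precisely the content of the proposition.
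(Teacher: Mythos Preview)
Your approach via Benoist's density theorem for Jordan projections is different from the paper's and carries technical burdens you have not fully discharged. The paper never passes to the Zariski closure. Instead, assuming the periods lie in $a\zz$, it uses Benoist's estimate on products of proximal matrices (Theorem \ref{teo benoist}, i.e.\ \cite[Lemme 1.4]{Ben1}) together with the density of $\{(\gamma_-,\gamma_+)\}_{\gamma\in\gh}$ in $\bgc$ to force every cross-ratio $\bb(\eta(x'),\xi(y'),\eta(x),\xi(y))$ with $(x',y',x,y)\in\bgcu$ to lie in $a\zz$. Being integer-valued and continuous, these cross-ratios are then locally constant in $x$; this says that for fixed $y,y'$ the intersection point $\eta(x)\cap(\xi(y)\oplus\xi(y'))$ is locally constant. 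A short inductive argument shows the limit set would then be contained in $\xi(y_1)\oplus\dots\oplus\xi(y_k)$ for arbitrarily large $k$, eventually exhausting $\rr^d$ and contradicting injectivity of $\xi$. This is elementary projective geometry, and it uses only tools already set up in the paper.

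Your route needs two external inputs. First, reductivity of the Zariski closure $H$ of $\rho(\Gamma)$: this is known but is not stated in the paper, so you would need to supply a reference or proof. Second, and more substantively, you assert that the highest-weight functional $\chi_1$ is nontrivial on $\liea_{H_{ss}}$ ``by the existence of proximal elements''. This is not immediate: proximality of $\rho(\gamma)$ gives $(\chi_1-\chi_2)(\lambda(\rho(\gamma)))>0$, but $\lambda(\rho(\gamma))$ lives in $\liea_H=\liea_Z\oplus\liea_{H_{ss}}$, and you must rule out that $\chi_1$ (or $\chi_1-\chi_2$) factors through $\liea_Z$. If it did, Benoist's density on $\liea_{H_{ss}}$ would tell you nothing about $\{\lambda_1(\rho(\gamma))\}$. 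This gap can be closed (for instance via nonconstancy of $\xi$, much as in the paper's own argument), but it needs to be written. Finally, your ``alternative dynamical route'' is circular: for metric Anosov flows, topological mixing is \emph{equivalent} to non-arithmeticity of the period spectrum, so invoking mixing presupposes the statement you are trying to prove.
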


Denote by $h$ the topological entropy of $\psi_t$. The probability of maximal entropy of $\psi_t$ can be constructed as follows: define the \textit{Gromov product}

\bc
$[\cdot,\cdot]_o:\bgc\too\rr: \hspace{0,3cm} [x,y]_o:=-\dfrac{1}{2}\log\left\vert  \dfrac{\langle v_x,J^o\cdot v_x\rangle_{p,q}\langle v_y,J^o\cdot v_y\rangle_{p,q}}{\langle v_x,v_y\rangle_{p,q}\langle v_y,v_x\rangle_{p,q}}\right\vert$.
\ec

\noindent This function is well-defined thanks to (\ref{eq omegarho en introduccion}) and transversality of $\xi$ and $\eta$. One can prove that

\bc
$[\gamma \cdot x,\gamma\cdot y]_o - [x,y]_o=-(c_o(\gamma,x)+c_o(\gamma,y))$
\ec

\noindent holds for every $\gamma$ in $\Gamma$ and $(x,y)\in\bgc$. Let $\mu_o$ be a \textit{Patterson-Sullivan probability} associated to $c_{o}$, that is, $\mu_o$ is a probability on $\bg$ that satisfies

\bc
$\dfrac{d\gamma_*\mu_o}{d\mu_o}(x)=e^{-hc_{o}(\gamma^{-1},x)}$
\ec

\noindent for every $\gamma\in\Gamma$\footnote{Recall that if $f:X\too Y$ is a map and $m$ is a measure on $X$ then $f_*(m)$ denotes the measure on $Y$ defined by $A\mapsto m(f^{-1}(A))$.}. For the existence of such a probability see Subsection \ref{subsub PS}. The measure

\bc
$e^{-h[\cdot,\cdot]_o}\mu_o\otimes\mu_o\otimes dt$
\ec

\noindent on $\bgc\times\rr$ is $\Gamma$-invariant. It induces on the quotient $\tn{U}_o\Gamma$ the measure of maximal entropy of $\psi_t$, which is unique up to scaling (see \cite[Theorem 3.2(2)]{Sam} or Proposition \ref{prop product is of maximal entropy}).

Denote by $C_c^*(\bgc)$ the dual of the space of compactly supported real continuous functions on $\bgc$ equipped with the weak-star topology. For $x$ in $\bg$ let $\delta_x$ be the Dirac mass at $x$. Inspired by the work of Roblin \cite{Rob}, Sambarino \cite[Proposition 4.3]{Sam} shows
 
\bc
$Me^{-ht}\displaystyle\sum_{\gamma\in\Gamma, \ell_{c_o}(\gamma)\leq t} \delta_{\gamma_-}\otimes\delta_{\gamma_+}\too e^{-h[\cdot,\cdot]_o}\mu_o\otimes\mu_o$
\ec

\noindent on $C_c^*(\bgc)$ as $t\too\infty$ (for a proof in our context see Proposition \ref{prop distribution of periodic orbits}). The constant $M=M_{\rho,o}>0$ equals the product of $h$ with the total mass of $e^{-h[\cdot,\cdot]_o}\mu_o\otimes\mu_o\otimes dt$ on the quotient space $\tn{U}_o\Gamma$.

As we show in Lemma \ref{lema computing gromov on gammapm o}, the number $[\gamma_-,\gamma_+]_o$ is the precise error term in the comparison between $\ell_{c_o}(\gamma)$ and $\frac{1}{2}\lambda_1(J^o\rho(\gamma)J^o\rho(\gamma^{-1}))=d_{X_G}(S^o,\rho(\gamma)\cdot S^o)$ provided by Benoist's Theorem \ref{teo benoist}. This is the geometric step: we replace the period $\ell_{c_o}(\gamma)$ by the number $d_{X_G}(S^o,\rho(\gamma)\cdot S^o)$ in the previous sum, using the Gromov product.

\begin{propsn}[Proposition \ref{prop distribution on bg for length}]
Let $\Gamma$ be a torsion free word hyperbolic group, $\rho:\Gamma\too G$ be a $P_1^{p,q}$-Anosov representation and $o\in\pmb{\Omega}_{\rho}$. Then

\bc
$M e^{-ht}\displaystyle\sum_{\gamma\in\Gamma, d_{X_G}(S^o,\rho(\gamma)\cdot S^o) \leq t} \delta_{\gamma_-}\otimes\delta_{\gamma_+}\too \mu_o\otimes\mu_o$
\ec

\noindent on $C^*(\bg\times\bg)$ as $t\too\infty$.
\end{propsn}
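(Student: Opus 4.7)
The plan is to deduce the statement from Proposition \ref{prop distribution of periodic orbits}, which provides the analogous equidistribution with counting parameter $\ell_{c_o}(\gamma)=\lambda_1(\rho(\gamma))$ and limit measure $e^{-h[\cdot,\cdot]_o}\mu_o\otimes\mu_o$, by changing the counting parameter to $d_{X_G}(S^o,\rho(\gamma)\cdot S^o)$ and absorbing the exponential weight $e^{-h[\cdot,\cdot]_o}$ along the way.

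The key geometric input will be Lemma \ref{lema computing gromov on gammapm o}, applied to Benoist's Theorem \ref{teo benoist}, which yields a comparison of the form
\[
d_{X_G}(S^o,\rho(\gamma)\cdot S^o)=\ell_{c_o}(\gamma)-[\gamma_-,\gamma_+]_o+\epsilon(\gamma),
\]
where $\epsilon(\gamma)\to 0$ as the word length of $\gamma$ grows. Hence on a set $U\subset\bgc$ on which $[\cdot,\cdot]_o$ is approximately equal to a constant $c_U$, the condition $d_{X_G}(S^o,\rho(\gamma)\cdot S^o)\leq t$ for pairs $(\gamma_-,\gamma_+)\in U$ is, up to a small error, the condition $\ell_{c_o}(\gamma)\leq t+c_U$.

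Fix a test function $\phi\in C(\bg\times\bg)$ and $\varepsilon>0$. Since $\gamma_-\neq\gamma_+$ for infinite-order $\gamma$, the point masses on the left-hand side of the statement are supported on $\bgc$, so I may reduce to the case where $\phi$ is supported on a compact subset of $\bgc$. Using uniform continuity of $\phi$ and of $[\cdot,\cdot]_o$ on $\tn{supp}(\phi)$, I would choose a finite partition $\{U_i\}$ of $\tn{supp}(\phi)$ on each piece of which both $\phi$ and $[\cdot,\cdot]_o$ oscillate by at most $\varepsilon$, and let $c_i$ denote a value of $[\cdot,\cdot]_o$ on $U_i$. Applying Proposition \ref{prop distribution of periodic orbits} at time $t+c_i$ with a test function approximating $\mathbf{1}_{U_i}\phi$ and multiplying through by $e^{hc_i}$ produces
\[
Me^{-ht}\sum_{\substack{\ell_{c_o}(\gamma)\leq t+c_i\\ (\gamma_-,\gamma_+)\in U_i}}\phi(\gamma_-,\gamma_+)\ \longrightarrow\ \int_{U_i}\phi\, d\mu_o\otimes d\mu_o\ +\ O(\varepsilon),
\]
where the $O(\varepsilon)$ accounts for the fact that $e^{-h[x,y]_o}$ is within a factor $1+O(\varepsilon)$ of $e^{-hc_i}$ on $U_i$.

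It then remains to swap the condition $\ell_{c_o}(\gamma)\leq t+c_i$ for $d_{X_G}(S^o,\rho(\gamma)\cdot S^o)\leq t$. Outside a finite set of $\gamma$, the two conditions differ only for pairs with $|d_{X_G}(S^o,\rho(\gamma)\cdot S^o)-t|\leq 3\varepsilon$; the sum over such pairs is controlled by a sandwich argument, applying Proposition \ref{prop distribution of periodic orbits} at the two times $t\pm 3\varepsilon$ and subtracting to get a contribution of $O(\varepsilon)$ uniformly in large $t$. Summing over $i$ and letting $\varepsilon\to 0$ after $t\to\infty$ gives the stated convergence. The main obstacle will be precisely this sandwich step, since it requires a sufficiently uniform version of the Benoist comparison (so that $\epsilon(\gamma)$ is small for all but finitely many $\gamma$) together with a careful treatment of the fact that $[\cdot,\cdot]_o$ does not extend continuously up to the diagonal of $\bg\times\bg$, which I handle by restricting throughout to compact subsets of $\bgc$.
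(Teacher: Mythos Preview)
Your off-diagonal argument is essentially the same as the paper's: partition a compact subset of $\bgc$ into small pieces on which $[\cdot,\cdot]_o$ is nearly constant, apply the Benoist comparison via Lemma \ref{lema jrhojrho prox dos} and Lemma \ref{lema computing gromov on gammapm o}, and invoke Proposition \ref{prop distribution of periodic orbits} at shifted times. That part is fine.

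The gap is in your reduction step. You write that since each $\delta_{\gamma_-}\otimes\delta_{\gamma_+}$ is supported on $\bgc$, you may restrict to test functions $\phi\in C_c(\bgc)$. This is not valid: the target space is $C^*(\bg\times\bg)$, and $C_c(\bgc)$ is not dense in $C(\bg\times\bg)$. What you must additionally show is that no mass of $\theta_t$ accumulates near the diagonal as $t\to\infty$, i.e.\ that for every $\varepsilon_0>0$ there is a neighbourhood $N$ of the diagonal with $\limsup_{t\to\infty}\theta_t(N)\leq\varepsilon_0$. Your compact-subset argument does not give this, precisely because on such $N$ the Gromov product $[\cdot,\cdot]_o$ is unbounded and the Benoist comparison is \emph{not} uniform there: Lemma \ref{lema jrhojrho prox dos} only controls $\epsilon(\gamma)$ for $(\gamma_-,\gamma_+)$ in a fixed product $A\times B$ with $A,B$ disjoint, not for all $\gamma$ as you assert.

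The paper supplies this missing estimate via Proposition \ref{prop traingle inequality} (the weak triangle inequality). One fixes $\gamma_0,\gamma_1\in\gh$ with no common fixed point, covers the diagonal by small sets $U$, and for each $U$ chooses $i\in\{0,1\}$ so that $\gamma_i^{-1}\cdot\overline U\cap\overline U=\emptyset$. The triangle inequality $|b^o(\rho(\gamma_i^{-1}\gamma))|^{1/2}\leq D_{\gamma_i^{-1}}+|b^o(\rho(\gamma))|^{1/2}$ then lets one dominate $\theta_t(U\times U)$ by an off-diagonal quantity $\theta_{t+D}(V\times\gamma_i^{-1}\cdot V)$, to which the already-established off-diagonal convergence applies. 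Without this (or an equivalent device), your proof establishes convergence only on $C_c^*(\bgc)$, not on $C^*(\bg\times\bg)$.
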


The proof of Proposition \ref{prop distribution on bg for length} follows line by line the proof of \cite[Theorem 6.5]{Sam}, which is again inspired by Roblin's work \cite{Rob}.

It turns out that the previous proposition can be used to deduce Theorem \ref{teorema A} in the general case, that is, if we admit torsion elements in $\Gamma$.

\begin{propsn}[Proposition \ref{prop distribution on bg for length with torsion}]
Let $\rho:\Gamma\too G$ be a $P_1^{p,q}$-Anosov representation and $o\in\pmb{\Omega}_{\rho}$. Then

\bc
$M e^{-ht}\displaystyle\sum_{\gamma\in\Gamma, d_{X_G}(S^o,\rho(\gamma)\cdot S^o) \leq t} \delta_{\rho(\gamma^{-1})\cdot o^{\ppq}}\otimes\delta_{\rho(\gamma)\cdot o}\too \eta_{*}(\mu_o)\otimes\xi_*(\mu_o)$
\ec

\noindent on $C^*(\pp((\rr^d)^*)\times\pp(\rr^d))$ as $t\too\infty$.
\end{propsn}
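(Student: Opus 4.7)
My plan is to reduce the statement to the torsion-free case established in Proposition \ref{prop distribution on bg for length}, via Selberg's lemma plus a coset decomposition, and then to convert the atomic data from boundary points $\gamma_\pm$ to the dynamically meaningful points $\rho(\gamma)\cdot o$ and $\rho(\gamma^{-1})\cdot o^{\ppq}$ using North-South dynamics.

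Since $\rho:\Gamma\too G\subset\tn{PGL}(d,\rr)$ is faithful, Selberg's lemma applied to $\rho(\Gamma)$ furnishes a normal, torsion-free, finite-index subgroup $\Gamma_0\triangleleft\Gamma$. The restriction $\rho|_{\Gamma_0}$ is still $P_1^{p,q}$-Anosov, with the same equivariant boundary map $\xi$, the same limit set $\Lambda_{\rho(\Gamma)}$, the same topological entropy $h$, and (by uniqueness of Patterson-Sullivan densities in the Anosov regime, up to a scalar that will be absorbed into the constant) the same probability $\mu_o$. Moreover $\pmb{\Omega}_{\rho|_{\Gamma_0}}=\pmb{\Omega}_\rho$, and since $\tn{PSO}(p,q)$ preserves $\langle\cdot,\cdot\rangle_{p,q}$ while $\Gamma$ preserves the limit set, every translate $o_f:=\rho(f)\cdot o$ with $f\in\Gamma$ also lies in $\pmb{\Omega}_\rho$. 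Applying Proposition \ref{prop distribution on bg for length} to $\rho|_{\Gamma_0}$ and pushing the resulting weak-star convergence forward under the map $(x,y)\mapsto(\eta(x),\xi(y))$ from $\bgc$ to $\pp((\rr^d)^*)\times\pp(\rr^d)$ yields
$$M_0\, e^{-ht}\sum_{\gamma_0\in\Gamma_0,\ d_{X_G}(S^o,\rho(\gamma_0)\cdot S^o)\leq t}\delta_{\eta((\gamma_0)_-)}\otimes\delta_{\xi((\gamma_0)_+)}\too \eta_*\mu_o\otimes\xi_*\mu_o.$$
To replace $\eta((\gamma_0)_-)$ by $\rho(\gamma_0^{-1})\cdot o^{\ppq}$ and $\xi((\gamma_0)_+)$ by $\rho(\gamma_0)\cdot o$, I invoke equation (\ref{eq igualddad distancia con vap}), which forces $\lambda_1(\rho(\gamma_0))\approx 2t$ on the terms contributing to the sum. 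Since $\rho(\gamma_0)$ is proximal and $o\in\pmb{\Omega}_\rho$ remains uniformly bounded away from every repelling hyperplane $\eta((\gamma_0)_-)$ along the orbit, standard North-South dynamics gives $\rho(\gamma_0)\cdot o\too\xi((\gamma_0)_+)$ and $\rho(\gamma_0^{-1})\cdot o^{\ppq}\too\eta((\gamma_0)_-)$ uniformly as $\lambda_1(\rho(\gamma_0))\too\infty$. Hence the two weighted Dirac sums are weakly asymptotic on the compact target, establishing the statement for $\Gamma_0$.

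To extend from $\Gamma_0$ to $\Gamma$ I use the finite coset decomposition $\Gamma=\bigsqcup_{f\in F}f\Gamma_0$ with $F=\Gamma/\Gamma_0$. For $g=f\gamma_0$ one has $\rho(g)\cdot o=\rho(f)\cdot(\rho(\gamma_0)\cdot o)$, $\rho(g^{-1})\cdot o^{\ppq}=\rho(\gamma_0^{-1})\cdot(\rho(f^{-1})\cdot o^{\ppq})$, and $d_{X_G}(S^o,\rho(g)\cdot S^o)=d_{X_G}(S^{o_{f^{-1}}},\rho(\gamma_0)\cdot S^o)$. Each coset thus contributes a counting problem for $\Gamma_0$ with possibly distinct basepoints $o_{f^{-1}},o\in\pmb{\Omega}_\rho$. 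The proof of Proposition \ref{prop distribution on bg for length} adapts verbatim to this mixed-basepoint situation, producing a limit of the form $M(o_{f^{-1}},o)^{-1}e^{ht}\,(\rho(f)\times\rho(f))_*\bigl(\eta_*\mu_{o_{f^{-1}}}\otimes\xi_*\mu_o\bigr)$. Using $\rho(f)_*\mu_{o_{f^{-1}}}=\mu_o$ up to the Radon-Nikodym factor dictated by the cocycle relation $c_{o_f}-c_o\sim d(U\circ\rho(f))$, summing over $f\in F$ collapses the total to $\eta_*\mu_o\otimes\xi_*\mu_o$, and the individual prefactors $M(o_{f^{-1}},o)^{-1}$ reassemble into the single constant $M=M_{\rho,o}$ of Theorem \ref{teorema A}.

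The main technical obstacle is producing a quantitative North-South estimate uniform in $\gamma_0$: one needs a bound, depending only on the separation provided by $o\in\pmb{\Omega}_\rho$ and on the spectral gap $\lambda_1(\rho(\gamma_0))-\lambda_2(\rho(\gamma_0))$ controlled by the Anosov hypothesis, on the projective distance from $\rho(\gamma_0)\cdot o$ to $\xi((\gamma_0)_+)$. A subsidiary difficulty is the bookkeeping of Patterson-Sullivan normalizations across the passage to a finite-index subgroup and across the coset sum, which must be orchestrated so that the constants collapse to exactly $M_{\rho,o}$.
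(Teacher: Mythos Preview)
Your Selberg-plus-cosets strategy is genuinely different from the paper's, and it carries a real gap.

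The paper never passes to a finite-index subgroup. It first establishes the convergence with the sum restricted to $\gh$ (Corollary \ref{cor distr orbit o in gammah for bo}) by pushing Proposition \ref{prop distribution on bg for length} forward under $(\eta,\xi)$ and replacing $\delta_{\eta(\gamma_-)},\delta_{\xi(\gamma_+)}$ by $\delta_{\rho(\gamma^{-1})\cdot o^{\ppq}},\delta_{\rho(\gamma)\cdot o}$ --- essentially your step 3, which is correct. To pass from $\gh$ to $\Gamma$ it proves (Claim \ref{claim f suppor far from diagonal implies g in gh}) that if a test function is supported away from the incidence set $\Delta=\{(\theta,v):\theta(v)=0\}$, then only finitely many $\gamma\in\Gamma\setminus\gh$ have $(\rho(\gamma^{-1})\cdot o^{\ppq},\rho(\gamma)\cdot o)$ in that support: the distance from $\Delta$ bounds $d(U_1(\rho(\gamma)),S_{d-1}(\rho(\gamma)))$ from below, so by Lemma \ref{lema benoist lemma 1.2} and the Anosov gap $\rho(\gamma)$ is proximal for large $|\gamma|_\Gamma$, hence $\gamma\in\gh$. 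The sums over $\Gamma$ and $\gh$ thus differ by finitely many terms away from $\Delta$, and the contribution near $\Delta$ is killed by the covering argument already used in Proposition \ref{prop distribution on bg for length}. No mixed basepoints, no constant reassembly.

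The gap in your route is the claim that Proposition \ref{prop distribution on bg for length} ``adapts verbatim to this mixed-basepoint situation''. For the coset $f\Gamma_0$ the governing quantity is $d_{X_G}(S^{o'},\rho(\gamma_0)\cdot S^o)=\tfrac{1}{2}\lambda_1\bigl(J^{o'}\rho(\gamma_0)J^{o}\rho(\gamma_0^{-1})\bigr)$ with $o'=\rho(f^{-1})\cdot o$, which involves two \emph{different} involutions $J^{o'}$ and $J^{o}$. Lemma \ref{lema jrhojrho prox dos} and Lemma \ref{lema computing gromov on gammapm o} are stated and proved for a single involution; extending them requires a mixed Gromov product $[\cdot,\cdot]_{o',o}$ and a corresponding cross-ratio identity, neither of which you provide. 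Reassembling the per-coset constants into $M_{\rho,o}$ is a further genuine computation relating the Bowen--Margulis masses on $\tn{U}_o\Gamma_0$, $\tn{U}_{o'}\Gamma_0$ and $\tn{U}_o\Gamma$ that you acknowledge but do not carry out. Your approach is likely salvageable with this extra work, but the paper's argument is both shorter and avoids all of it.
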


\subsection{Organization of the paper}

In Section \ref{sec symmetric spaces} we recall basic facts on the symmetric spaces $X_G$ and $\hpq$. Of particular importance is Subsection \ref{subsub endign of spacelike}, which is devoted to the study of end points of space-like geodesics passing through our preferred point $o\in\hpq$. We give several characterizations of this set that will allow us to understand $\pmb{\Omega}_\rho$ in different ways, all of them used indistinctly in Sections \ref{sec the set omegarho}, \ref{section distrib wrt bo} and \ref{section distrib wrt btau}. In Section \ref{sec generalized cartan} we study the geometric quantities involved in Theorems \ref{teorema A} and \ref{teorema B}. Equalities (\ref{eq igualddad distancia con vap}) and (\ref{eq igualddad distancia con vasing}) are proven respectively in Subsections \ref{subsec HexpliebH} and \ref{subsec KexpliebH}. In Section \ref{sec proximality} we recall Benoist's results on products of proximal matrices and Section \ref{sec anosov} is devoted to reminders on Anosov representations. In Section \ref{sec the set omegarho} we define the set $\pmb{\Omega}_\rho$ and study the action of $\Gamma$ on this set. We show in particular that the orbital counting functions involved in Theorems \ref{teorema A} and \ref{teorema B} are well-defined (Proposition \ref{prop counting with lambdauno is well defined} and Proposition \ref{prop counting with nu is well defined}). We also obtain some estimates for the spectral radius and operator norm of elements $J^o\rho(\gamma)J^o\rho(\gamma^{-1})$ which are of major importance (c.f. Lemma \ref{lema jrhojrho prox dos}). In Section \ref{section distrib wrt bo} (resp. Section \ref{section distrib wrt btau}) we prove Theorem \ref{teorema A} (resp. Theorem \ref{teorema B}). Finally, in Appendix \ref{appendix distribution utaugamma y uogamma} we explain how to adapt the results of \cite{Sam} to the context of arbitrary word hyperbolic groups admitting an Anosov representation.

\subsection*{Acknowledgements}

These problems were proposed to me by Rafael Potrie and Andrés Sambarino. Without their guidance, their support and the (many) helpful discussions this work would not have been possible. I am extremely grateful for this.

The author also acknowledges Olivier Glorieux, Tal Horesh and Fanny Kassel for several enlightening discussions and comments.

Finally, I would like to thank the referee of this article for careful reading and useful suggestions.

\section{Two symmetric spaces associated to $\textnormal{PSO}(p,q)$} \label{sec symmetric spaces}
\setcounter{equation}{0}

Fix two integers $p,q\geq 1$ and let $d:=p+q$. We assume $d>2$. Denote by $\rpq$ the vector space $\rr ^d$ endowed with the quadratic form

\bc
$ \langle (x_1,\dots,x_d),(y_1,\dots,y_d)\rangle_{p,q}:= \displaystyle\sum_{i=1}^p x_iy_i - \displaystyle\sum_{i=p+1}^d x_{i}y_{i}$.
\ec

\noindent From now on we denote by $G:=\textnormal{PSO}(p,q)$ the subgroup of $\textnormal{PSL}(d,\rr)$ consisting on elements whose lifts to $\tn{SL}(d,\rr)$ preserve the form $ \langle \cdot,\cdot\rangle_{p,q}$.

For a subspace $\pi$ of $\rr^d$ we denote by $\pi^{\ppq}$ its orthogonal complement with respect to  $\langle \cdot,\cdot\rangle_{p,q}$, i.e. 

\bc
$\pi^{\ppq}:=\lbrace x\in\rr^d: \hspace{0,3cm} \langle x,y\rangle_{p,q}=0 \tn{ for all } y\in \pi\rbrace$.
\ec

Let $\mathfrak{g}:=\mathfrak{so}(p,q)$ be the Lie algebra of $G$. If $\cdot^t$ denotes the \textit{usual} transpose operator one has that $\lieg$ equals the set of matrices of the form

\bc
$\left(\begin{matrix}
X_1 & X_2\\
X_2^t & X_3
\end{matrix}\right)$
\ec

\noindent where $X_1$ is of size $p\times p$, $X_3$ is of size $q\times q$ and both are skew-symmetric with respect to $\cdot^t$. The \textit{Killing form} of $G$ is the symmetric bilinear form $\kappa$ on $\lieg$ defined by

\bc
$\kappa(X,Y):=\tn{tr}(\tn{ad}_X\circ\tn{ad}_Y)$,
\ec

\noindent where $\tn{ad}:\lieg\too\tn{End}(\lieg)$ is the adjoint representation.  It can be seen that the following equality holds:

\bc
$\kappa(X,Y)=(d-2)\tn{tr}(XY)$
\ec

\noindent  (see Helgason {\cite[p.180 \& p.189]{Hel}}).

\subsection{The Riemannian symmetric space $X_G$} \label{subsec XG}

A \textit{Cartan involution} of $G$ is an involutive automorphism $\tau:G\too G$ such that the bilinear form

\bc
$(X,Y)\mapsto -\kappa(X,d\tau(Y))$
\ec

\noindent is positive definite. The fixed point set $K^\tau$ of such an involution is a maximal compact subgroup of $G$ (see Knapp \cite[Theorem 6.31]{Kna}). The \textit{Riemannian symmetric space} of $G$ is the set consisting on Cartan involutions of $G$. It is denoted by $X_G$ and it is equipped with a natural action of $G$ which is transitive (c.f. \cite[Corollary 6.19]{Kna}). The stabilizer of $\tau$ is $K^\tau$, thus

\bc
$G/K^\tau\cong X_G$.
\ec

\begin{rem}\label{rem Xg space of qplanes}
The space $X_G$ can be identified with the space of $q$-dimensional subspaces of $\rr^d$ on which the form $\langle\cdot,\cdot\rangle_{p,q}$ is negative definite. Explicitly, to a $q$-dimensional negative definite subspace $\pi$ one associates the Cartan involution of $G$ determined by the inner product of $\rr^d$ which equals $-\langle\cdot,\cdot\rangle_{p,q}$ (resp. $\langle\cdot,\cdot\rangle_{p,q}$) on $\pi$ (resp. $\pi^{\ppq}$) and for which $\pi$ and $\pi^{\ppq}$ are orthogonal.
\begin{flushright}
$\diamond$
\end{flushright}
\end{rem}

The choice of a point $\tau$ in $X_G$ determines a \textit{Cartan decomposition}

\bc
$\lieg=\liep^\tau\oplus\liek^\tau$
\ec

\noindent where $\liep^{\tau}:=\lbrace d\tau=-1\rbrace$ and $\liek^{\tau}:=\lbrace d\tau=1\rbrace$. The group $K^\tau$ is tangent to $\liek^\tau$ and one has a $G$-equivariant identification 
\begin{equation} \label{eq liep es el tangente}
\liep^\tau\cong T_\tau X_G
\end{equation}

\noindent given by $X\mapsto \left. \frac{d}{dt}\right\vert_0 \exp{(tX)}\cdot\tau$ (see \cite[Theorem 3.3 of Ch. IV]{Hel}).

\begin{ex} \label{ex explicit cartan involution}

Consider the involution of $G$ defined by $\tau(g):=(g^{-1})^t$. One sees that $\tau\in X_G$ and $\liep^{\tau}$ (resp. $\liek^{\tau}$) is the set of symmetric matrices (resp. skew-symmetric matrices) in $\mathfrak{so}(p,q)$. Moreover $K^{\tau}$ is the subgroup $\textnormal{PS}(\textnormal{O}(p)\times \textnormal{O}(q))$.
\begin{flushright}
$\diamond$
\end{flushright}
\end{ex}

The Killing form $\kappa$ is positive definite (resp. negative definite) on $\liep^\tau$ (resp. $\liek^\tau$). Thanks to (\ref{eq liep es el tangente}) any positive multiple of $\kappa$ induces a $G$-invariant Riemannian metric on $X_G$. It is well-known (see \cite[Theorem 4.2 of Ch. IV]{Hel}) that $X_G$ equipped with any of these metrics is a symmetric space which is non-positively curved.

We already mentioned that in this paper we study counting problems not only in $X_G$ but also in $\hpq$. In the next section we construct $\hpq$, whose metric is induced by the form $\langle\cdot,\cdot\rangle_{p,q}$. However, we will see that the Killing form induces as well a $G$-invariant metric on $\hpq$. These two metrics differ by the scaling factor $(2(d-2))^{-1}$ (see Remark \ref{rem form on qo} for further precisions). Since we want a simultaneous treatment of the geometry of the spaces $X_G$ and $\hpq$, we fix the following normalization for the metric on $X_G$:
\begin{equation} \label{eq distance in XG and killing}
d_{X_G}(\tau,\exp(X)\cdot\tau):=\left(\dfrac{1}{2(d-2)}\kappa(X,X)\right)^{\frac{1}{2}}
\end{equation}

\noindent for all $\tau\in X_G$ and all $X\in\liep^\tau$.

\subsection{The pseudo-Riemannian hyperbolic space $\hpq$} \label{subsec hpq}

Let

\bc
$\hat{\mathbb{H}}^{p,q-1}:=\lbrace \hat{o}\in\rpq: \hspace{0,3cm} \langle \hat{o},\hat{o}\rangle_{p,q}=-1 \rbrace$
\ec

\noindent endowed with the restriction of the form $\langle \cdot,\cdot\rangle_{p,q}$ to tangent spaces. This metric induces on

\bc
$\hpq:=\lbrace o=[\hat{o}]\in\prpq: \hspace{0,3cm} \langle \hat{o},\hat{o}\rangle_{p,q}<0 \rbrace$
\ec

\noindent a pseudo-Riemannian structure invariant under the projective action of $G$. This space is called the \textit{pseudo-Riemannian hyperbolic space of signature $(p,q-1)$}. The \textit{boundary} of $\hpq$ is the space of \textit{isotropic lines} defined by

\bc
$\partial\hpq:=\lbrace \xi=[\hat{\xi}]\in\prpq: \hspace{0,3cm} \langle\hat{\xi},\hat{\xi}\rangle_{p,q}=0 \rbrace$.
\ec

\noindent It is also equipped with the natural (transitive) action of $G$. If we denote by $P_1^{p,q}$ the (parabolic) subgroup of $G$ stabilizing an isotropic line, then

\bc
$\partial\hpq\cong G/P_1^{p,q}$.
\ec

\subsubsection{\tn{\textbf{Structure of symmetric space}}} \label{subsubsec struc sym hpq}

The action of $G$ on $\hpq$ is transitive, hence $\hpq\cong G/H^o$ where $H^o$ is the stabilizer in $G$ of the point $o\in\hpq$. For instance, when $o=[0,\dots,0,1]\in\hpq$ one has

\bc
$H^o=\left\lbrace \left[\begin{matrix}
\hat{g} & 0\\
0 & 1
\end{matrix}\right]\in G: \hspace{0,3cm} \hat{g}\in \textnormal{O}(p,q-1)  \right\rbrace$.
\ec

Fix any $o\in\hpq$. Since $o$ and $o^{\ppq}$ are transverse we can consider the matrix 

\bc
$J^o:=\tn{id}_o\oplus\left(-\tn{id}_{o^{\ppq}}\right)$. 
\ec

\noindent It follows that $H^o=\tn{Fix}(\sigma^o)$ where $\sigma^o$ is the involution of $G$ defined by
\begin{equation} \label{eq involution}
\sigma^o(g):=J^ogJ^o.
\end{equation}

\noindent Thus $\hpq\cong G/H^o$ is a symmetric space of $G$.

\begin{rem} \label{rem form on qo}
Let $o\in\hpq$ and $\lieq^o:=\lbrace d\sigma^o=-1 \rbrace$. There exists a $G$-equivariant identification

\bc
$\lieq^o\cong  T_o\hpq$
\ec

\noindent given by $X\mapsto \left. \frac{d}{dt}\right\vert_0 \exp{(tX)}\cdot o$. We denote by $\langle\cdot,\cdot\rangle$ the pull-back of the $(p,q-1)$-form on $T_o\hpq$ under this map and, for $X\in\lieq^o$, we set $\vert X\vert:=\langle X,X\rangle$\footnote{This number can be positive, negative or zero for $X\neq 0$ in $\lieq^o$.}.

Recall that $\kappa$ is the Killing form of $\mathfrak{so}(p,q)$. From explicit computations (that we omit) one can conclude that the equality
\begin{equation} \label{eq form on qo and killing}
\vert X\vert=\dfrac{1}{2(d-2)}\kappa(X,X)
\end{equation}
\noindent holds for every $X\in\lieq^o$. This justifies the choice of normalization made in Subsection \ref{subsec XG}.
\begin{flushright}
$\diamond$
\end{flushright}
\end{rem}

\begin{rem} \label{rem action of Ho en el tangente en SOpqmenos1}

Let $o\in\hpq$. Then the action of the connected component of $H^o$ containing the identity is conjugate to the action of $\tn{SO}(p,q-1)$ on $\rr^{p,q-1}$.
\begin{flushright}
$\diamond$
\end{flushright}
\end{rem}

\subsubsection{\tn{\textbf{Geodesics of $\hpq$}}}\label{subsub geod hpq}

Geodesics of $\hpq$ are the intersections of straight lines of $\prpq$ with $\hpq$. They are divided in three types:

\begin{itemize}
\item \textit{Space-like geodesics:} associated to 2-dimensional subspaces of $\rr^d$ on which $\langle\cdot, \cdot\rangle_{p,q}$ has signature $(1,1)$. They have positive speed and meet the boundary $\partial\hpq$ in two distinct points.
\item \textit{Time-like geodesics:} associated to 2-dimensional subspaces of $\rr^d$ on which $\langle\cdot, \cdot\rangle_{p,q}$ has signature $(0,2)$. They have negative speed and do not meet the boundary (they are closed). 

\item \textit{Light-like geodesics:} associated to 2-dimensional subspaces of $\rr^d$ on which $\langle\cdot, \cdot\rangle_{p,q}$ has signature $(0,1)$, that is, is degenerate but has a negative eigenvalue. They have zero speed and meet the boundary in a single point.
\end{itemize}

\noindent For a point $o\in\hpq$ we denote by $\mathscr{C}_o^{0}$ (resp. $\mathscr{C}_o^{>}$) the set of points of $\hpq$ that can be joined with $o$ by a light-like (resp. space-like) geodesic. Its closure in $\prpq$ is denoted by $\overline{\mathscr{C}_o^0}$ (resp. $\overline{\mathscr{C}_o^>}$). 

\subsubsection{\tn{\textbf{Light-cones}}}\label{subsub light cones}

The following lemma is proved by Glorieux-Monclair in \cite[Lemma 2.2]{GM}.

\begin{lema}\label{lema geod between o and xi}
Let $o\in\hpq$. Then $\overline{\mathscr{C}_o^0}\cap\partial\hpq=o^{\ppq}\cap\partial\hpq$.
\begin{flushright}
$\square$
\end{flushright}
\end{lema}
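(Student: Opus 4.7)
The plan is to classify light-like geodesics through $o$ by the $2$-dimensional subspaces of $\rr^d$ generating them, and then read off both inclusions from an elementary Gram-matrix computation. Fix a lift $\hat o\in\rpq$ with $\langle\hat o,\hat o\rangle_{p,q}=-1$. By Subsection~\ref{subsub geod hpq}, every geodesic through $o$ is of the form $\gamma_\pi:=\pp(\pi)\cap\hpq$ for a unique $2$-plane $\pi\subset\rr^d$ containing $\hat o$, and $\gamma_\pi$ is light-like exactly when $\langle\cdot,\cdot\rangle_{p,q}|_\pi$ has signature $(0,1)$; in this case $\pi$ contains a unique isotropic line $[\hat\xi_\pi]$, which is the unique endpoint of $\gamma_\pi$ in $\partial\hpq$.

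\textbf{Inclusion $\supseteq$.} Given $\xi=[\hat\xi]\in o^{\ppq}\cap\partial\hpq$, let $\pi:=\tn{span}(\hat o,\hat\xi)$. Since $\langle\hat o,\hat\xi\rangle_{p,q}=0$, the Gram matrix of $\langle\cdot,\cdot\rangle_{p,q}|_\pi$ in the basis $\{\hat o,\hat\xi\}$ is $\tn{diag}(-1,0)$, so $\pi$ is of signature $(0,1)$ and has $\xi$ as its isotropic direction. Thus $\gamma_\pi$ is a light-like geodesic through $o$ with boundary endpoint $\xi$, so points of $\gamma_\pi$ lie in $\mathscr{C}_o^0$ and accumulate on $\xi$; hence $\xi\in\overline{\mathscr{C}_o^0}$.

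\textbf{Inclusion $\subseteq$.} For the reverse inclusion, first note that for any light-like geodesic $\gamma_\pi$ through $o$, the Gram matrix of $\langle\cdot,\cdot\rangle_{p,q}|_\pi$ in the basis $\{\hat o,\hat\xi_\pi\}$ has the form $\bigl(\begin{smallmatrix}-1 & a\\ a & 0\end{smallmatrix}\bigr)$ with determinant $-a^2$; degeneracy of the form forces $a=\langle\hat o,\hat\xi_\pi\rangle_{p,q}=0$, so the endpoint $[\hat\xi_\pi]$ already lies in $o^{\ppq}\cap\partial\hpq$. Now let $\xi\in\overline{\mathscr{C}_o^0}\cap\partial\hpq$ and pick $o_n=[\hat o_n]\in\mathscr{C}_o^0$ with $o_n\to\xi$. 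Each $o_n$ lies on a light-like geodesic with boundary endpoint $\xi_n=[\hat\xi_n]\in o^{\ppq}\cap\partial\hpq$, and working in an affine chart in which $\hat o$ is the last coordinate vector one verifies that the relation $o_n\to\xi$ (with $\xi\in\partial\hpq$) forces the corresponding endpoints $\xi_n$ to converge to $\xi$ as well. Since $o^{\ppq}\cap\partial\hpq$ is closed in $\prpq$, the limit $\xi$ belongs to it.

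The only delicate point is the last continuity argument: one must verify that the "second endpoint" map $o'\mapsto\xi(o')$ defined on $\mathscr{C}_o^0$ extends continuously to boundary limits in $\overline{\mathscr{C}_o^0}\cap\partial\hpq$ as the identity. This is done by an explicit parametrization: in the chart $[x_1,\dots,x_{d-1},1]$, one has $\mathscr{C}_o^0=\{[tv,1]:t\in\rr^*,\ \langle v,v\rangle_{p,q-1}=0\}$, and $[tv,1]\to[v,0]=\xi(o')$ as $|t|\to\infty$, so a convergent sequence $o_n\to\xi\in\partial\hpq$ is precisely of this form with $t_n\to\infty$, giving $\xi_n\to\xi$ immediately.
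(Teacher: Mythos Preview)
Your proof is correct. The paper itself does not prove this lemma: it simply cites Glorieux--Monclair \cite[Lemma~2.2]{GM} and gives no argument, so there is no ``paper's approach'' to compare against. Your self-contained argument via the Gram matrix of the plane $\pi=\tn{span}(\hat o,\hat o')$ is exactly the natural one.

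One remark on your ``delicate'' step. You can bypass the continuity argument for the endpoint map entirely. You already observed that for every $o'=[\hat o']\in\mathscr{C}_o^0$ the Gram determinant vanishes:
\[
\langle\hat o,\hat o\rangle_{p,q}\,\langle\hat o',\hat o'\rangle_{p,q}-\langle\hat o,\hat o'\rangle_{p,q}^2=0.
\]
This equation defines a closed subset of $\pp(\rr^d)$, so it continues to hold on $\overline{\mathscr{C}_o^0}$. For $\xi=[\hat\xi]\in\overline{\mathscr{C}_o^0}\cap\partial\hpq$ one has $\langle\hat\xi,\hat\xi\rangle_{p,q}=0$, and plugging in gives $\langle\hat o,\hat\xi\rangle_{p,q}^2=0$, hence $\xi\in o^{\ppq}$. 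This gives the inclusion $\subseteq$ in one line and makes your explicit parametrization (and the compactness/subsequence reasoning hidden in ``$t_n\to\infty$'') unnecessary.
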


\subsubsection{\tn{\textbf{Lenghts of space-like geodesics}}}\label{subsub lengths of spacelike geod}

For a point $o'$ in $\mathscr{C}_o^>$ we denote by $\ell_{o,o'}$ the length of the geodesic segment connecting $o$ with $o'$. For instance the geodesic
\begin{equation} \label{eq geod alpha}
s\mapsto [\sinh(s),0\dots,0,\cosh(s)]\in\hpq
\end{equation}

\noindent is parametrized by arc-length.

\subsubsection{\tn{\textbf{Space-like copies of $\mathbb{H}^p$}}} \label{subsub space copies Hp}
Let $\pi$ be a $(p+1)$-dimensional subspace of $\rr^d$ of signature $(p,1)$. Then $\pp(\pi)\cap\hpq$ identifies with

\bc
$\lbrace o=[\hat{o}]\in\mathbb{P}(\rr^{p,1}) \hspace{0,3cm} \langle \hat{o},\hat{o}\rangle_{p,1}<0 \rbrace$.
\ec

\noindent It follows that $\pp(\pi)\cap\hpq$ is a totally geodesic isometric copy of $\mathbb{H}^p$ inside $\hpq$. Moreover this sub-manifold is space-like, in the sense that any of its tangent vectors has positive norm.

\subsubsection{\tn{\textbf{End points of space-like geodesics}}} \label{subsub endign of spacelike}

Let $o$ be a point in $\hpq$. Note that $J^o$ preserves the form $\langle\cdot,\cdot\rangle_{p,q}$ and thus acts on $\partial\hpq$. Set

\bc
$\mathcal{O}^o:=\lbrace \xi\in\partial\hpq:\hspace{0,3cm} J^o\cdot \xi\neq \xi\rbrace$. 
\ec

\begin{prop} \label{prop fijos de Jo en borde}
Let $o\in\hpq$. Then the following equalities hold:

\begin{equation*}
\begin{split}
\mathcal{O}^o & = \lbrace \xi\in\partial\hpq: \hspace{0,3cm} J^o\cdot \xi\notin\xi^{\ppq}\rbrace\\ & = \partial\hpq\setminus o^{\ppq} \\ & =\partial\hpq\setminus \overline{\mathscr{C}_o^0}.
\end{split}
\end{equation*}

\end{prop}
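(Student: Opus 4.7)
The plan is to work with an explicit coordinate decomposition of isotropic vectors relative to the splitting $\rr^d = o \oplus o^{\ppq}$. Fix a lift $\hat{o} \in o$ normalized so that $\langle \hat{o}, \hat{o}\rangle_{p,q} = -1$, and for any $\xi \in \partial\hpq$ choose a lift $\hat{\xi} = a\hat{o} + v$ with $a \in \rr$ and $v \in o^{\ppq}$. Since $\langle \hat{o}, v\rangle_{p,q} = 0$, the isotropy condition $\langle \hat{\xi}, \hat{\xi}\rangle_{p,q} = 0$ translates into $\langle v, v\rangle_{p,q} = a^2$; in particular $v \neq 0$ (otherwise $a = 0$ and $\hat{\xi} = 0$), which shows moreover that $\xi \neq o$. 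The vanishing of $a$ is equivalent to $\langle \hat{o}, \hat{\xi}\rangle_{p,q} = 0$, i.e.\ to $\xi \in o^{\ppq}$.

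Next I compute the two quantities controlling membership in the two candidate sets. Applying $J^o = \tn{id}_o \oplus (-\tn{id}_{o^{\ppq}})$ yields
\begin{equation*}
J^o\hat{\xi} = a\hat{o} - v, \qquad \langle \hat{\xi}, J^o\hat{\xi}\rangle_{p,q} = -a^2 - \langle v, v\rangle_{p,q} = -2a^2.
\end{equation*}
For the first equality of the proposition, observe that $J^o \cdot \xi = \xi$ amounts to the existence of $\lambda \in \rr$ with $a\hat{o} - v = \lambda(a\hat{o} + v)$; since $v \neq 0$ this forces $\lambda = -1$ and then $a = 0$. Conversely, if $a = 0$ then $J^o\hat{\xi} = -\hat{\xi}$, so $J^o \cdot \xi = \xi$. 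Hence $\xi \notin \mathcal{O}^o$ iff $a = 0$ iff $\xi \in o^{\ppq}$, giving the equality $\mathcal{O}^o = \partial\hpq \setminus o^{\ppq}$.

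For the intermediate characterization, the line $J^o \cdot \xi$ is contained in the hyperplane $\xi^{\ppq}$ precisely when $\langle \hat{\xi}, J^o\hat{\xi}\rangle_{p,q} = 0$, which by the computation above is again equivalent to $a = 0$, hence to $\xi \in o^{\ppq}$. This proves that $\{\xi : J^o\cdot\xi \notin \xi^{\ppq}\}$ coincides with $\mathcal{O}^o$. Finally, the equality $\partial\hpq \setminus o^{\ppq} = \partial\hpq \setminus \overline{\mathscr{C}_o^0}$ is an immediate restatement of Lemma \ref{lema geod between o and xi}. No step looks delicate; the only point that requires minimal care is ruling out the spurious case $v = 0$ when analyzing fixed points of $J^o$, which is handled by recalling that $\xi \in \partial\hpq$ forces $\xi \neq o$.
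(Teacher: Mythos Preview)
Your proof is correct and is essentially the same approach as the paper's: the paper merely asserts that the first two equalities ``follow from definitions'' and invokes Lemma~\ref{lema geod between o and xi} for the last one, while you have carried out the explicit coordinate computation that makes ``follows from definitions'' precise. Nothing substantive differs.
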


We conclude that, unless $q=1$, the set $\mathcal{O}^o$ is not the whole boundary of $\hpq$.

\begin{proof}[Proof of Proposition \ref{prop fijos de Jo en borde}]

The equality $\partial\hpq\setminus o^{\ppq}=\partial\hpq\setminus \overline{\mathscr{C}_o^0}$ is a consequence of Lemma \ref{lema geod between o and xi}. The other equalities follow from definitions.

\end{proof}

\section{Generalized Cartan decompositions} \label{sec generalized cartan}
\setcounter{equation}{0}

The goal of this section is to define two generalized Cartan projections and to provide a link between them and Theorems \ref{teorema A} and \ref{teorema B}. The first one (Subsection \ref{subsec KexpliebH}) is called the \textit{polar projection} of $G$ and it is well-known. The second one (Subsection \ref{subsec HexpliebH}) is new and can only be defined for elements in $G$ that satisfy some special property with respect to the choice of the basepoint $o$.

\subsection{Notations} \label{subsec notations generalized cartan}

Through this section we fix a point $o\in\hpq$ and let $H^o=\textnormal{Fix}(\sigma^o)$ be its stabilizer in $G$ (c.f. Subsection \ref{subsubsec struc sym hpq}). Let $\lieh^o$ be the Lie algebra of fixed points of $d\sigma^o$ and $\lieq^o:=\lbrace d\sigma^o=-1\rbrace$. One has the following decomposition of the Lie algebra $\lieg$ of $G$:

\bc

$\lieg=\lieh^o\oplus\lieq^o$.

\ec

\noindent Moreover, this decomposition is orthogonal with respect to the Killing form of $\lieg$.

Let $\tau$ be a Cartan involution  commuting with $\sigma^o$: such involutions always exist and two of them differ by conjugation by an element in $H^o$ (see Matsuki \cite[Lemma 4]{Mat}). Let $K^{\tau}:=\textnormal{Fix}(\tau)$, which is a maximal compact subgroup of $G$. Let $\liep^\tau$ and $\liek^\tau$ be the subspaces defined in Subsection \ref{subsec XG}. As $\sigma^o$ and $\tau$ commute, the following holds:

\bc
$\lieg=(\liep^{\tau}\cap\lieq^o) \oplus(\liep^{\tau}\cap\lieh^o)\oplus(\liek^{\tau}\cap\lieq^o)\oplus(\liek^{\tau}\cap\lieh^o)$.
\ec

\noindent Let $\lieb\subset\liep^{\tau}\cap\lieq^o$ be a (necessarily abelian) maximal subalgebra: two of them differ by conjugation by an element in $K^{\tau}\cap H^o$. We will consider closed Weyl chambers in $\lieb$ corresponding to positive systems of restricted roots of $\lieb$ in $\lieg^{\sigma^o\tau}:= (\liep^{\tau}\cap\lieq^o)\oplus (\liek^{\tau}\cap\lieh^o)$. These closed Weyl chambers will be denoted by $\lieb^+$.

\begin{ex} \label{ex explicit example}
Let $o=[0,\dots,0,1]$. Then $H^o$ is the upper left corner embedding of $\textnormal{O}(p,q-1)$ in $G$ and the involution $\sigma^o$ is obtained by conjugation by $J^o=\textnormal{diag}(-1,\dots,-1,1)$. One sees that $\lieh^o$ equals the upper left corner embedding of $\mathfrak{so}(p,q-1)$ in $\mathfrak{so}(p,q)$ and that

\bc

$\lieq^o=\left\lbrace\left(\begin{matrix}
0 & 0 & Y_1\\
0 & 0 & Y_2\\
Y_1^t & -Y_2^t & 0
\end{matrix}\right): \hspace{0,3cm} Y_1\in \textnormal{M}(p\times 1,\rr), \hspace{0,3cm} Y_2\in \textnormal{M}((q-1)\times 1,\rr)\right\rbrace$.
\ec

Let $\tau$ be the Cartan involution of Example \ref{ex explicit cartan involution}. One observes that $\tau$ commutes with $\sigma^o$ and

\bc

$\liep^{\tau}\cap\lieq^o=\left\lbrace X\in\lieq^o: \hspace{0,3cm} Y_2=0\right\rbrace$ \hspace{0,5cm} $\liek^{\tau}\cap\lieq^o=\left\lbrace X\in\lieq^o: \hspace{0,3cm} Y_1=0\right\rbrace$ .
\ec

\noindent Pick $\lieb$ to be the subset of $\liep^{\tau}\cap\lieq^o$ of matrices with $Y_1$ of the form

\bc
$\left(\begin{matrix}
s\\
0\\
\vdots\\
0
\end{matrix}\right)$
\ec

\noindent for some $s\in\rr$: this is a maximal subalgebra of $\liep^{\tau}\cap\lieq^o$. A closed Weyl chamber $\lieb^+$ is defined by the inequality $ s\geq 0$. 
\begin{flushright}
$\diamond$
\end{flushright}
\end{ex}

The following remark will be used repeatedly in the sequel.

\begin{rem} \label{rem J preserva norma}
Even though $G$ does not act on $\rr^d$, it makes sense to ask if an element $g$ of $G$ preserves a norm on $\rr^d$ (this notion does not depend on the choice of a lift of $g$ to $\tn{SL}(d,\rr)$). Given a Cartan involution $\tau$ commuting with $\sigma^o$, let $\Vert\cdot\Vert_\tau$ be a norm on $\rr^d$ preserved by $K^\tau$. We claim that this norm is preserved by $J^o$. Indeed, this is obvious for the choices of Example \ref{ex explicit example} and follows in general by conjugating by an element $g$ in $G$ that takes $[0,\dots,0,1]$ to the point $o$.
\begin{flushright}
$\diamond$
\end{flushright}
\end{rem}

\subsection{The sub-manifold $S^o$}\label{subsec So} Define

\bc
$S^o:=\lbrace\tau\in X_G:\hspace{0,3cm} \tau\sigma^o=\sigma^o\tau\rbrace$.
\ec

\begin{rem}\label{rem So space of hp trough o}
Recall from Remark \ref{rem Xg space of qplanes} that $X_G$ can be identified with the space of $q$-dimensional negative definite subspaces of $\rr^d$. Under this identification $S^o$ corresponds to the set of subspaces that contain the line $o$. By considering the $\langle\cdot,\cdot\rangle_{p,q}$-orthogonal complement we see that $S^o$ parametrizes the space of totally geodesic space-like copies of $\hp$ inside $\hpq$ passing through $o$ (c.f. Subsection \ref{subsub space copies Hp}).
\begin{flushright}
$\diamond$
\end{flushright}
\end{rem}

Using the fact that two elements of $S^o$ differ by conjugation by an element in $H^o$ one observes that for any $\tau\in S^o$ the following holds

\bc
$S^o=H^o\cdot\tau$.
\ec  

\noindent Further, the group $H^o$ has several connected components but one can see that the connected component containing the identity acts transitively on $S^o$. Hence $S^o$ is connected and one can show that

\bc
$S^o=\exp(\liep^{\tau}\cap\lieh^o)\cdot\tau$. 
\ec

\noindent It follows that $S^o$ is a totally geodesic sub-manifold of $X_G$ and $T_\tau S^o\cong \liep^{\tau}\cap\lieh^o$ (see \cite[Theorem 7.2 of Ch. IV]{Hel}).

\subsection{$K\exp(\lieb^+)H$-decomposition}\label{subsec KexpliebH}

For the rest of this section we fix a Cartan involution $\tau\in S^o$, a maximal subalgebra $\lieb\subset\liep^\tau\cap\lieq^o$ and a closed Weyl chamber $\lieb^+\subset\lieb$. By Schlichtkrull \cite[Proposition 7.1.3]{Sch} the following decomposition of $G $ holds:
\begin{equation}\label{eq polar decomposition}
G=K^{\tau}\exp(\lieb^+)H^o
\end{equation}
\noindent where the $\exp(\lieb^+)$-component is uniquely determined and one can define
\begin{equation} \label{eq def nu}
b^\tau:G\too\lieb^+
\end{equation}

\noindent by taking the $\textnormal{log}$ of this component. This is a continuous map called the \textit{polar projection} of $G$ associated to the choice of $\tau$ and $\lieb^+$. It generalizes the usual Cartan projection of $G$.

\begin{rem} \label{rem btau is proper}

Note that $b^\tau$ is not proper (unless $q=1$). However it descends to a map $\hpq\cong G/H^o\too\lieb^+$ which, by definition, is proper.
\begin{flushright}
$\diamond$
\end{flushright}
\end{rem}

We now discuss geometric interpretations of the polar projection $b^\tau$. The geometric interpretation in $\hpq$ follows Kassel-Kobayashi \cite[p.151]{KK}, while the geometric interpretation in $X_G$ is inspired by the work of Oh-Shah \cite{OS} for the case $p=1$ and $q=3$.

Let us begin with the interpretation in the pseudo-Riemannian setting. By Remark \ref{rem So space of hp trough o}, the choice of $\tau\in S^o$ determines a totally geodesic space-like copy of the $p$-dimensional hyperbolic space, inside $\hpq$ and passing through $o$. We denote this copy by $\hp_\tau$. From explicit computations one can show that 

\bc
$\hp_\tau=\exp(\liep^\tau\cap\lieq^o)\cdot o$.
\ec

\noindent In particular $\hp_\tau$ contains the geodesic ray $\exp(\lieb^+)\cdot o$ starting from $o$. Equality (\ref{eq polar decomposition}) tell us that for every $g$ in $G$ the point $g\cdot o$ lies in the $K^\tau$-orbit of $o_g:=\exp(b^\tau(g))\cdot o$ (see Figure \ref{fig interp geom polar}). The geometric interpretation of the polar projection is now clear: the number $\vert b^\tau(g)\vert^{\frac{1}{2}}$ equals the length of the geodesic segment connecting $o$ with $o_g$\footnote{Recall that $\vert\cdot\vert$ is the form on $\lieq^o$ defined in Remark \ref{rem form on qo}.}.

\begin{figure}[h!]
\bc
\scalebox{0.8}{%
\begin{overpic}[scale=1, width=1\textwidth, tics=5]{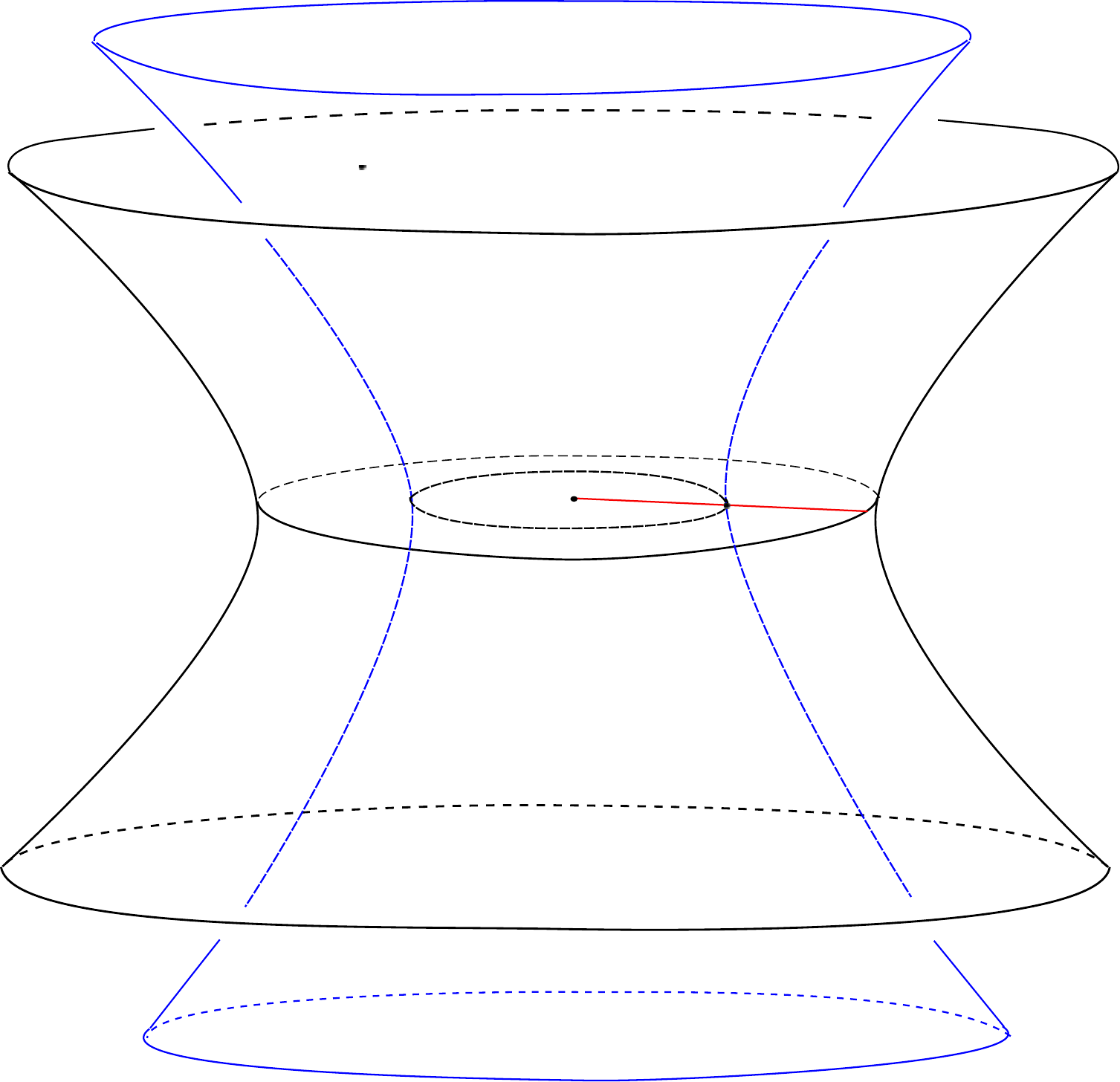}

 \put (30,79) { \Large$g\cdot o$}
  \put (87,92) { \Large\tc{blue}{$K^\tau \cdot o_g$}}
    \put (78,50) { \Large\tc{red}{$\exp(\lieb^+)\cdot o$}}
        \put (65,53) { \Large$o_g$}
        \put (48,52) { \Large$o$}
        \put (45,35) { \Large$\hpq$}
        \put (29,51) { \Large$\hp_\tau$}
        \put (5,35) { \Large$\partial\hpq$}
 \end{overpic}}
 
\hspace{0,3cm}

\begin{changemargin}{3cm}{3cm}    
    \caption{Geometric interpretation of polar projection in $\hpq$.}
  \label{fig interp geom polar}
\end{changemargin}

  \ec
\end{figure}

We now turn our attention to the Riemannian symmetric space $X_G$.

\begin{prop} \label{prop interpretation of btau in symg}
For every $g$ in $G$ one has 

\bc
$\vert b^\tau(g)\vert^{\frac{1}{2}}=d_{\symg}(g^{-1}\cdot \tau,S^o)$.
\ec
\end{prop}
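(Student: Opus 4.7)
The plan is to exploit the polar decomposition to reduce the claim to a perpendicularity computation inside the Hadamard manifold $X_G$.

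Write $g = k\exp(X)h$ with $k\in K^\tau$, $X = b^\tau(g)\in\lieb^+$ and $h\in H^o$, as in (\ref{eq polar decomposition}). Then $g^{-1} = h^{-1}\exp(-X)k^{-1}$, and because $k^{-1}\in K^\tau$ fixes $\tau$ we obtain
\[
g^{-1}\cdot\tau = h^{-1}\exp(-X)\cdot\tau.
\]
Since $G$ acts by isometries on $X_G$ and $H^o$ preserves $S^o=H^o\cdot\tau$, applying $h^{-1}$ gives
\[
d_{X_G}(g^{-1}\cdot\tau,S^o) = d_{X_G}\bigl(\exp(-X)\cdot\tau,S^o\bigr).
\]
So it suffices to prove that
\[
d_{X_G}\bigl(\exp(-X)\cdot\tau,S^o\bigr) = |X|^{\frac{1}{2}},
\]
because by (\ref{eq distance in XG and killing}) and (\ref{eq form on qo and killing}) one has $d_{X_G}(\tau,\exp(-X)\cdot\tau)=|X|^{\frac{1}{2}}$, where I use that $X\in\lieb\subset\liep^\tau\cap\lieq^o$ so both formulas apply and $|{-X}|=|X|$.

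The upper bound $d_{X_G}(\exp(-X)\cdot\tau,S^o)\leq|X|^{\frac{1}{2}}$ is immediate, since $\tau\in S^o$. The lower bound will come from orthogonality. Recall from Subsection \ref{subsec So} that $T_\tau S^o\cong \liep^\tau\cap\lieh^o$, while the initial velocity at $\tau$ of the geodesic $t\mapsto\exp(-tX)\cdot\tau$ corresponds, under (\ref{eq liep es el tangente}), to $-X\in\liep^\tau\cap\lieq^o$. The decomposition $\lieg=\lieh^o\oplus\lieq^o$ is $\kappa$-orthogonal (as noted in Subsection \ref{subsec notations generalized cartan}), hence also orthogonal for the normalized inner product $\frac{1}{2(d-2)}\kappa$ on $\liep^\tau$ defining the Riemannian metric at $\tau$. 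Therefore the geodesic $t\mapsto\exp(-tX)\cdot\tau$ meets $S^o$ perpendicularly at $\tau$.

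Now $X_G$ is a Hadamard manifold (non-positively curved, simply connected) and $S^o$ is complete and totally geodesic, hence a closed convex subset. In this situation the foot of the perpendicular from any point $p\notin S^o$ to $S^o$ is the unique nearest point on $S^o$, and a point $\tau_0\in S^o$ is this foot precisely when the geodesic joining $p$ to $\tau_0$ is orthogonal to $T_{\tau_0}S^o$. Applied to $p=\exp(-X)\cdot\tau$ and $\tau_0=\tau$, this identifies $\tau$ as the nearest point of $S^o$ to $\exp(-X)\cdot\tau$, and gives
\[
d_{X_G}\bigl(\exp(-X)\cdot\tau,S^o\bigr) = d_{X_G}\bigl(\exp(-X)\cdot\tau,\tau\bigr) = |X|^{\frac{1}{2}},
\]
completing the proof.

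The only non-formal step is invoking the CAT(0) characterization of the nearest point projection onto a closed convex set; this is standard (see e.g.\ Helgason \cite{Hel}, Ch.~I, or Bridson--Haefliger), so I do not expect it to be a real obstacle. Everything else is bookkeeping with the polar decomposition and the orthogonality $\lieq^o\perp_\kappa\lieh^o$.
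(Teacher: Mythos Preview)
Your proof is correct and follows essentially the same approach as the paper: both reduce to $g=\exp(X)$ via the bi-$K^\tau$/$H^o$-invariance (which you make explicit through the polar decomposition), then use the $\kappa$-orthogonality of $\lieh^o$ and $\lieq^o$ to see that $\exp(\lieb)\cdot\tau$ meets $S^o$ perpendicularly at $\tau$, and finally invoke non-positive curvature to conclude that $\tau$ realizes the distance. The only cosmetic difference is that the paper phrases the last step as ``unique orthogonal geodesic'' while you phrase it as ``CAT(0) nearest-point projection''; these are equivalent.
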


\begin{proof}
The function $g\mapsto d_{\symg}(g^{-1}\cdot\tau,S^o)$ is $K^{\tau}$-invariant on the left and $H^o$-invariant on the right, hence it suffices to check that the equality of the statement holds when $g=\exp(X)$ for some $X\in\lieb^+$.

Since $X_G$ is non-positively curved, there exists a unique geodesic through $\exp(-X)\cdot \tau$ which is orthogonal to $S^o=\exp(\liep^\tau\cap\lieh^o)\cdot \tau$. This geodesic is $\exp(\lieb)\cdot \tau$ and intersects $S^o$ in $\tau$, hence

\bc
$d_{X_G}(\exp(-X)\cdot\tau,S^o)=d_{X_G}(\exp(-X)\cdot\tau,\tau)$.
\ec

\noindent Thanks to Remark \ref{rem form on qo} and (\ref{eq distance in XG and killing}) the proof is complete.

\end{proof}

We finish this subsection with a linear algebraic interpretation of the polar projection. Let $\Vert\cdot\Vert_{\tau}$ be a norm on $\rr^d$ invariant under the action of $K^\tau$.

\begin{prop} \label{prop computing nu}
For every $g$ in $G$ one has
\bc
$ \vert b^\tau(g)\vert^{\frac{1}{2}}=\frac{1}{2}\log\Vert J^o gJ^og^{-1}\Vert_{\tau} $.
\ec

\end{prop}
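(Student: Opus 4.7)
The plan is to apply the polar decomposition (\ref{eq polar decomposition}) and to track carefully what happens to the factors under the involution $\sigma^o$. Write $g = k\exp(X)h$ with $k\in K^\tau$, $h\in H^o$, and $X = b^\tau(g)\in\lieb^+$. The first step is purely algebraic: since $h\in H^o=\tn{Fix}(\sigma^o)$ we have $\sigma^o(h)=h$, and since $X\in\lieb\subset\lieq^o$ satisfies $d\sigma^o(X)=-X$, we obtain $\sigma^o(\exp(X))=\exp(-X)$. Applying $\sigma^o=\tn{Ad}(J^o)$ to $g$ and multiplying by $g^{-1}$, the factors of $h$ cancel and the two copies of $\exp(-X)$ combine, yielding
\[
J^o g J^o g^{-1} = \sigma^o(k)\exp(-2X)k^{-1}.
\]

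The second step uses the invariance properties of $\Vert\cdot\Vert_\tau$. Because $\sigma^o$ and $\tau$ commute as involutions of $G$, the subgroup $K^\tau=\tn{Fix}(\tau)$ is preserved by $\sigma^o$; in particular $\sigma^o(k)\in K^\tau$. Combined with Remark \ref{rem J preserva norma} this shows that both $\sigma^o(k)$ and $k^{-1}$ act as $\Vert\cdot\Vert_\tau$-isometries, so the induced operator norm is bi-$K^\tau$-invariant and
\[
\Vert J^o g J^o g^{-1}\Vert_\tau = \Vert\exp(-2X)\Vert_\tau.
\]

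The third step is the geometric computation. The pair $(G,H^o)$ is a rank-one symmetric pair: indeed, Example \ref{ex explicit example} exhibits $\lieb$ as one-dimensional, and any choice of $\lieb$ is $(K^\tau\cap H^o)$-conjugate to this one. Taking the generator $X_0\in\lieb^+$ from the example we check that $X_0$ is symmetric in the usual sense with eigenvalues $1,-1$ and $0$ of multiplicity $d-2$, and that $\vert X_0\vert = \tfrac{1}{2}\tn{tr}(X_0^2)=1$. Writing $X = sX_0$ with $s\geq 0$ gives $\vert X\vert^{1/2}=s$. Since $X\in\liep^\tau$ acts as a self-adjoint operator with respect to the $K^\tau$-invariant inner product inducing $\Vert\cdot\Vert_\tau$, the matrix $\exp(-2X)$ is diagonalizable with eigenvalues $e^{2s},e^{-2s},1,\ldots,1$, so its operator norm is $e^{2s}$. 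Taking $\tfrac{1}{2}\log$ yields $s = \vert b^\tau(g)\vert^{1/2}$, as desired. The general case is obtained by conjugating through an element of $G$ identifying a given choice of $(o,\tau,\lieb)$ with the setup of Example \ref{ex explicit example}.

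\textbf{Main obstacle.} The only real subtlety is in the third step: one must be certain that the operator norm of $\exp(-2X)$ coming from any $K^\tau$-invariant norm $\Vert\cdot\Vert_\tau$ behaves like $e^{2\vert X\vert^{1/2}}$. This is clean because $(G,H^o)$ is rank one, so $\lieb$ is one dimensional, and $X\in\liep^\tau$ is diagonalized by a $K^\tau$-invariant orthonormal basis; the eigenvalues of the canonical generator $X_0$ of $\lieb$ are then forced by the normalization $\vert X_0\vert=1$ together with the fact that exactly two of them are nonzero and opposite (the remaining $d-2$ eigenvalues being zero because $X_0$ is a rank-two matrix in the chosen normal form). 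Once this calibration is established, the identification $\tfrac{1}{2}\log\Vert\exp(-2X)\Vert_\tau = \vert X\vert^{1/2}$ is immediate and the proposition follows.
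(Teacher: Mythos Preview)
Your proof is correct and follows essentially the same approach as the paper: both reduce via the polar decomposition and $K^\tau$-invariance to the computation $\Vert\exp(\pm 2X)\Vert_\tau = e^{2s}$ on $\lieb^+$. The only difference is organizational: the paper first observes that $g\mapsto\tfrac{1}{2}\log\Vert gJ^og^{-1}\Vert_\tau$ is left-$K^\tau$ and right-$H^o$ invariant and then checks the identity on $\exp(\lieb^+)$, whereas you compute $\sigma^o(g)g^{-1}=\sigma^o(k)\exp(-2X)k^{-1}$ directly and then invoke bi-$K^\tau$-invariance of the operator norm, but the content is the same.
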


\begin{proof}

We prove the proposition for the particular choices of Example \ref{ex explicit example}, the general case follows from this one by conjugating by appropriate elements of $G$.

By Remark \ref{rem J preserva norma} the matrix $J^o$ preserves $\Vert\cdot\Vert_\tau$ thus  

\bc
$\frac{1}{2}\log\Vert J^o gJ^og^{-1}\Vert_{\tau}=\frac{1}{2}\log\Vert gJ^og^{-1}\Vert_{\tau}$.
\ec

\noindent The map $g\mapsto\frac{1}{2}\log\Vert  gJ^og^{-1}\Vert_{\tau}$ is $K^{\tau}$-invariant on the left and $H^o$-invariant on the right, hence it remains to check that the equality of the statement holds on $\exp(\lieb^+)$. Let $X\in\lieb^+$, that is,

\bc
$X=\left(\begin{matrix}
 &   & &  & s\\
  & &   & 0 & \\
 & & \iddots &  & \\
 &  0  &  &  & \\
s &  & & &   
\end{matrix}\right)$
\ec

\noindent for some $s\geq 0$. Since $X\in\lieq^o$, one has $J^o\exp(-X)=\exp(X)J^o$ and thus

\bc
$\vert X\vert^\frac{1}{2}=s=\frac{1}{2}\log\Vert  \exp(X)J^o\exp(-X)\Vert_{\tau}$.
\ec
\end{proof}

\subsection{$H\exp(\lieb^+)H$-decomposition}\label{subsec HexpliebH}

Recall from Subsection \ref{subsub geod hpq} the definition of the set $\mathscr{C}^>_o$ and define

\bc
$\mathscr{C}_{o,G}^>:=\lbrace g\in G:\hspace{0,3cm} g\cdot o\in \mathscr{C}_{o}^>\rbrace$. 
\ec

\begin{prop}\label{prop HBH}
For every $g$ in $\mathscr{C}_{o,G}^>$ one can write

\bc
$g=h\exp (X)h' $
\ec

\noindent for some $h,h'\in H^o$ and a unique $X\in\lieb^+$.
\end{prop}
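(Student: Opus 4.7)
The proof will have the structure of a Mostow-type decomposition for the pseudo-Riemannian symmetric pair $(G,H^o)$: existence will follow by moving the tangent direction of a suitable geodesic into $\lieb^+$ via the $H^o$-action, while uniqueness will come from an $H^o$-invariant on $\hpq$ that separates the points $\exp(X)\cdot o$ for $X\in\lieb^+$.

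For existence, I pick $g\in\mathscr{C}^>_{o,G}$. Since geodesics through $o$ in the symmetric space $\hpq=G/H^o$ are parametrized as $t\mapsto\exp(tY)\cdot o$ with $Y\in\lieq^o$, and since $g\cdot o\in\mathscr{C}^>_o$ is joined to $o$ by a space-like geodesic, I may write $g\cdot o=\exp(Y)\cdot o$ for some $Y\in\lieq^o$ with $\vert Y\vert>0$. By Remark~\ref{rem action of Ho en el tangente en SOpqmenos1} the $H^o$-action on $\lieq^o$ is modeled on $\tn{O}(p,q-1)$ acting on $\rr^{p,q-1}$, and in particular is transitive on each level set $\{Y\in\lieq^o:\vert Y\vert=c\}$ for $c>0$. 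Since $\lieb^+$ is a closed ray inside the positive-definite subspace $\liep^\tau\cap\lieq^o$ (as one sees from Example~\ref{ex explicit example}), the map $\vert\cdot\vert^{1/2}\colon\lieb^+\to[0,\infty)$ is a bijection. Hence there exist $h_1\in H^o$ and $X\in\lieb^+$ with $Y=\tn{Ad}(h_1)X$, and
\begin{equation*}
g\cdot o\,=\,\exp(\tn{Ad}(h_1)X)\cdot o\,=\,h_1\exp(X)h_1^{-1}\cdot o\,=\,h_1\exp(X)\cdot o.
\end{equation*}
Thus $(h_1\exp(X))^{-1}g$ stabilizes $o$; setting $h'=(h_1\exp(X))^{-1}g\in H^o$ yields the decomposition $g=h_1\exp(X)h'$.

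For uniqueness, I suppose $g=h_1\exp(X_1)h_1'=h_2\exp(X_2)h_2'$ with $X_1,X_2\in\lieb^+$. Evaluating at $o$ gives $h_1\exp(X_1)\cdot o=h_2\exp(X_2)\cdot o$, so $\exp(X_1)\cdot o$ and $\exp(X_2)\cdot o$ lie in a common $H^o$-orbit in $\hpq$. Consider the function
\begin{equation*}
p=[\hat p]\longmapsto \frac{\langle\hat o,\hat p\rangle_{p,q}^{\,2}}{\langle\hat o,\hat o\rangle_{p,q}\langle\hat p,\hat p\rangle_{p,q}}.
\end{equation*}
It is well-defined on $\hpq$ and $H^o$-invariant, since $H^o$ fixes $o$ and preserves $\langle\cdot,\cdot\rangle_{p,q}$. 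A direct computation using the arc-length parametrization~(\ref{eq geod alpha}) shows that this function equals $\cosh^2(\vert X\vert^{1/2})$ at the point $\exp(X)\cdot o$ for every $X\in\lieb^+$. Injectivity of $\cosh^2$ on $[0,\infty)$ therefore forces $\vert X_1\vert=\vert X_2\vert$, and since $\lieb^+$ is a ray on which $\vert\cdot\vert$ is positive definite, we conclude $X_1=X_2$.

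The main obstacle I foresee is ensuring that the full group $H^o$, and not only its identity component, acts transitively on the space-like level sets $\{Y\in\lieq^o:\vert Y\vert=c\}$ for $c>0$: these sets can have several connected components in low-dimensional cases such as $p=1$, and one must verify that $H^o$ contains enough disconnected pieces to interchange them. Once this transitivity is in place, the entire argument collapses to a clean rank-one computation, because $\lieb$ has dimension equal to the real rank of the symmetric space $\hpq$, which is $1$.
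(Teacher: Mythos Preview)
Your proof is correct and follows essentially the same approach as the paper's: existence via $H^o$-transitivity on space-like directions through $o$, uniqueness via the $H^o$-invariant length $\ell_{o,g\cdot o}$ (your function $\langle\hat o,\hat p\rangle_{p,q}^{2}/(\langle\hat o,\hat o\rangle_{p,q}\langle\hat p,\hat p\rangle_{p,q})$ is just $\cosh^2\ell_{o,g\cdot o}$). The paper's proof is a two-line sketch that simply asserts ``take $h\in H^o$ such that $h^{-1}g\cdot o\in\exp(\lieb^+)\cdot o$'' and ``$X$ is determined by the length of the geodesic segment connecting $o$ with $g\cdot o$''; you have unpacked both steps in more detail. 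Regarding your self-identified obstacle: the transitivity of $H^o$ on the positive level sets in $\lieq^o\cong\rr^{p,q-1}$ does hold in all cases covered by the paper, since even when $p=1$ the group $\tn{SO}(1,q-1)$ already contains an element (e.g.\ $\tn{diag}(-1,-1,1,\dots,1)$ when $q-1\geq 1$) interchanging the two components of the space-like hyperboloid, so no appeal to extra components of $H^o$ is needed.
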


It is clear that this decomposition of $g$ can only hold when $g\in\mathscr{C}_{o,G}^>$.

\begin{proof}[Proof of Proposition \ref{prop HBH}]

Take $h$ in $H^o$ such that $h^{-1}g\cdot o\in\exp(\lieb^+)\cdot o$. There exists then $X\in\lieb^+$ and $h'\in H^o$ such that $h^{-1}g=\exp(X)h'$. Note that $X$ is unique since it is determined by the length of the geodesic segment connecting $o$ with $g\cdot o$.

\end{proof}

We define the map
\begin{equation} \label{eq def de bo}
b^o:\mathscr{C}_{o,G}^>\too\lieb^+: \hspace{0,3cm} g\mapsto b^o(g)
\end{equation}

\noindent where $g=h\exp(b^o(g))h'$ for some $h,h'\in H^o$. Note that $b^o$ descends to the quotient $\mathscr{C}_{o}^>$ but this map is not proper (compare with Remark \ref{rem btau is proper}).

\begin{prop}\label{prop ell dXG y vertboverto}
For every $g$ in $\mathscr{C}_{o,G}^>$ one has

\bc
$\ell_{o,g\cdot o}=\vert b^o(g)\vert^{\frac{1}{2}}=d_{X_G}(S^o,g\cdot S^o)$.
\ec
\end{prop}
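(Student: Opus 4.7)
The plan is to reduce the problem to the case $g=\exp(X)$ with $X\in\lieb^+$ via Proposition \ref{prop HBH}, and then identify the connecting geodesic in each symmetric space. Concretely, write $g=h\exp(X)h'$ with $h,h'\in H^o$ and $X=b^o(g)\in\lieb^+$. Since $h'\in H^o$ fixes $o$, we have $g\cdot o=h\exp(X)\cdot o$, and since the elements of $H^o$ commute with $\sigma^o$ they stabilize $S^o$, so $g\cdot S^o=h\exp(X)\cdot S^o$. Using that $h$ acts by isometries on both $\hpq$ and $X_G$, both equalities reduce to proving
\[
\ell_{o,\exp(X)\cdot o}=\vert X\vert^{\frac{1}{2}}=d_{X_G}(S^o,\exp(X)\cdot S^o).
\]

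For the first equality I would use the identification $\lieq^o\cong T_o\hpq$ from Remark \ref{rem form on qo}: the curve $t\mapsto\exp(tX)\cdot o$ is a geodesic of $\hpq$ with initial velocity $X$, so its length from $t=0$ to $t=1$ is $\langle X,X\rangle^{1/2}=\vert X\vert^{1/2}$ (it is space-like by the standing hypothesis on $\lieb^+$, hence $\vert X\vert>0$).

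For the second equality I would exhibit $c(t):=\exp(tX)\cdot\tau$, $t\in[0,1]$, as a geodesic of $X_G$ realizing the distance between $S^o$ and $\exp(X)\cdot S^o$. Since $X\in\lieb\subset\liep^\tau$, the curve $c$ is a geodesic of $X_G$, and by the normalization (\ref{eq distance in XG and killing}) combined with (\ref{eq form on qo and killing}) its length equals $\vert X\vert^{1/2}$. Orthogonality to $S^o$ at $\tau$ amounts to $X\perp(\liep^\tau\cap\lieh^o)$ in the Killing form, which holds because $X\in\lieq^o$ and $\lieh^o\perp_\kappa\lieq^o$. Orthogonality at the other endpoint follows by translating by the isometry $\exp(X)$: the tangent vector of $c$ at $c(1)$ is $d(\exp(X))_\tau(X)$, while $T_{\exp(X)\cdot\tau}(\exp(X)\cdot S^o)=d(\exp(X))_\tau(\liep^\tau\cap\lieh^o)$, and $G$-invariance of the metric reduces the required orthogonality back to $X\perp(\liep^\tau\cap\lieh^o)$.

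The main obstacle is upgrading the existence of this orthogonal connecting geodesic to the conclusion that $c$ actually \emph{realizes} the distance between $S^o$ and $\exp(X)\cdot S^o$. Here I would invoke the fact that $X_G$ is non-positively curved (a Hadamard manifold), for which any geodesic orthogonal to two totally geodesic submanifolds at its endpoints minimizes the distance between them: convexity of the distance function on $X_G\times X_G$ forces the first variation to vanish only at minimizers, and uniqueness of the $\lieb^+$-component in (\ref{eq polar decomposition}) (applied here to the $H^o\exp(\lieb^+)H^o$-decomposition of Proposition \ref{prop HBH}) guarantees that $\vert b^o(g)\vert^{1/2}$ is a well-defined invariant of $g\in\mathscr{C}^{>}_{o,G}$. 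Combining these inputs yields both equalities simultaneously.
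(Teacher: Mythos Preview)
Your proposal is correct and follows essentially the same approach as the paper: reduce via the $H^o\exp(\lieb^+)H^o$-decomposition to $g=\exp(X)$, identify $\exp(\mathbb{R}X)\cdot\tau$ as a geodesic orthogonal to both $S^o$ and $\exp(X)\cdot S^o$, and then use non-positive curvature to conclude it realizes the distance. The paper packages the last step as an explicit claim, proving $d_{X_G}(\tau',\tau'')\geq d_{X_G}(\tau,\exp(X)\cdot\tau)$ for arbitrary $\tau'\in S^o$, $\tau''\in\exp(X)\cdot S^o$ by running geodesics $\beta_1,\beta_2$ inside the two copies and applying convexity of $t\mapsto d_{X_G}(\beta_1(t),\beta_2(at))$ (Busemann) after observing the derivative vanishes at $t=0$; your phrasing in terms of the convexity of $d_{X_G}$ on $X_G\times X_G$ restricted to the totally geodesic product $S^o\times\exp(X)\cdot S^o$ is the same argument at a slightly higher level of abstraction.
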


\begin{proof}

The first equality was already discussed in the proof of Proposition \ref{prop HBH}. For the second one write $g=h\exp(b^o(g))h'$. Since $S^o=H^o\cdot \tau$ we have

\bc
$d_{\symg}(S^o,h\exp(b^o(g))h'\cdot S^o)=d_{\symg}(H^o\cdot\tau,\exp(b^o(g))H^o\cdot\tau)$.
\ec

Set $X:=b^o(g)$. If $X=0$ there is nothing to prove, so assume $X\neq 0$. In that case $H^o\cdot \tau$ is disjoint from $\exp(X)H^o\cdot\tau$: since the action of $\lieb$ on the geodesic $\exp(\lieb)\cdot\tau$ is free, this follows from the fact that $X_G$ is non-positively curved and the fact that $\exp(\lieb)\cdot\tau$ intersects orthogonally $H^o\cdot\tau$ (resp. $\exp(X)H^o\cdot\tau$) in $\tau$ (resp. $\exp(X)\cdot\tau$).

\begin{cla} \label{claim in prop interp geom of bo}

Take $\tau'\in H^o\cdot\tau$ and $\tau''\in \exp(X)H^o\cdot\tau$. Then the following holds:
\bc
$d_{X_G}(\tau',\tau'')\geq d_{X_G}(\tau,\exp(X)\cdot\tau)$.
\ec
\end{cla}

\begin{proof}[Proof of Claim \ref{claim in prop interp geom of bo}]

Let $\beta_1\subset H^o\cdot \tau$ (resp. $\beta_2\subset \exp(X)H^o\cdot\tau$) be the unit-speed geodesic connecting $\beta_1(0)=\tau$ (resp. $\beta_2(0)=\exp(X)\cdot \tau$) with $\tau'$ (resp. $\tau''$). Then $\beta_1$ and $\beta_2$ are disjoint and from the fact that $X_G$ is non-positively curved follows that the map
\bc
$(t,s)\mapsto d_{X_G}(\beta_1(t),\beta_2(s))$
\ec
\noindent is smooth (see Petersen \cite[p.129]{Pet}). Moreover, since $\exp(\lieb)\cdot \tau$ is orthogonal both to $H^o\cdot\tau$ and $\exp(X)H^o\cdot\tau$ we conclude that the differential at $(0,0)$ of this map is zero. 

Take $t_0>0$ such that $\beta_1(t_0)=\tau'$ and a positive $a$ such that the geodesic $t\mapsto\beta_2(at)$ equals $\tau''$ in $t_0$. By Busemann \cite[Theorem 3.6]{Bus} the map

\bc
$t\mapsto d_{X_G}(\beta_1(t),\beta_2(at))$
\ec

\noindent is convex. Since it has a critical point at $t=0$ the proof of the claim is finished.

\end{proof}

Thanks to Remark \ref{rem form on qo} and (\ref{eq distance in XG and killing}) the proof of Proposition \ref{prop ell dXG y vertboverto} is now complete.

\end{proof}

Recall that $\lambda_1(g)$ denotes the logarithm of the spectral radius of $g\in G$.

\begin{prop}\label{prop linear alg interpr of bo}
For every $g$ in $\mathscr{C}_{o,G}^>$ one has

\bc
$\vert b^o(g)\vert^{\frac{1}{2}}=\frac{1}{2}\lambda_1(J^ogJ^og^{-1})$.
\ec

\end{prop}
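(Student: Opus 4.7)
The plan is to reduce the identity to a direct linear algebra computation on the one-parameter family $\exp(\lieb^+)$, where it can be checked in the coordinates of Example \ref{ex explicit example}. The starting point is Proposition \ref{prop HBH}: for $g\in\mathscr{C}^{>}_{o,G}$ I write $g=h\exp(X)h'$ with $h,h'\in H^o$ and $X=b^o(g)\in\lieb^+$. The two ingredients I would exploit are (i) that $h,h'\in H^o=\tn{Fix}(\sigma^o)$, so $J^o h J^o=h$ and $J^o h' J^o=h'$, and (ii) that $X\in\lieq^o$, which together with $d\sigma^o(X)=-X$ gives $J^o\exp(X)J^o=\exp(-X)$.

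With these two facts in hand, I would compute directly
\begin{equation*}
J^ogJ^og^{-1}=(J^ohJ^o)(J^o\exp(X)J^o)(J^oh'J^o)\cdot(h')^{-1}\exp(-X)h^{-1}=h\exp(-2X)h^{-1}.
\end{equation*}
Since the log-spectral-radius $\lambda_1$ is invariant under conjugation in $G$, this already yields the key reduction
\begin{equation*}
\lambda_1(J^ogJ^og^{-1})=\lambda_1(\exp(-2X)).
\end{equation*}

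It remains to identify $\lambda_1(\exp(-2X))$ with $2\vert X\vert^{1/2}$ for $X\in\lieb^+$. Both sides are invariant under conjugation by any $g_0\in G$ sending a preferred basepoint to $o$, so, exactly as in the proof of Proposition \ref{prop computing nu}, it suffices to perform the check in the explicit model of Example \ref{ex explicit example}. There $X$ is the matrix whose only nonzero entries are $s\geq 0$ in positions $(1,d)$ and $(d,1)$; a direct eigenvalue computation gives that the eigenvalues of $X$ are $\pm s$ together with $0$ of multiplicity $d-2$, hence $\lambda_1(\exp(-2X))=2s$. Combined with the equality $\vert X\vert^{1/2}=s$ already established inside the proof of Proposition \ref{prop computing nu}, this finishes the argument.

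The computation is essentially mechanical once the $H\exp(\lieb^+)H$-decomposition is available, and the only point requiring any care is the bookkeeping with $J^o$: since we are working in $\tn{PSO}(p,q)$ and $J^o$ lives in $\tn{O}(p,q)$, one must check that the formula $\sigma^o(g)=J^ogJ^o$ and the identity $\lambda_1(J^ogJ^og^{-1})=\lambda_1(\exp(-2X))$ are insensitive to the choice of lift; but both sides depend on $J^o$ only through the inner automorphism it defines, so no ambiguity arises.
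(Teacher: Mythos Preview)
Your proof is correct and follows essentially the same route as the paper: both use the $H\exp(\lieb^+)H$-decomposition, the commutation of $J^o$ with $H^o$, and the identity $J^o\exp(X)J^o=\exp(-X)$ for $X\in\lieq^o$ to reduce $\lambda_1(J^ogJ^og^{-1})$ to $\lambda_1(\exp(-2X))$, and then finish by the explicit computation in the model of Example \ref{ex explicit example}. Your write-up is slightly more detailed (you compute the eigenvalues of $X$ explicitly and address the lift ambiguity), but there is no genuine difference in strategy.
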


\begin{proof}

It suffices to prove the proposition for the choices of $o$ and $\lieb^+$ of Example \ref{ex explicit example}. Write $g=h\exp(b^o(g))h' $ with

\bc
$b^o(g)=\left(\begin{matrix}
 &  & &   & s\\
 & & & 0 & \\
 & &  \iddots &    & \\
 & 0 & & & \\
s &  &   & &
\end{matrix}\right)$
\ec

\noindent for some $s\geq 0$. We have $\vert b^o(g)\vert^\frac{1}{2}=s$. On the other hand, $J^o$ commutes with elements of $H^o$ and thus the number $\frac{1}{2}\lambda_1(J^ogJ^og^{-1})$ equals to

\bc
$\frac{1}{2}\lambda_1(J^oh\exp(b^o(g))J^o\exp(b^o(g))^{-1}h^{-1})=\frac{1}{2}\lambda_1(J^o\exp(b^o(g))J^o\exp(b^o(g))^{-1})$.
\ec

\noindent Since $b^o(g)\in\lieq^o$ we have $J^o\exp(b^o(g))^{-1}=\exp(b^o(g))J^o$ and the proof is complete.
\end{proof}

\section{Proximality} \label{sec proximality}
\setcounter{equation}{0}

In this section we recall basic facts on product of proximal matrices, the main one being Benoist's Theorem \ref{teo benoist}. This results are well-known but we provide proofs for those which are not explicitly stated in the literature (the reader familiarized with these concepts may skip this section). Standard references are the works of Benoist \cite{Ben3,Ben4,Ben1}.

\subsection{Notations and basic definitions}\label{subsec notation definitions in sec proximality}

A norm $\Vert \cdot \Vert$ on $\rr^d$ will be fixed in the whole section. For $\xi_{1},\xi_2\in\mathbb{P}(\rr^d)$ define the distance

\bc
$d(\xi_1,\xi_2):=\inf\lbrace \Vert v_{\xi_1}-v_{\xi_2}\Vert: \hspace{0,3cm} v_{\xi_i}\in \xi_i \textnormal{ and } \Vert v_{\xi_i}\Vert =1 \textnormal{ for all } i=1,2 \rbrace$.
\ec

\noindent Let $\gr$ be the Grassmannian of $(d-1)$-dimensional subspaces of $\rr^d$. There exists a $G$-equivariant identification $\pp((\rr^d)^*)\too \gr $ given by 

\bc
$\theta\mapsto\ker\theta$
\ec

\noindent where the action of $G$ on the left side is given by $g\cdot\theta:=\theta\circ g^{-1}$. This identification will be used from now on whenever convenient.

For $\eta_1,\eta_2\in\gr$ we let

\bc
$d(\xi_1,\eta_1):=\min\lbrace d(\xi_1,\xi):\hspace{0,3cm} \xi\in\pp(\eta_1)\rbrace$
\ec

\noindent and we denote by $d^*(\eta_1,\eta_2)$ the distance on $\pp((\rr^d)^*)$ induced by the operator norm on $(\rr^d)^*$. Given a positive $\varepsilon$ we set

\bc

$b_\varepsilon(\xi_1):=\lbrace \xi\in\pp(\rr^d):\hspace{0,3cm} d(\xi_1,\xi)<\varepsilon\rbrace$
\ec

\noindent and

\bc

$B_\varepsilon(\eta_1):=\lbrace \xi\in\pp(\rr^d):\hspace{0,3cm} d(\xi,\eta_1)\geq\varepsilon\rbrace$.
\ec

On the other hand, let

\bc
$\ppc:=\lbrace (\theta,v)\in\pp((\rr^d)^*)\times\pp(\rr^d): \hspace{0,3cm} v\notin\ker\theta \rbrace$
\ec

\noindent and

\bc
$\ppcu:=\lbrace (\theta,v,\phi,u)\in\ppc\times\ppc: \hspace{0,3cm} v\notin\ker\phi \tn{ and } u\notin\ker\theta \rbrace$.
\ec

\noindent Observe that 
\begin{equation} \label{eq defi gcursiva}
\mathscr{G}_{\Vert\cdot\Vert}=\mathscr{G}:\ppc\too\rr: \hspace{0,3cm} \mathscr{G}( \theta,v):=\log\dfrac{\left\vert\theta(v)\right\vert}{\Vert \theta\Vert\Vert v\Vert}	
 \end{equation} 
\noindent is well-defined. Similarly the following map is well-defined
\begin{equation}\label{eqdef crossratio}
\bb:\ppcu\too\rr: \hspace{0,3cm} \bb(\theta,v,\phi,u):= \log\left\vert\dfrac{\theta(u)}{\theta(v)}	\dfrac{\phi(v)}{ \phi(u)}\right\vert	
 \end{equation}

\noindent and is called de \textit{cross-ratio} of $(\theta,v,\phi,u)$\footnote{Sometimes $e^{\bb}$ is called the cross-ratio.}. Both $\mathscr{G}$ and $\bb$ are continuous.

\subsection{Product of proximal matrices}\label{subsec product of proximal}

Given $g$ in $\textnormal{End}(\rr^d)\setminus\lbrace 0\rbrace$ we denote by

\bc
$\lambda_1(g)\geq\dots\geq\lambda_d(g)$
\ec

\noindent the logarithms of the moduli of the eigenvalues of $g$, repeated with multiplicity (we use the convention $\log 0=-\infty$). The matrix $g$ is said to be \textit{proximal} in $\mathbb{P}(\rr^d)$ if $\lambda_1(g)$ is simple. In that case we let $g_+$ (resp. $g_-$) to be the attractive fixed line (resp. repelling fixed hyperplane) of $g$ in $\mathbb{P}(\rr^d)$. Note that if $g$ is non invertible then $g_-$ contains the kernel of $g$.

We now define a quantified version of proximality. The definition that we propose is (slightly) weaker than the one given by Benoist in \cite{Ben3,Ben4,Ben1}. We provide proofs of the basic facts established in those works when necessary.

\begin{dfn} \label{dfn of repsilon proximality}
Let $0<\varepsilon \leq r$ and $g\in\textnormal{End}(\rr^d)\setminus\lbrace 0\rbrace$ be a proximal matrix. The matrix $g$ is called \textit{$(r,\varepsilon)$-proximal} if $d(g_+,g_-)\geq 2r$ and $g\cdot B_\varepsilon(g_-)\subset b_\varepsilon(g_+)$. 
\end{dfn}

\begin{lema}[Benoist {\cite[Corollaire 6.3]{Ben3}}]\label{lema benoist proximal comp vasing y vap}
Let $0<\varepsilon\leq r$. There exists a constant $ c_{r,\varepsilon}>0$ such that for every $(r,\varepsilon)$-proximal matrix $g$ one has

\bc
$\log\Vert g \Vert - c_{r,\varepsilon}\leq \lambda_1(g)\leq \log\Vert g \Vert$.
\ec
\begin{flushright}
$\square$
\end{flushright}
\end{lema}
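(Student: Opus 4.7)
The plan is to handle the two inequalities separately, with the upper bound being immediate and the lower bound requiring the bulk of the work.

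The upper bound $\lambda_1(g)\leq\log\Vert g\Vert$ holds for any linear endomorphism, since the spectral radius is always dominated by the operator norm. So the task is to find $C=C(r,\varepsilon)>0$ such that $\Vert g\Vert\leq C\,e^{\lambda_1(g)}$ for every $(r,\varepsilon)$-proximal matrix $g$; taking logarithms then yields the lemma with $c_{r,\varepsilon}=\log C$.

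The key structural input is the $g$-invariant decomposition $\rr^d=g_+\oplus g_-$. First I would note that, since $d(g_+,g_-)\geq 2r$, the two projections $\pi_{\pm}:\rr^d\too g_{\pm}$ associated to this splitting have operator norms bounded by a constant $M_1(r)$. Pick a unit vector $v_+^*\in g_+$, so $gv_+^*=\alpha v_+^*$ with $|\alpha|=e^{\lambda_1(g)}$, and write $g=\alpha\,\pi_+ + g_0\,\pi_-$, where $g_0:=g|_{g_-}$. Then
$$\Vert g\Vert\leq M_1(r)\bigl(|\alpha|+\Vert g_0\Vert\bigr),$$
so the whole problem reduces to showing $\Vert g_0\Vert\leq C'(r,\varepsilon)\,|\alpha|$.

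The bound on $\Vert g_0\Vert$ is where the contraction property is used. Given a unit vector $w\in g_-$, I would pick $\delta=\delta(r,\varepsilon)>0$, of order $\varepsilon$, so that the projective class of $v:=w+\delta v_+^*$ lies in $B_\varepsilon(g_-)$ (this is possible because $d([v],g_-)$ is comparable, through constants depending only on $r$, to $\Vert\pi_+v\Vert/\Vert v\Vert=\delta/\Vert v\Vert$). By $(r,\varepsilon)$-proximality, $[gv]\in b_\varepsilon(g_+)$, which means the $g_-$-component of $gv=\delta\alpha v_+^*+g_0w$ is small compared to its $g_+$-component. Translating $d([gv],g_+)<\varepsilon$ into a norm estimate via the same splitting gives
$$\Vert g_0w\Vert\leq M_2(r)\,\varepsilon\,\Vert gv\Vert\leq M_2(r)\,\varepsilon\bigl(\Vert g_0w\Vert+\delta|\alpha|\bigr),$$
and solving this for $\Vert g_0w\Vert$ yields the desired uniform bound $\Vert g_0w\Vert\leq C'(r,\varepsilon)\,|\alpha|$. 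Since $w$ was arbitrary in the unit sphere of $g_-$, combining with Step 1 gives $\Vert g\Vert\leq C(r,\varepsilon)\,e^{\lambda_1(g)}$.

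The main technical obstacle will be keeping the constants uniform: the inequality obtained for $\Vert g_0w\Vert$ is only meaningful when $M_2(r)\,\varepsilon<1$, and one must ensure nothing degenerates when $\varepsilon$ is close to $r$. This can be handled by observing that $(r,\varepsilon)$-proximality with $\varepsilon\leq r$ implies $(r,r')$-proximality for any $r'\in[\varepsilon,r]$, so one may always work at whatever scale makes the inequalities effective. A secondary point is verifying cleanly the comparison between the projective distance $d(\cdot,g_\pm)$ and the norms of the $\pi_\pm$-components, but this is a routine consequence of $d(g_+,g_-)\geq 2r$.
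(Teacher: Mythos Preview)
The paper does not prove this lemma; it simply records the citation to Benoist and closes with $\square$. So there is no argument in the paper to compare against, and I will assess your proof on its own merits.

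Your approach via the invariant splitting $\rr^d=g_+\oplus g_-$ is correct and yields the result. The only wrinkle is your handling of the constraint $M_2(r)\,\varepsilon<1$. Your proposed remedy---passing from $(r,\varepsilon)$-proximality to $(r,r')$-proximality with $r'\in[\varepsilon,r]$---goes in the wrong direction: enlarging the scale replaces $\varepsilon$ by the larger $r'$ in the problematic product, making it bigger rather than smaller, so this cannot rescue the self-referential inequality. Fortunately no remedy is needed. The hypothesis $d(g_+,g_-)\geq 2r$ forces the minimal angle $\theta\in[0,\pi/2]$ between the line $g_+$ and the hyperplane $g_-$ to satisfy $\sin(\theta/2)\geq r$; since $\theta/2\leq\pi/4$ one has $\cos(\theta/2)\geq 1/\sqrt{2}$, whence $\sin\theta\geq r\sqrt{2}$ and $\Vert\pi_-\Vert=1/\sin\theta\leq 1/(r\sqrt{2})$. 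Combined with $\varepsilon\leq r$ this gives $\Vert\pi_-\Vert\,\varepsilon\leq 1/\sqrt{2}<1$ uniformly, so your inequality $\Vert g_0w\Vert\leq \Vert\pi_-\Vert\,\varepsilon\,(\Vert g_0w\Vert+\delta\,\vert\alpha\vert)$ can always be solved for $\Vert g_0w\Vert$ with constants depending only on $(r,\varepsilon)$. With that correction your outline is a complete proof.
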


The following criterion of $(r,\varepsilon)$-proximality will be very useful in the sequel.

\begin{lema}[Benoist {\cite[Lemme 6.2]{Ben3}}]\label{lema benoist lemma 1.2}
Let $g$ be an element in $\tn{End}(\rr^d)\setminus\lbrace 0\rbrace$, $\eta\in\gr$, $\xi\in\mathbb{P}(\rr^d)$ and $0<\varepsilon\leq r$. If $d(\xi,\eta)\geq 6r$ and $g\cdot B_\varepsilon(\eta)\subset b_\varepsilon(\xi)$ then $g$ is $(2r,2\varepsilon)$-proximal with $d(g_+,\xi)\leq\varepsilon$ and $d^*(g_-,\eta)\leq\varepsilon$.

\end{lema}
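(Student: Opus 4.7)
The plan is to use Brouwer's fixed-point theorem, applied both to $g$ and (dually) to $g^t$, to produce the attracting line $g_+$ and repelling hyperplane $g_-$, and then upgrade these to genuine proximality via a nested-image argument before reading off the $(2r,2\varepsilon)$-bounds from triangle inequalities. To set up, I would first observe that $\overline{b_\varepsilon(\xi)}\subset B_\varepsilon(\eta)$: any $\xi'$ with $d(\xi',\xi)\leq\varepsilon$ satisfies $d(\xi',\eta)\geq d(\xi,\eta)-\varepsilon\geq 6r-\varepsilon\geq 5\varepsilon$, and combined with the hypothesis this gives $g(\overline{b_\varepsilon(\xi)})\subset b_\varepsilon(\xi)\subset\overline{b_\varepsilon(\xi)}$. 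In particular $\overline{b_\varepsilon(\xi)}$ is disjoint from $\pp(\ker g)$, so $g$ is a continuous self-map of this closed topological ball. Brouwer's theorem then produces a fixed point $g_+$, necessarily lying in $b_\varepsilon(\xi)$ since the image is contained there; so $d(g_+,\xi)\leq\varepsilon$ and we denote the corresponding eigenvalue by $\mu$.

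For the hyperplane $g_-$ I would apply the same argument dually. Let $\hat\eta\in\pp((\rr^d)^*)$ denote the line of functionals vanishing on $\eta$, and write $\hat\xi$ for the hyperplane of functionals in $(\rr^d)^*$ that vanish on $\xi$. The natural identification $[\phi]\mapsto\ker\phi$ between $\pp((\rr^d)^*)$ and $\gr$ is an isometry between $d^*$ and $d$, and the duality $d([v],\ker\phi)\asymp|\phi(v)|/(\|v\|\|\phi\|)$ translates the hypothesis $g(B_\varepsilon(\eta))\subset b_\varepsilon(\xi)$ into $g^t(B_\varepsilon(\hat\xi))\subset b_\varepsilon(\hat\eta)$ in $\pp((\rr^d)^*)$, with $d^*(\hat\xi,\hat\eta)=d(\xi,\eta)\geq 6r$. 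Applying the Brouwer argument of the previous paragraph to $g^t$ on $\overline{b_\varepsilon(\hat\eta)}$ produces a fixed line $\hat g_-\in b_\varepsilon(\hat\eta)$ of $g^t$; its kernel $g_-:=\ker\hat g_-\in\gr$ is then a $g$-invariant hyperplane with $d^*(g_-,\eta)\leq\varepsilon$.

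From the three estimates $d(\xi,\eta)\geq 6r$, $d(\xi,g_+)\leq\varepsilon$, $d^*(g_-,\eta)\leq\varepsilon$ we deduce $d(g_+,g_-)\geq 6r-2\varepsilon\geq 4r$, so $\rr^d=g_+\oplus g_-$ and $g$ splits as $\mu\cdot\mathrm{id}_{g_+}\oplus g|_{g_-}$. To upgrade this to genuine proximality I would analyze $A:=\bigcap_n g^n(\overline{b_\varepsilon(\xi)})$, a nonempty $g$-invariant compact subset of $\overline{b_\varepsilon(\xi)}$. The only eigenline of $g$ in $\overline{b_\varepsilon(\xi)}$ is $g_+$ (any other lies in $g_-$, hence is $\varepsilon$-close to $\eta$ and therefore at distance $\geq 5\varepsilon$ from $\xi$); moreover, any $g$-invariant projective subspace of positive dimension, or orbit-closure associated to a complex eigenvalue of $g|_{g_-}$ of modulus $|\mu|$, would contain a $1$-dimensional projective subspace of diameter $\pi/2$ in $\pp(\rr^d)$, too large to fit inside a ball of radius $\varepsilon$. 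Hence $A=\{g_+\}$, which yields uniform convergence $g^n(v)\to g_+$ on $\overline{b_\varepsilon(\xi)}$ and forces $|\mu|$ to strictly dominate the spectral radius of $g|_{g_-}$; so $g$ is proximal with attractive line $g_+$ and repelling hyperplane $g_-$. The remaining quantitative statement is then immediate: for $\xi'\in B_{2\varepsilon}(g_-)$ the triangle inequality gives $d(\xi',\eta)\geq 2\varepsilon-d^*(g_-,\eta)\geq\varepsilon$, so $\xi'\in B_\varepsilon(\eta)$ and by hypothesis $g\xi'\in b_\varepsilon(\xi)\subset b_{2\varepsilon}(g_+)$ (using $d(g_+,\xi)\leq\varepsilon$), completing the proof of $(2r,2\varepsilon)$-proximality. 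The main obstacle is the proximality step: Brouwer alone produces a fixed line but no spectral gap, and the diameter argument ruling out higher-dimensional $g$-invariant subsets inside the small ball $\overline{b_\varepsilon(\xi)}$ is the essential geometric input that converts the ball-contraction hypothesis into strict dominance of $|\mu|$.
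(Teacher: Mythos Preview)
Your approach via Brouwer's fixed-point theorem differs from the paper's, which instead uses the Hilbert metric on the properly convex set $B_\varepsilon(\eta)$. Since $g$ sends $B_\varepsilon(\eta)$ into the relatively compact subset $b_\varepsilon(\xi)$, Birkhoff's theorem makes $g$ a strict contraction for this metric, yielding a \emph{unique} fixed point $g_+$ together with exponential convergence $g^n\to g_+$ on $B_\varepsilon(\eta)$; this exponential rate immediately forces $\lambda_1(g)>\lambda_2(g)$, and the remaining estimates are then deferred to Benoist's original lemma.

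Your proximality step has a genuine gap. Brouwer produces a fixed line $g_+$ but no spectral gap, and the nested-image argument that $A=\bigcap_n g^n(\overline{b_\varepsilon(\xi)})$ reduces to $\{g_+\}$ does not go through as written. Suppose $g|_{g_-}$ has a complex eigenvalue $\nu$ with $|\nu|=|\mu|$, realized on a $2$-plane $W\subset g_-$ where $g$ acts by a rotation composed with scaling by $|\mu|$. For any small $\delta>0$ the $g$-orbit of $[v_+ + \delta w]$ (with $0\neq v_+\in g_+$ and $0\neq w\in W$) is a circle of radius comparable to $\delta$ about $g_+$ inside $\pp(g_+\oplus W)$; this circle lies entirely in $\overline{b_\varepsilon(\xi)}$ and is $g$-invariant, hence contained in $A$. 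No full projective line is forced to appear, so your diameter argument does not apply. A similar objection arises if $g|_{g_-}$ has a real eigenvalue equal to $\mu$: then a whole segment of the line $\pp(g_+\oplus L)$ near $g_+$ consists of fixed points lying in $A$, and your assertion that ``any other eigenline lies in $g_-$'' already presupposes that $\mu$ is simple. The Hilbert-metric contraction is precisely the device that converts the inclusion $g\cdot B_\varepsilon(\eta)\subset b_\varepsilon(\xi)$ into the strict spectral inequality; with that in hand the rest of your argument is fine.
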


\begin{proof}
Consider the Hilbert distance on the convex set $B_\varepsilon(\eta)$ (see \cite{Ben2}). The condition $g\cdot B_\varepsilon(\eta)\subset b_\varepsilon(\xi)$ implies that $g$ is contracting for this metric and thus has a unique fixed point in $B_\varepsilon(\eta)$, which belongs in fact to $b_\varepsilon(\xi)$. The proof now finishes as in \cite[Lemme 6.2]{Ben3}.

\end{proof}

\begin{cor}[Benoist {\cite[Lemme 1.4]{Ben1}}]\label{cor product of proximal transverse is proximal}
Let $0<\varepsilon\leq r$. If $g_1$ and $g_2$ are $(r,\varepsilon)$-proximal and satisfy 

\bc
$d(g_{1_+},g_{2_-})\geq 6r$ and $d(g_{2_+},g_{1_-})\geq 6r$
\ec

\noindent then $g_1g_2$ is $(2r,2\varepsilon)$-proximal. 
\begin{flushright}
$\square$
\end{flushright}
\end{cor}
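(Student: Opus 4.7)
The plan is to reduce the corollary to the previous lemma (Lemma \ref{lema benoist lemma 1.2}) by finding an appropriate hyperplane $\eta$ and line $\xi$ for the composition $g_1 g_2$. The natural guess is $\eta := g_{2_-}$ (the repelling hyperplane we ``start with'') and $\xi := g_{1_+}$ (the attracting line we ``end at''). The hypothesis $d(g_{1_+}, g_{2_-}) \geq 6r$ gives precisely the required separation $d(\xi,\eta) \geq 6r$, so the only thing left to verify is the inclusion $g_1 g_2 \cdot B_\varepsilon(\eta) \subset b_\varepsilon(\xi)$.

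For this I would split the application of $g_1 g_2$ into two stages. First, the $(r,\varepsilon)$-proximality of $g_2$ gives $g_2 \cdot B_\varepsilon(g_{2_-}) \subset b_\varepsilon(g_{2_+})$. Second, I want to feed this into $g_1$'s contraction property, which requires showing $b_\varepsilon(g_{2_+}) \subset B_\varepsilon(g_{1_-})$. This is a straightforward triangle-inequality estimate: any $\zeta \in b_\varepsilon(g_{2_+})$ satisfies
\[
d(\zeta, g_{1_-}) \geq d(g_{2_+}, g_{1_-}) - d(\zeta, g_{2_+}) \geq 6r - \varepsilon \geq \varepsilon,
\]
using $\varepsilon \leq r$. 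Then the $(r,\varepsilon)$-proximality of $g_1$ yields $g_1 \cdot B_\varepsilon(g_{1_-}) \subset b_\varepsilon(g_{1_+})$, and composing the two inclusions gives the desired $g_1 g_2 \cdot B_\varepsilon(g_{2_-}) \subset b_\varepsilon(g_{1_+})$.

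With both hypotheses of Lemma \ref{lema benoist lemma 1.2} verified for the pair $(\xi,\eta) = (g_{1_+}, g_{2_-})$, the lemma immediately concludes that $g_1 g_2$ is $(2r, 2\varepsilon)$-proximal (and as a bonus yields $d((g_1g_2)_+, g_{1_+}) \leq \varepsilon$ and $d^*((g_1g_2)_-, g_{2_-}) \leq \varepsilon$, although these bonus estimates are not requested in the statement). There is no real obstacle here: the proof is a two-line manipulation once one chooses the correct $\xi$ and $\eta$, and the only subtlety is the small triangle-inequality check that the $\varepsilon$-ball around $g_{2_+}$ stays uniformly far from $g_{1_-}$, which is where the factor of $6$ in the $6r$-transversality condition is used.
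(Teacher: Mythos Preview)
Your proposal is correct and is exactly the argument the paper has in mind: the corollary is stated with only a $\square$, i.e.\ as an immediate consequence of Lemma~\ref{lema benoist lemma 1.2}, and your choice $\xi=g_{1_+}$, $\eta=g_{2_-}$ together with the two-step inclusion $g_2\cdot B_\varepsilon(g_{2_-})\subset b_\varepsilon(g_{2_+})\subset B_\varepsilon(g_{1_-})$ and $g_1\cdot B_\varepsilon(g_{1_-})\subset b_\varepsilon(g_{1_+})$ is precisely how one fills in that omitted line.
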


Let $g_1$ and $g_2$ be two matrices as in Corollary \ref{cor product of proximal transverse is proximal}. The goal now is to state a theorem (Theorem \ref{teo benoist}) which provides a comparison between the spectral radius and operator norm of $g_1g_2$ in terms of the spectral radii of $g_1$ and $g_2$ and the maps $\mathscr{G}$ and $\bb$.

\begin{lema}\label{lemma atracrep of product}

Fix $r>0$ and $\delta> 0$. For every $\varepsilon$ small enough, the following property is satisfied: for every pair of $(r,\varepsilon)$-proximal elements $g_1$ and $g_2$ such that

\bc
$d(g_{1_+},g_{2_-})\geq 6r$ and $d(g_{2_+},g_{1_-})\geq 6r$
\ec

\noindent one has

\bc
$\vert \mathscr{G}(g_{2_-},g_{1_+})-\mathscr{G}((g_1g_2)_-,(g_1g_2)_+)\vert<\delta$.
\ec

\end{lema}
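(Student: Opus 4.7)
The idea is that $\mathscr{G}$ is continuous on $\ppc$, hence uniformly continuous on compact subsets of $\ppc$, and the hypotheses pin down both pairs $(g_{2_-},g_{1_+})$ and $((g_1g_2)_-,(g_1g_2)_+)$ inside a fixed compact set, arbitrarily close to one another as $\varepsilon\to 0$.

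First, I would fix the compact set
\[
K:=\bigl\{(\theta,v)\in\pp((\rr^d)^*)\times\pp(\rr^d):\ d(v,\theta)\ge 4r\bigr\}\subset\ppc.
\]
Since $K$ is compact and $\mathscr{G}$ is continuous on $\ppc$, there exists $\varepsilon_{0}>0$ such that any two points of $K$ at product distance less than $\varepsilon_{0}$ have $\mathscr{G}$-values differing by less than $\delta$. I would then set $\varepsilon:=\min\{r,\varepsilon_{0}/2\}$ (shrinking further if needed) and verify that this $\varepsilon$ works.

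Second, given $(r,\varepsilon)$-proximal $g_1,g_2$ satisfying the transversality hypotheses, I would reproduce the short argument from Corollary \ref{cor product of proximal transverse is proximal}: the condition $d(g_{2_+},g_{1_-})\ge 6r$ together with $\varepsilon\le r$ forces $b_\varepsilon(g_{2_+})\subset B_\varepsilon(g_{1_-})$, and hence
\[
g_1g_2\cdot B_\varepsilon(g_{2_-})\subset g_1\cdot b_\varepsilon(g_{2_+})\subset g_1\cdot B_\varepsilon(g_{1_-})\subset b_\varepsilon(g_{1_+}).
\]
Applying Lemma \ref{lema benoist lemma 1.2} with $\eta=g_{2_-}$, $\xi=g_{1_+}$ (whose distance $\ge 6r$ meets the hypothesis), $g_1g_2$ is $(2r,2\varepsilon)$-proximal with
\[
d\bigl((g_1g_2)_+,g_{1_+}\bigr)\le\varepsilon,\qquad d^{*}\bigl((g_1g_2)_-,g_{2_-}\bigr)\le\varepsilon.
\]

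Third, I would check that both pairs lie in $K$: the pair $(g_{2_-},g_{1_+})$ satisfies $d(g_{1_+},g_{2_-})\ge 6r\ge 4r$, and the pair $((g_1g_2)_-,(g_1g_2)_+)$ satisfies $d((g_1g_2)_+,(g_1g_2)_-)\ge 4r$ by $(2r,2\varepsilon)$-proximality. Moreover the product distance between these two pairs is at most $\varepsilon\le\varepsilon_{0}/2<\varepsilon_{0}$. The uniform continuity chosen in the first step then yields
\[
\bigl|\mathscr{G}(g_{2_-},g_{1_+})-\mathscr{G}((g_1g_2)_-,(g_1g_2)_+)\bigr|<\delta,
\]
which is the desired conclusion.

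The only mildly subtle point is the first step: one must verify that $K$ is genuinely compact inside $\ppc$, which is clear because the closed condition $d(v,\theta)\ge 4r$ is disjoint from the locus $\{v\in\ker\theta\}$ where $\mathscr{G}$ blows up. Everything else is a bookkeeping exercise combining the quantitative proximality criterion (Lemma \ref{lema benoist lemma 1.2}) with uniform continuity; no deeper input is required.
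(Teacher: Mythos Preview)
Your argument is correct and complete. It is essentially the same continuity-and-compactness idea as the paper's, but implemented in a different space: you work directly in the flag variety $\ppc$, using uniform continuity of $\mathscr{G}$ on the fixed compact $K$ together with the explicit distance bounds $d((g_1g_2)_+,g_{1_+})\le\varepsilon$ and $d^*((g_1g_2)_-,g_{2_-})\le\varepsilon$ coming from Lemma~\ref{lema benoist lemma 1.2}. The paper instead normalizes $g_1,g_2$ to have operator norm one and works in the compact space of such matrix pairs, observing that the function $(g_1,g_2)\mapsto|\mathscr{G}(g_{2_-},g_{1_+})-\mathscr{G}((g_1g_2)_-,(g_1g_2)_+)|$ is continuous and vanishes on the nested intersection $C_r=\bigcap_{\varepsilon>0}C_{r,\varepsilon}$ (where the limiting pairs are rank-one, so that $(g_1g_2)_-=g_{2_-}$ and $(g_1g_2)_+=g_{1_+}$ exactly). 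Your route is arguably more transparent because it isolates the quantitative estimate from Lemma~\ref{lema benoist lemma 1.2} and avoids discussing the rank-one limits; the paper's route is slicker in that it packages everything into a single continuous function on a compact. Neither needs more than the other.
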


\begin{proof}
For every $0<\varepsilon\leq r$, consider the compact set $C_{r,\varepsilon}$ of pairs $(g_1,g_2)$ of norm-one $(r,\varepsilon)$-proximal matrices in $\tn{End}(\rr^d)\setminus\lbrace 0\rbrace$ satisfying 

\bc
$d(g_{1_+},g_{2_-})\geq 6r$ and $d(g_{2_+},g_{1_-})\geq 6r$.
\ec

\noindent The function
\bc
$(g_1,g_2)\mapsto \vert \mathscr{G}(g_{2_-},g_{1_+})-\mathscr{G}((g_1g_2)_-,(g_1g_2)_+)\vert$
\ec

\noindent is continuous and equals zero on $C_r:=\displaystyle\cap_{\varepsilon >0}C_{r,\varepsilon}\subset \textnormal{End}(\rr^d)\setminus\lbrace 0\rbrace$.

\end{proof}

\begin{teo}[Benoist {\cite[Lemme 1.4]{Ben1}}] \label{teo benoist}

Fix $r>0$ and $\delta> 0$. Then for every $\varepsilon$ small enough, the following properties are satisfied: for every pair of $(r,\varepsilon)$-proximal elements $g_1$ and $g_2$ such that

\bc
$d(g_{1_+},g_{2_-})\geq 6r$ and $d(g_{2_+},g_{1_-})\geq 6r$
\ec

\noindent one has:

\begin{enumerate}
\item The number

\bc

$\left\vert \lambda_1( g_1g_2)  - (\lambda_1(g_1)+\lambda_1(g_2))-\bb(g_{1_-},g_{1_+},g_{2_-},g_{2_+}) \right\vert$

\ec

\noindent is less than $\delta$.

\item The number

\bc
$\left\vert \log\Vert g_1g_2 \Vert - (\lambda_1(g_1)+\lambda_1(g_2))-\bb(g_{1_-},g_{1_+},g_{2_-},g_{2_+})+\mathscr{G}(g_{2_-},g_{1_+}) \right\vert$
\ec

\noindent is less than $\delta$.

\end{enumerate}

\end{teo}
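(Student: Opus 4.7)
The plan is to approximate each $(r,\varepsilon)$-proximal matrix, after rescaling, by a rank-one projection, and then read off both the spectral radius and the operator norm of $g_1 g_2$ from a direct computation on the resulting product of rank-one operators. Pick unit representatives $v_i \in g_{i_+}$ and unit forms $\theta_i \in (\rr^d)^*$ with $\ker \theta_i = g_{i_-}$, normalized so $\theta_i(v_i) > 0$. Since $v_i$ is an eigenline and the hyperplane $g_{i_-}$ is $g_i$-invariant, one has $g_i v_i = e^{\lambda_1(g_i)} v_i$ and $\theta_i \circ g_i = e^{\lambda_1(g_i)} \theta_i$. Let
\[
\pi_i : \rr^d \longrightarrow \rr^d, \qquad \pi_i(v) := \frac{\theta_i(v)}{\theta_i(v_i)}\, v_i,
\]
denote the rank-one projection onto $g_{i_+}$ parallel to $g_{i_-}$. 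A key preliminary step is to show that $e^{-\lambda_1(g_i)} g_i - \pi_i \to 0$ in operator norm as $\varepsilon \to 0$, uniformly over norm-one $(r,\varepsilon)$-proximal matrices with $r$ fixed; this follows from the quantitative contraction $g_i \cdot B_\varepsilon(g_{i_-}) \subset b_\varepsilon(g_{i_+})$ applied to suitable unit vectors combining the $g_{i_+}$ and $g_{i_-}$ directions.

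Granted this, a direct computation gives
\[
\pi_1 \pi_2(v) = \frac{\theta_1(v_2)}{\theta_1(v_1)} \cdot \frac{\theta_2(v)}{\theta_2(v_2)}\, v_1,
\]
so $\pi_1 \pi_2$ is rank-one with image $\rr v_1$ and kernel $g_{2_-}$. Its unique nonzero eigenvalue equals $\frac{\theta_1(v_2)\theta_2(v_1)}{\theta_1(v_1)\theta_2(v_2)}$, whose log modulus is exactly $\bb(g_{1_-}, g_{1_+}, g_{2_-}, g_{2_+})$. With the chosen normalizations, its operator norm equals $|\theta_1(v_2)|/(|\theta_1(v_1)|\,|\theta_2(v_2)|)$, whose logarithm equals
\[
\log|\theta_1(v_2)| - \log|\theta_1(v_1)| - \log|\theta_2(v_2)| \, = \, \bb(g_{1_-},g_{1_+},g_{2_-},g_{2_+}) - \mathscr{G}(g_{2_-}, g_{1_+}),
\]
since $\mathscr{G}(g_{2_-},g_{1_+}) = \log|\theta_2(v_1)|$. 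By Corollary \ref{cor product of proximal transverse is proximal} the product $g_1 g_2$ is $(2r,2\varepsilon)$-proximal, hence both $\lambda_1(g_1 g_2)$ and $\log\|g_1 g_2\|$ vary continuously as $(g_1, g_2)$ varies, and comparing with the rank-one limit $\pi_1 \pi_2$ yields parts (1) and (2).

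To package this, one repeats the compactness argument used in Lemma \ref{lemma atracrep of product}. For fixed $r > 0$ the set $C_{r,\varepsilon}$ of norm-one $(r,\varepsilon)$-proximal pairs satisfying the transversality hypothesis is compact and $\cap_\varepsilon C_{r,\varepsilon}$ consists of pairs whose (suitably normalized) limits are rank-one; the two continuous functions
\[
(g_1,g_2) \mapsto \lambda_1(g_1 g_2) - \lambda_1(g_1) - \lambda_1(g_2) - \bb(g_{1_-}, g_{1_+}, g_{2_-}, g_{2_+})
\]
and
\[
(g_1,g_2) \mapsto \log\|g_1 g_2\| - \lambda_1(g_1) - \lambda_1(g_2) - \bb(g_{1_-},g_{1_+},g_{2_-},g_{2_+}) + \mathscr{G}(g_{2_-},g_{1_+})
\]
vanish on this intersection by the rank-one computation above, so continuity supplies an $\varepsilon$ forcing both quantities below $\delta$ simultaneously. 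Reducing the general case to norm-one representatives is harmless: $\bb$ and $\mathscr{G}$ are scale-invariant and the statements are invariant under the rescalings $g_i \mapsto c_i g_i$.

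The main obstacle is the uniform approximation $e^{-\lambda_1(g_i)} g_i \to \pi_i$ in operator norm. One must verify that the restriction of $g_i$ to the invariant hyperplane $g_{i_-}$ has operator norm $o(e^{\lambda_1(g_i)})$, which does not follow formally from Lemma \ref{lema benoist proximal comp vasing y vap} (that lemma only compares $\log\|g_i\|$ with $\lambda_1(g_i)$) but does follow, after suitable normalization, from the projective contraction in Definition \ref{dfn of repsilon proximality} applied to vectors of the form $v_2 + \alpha u$ with $u \in g_{2_-}$ and $\alpha$ of order $1/\varepsilon$.
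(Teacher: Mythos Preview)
Your argument is correct, though it takes a different route from the paper. The paper does not reprove part (1) at all---it simply cites Benoist's \cite[Lemme 1.4]{Ben1}---and then derives (2) from (1) in two short steps: since $g_1g_2$ is itself $(2r,2\varepsilon)$-proximal by Corollary \ref{cor product of proximal transverse is proximal}, one invokes \cite[Lemma 5.6]{Sam} to get $\left\vert \log\Vert g_1g_2\Vert - \lambda_1(g_1g_2) + \mathscr{G}((g_1g_2)_-,(g_1g_2)_+)\right\vert < \delta$, and then Lemma \ref{lemma atracrep of product} replaces $\mathscr{G}((g_1g_2)_-,(g_1g_2)_+)$ by $\mathscr{G}(g_{2_-},g_{1_+})$. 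By contrast, you establish (1) and (2) simultaneously by the same compactness mechanism that already underlies Lemma \ref{lemma atracrep of product}: both displayed functions are continuous on the compact set of norm-one proximal pairs, and both vanish on the rank-one limits obtained as $\varepsilon\to 0$. Your route is more self-contained (it reproves Benoist's estimate from scratch and avoids the external reference to \cite{Sam}); the paper's route is more modular but leans on two black boxes.

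One remark on presentation: your final paragraph about the ``main obstacle'' $e^{-\lambda_1(g_i)}g_i \to \pi_i$ in operator norm is not actually needed for the compactness argument you set up just before. That argument only requires continuity of the two functions on the space of norm-one pairs (which holds since $\lambda_1$, $g_{i_\pm}$, $\bb$, $\mathscr{G}$ are all continuous on their natural domains, and the transversality bounds keep you inside those domains) together with their vanishing on rank-one limits---both of which you already established. The discussion of operator-norm convergence is a vestige of the direct approximation sketch in your first two paragraphs and can be dropped.
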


\begin{proof}

\begin{enumerate}
\item[(1)] See \cite[Lemme 1.4]{Ben1}.

\item[(2)] Let $\varepsilon$ be as in (1). For every $g_1$ and $g_2$ as in the statement, Corollary \ref{cor product of proximal transverse is proximal} implies that $g_1g_2$ is $(2r,2\varepsilon)$-proximal. By \cite[Lemma 5.6]{Sam} (and taking $\varepsilon$ smaller if necessary) we have

\bc
$\left\vert \log\Vert g_1g_2 \Vert - \lambda_1(g_1g_2)+\mathscr{G}((g_1g_2)_-,(g_1g_2)_+) \right\vert<\delta$.
\ec

\noindent Lemma \ref{lemma atracrep of product} finishes the proof.

\end{enumerate}

\end{proof}

\section{Projective Anosov representations} \label{sec anosov}
\setcounter{equation}{0}

Anosov representations were introduced by Labourie \cite{Lab} for surface groups and extended by Guichard-Wienhard \cite{GW} to word hyperbolic groups. In this section we recall the definition of (projective) Anosov representations and some well-known facts concerning $(r,\varepsilon)$-proximality of matrices in the image of such a representation.

\subsection{Singular values}\label{subsec cartan dec in sec anosov}

The most useful characterization of Anosov representations for our purposes is the one given in terms of \textit{singular values}. We begin by recalling this notion and we fix also some notations that we will use in the rest of the paper.

Let $\tau$ be a $q$-dimensional subspace of $\rr^d$ which is negative definite for $\langle\cdot,\cdot\rangle_{p,q}$. Consider $\langle \cdot,\cdot\rangle_\tau$ to be the inner product of $\rr^d$ that coincides with $-\langle\cdot,\cdot\rangle_{p,q}$ (resp. $\langle\cdot,\cdot\rangle_{p,q}$) on $\tau$ (resp. $\tau^{\ppq}$) and for which $\tau$ and $\tau^{\ppq}$ are orthogonal. Given $g$ in $\tn{PSL}(d,\rr)$, we let $g^{*_\tau}$ to be the adjoint operator with respect to $\langle \cdot,\cdot\rangle_\tau$. Set

\bc

$a_1^\tau(g)\geq \dots \geq a_d^\tau(g)$
\ec

\noindent to be the logarithms of the eigenvalues of $\sqrt{g^{*_\tau}g}$ repeated with multiplicity. These are called the $\tau$-\textit{singular values} of $g$. Geometrically, they represent the (logarithms of the) lengths of the semi axes of the ellipsoid which is the image by $g$ of the unit sphere

\bc
$\mathbb{S}^{d-1}_\tau:=\lbrace x\in\rr^d:\hspace{0,3cm} \langle x,x\rangle_\tau=1 \rbrace$.
\ec

Let $i=1,\dots,d-1$. Given an element $g$ in $\tn{PSL}(d,\rr)$ such that $a^\tau_i(g)>a^\tau_{i+1}(g)$ we denote by $U_i(g)$ the $i$-dimensional subspace of $\rr^d$ spanned by the $i$ biggest axes of $g\cdot \mathbb{S}^{d-1}_\tau$. We also set

\bc
$S_{d-i}(g):=U_{d-i}(g^{-1})$.
\ec

\begin{rem} \label{rem complemento de Sdmenosuno va en Uuno}
Let $\varepsilon>0$. It follows from Singular Value Decomposition (see Horn-Johnson \cite[Section 7.3 of Chapter 7]{HJ}), that there exists $L>0$ such that for every $g$ in $\tn{PSL}(d,\rr)$ satisfying $a^\tau_1(g)-a^\tau_2(g)>L$ one has

\bc
$g\cdot B_\varepsilon(S_{d-1}(g))\subset b_\varepsilon( U_1(g))$,
\ec

\noindent where $B_\varepsilon(S_{d-1}(g))$ and $b_\varepsilon( U_1(g))$ are defined as in Subsection \ref{subsec notation definitions in sec proximality}.
\begin{flushright}
$\diamond$
\end{flushright}
\end{rem}

\subsection{The definition of projective Anosov representations}\label{subsec deifnition anosov}

A lot of work has been done in order to simplify the original definition of Anosov representations, here we follow mainly the work of Bochi-Potrie-Sambarino \cite{BPS} (see also Guichard-Guéritaud-Kassel-Wienhard \cite{GGKW} or Kapovich-Leeb-Porti \cite{KLP1}).

Fix $\tau$ as in the previous subsection and let $\Gamma$ be a finitely generated group. Consider a finite symmetric generating set $S$ of $\Gamma$ and take $\vert\cdot\vert_\Gamma$ to be the associated word length: for $\gamma$ in $\Gamma$, it is the minimum number required to write $\gamma$ as a product of elements of $S$\footnote{This number depends on the choice of $S$. However, the set $S$ will be fixed from now on hence we do not emphasize the dependence on this choice in the notation.}. Let $\rho:\Gamma\too \textnormal{PSL}{(d,\rr)}$ be a representation. We say that $\rho$ is \textit{projective Anosov} if there exist positive constants $C$ and $\alpha$ such that for all $\gamma\in\Gamma$ one has
\begin{equation}\label{eq def anosov}
a^\tau_1(\rho(\gamma))-a^\tau_2(\rho(\gamma))\geq \alpha\vert\gamma\vert_\Gamma-C.
\end{equation}
By Kapovich-Leeb-Porti \cite[Theorem 1.4]{KLP3} (see also \cite[Section 3]{BPS}), condition (\ref{eq def anosov}) implies that $\Gamma$ is word hyperbolic\footnote{We refer the reader to the book of Ghys-de la Harpe \cite{GdlH} for definitions and standard facts on word hyperbolic groups.}. We assume in this paper that $\Gamma$ is non elementary. Let $\bg$ be the Gromov boundary of $\Gamma$ and $\gh$ be the set of infinite order elements in $\Gamma$. Every $\gamma$ in $\gh$ has exactly two fixed points in $\bg$: the attractive one denoted by $\gamma_+$ and the repelling one denoted by $\gamma_-$. The dynamics of $\gamma$ on $\bg$ is of type \textit{north-south}.

Fix $\rho:\Gamma\too\tn{PSL}(d,\rr)$ a projective Anosov representation. By \cite{BPS,GGKW,KLP1} we know that there exist continuous equivariant maps

\bc
$\xi:\bg\too\pp(\rr^d)$ and $\eta:\bg\too\gr$
\ec

\noindent which are \textit{transverse}, i.e. for every $x\neq y$ in $\bg$ one has
\begin{equation} \label{eq transv condition}
\xi(x)\oplus\eta(y)=\rr^d.
\end{equation}

\noindent One can see that condition (\ref{eq def anosov}) implies that for every $\gamma$ in $\gh$ the matrix $\rho(\gamma)$ is proximal. Equivariance of $\xi$ and $\eta$ implies that

\bc
$\xi(\gamma_+)=\rho(\gamma)_+$ and $\eta(\gamma_+)=\rho(\gamma^{-1})_-$.
\ec

\noindent It follows that both $\xi$ and $\eta$ are homeomorphisms onto their images. In fact, these homeo\-mor\-phisms are Hölder (see Bridgeman-Canary-Labourie-Sambarino \cite[Lemma 2.5]{BCLS}).

We denote by $\Lambda_{\rho(\Gamma)}\subset\pp(\rr^d)$ the image of $\xi$, which is called the \textit{limit set} of $\rho(\Gamma)$: it is the closure of the set of attractive fixed points in $\pp(\rr^d)$ of proximal elements in $\rho(\Gamma)$. The image of $\eta$  is called the \textit{dual limit set} of $\rho(\Gamma)$.

Here is another characterization of the limit sets which is very useful. An explicit reference is \cite[Theorem 5.3]{GGKW} (it can also be deduced from {\cite[Subsection 3.4]{BPS}}). Let $d=d_\tau$ (resp. $d^*=d^*_\tau$) be the distance on $\pp(\rr^d)$ (resp. $\pp((\rr^d)^*)$) associated to $\langle\cdot,\cdot\rangle_\tau$.

\begin{prop}\label{prop limit with S and U and Uuno cerca gammamas}
Let $\rho:\Gamma\too \tn{PSL}(d,\rr)$ be a projective Anosov representation. Then $\xi(\bg)$ (resp. $\eta(\bg)$) equals the set of accumulation points of sequences $\lbrace U_1(\rho(\gamma_n))\rbrace_n$ (resp. $\lbrace S_{d-1}(\rho(\gamma_n))\rbrace_n$) where $\gamma_n\too\infty$. Moreover, given a positive $\varepsilon$ there exists $L>0$ such that for every $\gamma$ in $\gh$ with $\vert\gamma\vert_\Gamma>L$ one has

\bc

$d(U_1(\rho(\gamma)),\rho(\gamma)_+)<\varepsilon$ and $d^*(S_{d-1}(\rho(\gamma)),\rho(\gamma)_-)<\varepsilon$.

\ec
\begin{flushright}
$\square$
\end{flushright}
\end{prop}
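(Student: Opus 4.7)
\emph{Strategy.} The overall approach is to apply Benoist's quantitative proximality criterion (Lemma \ref{lema benoist lemma 1.2}) to $\rho(\gamma)$, using as $\xi'$ and $\eta'$ the singular-value data $U_1(\rho(\gamma))$ and $S_{d-1}(\rho(\gamma))$. The Anosov bound (\ref{eq def anosov}) combined with Remark \ref{rem complemento de Sdmenosuno va en Uuno} supplies the needed projective contraction as soon as $|\gamma|_\Gamma$ is large enough, while the equivariance of $\xi,\eta$ and the convergence-group dynamics of $\Gamma$ on $\bg$ allow us to identify the accumulation sets with $\xi(\bg)$ and $\eta(\bg)$.

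\emph{Part 1.} Take $\gamma_n\to\infty$ in $\Gamma$ and extract subsequences so that $\gamma_n\cdot e\to x$ and $\gamma_n^{-1}\cdot e\to y$ in the Gromov compactification $\Gamma\cup\bg$ (necessarily $x,y\in\bg$), together with $U_1(\rho(\gamma_n))\to L$ and $S_{d-1}(\rho(\gamma_n))\to H$. For any $z\in\bg\setminus\{y\}$, (\ref{eq transv condition}) gives $\xi(z)\not\subset\eta(y)$, so for $n$ large $\xi(z)\in B_\varepsilon(S_{d-1}(\rho(\gamma_n)))$ and Remark \ref{rem complemento de Sdmenosuno va en Uuno} yields $\rho(\gamma_n)\cdot\xi(z)\in b_\varepsilon(U_1(\rho(\gamma_n)))$. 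By equivariance this point equals $\xi(\gamma_n\cdot z)$, and the north–south type dynamics of $\Gamma$ on $\bg$ give $\gamma_n\cdot z\to x$; continuity of $\xi$ then forces $L=\xi(x)\in\xi(\bg)$. The symmetric argument applied to $S_{d-1}(\rho(\gamma_n))=U_{d-1}(\rho(\gamma_n^{-1}))$ yields $H=\eta(y)\in\eta(\bg)$. The reverse inclusions follow from density of $\{\delta_+:\delta\in\gh\}$ in $\bg$, continuity of $\xi,\eta$, and the standard SVD asymptotics $U_1(\rho(\delta^n))\to\rho(\delta)_+=\xi(\delta_+)$ for a proximal $\rho(\delta)$.

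\emph{Part 2 and the main obstacle.} Remark \ref{rem complemento de Sdmenosuno va en Uuno} together with (\ref{eq def anosov}) supplies the contraction $\rho(\gamma)\cdot B_\varepsilon(S_{d-1}(\rho(\gamma)))\subset b_\varepsilon(U_1(\rho(\gamma)))$ whenever $|\gamma|_\Gamma$ exceeds a threshold depending on $\varepsilon$. Lemma \ref{lema benoist lemma 1.2} then delivers both bounds $d(\rho(\gamma)_+,U_1(\rho(\gamma)))\leq\varepsilon$ and $d^*(\rho(\gamma)_-,S_{d-1}(\rho(\gamma)))\leq\varepsilon$ provided the uniform transversality $d(U_1(\rho(\gamma)),S_{d-1}(\rho(\gamma)))\geq 6r$ holds on $\{|\gamma|_\Gamma>L\}$ for some fixed $r>0$. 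Securing this uniform transversality is the hard part. I argue by contradiction: otherwise we produce $\gamma_n\in\gh$ with $|\gamma_n|_\Gamma\to\infty$ and $d(U_1(\rho(\gamma_n)),S_{d-1}(\rho(\gamma_n)))\to 0$; by Part 1 the limits are $\xi(x)\subset\eta(y)$, and (\ref{eq transv condition}) forces $x=y$. This degenerate case is not automatically contradictory---indeed in the $\tn{PSO}(p,q)$ setting one has $\eta(x)=\xi(x)^{\ppq}\supset\xi(x)$ for every $x\in\bg$, since $\xi(x)$ is $\langle\cdot,\cdot\rangle_{p,q}$-isotropic. To close this case I bypass Lemma \ref{lema benoist lemma 1.2} and negate the Part 2 conclusion directly: assuming $d(U_1(\rho(\gamma_n)),\rho(\gamma_n)_+)\geq\varepsilon$ along such a sequence, I use the equivariance identity $\rho(\gamma_n)_+=\xi((\gamma_n)_+)$ together with the hyperbolic-group tracking property---that for $\gamma_n\in\gh$ with $|\gamma_n|_\Gamma\to\infty$ and $\gamma_n\cdot e\to x$, the attracting fixed point $(\gamma_n)_+$ also converges to $x$ along a further extraction---to conclude $\rho(\gamma_n)_+\to\xi(x)=\lim U_1(\rho(\gamma_n))$, contradicting $d\geq\varepsilon$. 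The bound for $S_{d-1}$ and $\rho(\gamma)_-=\eta(\gamma_-)$ follows by applying the same argument to $\rho(\gamma^{-1})$.
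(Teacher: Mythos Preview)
The paper does not give a proof here; it simply cites \cite[Theorem 5.3]{GGKW} and \cite[Subsection 3.4]{BPS}. In those references the boundary maps $\xi,\eta$ are \emph{constructed} as limits of the Cartan data $U_1(\rho(\gamma_n))$ and $S_{d-1}(\rho(\gamma_n))$ along sequences $\gamma_n\to x$, so Part~1 of the statement holds essentially by definition and Part~2 follows from the uniformity in that construction. Your approach is the converse: you take the existence and transversality of $\xi,\eta$ as given and try to recover the SVD characterisation from them. That is a legitimate alternative route, but as written it has a genuine gap.

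\textbf{The circular step in Part 1.} You write: ``$\xi(z)\not\subset\eta(y)$, so for $n$ large $\xi(z)\in B_\varepsilon(S_{d-1}(\rho(\gamma_n)))$''. This implication only holds if you already know $S_{d-1}(\rho(\gamma_n))\to\eta(y)$, which is precisely the conclusion $H=\eta(y)$ you are after. What you actually have at this stage is $S_{d-1}(\rho(\gamma_n))\to H$ for some unidentified hyperplane $H$, and the contraction argument needs a point $z\neq y$ with $\xi(z)\not\subset H$. Transversality (\ref{eq transv condition}) does not supply this: it only says $\xi(z)\not\subset\eta(w)$ for $z\neq w$, and there is no reason a priori that $H$ lies in $\eta(\bg)$. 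In fact nothing you have assumed rules out $\xi(\bg)\subset H$ --- when $\rho$ is reducible the limit set can sit inside a proper subspace. To close this you would need an additional argument (for instance, running the dual picture simultaneously and showing the two circularities cancel, or invoking directly that $\xi$ is the Cartan attractor map from \cite{GGKW,BPS}, at which point you are back to the cited proof).

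\textbf{Part 2.} Once Part~1 is available, your handling of the degenerate case $x=y$ via the tracking property is correct: for $\gamma\in\gh$ one has the Gromov-product estimate $(\gamma\cdot e\mid\gamma_+)_e\geq\tfrac{1}{2}\vert\gamma\vert_\Gamma-O(1)$, so $\gamma_n\cdot e\to x$ forces $(\gamma_n)_+\to x$ and hence $\rho(\gamma_n)_+=\xi((\gamma_n)_+)\to\xi(x)=\lim U_1(\rho(\gamma_n))$. Note that this last argument already proves Part~2 directly (and the $S_{d-1}$ statement by duality), so the detour through Lemma~\ref{lema benoist lemma 1.2} and the uniform--$6r$ transversality is unnecessary.
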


We are interested in projective Anosov representations whose image is contained in $G=\tn{PSO}(p,q)$. The following remark is then important for our purposes.

\begin{rem}
Let $\rho:\Gamma\too\tn{PSL}(d,\rr)$ be a projective Anosov representation. If $\rho(\Gamma)$ is contained in $G$ we say that $\rho$ is \textit{$P_1^{p,q}$-Anosov} (recall that $P_1^{p,q}$ denotes the (parabolic) subgroup of $G$ stabilizing an isotropic line). In this case, the image of $\xi$ is contained in $\partial\hpq$ and the dual map $\eta$ equals $\xi^{\ppq}$.
\begin{flushright}
$\diamond$
\end{flushright}
\end{rem}

\subsection{Proximality properties}\label{subsec proximality anosov}
The following lemma will be useful in the next section.

\begin{lema}[c.f. {\cite[Lemma 5.7]{Sam}}]\label{lema sambarino lemma 5.7}
Let $\rho:\Gamma\too \tn{PSL}(d,\rr)$ be a  projective Anosov representation and $0<\varepsilon\leq r$. Then

\bc
$\#\lbrace\gamma\in\gh: \hspace{0,3cm} d(\rho(\gamma)_+,\rho(\gamma)_-)\geq 2r \textnormal{ and } \rho(\gamma) \textnormal{ is not } (r,\varepsilon)\textnormal{-proximal}\rbrace<\infty$.
\ec

\end{lema}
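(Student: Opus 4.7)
The goal is to show that, apart from finitely many exceptions, every $\gamma\in\gh$ with $d(\rho(\gamma)_+,\rho(\gamma)_-)\geq 2r$ also satisfies the contraction condition $\rho(\gamma)\cdot B_\varepsilon(\rho(\gamma)_-)\subset b_\varepsilon(\rho(\gamma)_+)$. Since for any $L>0$ the set $\{\gamma\in\Gamma:|\gamma|_\Gamma\leq L\}$ is finite, it suffices to produce $L_0>0$ such that the contraction condition holds whenever $|\gamma|_\Gamma>L_0$; the finitely many counterexamples then lie in a ball of the Cayley graph.

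The plan is to play off the Cartan attractor/repeller against the Jordan attractor/repeller. First, the defining inequality \eqref{eq def anosov} of a projective Anosov representation makes the singular gap $a_1^\tau(\rho(\gamma))-a_2^\tau(\rho(\gamma))$ grow linearly with $|\gamma|_\Gamma$. Combined with Remark \ref{rem complemento de Sdmenosuno va en Uuno}, this shows that for every prescribed $\delta>0$ there is $L_1=L_1(\delta)$ such that whenever $|\gamma|_\Gamma>L_1$ the Cartan-contraction
\begin{equation*}
\rho(\gamma)\cdot B_\delta(S_{d-1}(\rho(\gamma)))\subset b_\delta(U_1(\rho(\gamma)))
\end{equation*}
holds. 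Next, Proposition \ref{prop limit with S and U and Uuno cerca gammamas} provides $L_2=L_2(\delta)$ so that for $|\gamma|_\Gamma>L_2$ the Cartan flag is $\delta$-close to the Jordan flag, namely $d(U_1(\rho(\gamma)),\rho(\gamma)_+)<\delta$ and $d^*(S_{d-1}(\rho(\gamma)),\rho(\gamma)_-)<\delta$.

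Finally, I would set $\delta:=\varepsilon/2$ and $L_0:=\max\{L_1(\delta),L_2(\delta)\}$. For $|\gamma|_\Gamma>L_0$, uniform continuity of the distance function on the compact Grassmannian gives the triangle-inequality estimates
\begin{equation*}
B_\varepsilon(\rho(\gamma)_-)\subset B_\delta(S_{d-1}(\rho(\gamma)))\quad\text{and}\quad b_\delta(U_1(\rho(\gamma)))\subset b_\varepsilon(\rho(\gamma)_+),
\end{equation*}
and chaining these with the Cartan-contraction above yields
\begin{equation*}
\rho(\gamma)\cdot B_\varepsilon(\rho(\gamma)_-)\subset\rho(\gamma)\cdot B_\delta(S_{d-1}(\rho(\gamma)))\subset b_\delta(U_1(\rho(\gamma)))\subset b_\varepsilon(\rho(\gamma)_+),
\end{equation*}
which is exactly the missing clause of $(r,\varepsilon)$-proximality (together with the hypothesis $d(\rho(\gamma)_+,\rho(\gamma)_-)\geq 2r$). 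The only real bookkeeping consists in verifying the triangle inequalities comparing $d$-distance to a hyperplane with $d^*$-distance between two hyperplanes, which is a routine uniform-continuity argument on compact projective spaces; this is the mild technical obstacle but nothing more. The exceptional set is then contained in the ball $\{\gamma:|\gamma|_\Gamma\leq L_0\}$, which is finite since $\Gamma$ is finitely generated.
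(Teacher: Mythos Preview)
Your argument is correct and follows essentially the same route as the paper's own proof: both combine the singular-value gap (\ref{eq def anosov}) with Remark~\ref{rem complemento de Sdmenosuno va en Uuno} to get the Cartan contraction at scale $\varepsilon/2$, then invoke Proposition~\ref{prop limit with S and U and Uuno cerca gammamas} to transfer the inclusions from $(U_1,S_{d-1})$ to $(\rho(\gamma)_+,\rho(\gamma)_-)$. The paper phrases this as a sequential argument while you give an explicit threshold $L_0$, but the content is identical; your remark about the bookkeeping relating $d$ and $d^*$ is exactly the step the paper leaves implicit.
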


\begin{proof}

Consider a sequence $\gamma_n\too\infty$ in $\gh$ such that $d(\rho(\gamma_n)_+,\rho(\gamma_n)_-)\geq 2r$ for all $n$. By Proposition \ref{prop limit with S and U and Uuno cerca gammamas} for every $n$ big enough the following holds

\bc
$b_{\frac{\varepsilon}{2}}(U_1(\rho(\gamma_n)))\subset b_{\varepsilon}(\rho(\gamma_n)_+)$
\ec

\noindent and

\bc
$B_{\varepsilon}(\rho(\gamma_n)_-)\subset B_{\frac{\varepsilon}{2}}(S_{d-1}(\rho(\gamma_n)))$.
\ec

\noindent By Remark \ref{rem complemento de Sdmenosuno va en Uuno} and (\ref{eq def anosov}) the condition $\rho(\gamma_n)\cdot B_{\varepsilon}(\rho(\gamma_n)_-)\subset b_{\varepsilon}(\rho(\gamma_n)_+)$ is sa\-tis\-fied for sufficiently large $n$.

\end{proof}

\section{The set $\pmb{\Omega}_\rho$}\label{sec the set omegarho}
\setcounter{equation}{0}

Let $\rho:\Gamma\too G$ be a $P_1^{p,q}$-Anosov representation and define

\bc
$\pmb{\Omega}_{\rho}:=\lbrace o\in \hpq:\hspace{0,3cm} J^o\cdot \xi(x)\notin\eta(x) \tn{ for all } x\in\bg\rbrace$.
\ec

\noindent This section is structured as follows. In Subsection \ref{subsec dynam on omegarho} we prove that the action of $\Gamma$ on $\pmb{\Omega}_\rho$ is properly discontinuous. Moreover, we show that if $o$ is a point in $\pmb{\Omega}_\rho$ then the geodesic connecting $o$ with $\rho(\gamma)\cdot o$ is space-like (apart from possibly finitely many exceptions $\gamma\in\Gamma$). In Subsection \ref{subsec proxim of jrhojrho} we study the matrices $J^o\rho(\gamma)J^o\rho(\gamma^{-1})$ for a point $o$ in $\pmb{\Omega}_\rho$: we apply to them Benoist's work on proximality. Finiteness of our counting functions is proved in Subsection \ref{subsec orbital counting functions}. Finally, in Subsection \ref{subsec weak triangle} we prove a proposition that will be needed in the proof of Proposition \ref{prop distribution on bg for length}.

Before we start, let us discuss some examples for which $\pmb{\Omega}_\rho$ is non empty. From Proposition \ref{prop fijos de Jo en borde} we know that the following alternative description of $\pmb{\Omega}_\rho$ holds

\bc
$\pmb{\Omega}_{\rho}=\lbrace o=[\hat{o}]\in \hpq:\hspace{0,3cm} \langle \hat{o},\hat{\xi}\rangle_{p,q}\neq 0 \tn{ for all } \xi=[\hat{\xi}]\in\Lambda_{\rho(\Gamma)}\rbrace$.
\ec

\noindent We have the following important example.

\begin{ex}\label{ex omegarho no vacio}

\item 

\begin{itemize}
\item Let $\Gamma$ be the fundamental group of a convex co-compact hyperbolic manifold of dimension $m\geq 2$ and $\iota_0:\Gamma\too \tn{SO}(m,1)$ be the holonomy representation. Fix $p\geq m$ and $q\geq 2$. Consider the embedding $\rr^{m,1}\hookrightarrow \rr^{p,q}$ given by

\bc
$\rr^{m,1}\cong \tn{span}\lbrace e_{p-m+1},\dots,e_{p+1}\rbrace$,
\ec

\noindent where $e_i$ is the vector of $\rr^d$ with all entries equal to zero except for the $i$-th entry which is equal to one. This induces a projection $j:\tn{SO}(m,1)\too G$ and a representation $\rho_0:\Gamma\too G$ defined by

\bc
$\rho_0:=j\circ\iota_0$.
\ec

\noindent Thus $\rho_0$ is $P_1^{p,q}$-Anosov, because $\iota_0$ is $P_1^{m,1}$-Anosov. The set $\pmb{\Omega}_{\rho_0}$ is non empty: every point $o\in\hpq$ for which the subspace

\bc
$\tn{span}\lbrace o,e_{p+2},\dots,e_d\rbrace$
\ec

\noindent has signature $(0,q)$ belongs to $\pmb{\Omega}_{\rho_0}$. Since the condition of being Anosov is open in the space of representations of $\Gamma$ into $G$ and the limit map $\xi$ varies continuously with the representation (see Guichard-Wienhard \cite[Theorem 5.13]{GW}), we obtain that if $\rho$ is a small deformation of $\rho_0$ then $\pmb{\Omega}_\rho$ is non empty.

\item The previous example generalizes to a large class of representations introduced by Danciger-Guéritaud-Kassel in \cite{DGK1,DGK2} called \textit{$\hpq$-convex co-compact}\footnote{These are inclusion representations induced by taking an infinite discrete subgroup $\Gamma< G$ which preserves some properly convex non empty open set $\Omega\subset\pp(\rr^d)$ whose boundary is strictly convex and of class $C^1$. One requires that $\Gamma$ preserves some \textit{distinguished} non empty convex subset of $\Omega$ on which the action is co-compact (see \cite{DGK1,DGK2} for precisions).}. Let $\Gamma <G$ be a $\hpq$-convex co-compact group and $\rho:\Gamma\too G$ be the inclusion representation, which is $P_1^{p,q}$-Anosov as proved in \cite[Theorem 1.25]{DGK2}. Let $\Omega$ be a non empty $\Gamma$-invariant properly convex open subset of $\hpq$. By \cite[Proposition 4.5]{DGK2}, $\Omega$ is contained in $\pmb{\Omega}_\rho$.

\item There exist examples of $P_1^{p,q}$-Anosov representations $\rho$ whose image is not $\hpq$-convex co-compact but satisfy $\pmb{\Omega}_\rho\neq\emptyset$ (see \cite[Examples 5.2 \& 5.3]{DGK1}).

\end{itemize}

\begin{flushright}
$\diamond$
\end{flushright}
\end{ex}

\subsection{Dynamics on $\pmb{\Omega}_\rho$}\label{subsec dynam on omegarho}

Observe that $\pmb{\Omega}_\rho$ is $\Gamma$-invariant. The following proposition is well-known, we include a proof for completeness.

\begin{prop}\label{prop action on omegarho is prop discont and limit set is limit set}
Let $\rho:\Gamma\too G$ be a $P_1^{p,q}$-Anosov representation. Then the action of $\Gamma$ on $\pmb{\Omega}_\rho$ is properly discontinuous, that is, for every compact set $C\subset\pmb{\Omega}_\rho$ one has

\bc
$\#\left\lbrace \gamma\in\Gamma: \hspace{0,3cm} \rho(\gamma)\cdot C\cap C\neq \emptyset\right\rbrace<\infty$.
\ec

\noindent Moreover, for any point $o$ in $\pmb{\Omega}_\rho$ the set of accumulation points of $\rho(\Gamma)\cdot o$ in $\hpq\cup\partial\hpq$ coincides with the limit set $\Lambda_{\rho(\Gamma)}$.
\end{prop}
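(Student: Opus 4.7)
The plan is to exploit the geometric reformulation $\pmb{\Omega}_\rho=\hpq\setminus\eta(\bg)$, which is immediate from the definition: since $\eta(x)=\xi(x)^{\ppq}$, the condition $J^o\cdot\xi(x)\notin\eta(x)$ is equivalent (by Proposition \ref{prop fijos de Jo en borde} and a direct computation) to $\xi(x)\notin o^{\ppq}$, i.e.\ $o\notin\eta(x)$. Since $\eta:\bg\too\gr$ is continuous and $\bg$ is compact, $\eta(\bg)$ is a compact subset of $\gr$, disjoint from any compact $C\subset\pmb{\Omega}_\rho$. By compactness there is $\varepsilon_0>0$ with $d(o,\eta(x))\geq 2\varepsilon_0$ for every $o\in C$ and $x\in\bg$.

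For proper discontinuity the key dynamical input is Proposition \ref{prop limit with S and U and Uuno cerca gammamas}: for $\vert\gamma\vert_\Gamma$ large enough one has $d^*(S_{d-1}(\rho(\gamma)),\eta(\gamma_-))<\varepsilon_0$ and $d(U_1(\rho(\gamma)),\xi(\gamma_+))<\varepsilon_0$ (using $\eta(\gamma_-)=\rho(\gamma)_-$ and $\xi(\gamma_+)=\rho(\gamma)_+$). Combined with the previous paragraph, $d(o,S_{d-1}(\rho(\gamma)))\geq\varepsilon_0$ for every $o\in C$. Coupling this separation with the Anosov gap $a_1^\tau(\rho(\gamma))-a_2^\tau(\rho(\gamma))\too\infty$ and Singular Value Decomposition (Remark \ref{rem complemento de Sdmenosuno va en Uuno}), for any $\varepsilon>0$ there is $L>0$ such that $\rho(\gamma)\cdot C\subset b_\varepsilon(U_1(\rho(\gamma)))\subset b_{2\varepsilon}(\xi(\gamma_+))$ whenever $\vert\gamma\vert_\Gamma>L$. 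Since $\Lambda_{\rho(\Gamma)}=\xi(\bg)\subset\partial\hpq$ is compact while $C\Subset\hpq$, it is disjoint from a uniform neighbourhood of $\Lambda_{\rho(\Gamma)}$; taking $\varepsilon$ small enough forces $\rho(\gamma)\cdot C\cap C=\emptyset$ for all but finitely many $\gamma\in\Gamma$.

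For the accumulation set the same contraction estimate gives the inclusion $\subset$: any accumulation point of $\rho(\Gamma)\cdot o$ in $\hpq\cup\partial\hpq$ arises as $\lim\rho(\gamma_n)\cdot o$ for distinct $\gamma_n$, hence with $\vert\gamma_n\vert_\Gamma\too\infty$; by the above this limit is the same as the limit of $\xi((\gamma_n)_+)\in\Lambda_{\rho(\Gamma)}$, and so lies in $\Lambda_{\rho(\Gamma)}$. For the reverse inclusion $\supset$, fix $\xi\in\Lambda_{\rho(\Gamma)}$ and write $\xi=\xi(x)$ with $x\in\bg$; the density of attractive fixed points of infinite-order elements in $\bg$ (standard for non-elementary word hyperbolic groups) yields distinct $\gamma_n\in\gh$ with $(\gamma_n)_+\too x$, and since $\xi$ is continuous, $\rho(\gamma_n)_+=\xi((\gamma_n)_+)\too\xi(x)$. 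Applying the contraction estimate again with $C=\lbrace o\rbrace$ gives $\rho(\gamma_n)\cdot o\too\xi(x)$, as desired.

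The only technical point that requires care is the uniformity in the SVD step: namely, that the contraction ratio of $\rho(\gamma)$ on the region $B_{\varepsilon_0}(S_{d-1}(\rho(\gamma)))$ is controlled by $a_1^\tau(\rho(\gamma))-a_2^\tau(\rho(\gamma))$ alone, independently of $\gamma$, once we stay at distance $\varepsilon_0$ from $S_{d-1}(\rho(\gamma))$. This is precisely the content of Remark \ref{rem complemento de Sdmenosuno va en Uuno}, and it is what makes the combination ``compact $C\subset\pmb{\Omega}_\rho$ plus Anosov'' produce uniform statements. Once this uniformity is in place, both assertions of the proposition reduce to the elementary topological fact that $C$ is compactly contained in $\hpq$ while $\Lambda_{\rho(\Gamma)}$ sits in the boundary.
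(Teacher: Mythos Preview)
Your argument follows essentially the same route as the paper's proof: separate $C$ uniformly from the dual limit set, use the Anosov gap and Singular Value Decomposition (Remark~\ref{rem complemento de Sdmenosuno va en Uuno}) to push $C$ into a small ball around $U_1(\rho(\gamma))$, and observe that this ball lies in a neighbourhood of $\Lambda_{\rho(\Gamma)}$ disjoint from $C$.

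There is one small slip worth noting. You phrase the key estimate in terms of $\gamma_+$ and $\gamma_-$, writing $d^*(S_{d-1}(\rho(\gamma)),\eta(\gamma_-))<\varepsilon_0$ and $d(U_1(\rho(\gamma)),\xi(\gamma_+))<\varepsilon_0$. These points are only defined for $\gamma\in\gh$, whereas $\Gamma$ may contain infinitely many torsion elements (the conjugates of a single finite-order element, say), and the proper-discontinuity statement must cover them too. The fix is immediate and is precisely what the paper does: invoke the \emph{first} part of Proposition~\ref{prop limit with S and U and Uuno cerca gammamas} (the sequences $U_1(\rho(\gamma_n))$ and $S_{d-1}(\rho(\gamma_n))$ accumulate only on $\xi(\bg)$ and $\eta(\bg)$, for \emph{any} $\gamma_n\to\infty$ in $\Gamma$) rather than the second part. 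One then obtains, for all but finitely many $\gamma\in\Gamma$, that $C\subset B_{\varepsilon/2}(S_{d-1}(\rho(\gamma)))$ and $b_{\varepsilon/2}(U_1(\rho(\gamma)))$ sits in a fixed neighbourhood of $\Lambda_{\rho(\Gamma)}$ disjoint from $C$; the remainder of the argument is identical to yours. The same remark applies to your verification that every accumulation point of $\rho(\Gamma)\cdot o$ lies in $\Lambda_{\rho(\Gamma)}$.

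For the reverse inclusion the paper is terser than you: it simply notes that the set of accumulation points of $\rho(\Gamma)\cdot o$ is non-empty, closed and $\rho(\Gamma)$-invariant in $\Lambda_{\rho(\Gamma)}$, hence equals $\Lambda_{\rho(\Gamma)}$ by minimality. Your explicit construction via $(\gamma_n)_+\to x$ is a perfectly valid alternative.
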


\begin{proof}

Let $C\subset\pmb{\Omega}_\rho$ be a compact set and fix a norm on $\rr^d$. By definition of $\pmb{\Omega}_\rho$ we can take a positive $\varepsilon$ such that

\bc
$C\cap\displaystyle\bigcup_{x\in\bg}b_\varepsilon(\xi(x))=\emptyset$ and $C\subset\displaystyle\bigcap_{x\in\bg}B_\varepsilon(\eta(x))$.
\ec

By Proposition \ref{prop limit with S and U and Uuno cerca gammamas}, Remark \ref{rem complemento de Sdmenosuno va en Uuno} and (\ref{eq def anosov}) we know that, apart from possibly finitely many exceptions $\gamma$ in $\Gamma$, the following holds:

\bc
$b_{\frac{\varepsilon}{2}}(U_1(\rho(\gamma)))\subset \displaystyle\bigcup_{x\in\bg}b_\varepsilon(\xi(x))$,

\vspace{0,3cm}

$\displaystyle\bigcap_{x\in\bg}B_\varepsilon(\eta(x))\subset B_{\frac{\varepsilon}{2}}(S_{d-1}(\rho(\gamma)))$

\ec

\noindent and

\bc

$\rho(\gamma)\cdot B_{\frac{\varepsilon}{2}}(S_{d-1}(\rho(\gamma))) \subset b_{\frac{\varepsilon}{2}}(U_1(\rho(\gamma)))$.

\ec

\noindent For these $\gamma$ we have then that $\rho(\gamma)\cdot C$ is contained in the $\varepsilon$-neighbourhood of $\Lambda_{\rho(\Gamma)}$ and thus is disjoint from $C$.

We have shown that the action of $\Gamma$ on $\pmb{\Omega}_\rho$ is properly discontinuous and that for any point $o$ in $\pmb{\Omega}_\rho$ the accumulation points of $\rho(\Gamma)\cdot o$ belong to $\Lambda_{\rho(\Gamma)}$. Conversely, the $\Gamma$-orbit of any point in $\Lambda_{\rho(\Gamma)}$ is dense in the limit set and now the proof is complete.

\end{proof}

Let $o\in\pmb{\Omega}_\rho$ and recall the notations introduced in Subsection \ref{subsub geod hpq}. Given an open set $W\subset\partial\hpq$ disjoint from $\overline{\mathscr{C}_o^0}\cap\partial\hpq$ we denote by $\mathscr{C}_{o}^{>_W}$ the subset of $\mathscr{C}_o^{>}$ consisting of points $o'$ such that the (space-like) geodesic ray connecting $o$ with $o'$ has its end point in $W$.

The following corollary has been proved by Glorieux-Monclair \cite{GM} for $\hpq$-convex co-compact groups.

\begin{cor}\label{cor gammao in cowmayor}
Let $\rho:\Gamma\too G$ be a $P_1^{p,q}$-Anosov representation, a point $o\in\pmb{\Omega}_{\rho}$ and $W\subset\partial\hpq$ an open set containing $\Lambda_{\rho(\Gamma)}$ with closure disjoint from $\overline{\mathscr{C}_o^0}\cap\partial\hpq$. Then apart from possibly finitely many exceptions $\gamma$ in $\Gamma$ one has $\rho(\gamma)\cdot o \in\mathscr{C}_{o}^{>_W}$. In particular the geodesic joining $o$ with $\rho(\gamma)\cdot o$ is space-like.
\end{cor}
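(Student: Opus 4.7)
The plan is to argue by contradiction, leveraging the description of accumulation points provided by Proposition \ref{prop action on omegarho is prop discont and limit set is limit set}. Suppose there were infinitely many $\gamma\in\Gamma$ for which $\rho(\gamma)\cdot o\notin\mathscr{C}_{o}^{>_W}$. After extracting a subsequence we may assume $o_n:=\rho(\gamma_n)\cdot o$ converges in $\overline{\hpq}=\hpq\cup\partial\hpq$; by proper discontinuity of the $\Gamma$-action on $\pmb{\Omega}_\rho$ the limit cannot be interior, and by that same proposition it must lie in $\Lambda_{\rho(\Gamma)}$. Call this limit $\xi_\infty$. Since $\Lambda_{\rho(\Gamma)}\subset W$ and, by Lemma \ref{lema geod between o and xi}, $\overline{\mathscr{C}_o^0}\cap\partial\hpq=o^{\ppq}\cap\partial\hpq$, the hypothesis that $\overline{W}$ is disjoint from $\overline{\mathscr{C}_o^0}\cap\partial\hpq$ gives in particular $\xi_\infty\notin o^{\ppq}$, i.e. $\langle\hat o,\hat\xi_\infty\rangle_{p,q}\neq 0$ for any lifts.

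Next I would exploit the openness of the signature-$(1,1)$ locus in the Grassmannian of $2$-planes. Choosing lifts so that $\hat o_n\to\hat\xi_\infty$ in $\rr^d$, the $2$-planes $\pi_n:=\mathrm{span}(\hat o,\hat o_n)$ converge in the Grassmannian to $\pi_\infty:=\mathrm{span}(\hat o,\hat\xi_\infty)$, whose Gram matrix
\[
\begin{pmatrix}\langle\hat o,\hat o\rangle_{p,q} & \langle\hat o,\hat\xi_\infty\rangle_{p,q}\\ \langle\hat o,\hat\xi_\infty\rangle_{p,q} & 0\end{pmatrix}
\]
has strictly negative determinant, hence signature $(1,1)$. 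Being non-degenerate of signature $(1,1)$ is an open condition, so $\pi_n$ also has signature $(1,1)$ for $n$ large, and therefore $o_n\in\mathscr{C}_o^{>}$.

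It remains to locate the endpoint in $\partial\hpq$ of the forward space-like ray from $o$ through $o_n$, which is one of the two isotropic directions of $\pi_n$. On the open locus of signature-$(1,1)$ planes these two isotropic directions depend continuously on the plane; as $\pi_n\to\pi_\infty$ they converge to the two isotropic directions of $\pi_\infty$, one of which is $\xi_\infty$. The forward endpoint, namely the isotropic direction lying on the same side of $o$ as $o_n$ inside $\mathbb{P}(\pi_n)\cap\hpq$, must converge to $\xi_\infty$ because $o_n\to\xi_\infty$. Since $W$ is open and contains $\xi_\infty$, this endpoint lies in $W$ for all large $n$, giving $o_n\in\mathscr{C}_o^{>_W}$ and contradicting the choice of the $\gamma_n$.

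The main subtlety I anticipate in this outline is that last step: distinguishing the two isotropic directions of $\pi_n$ and confirming that the one approached by the \emph{forward} ray from $o$ through $o_n$ (and not the backward one) is the one tending to $\xi_\infty$. This is cleanest when carried out in the projective line $\mathbb{P}(\pi_n)$, where $\mathbb{P}(\pi_n)\cap\hpq$ is the arc bounded by the two isotropic directions, $o$ and $o_n$ are distinct negative points of this arc, and $o_n$ escapes towards precisely the isotropic direction sitting on its side of $o$; continuity of the convergence $\pi_n\to\pi_\infty$ then identifies this endpoint with $\xi_\infty$.
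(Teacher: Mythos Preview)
Your argument is correct and follows essentially the same idea as the paper: orbit points accumulate only on $\Lambda_{\rho(\Gamma)}$ (Proposition~\ref{prop action on omegarho is prop discont and limit set is limit set}), and points of $\hpq$ near $\Lambda_{\rho(\Gamma)}$ lie in $\mathscr{C}_o^{>_W}$. The paper packages this more tersely by setting $C:=\overline{\hpq\setminus\mathscr{C}_o^{>_W}}\subset\hpq\cup\partial\hpq$, observing $C$ is compact, and asserting $C\cap\Lambda_{\rho(\Gamma)}=\emptyset$ so that $\rho(\Gamma)\cdot o\cap C$ is finite; your signature-$(1,1)$ computation and endpoint-tracking are exactly what one would do to justify that asserted disjointness, so your version is a more explicit unfolding of the same proof rather than a different route.
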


\begin{proof}

Let $C$ be the closure of $\hpq\setminus\mathscr{C}_{o}^{>_W}$ in $\hpq\cup\partial\hpq$. Note that $C$ is compact and by Proposition \ref{prop action on omegarho is prop discont and limit set is limit set} does not contain accumulation points of $\rho(\Gamma)\cdot o$, hence $\rho(\Gamma)\cdot o\cap C$ is finite.  Since $\gamma\mapsto\rho(\gamma)\cdot o$ is proper the proof is complete.

\end{proof}

\subsection{Proximality of $J^o\rho(\gamma)J^o\rho(\gamma^{-1})$} \label{subsec proxim of jrhojrho}

For the rest of the section we fix a $P_1^{p,q}$-Anosov representation $\rho:\Gamma\too G$, a point $o\in\pmb{\Omega}_\rho$ and a Cartan involution $\tau\in S^o$.

The next lemma is a direct consequence of Proposition \ref{prop limit with S and U and Uuno cerca gammamas}, transversality condition (\ref{eq transv condition}) and the definition of $\pmb{\Omega}_\rho$.

\begin{lema}\label{lema JUuno lejos de Sdmenos1}
Let $d_\tau$ be the distance on $\pp(\rr^d)$ induced by the norm $\Vert\cdot\Vert_\tau$. There exists a positive constant $D$ such that

\bc
$\#\lbrace \gamma\in\Gamma:\hspace{0,3cm} d_\tau(J^o\cdot U_1(\rho(\gamma)),S_{d-1}(\rho(\gamma^{-1})))< D\rbrace <\infty$.
\ec
\begin{flushright}
$\square$
\end{flushright}
\end{lema}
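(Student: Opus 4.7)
The plan is to argue by contradiction. Suppose there exist distinct elements $\gamma_n\in\Gamma$ with
\[
d_\tau\bigl(J^o\cdot U_1(\rho(\gamma_n)),\,S_{d-1}(\rho(\gamma_n^{-1}))\bigr)\too 0.
\]
Since balls in $\Gamma$ are finite we must have $|\gamma_n|_\Gamma\too\infty$, and consequently $|\gamma_n^{-1}|_\Gamma\too\infty$ as well. Applying the first part of Proposition \ref{prop limit with S and U and Uuno cerca gammamas} (and passing twice to a subsequence) yields points $x,y\in\bg$ with $U_1(\rho(\gamma_n))\too\xi(x)$ and $S_{d-1}(\rho(\gamma_n^{-1}))\too\eta(y)$.

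The main point, and the step where one must be careful, is to identify $x$ with $y$: a priori the hypothesis $o\in\pmb{\Omega}_\rho$ (which is a statement along the diagonal of $\bg\times\bg$) does not forbid $J^o\cdot\xi(a)\in\eta(b)$ when $a\neq b$. The resolution is the following tautological inclusion. By the definition $S_{d-i}(h)=U_{d-i}(h^{-1})$ one has $S_{d-1}(\rho(\gamma_n^{-1}))=U_{d-1}(\rho(\gamma_n))$; since the span of the top singular axis is contained in the span of the top $d-1$ singular axes, $U_1(\rho(\gamma_n))\subset U_{d-1}(\rho(\gamma_n))$. Combining,
\[
U_1(\rho(\gamma_n))\subset S_{d-1}(\rho(\gamma_n^{-1}))
\]
for every $n$. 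Passing to the limit gives $\xi(x)\subset\eta(y)$, which by the transversality condition (\ref{eq transv condition}) forces $x=y$.

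The limit of the distances is therefore $d_\tau(J^o\cdot\xi(x),\eta(x))$, which is strictly positive by the definition of $\pmb{\Omega}_\rho$; this contradicts the assumption. To pin down the constant $D$, compactness of $\bg$ together with continuity of $\xi$, $\eta$, and $J^o$ imply that
\[
\delta:=\min_{x\in\bg}d_\tau(J^o\cdot\xi(x),\eta(x))>0.
\]
The contradiction argument then shows that outside a finite set of $\gamma$'s one has $d_\tau(J^o\cdot U_1(\rho(\gamma)),S_{d-1}(\rho(\gamma^{-1})))\geq \delta/2$, so $D:=\delta/2$ works. The only genuinely non-routine ingredient is the pairing of the Cartan attractor $U_1$ of $\rho(\gamma_n)$ with the Cartan repellor $S_{d-1}$ of $\rho(\gamma_n^{-1})$ at a \emph{common} boundary point, which the inclusion above supplies for free.
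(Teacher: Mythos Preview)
Your proof is correct and follows precisely the approach the paper intends: the paper gives no written proof, stating only that the lemma is a direct consequence of Proposition \ref{prop limit with S and U and Uuno cerca gammamas}, the transversality condition (\ref{eq transv condition}), and the definition of $\pmb{\Omega}_\rho$ --- exactly the three ingredients you use. Your observation that $U_1(\rho(\gamma_n))\subset S_{d-1}(\rho(\gamma_n^{-1}))$ (via $S_{d-1}(g)=U_{d-1}(g^{-1})$) is the clean way to force $x=y$ through transversality, and is presumably what the author had in mind.
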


\begin{lema} \label{lema jrhojrho prox uno}
There exist $0<\varepsilon\leq r$ such that, apart from possibly finitely many exceptions $\gamma\in\Gamma$, the matrix $J^o\rho(\gamma)J^o\rho(\gamma^{-1})$ is $(r,\varepsilon)$-proximal.
\end{lema}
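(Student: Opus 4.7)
The plan is to apply the proximality criterion of Lemma \ref{lema benoist lemma 1.2} directly to the matrix $g:=J^o\rho(\gamma)J^o\rho(\gamma^{-1})$, using singular-value data rather than attempting to factor the product into two separately $(r,\varepsilon)$-proximal pieces.

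First I assemble two uniform ingredients. From the Anosov definition (\ref{eq def anosov}), the singular-value gap $a_1^\tau(\rho(\gamma))-a_2^\tau(\rho(\gamma))$ grows linearly in $|\gamma|_\Gamma$, so by Remark \ref{rem complemento de Sdmenosuno va en Uuno}, for any fixed $\varepsilon>0$ and all but finitely many $\gamma\in\Gamma$, the contractions
\[
\rho(\gamma)\cdot B_\varepsilon(S_{d-1}(\rho(\gamma)))\subset b_\varepsilon(U_1(\rho(\gamma)))
\]
and the analogous inclusion for $\rho(\gamma^{-1})$ both hold. Next, Lemma \ref{lema JUuno lejos de Sdmenos1} gives a constant $D>0$ with
\[
d\bigl(J^o\cdot U_1(\rho(\gamma)),\,S_{d-1}(\rho(\gamma^{-1}))\bigr)\geq D
\]
apart from finitely many exceptions; replacing $\gamma$ by $\gamma^{-1}$ and using that $J^o$ is a $d_\tau$-isometry (Remark \ref{rem J preserva norma}), I also obtain $d(U_1(\rho(\gamma^{-1})),J^o\cdot S_{d-1}(\rho(\gamma)))\geq D$ for all but finitely many $\gamma$.

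Now set $r:=D/6$ and pick $\varepsilon\in(0,r]$ also satisfying $\varepsilon\leq D/2$. For all but finitely many $\gamma$, I verify the hypotheses of Lemma \ref{lema benoist lemma 1.2} for $g$ with candidate attracting line $\xi:=J^o\cdot U_1(\rho(\gamma))$ and candidate repelling hyperplane $\eta:=S_{d-1}(\rho(\gamma^{-1}))$. The separation $d(\xi,\eta)\geq 6r$ is exactly Lemma \ref{lema JUuno lejos de Sdmenos1}. To check $g\cdot B_\varepsilon(\eta)\subset b_\varepsilon(\xi)$ I trace a point of $B_\varepsilon(\eta)$ through the four factors of $g$: the Anosov contraction sends it into $b_\varepsilon(U_1(\rho(\gamma^{-1})))$; the isometry $J^o$ moves this into $b_\varepsilon(J^o\cdot U_1(\rho(\gamma^{-1})))$, which by the second transversality bound and a triangle inequality lies in $B_\varepsilon(S_{d-1}(\rho(\gamma)))$ because $\varepsilon\leq D/2$; the Anosov contraction for $\rho(\gamma)$ pushes into $b_\varepsilon(U_1(\rho(\gamma)))$; and a final $J^o$ lands inside $b_\varepsilon(\xi)$. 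Lemma \ref{lema benoist lemma 1.2} then yields that $g$ is $(2r,2\varepsilon)$-proximal, and relabeling $(2r,2\varepsilon)\leadsto(r,\varepsilon)$ finishes the proof.

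The main obstacle to this strategy is recognizing that the more naive route, applying Corollary \ref{cor product of proximal transverse is proximal} to the factorization $g=(J^o\rho(\gamma)J^o)\cdot\rho(\gamma^{-1})$, does not work. Indeed, Lemma \ref{lema sambarino lemma 5.7} requires the uniform bound $d(\rho(\gamma)_+,\rho(\gamma)_-)=d(\xi(\gamma_+),\eta(\gamma_-))\geq 2r$, but inspecting the eigenstructure of proximal $\rho(\gamma)$ one sees that $\xi(x)\subset\eta(x)$ for every $x\in\bg$, so this distance collapses to zero along sequences such as $\gamma_n=\beta^n\alpha\beta^{-n}$. Consequently one cannot establish $(r,\varepsilon)$-proximality of $\rho(\gamma^{-1})$ uniformly, and one really must invoke Lemma \ref{lema benoist lemma 1.2} directly, with the required singular-value transversality supplied by Lemma \ref{lema JUuno lejos de Sdmenos1}, whose hypothesis crucially encodes that $o\in\pmb{\Omega}_\rho$ via the non-vanishing $d(J^o\cdot\xi(x),\eta(x))>0$.
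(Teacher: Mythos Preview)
Your proof is correct and follows essentially the same route as the paper: both apply Lemma \ref{lema benoist lemma 1.2} directly to $g=J^o\rho(\gamma)J^o\rho(\gamma^{-1})$ with $\xi=J^o\cdot U_1(\rho(\gamma))$ and $\eta=S_{d-1}(\rho(\gamma^{-1}))$, and both verify the contraction $g\cdot B_\varepsilon(\eta)\subset b_\varepsilon(\xi)$ by tracing through the four factors using the singular-value contraction from the Anosov condition, the transversality of Lemma \ref{lema JUuno lejos de Sdmenos1} (for $\gamma$ and for $\gamma^{-1}$), and the fact that $J^o$ is a $d_\tau$-isometry. Your final paragraph explaining why the factorization route via Corollary \ref{cor product of proximal transverse is proximal} fails is a helpful piece of motivation that the paper does not include, and your explicit constraint $\varepsilon\leq D/2$ is redundant once $\varepsilon\leq r=D/6$ is imposed, but neither affects the argument.
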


\begin{proof}

We apply a \textit{ping-pong} argument together with Lemma \ref{lema benoist lemma 1.2}. By Lemma \ref{lema JUuno lejos de Sdmenos1} we can take a positive constant $r$ and a finite subset $F\subset\Gamma$ such that for every $\gamma\in\Gamma\setminus F$ one has
\begin{equation}\label{eq jrhojrho prox uno}
d_\tau (J^o\cdot U_1(\rho(\gamma)),S_{d-1}(\rho(\gamma^{-1})))\geq 6r.
\end{equation}

\noindent Take $0<\varepsilon\leq r$ such that for every $\gamma\in\Gamma\setminus F$ one has

\bc
$b_\varepsilon(J^o\cdot U_1(\rho(\gamma)))\subset B_\varepsilon(S_{d-1}(\rho(\gamma^{-1})))$.
\ec

\noindent By Remark \ref{rem J preserva norma} the matrix $J^o$ preserves $d_\tau$ thus

\bc
$J^o\cdot b_\varepsilon( U_1(\rho(\gamma)))\subset B_\varepsilon( S_{d-1}(\rho(\gamma^{-1})))$.
\ec

\noindent By taking $F$ larger if necessary we have that

\bc
$\rho(\gamma^{-1})\cdot B_\varepsilon(S_{d-1}(\rho(\gamma^{-1})))\subset b_\varepsilon(U_1(\rho(\gamma^{-1})))$
\ec

\noindent holds for every $\gamma$ in $\Gamma\setminus F$. It follows that 

\bc$J^o\rho(\gamma^{-1})\cdot B_\varepsilon(S_{d-1}(\rho(\gamma^{-1})))\subset B_\varepsilon(S_{d-1}(\rho(\gamma)))$
\ec

\noindent and applying $\rho(\gamma)$ we obtain

\bc
$\rho(\gamma)J^o\rho(\gamma^{-1})\cdot B_\varepsilon(S_{d-1}(\rho(\gamma^{-1})))\subset b_\varepsilon(U_1(\rho(\gamma)))$.
\ec

\noindent Then
\bc
$J^o\rho(\gamma)J^o\rho(\gamma^{-1})\cdot B_\varepsilon(S_{d-1}(\rho(\gamma^{-1})))\subset b_\varepsilon(J^o\cdot  U_1(\rho(\gamma)))$.
\ec

\noindent By (\ref{eq jrhojrho prox uno}) and Lemma \ref{lema benoist lemma 1.2} the proof is finished.

\end{proof}

The following is a strengthening of Lemma \ref{lema jrhojrho prox uno}. It provides a link between the generalized Cartan projections $b^o$ and $b^\tau$ and the spectral radii of proximal elements in $\rho(\Gamma)$. For the remainder of the section we fix a maximal subalgebra $\lieb\subset\liep^\tau\cap\lieq^o$ and a closed Weyl chamber $\lieb^+$.

\begin{lema}\label{lema jrhojrho prox dos}

Fix any $\delta>0$ and  $A$ and $B$ two compact disjoint sets in $\bg$. Then there exist $0< \varepsilon\leq r$ such that, apart from possibly finitely many exceptions $\gamma\in \gh$ with $\gamma_-\in A$ and $\gamma_+\in B$, the following holds:

\begin{enumerate}
\item The matrices $J^o\rho(\gamma)J^o$ and $\rho(\gamma^{-1})$ are $(r,\varepsilon)$-proximal.
\item $d_\tau(J^o\cdot \rho(\gamma)_+,\rho(\gamma^{-1})_-)\geq 6r$ and $d_\tau(\rho(\gamma^{-1})_+,J^o\cdot \rho(\gamma)_-)\geq 6r$.
\item $d_\tau((J^o\rho(\gamma)J^o)_+,\rho(\gamma^{-1})_-)\geq 6r$ and $d_\tau(\rho(\gamma^{-1})_+,(J^o\rho(\gamma)J^o)_-)\geq 6r$.
\item The matrix $\rho(\gamma)$ belongs to $\mathscr{C}_{o,G}^>$ and the number 

\bc
$\vert b^o(\rho(\gamma))\vert^{\frac{1}{2}}   - \lambda_1(\rho(\gamma))$
\ec

\noindent is at distance at most $\delta$ from

\bc
$\frac{1}{2}\bb(J^o\cdot \rho(\gamma)_-,J^o\cdot \rho(\gamma)_+,\rho(\gamma^{-1})_-,\rho(\gamma^{-1})_+)$.
\ec

\item The number

\bc

$\vert b^\tau(\rho(\gamma))\vert^{\frac{1}{2}} - \lambda_1(\rho(\gamma)) $
\ec

\noindent is at distance at most $\delta$ from

\bc
$\frac{1}{2} \bb(J^o\cdot \rho(\gamma)_-,J^o\cdot \rho(\gamma)_+,\rho(\gamma^{-1})_-,\rho(\gamma^{-1})_+)-\frac{1}{2}\mathscr{G}_\tau(\rho(\gamma^{-1})_-,J^o\cdot \rho(\gamma)_+)$.
\ec

\end{enumerate}

\end{lema}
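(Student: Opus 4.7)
The strategy is to view the product $g_1 g_2 := (J^o\rho(\gamma)J^o)\cdot\rho(\gamma^{-1})$ and apply Benoist's Theorem \ref{teo benoist}, combining with the linear algebraic interpretations of the polar projections. Indeed, Propositions \ref{prop linear alg interpr of bo} and \ref{prop computing nu} give
\begin{equation*}
2\vert b^o(\rho(\gamma))\vert^{\frac{1}{2}}=\lambda_1(g_1g_2),\qquad 2\vert b^\tau(\rho(\gamma))\vert^{\frac{1}{2}}=\log\Vert g_1g_2\Vert_\tau,
\end{equation*}
while $\lambda_1(g_1)=\lambda_1(\rho(\gamma))$ (conjugation) and $\lambda_1(g_2)=\lambda_1(\rho(\gamma))$ because elements of $G=\tn{PSO}(p,q)$ have spectrum stable under inversion. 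Thus $\lambda_1(g_1)+\lambda_1(g_2)=2\lambda_1(\rho(\gamma))$, and items (4) and (5) will follow almost verbatim from Theorem \ref{teo benoist} (1) and (2) (up to a harmless factor of $2$ in $\delta$), once items (1)-(3) are established as hypotheses. Membership $\rho(\gamma)\in\mathscr{C}_{o,G}^>$ in item (4) is for free from Corollary \ref{cor gammao in cowmayor}, which rules out at most finitely many $\gamma$.

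The bulk of the work is in verifying items (1)-(3), which requires producing a single pair $(r,\varepsilon)$ that works for all but finitely many $\gamma$ with $\gamma_\pm\in A,B$. I would first choose $r>0$ so that, uniformly over $x\in\bg$ and over $(x,y)\in A\times B$,
\begin{equation*}
d_\tau(J^o\cdot\xi(x),\eta(x))\geq 12r,\qquad d_\tau(\xi(y),\eta(x))\geq 12r,
\end{equation*}
which is possible by compactness of $\bg$ and $A\cup B$, by transversality of $\xi$ and $\eta$, and by the very definition of $\pmb{\Omega}_\rho$ together with Proposition \ref{prop fijos de Jo en borde}. Since $J^o$ is a $d_\tau$-isometry (Remark \ref{rem J preserva norma}), the second inequality also yields $d_\tau(\xi(y),J^o\cdot\eta(y))\geq 12r$. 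Using the identifications $\rho(\gamma)_+=\xi(\gamma_+)$, $\rho(\gamma^{-1})_-=\eta(\gamma_+)$ and similarly with $+,-$ swapped, and the fact that $(J^o\rho(\gamma)J^o)_\pm=J^o\cdot\rho(\gamma)_\pm$ (because $J^o$ is an involution), all four separation inequalities in items (2) and (3) hold with constant $6r$.

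For item (1), I would apply Lemma \ref{lema sambarino lemma 5.7} to $\rho$: with the previous $r$, apart from finitely many exceptions the matrix $\rho(\gamma^{-1})$ is $(r,\varepsilon)$-proximal for any chosen $0<\varepsilon\leq r$. Conjugation by $J^o$ preserves the distance $d_\tau$ (Remark \ref{rem J preserva norma}), so $(r,\varepsilon)$-proximality is stable under $\sigma^o$; hence $J^o\rho(\gamma)J^o$ is $(r,\varepsilon)$-proximal whenever $\rho(\gamma)$ is, and the attractive line and repelling hyperplane are the $J^o$-images. Shrinking $\varepsilon$ if necessary, feeding the pair $(g_1,g_2)=(J^o\rho(\gamma)J^o,\rho(\gamma^{-1}))$ into Benoist's Theorem \ref{teo benoist} with the prescribed $\delta$ yields items (4) and (5) after dividing by $2$, the cross-ratio and Iwasawa-type cocycle terms appearing exactly as stated.

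The main technical obstacle is the uniform control underlying the choice of $r$: one must simultaneously arrange transversality between four limit points and their $J^o$-images across the full parameter space $A\times B$, which is precisely where the assumption $o\in\pmb{\Omega}_\rho$ is used in an essential way (as opposed to only in items (4)-(5) via Corollary \ref{cor gammao in cowmayor}). Once this is in place, the rest is a bookkeeping exercise juxtaposing Benoist's estimates with the Propositions of Section \ref{sec generalized cartan}.
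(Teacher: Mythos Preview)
Your proposal is correct and follows essentially the same route as the paper: pick $r$ uniformly using the $\pmb{\Omega}_\rho$ condition and $A\times B$-transversality, apply Lemma \ref{lema sambarino lemma 5.7} for $(r,\varepsilon)$-proximality, transfer to $J^o\rho(\gamma)J^o$ via the $J^o$-isometry of Remark \ref{rem J preserva norma}, and then invoke Theorem \ref{teo benoist} together with Propositions \ref{prop linear alg interpr of bo} and \ref{prop computing nu}. Two small bookkeeping slips: you need \emph{both} directions $d_\tau(\xi(y),\eta(x))\geq 2r$ and $d_\tau(\xi(x),\eta(y))\geq 2r$ on $A\times B$ (so that Lemma \ref{lema sambarino lemma 5.7} applies to $\rho(\gamma)$ \emph{and} $\rho(\gamma^{-1})$), and the bound $d_\tau(\xi(y),J^o\cdot\eta(y))\geq 12r$ comes from your \emph{first} displayed inequality (valid for all $x\in\bg$, specialised to $x=y$) together with the isometry, not from the second.
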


\begin{proof}

By transversality condition (\ref{eq transv condition}) there exists $r>0$ such that
\begin{equation}\label{eq transvversalidad A y B distribucion nu}
d_{\tau}(\xi(x),\eta(y))\geq 2r \textnormal{ and } d_{\tau}(\xi(y),\eta(x))\geq 2r
\end{equation}
\noindent for all $(x,y)\in A\times B$. Further, since $o\in\pmb{\Omega}_{\rho}$ we may assume
\begin{equation} \label{eq seisr en teo distribucion nu}
d_{\tau}(J^o\cdot\xi(x),\eta(x))\geq 6r
\end{equation}

\noindent for all $x\in\bg$. Given these $r>0$ and $2\delta>0$, we consider $\varepsilon>0$ as in Benoist's Theorem \ref{teo benoist}. 

By Lemma \ref{lema sambarino lemma 5.7} there exists a finite subset $F$ of $\gh$ outside of which elements satisfying $d_{\tau}(\rho(\gamma)_+,\rho(\gamma)_-)\geq 2r$ are $(r,\varepsilon)$-proximal. Thanks to (\ref{eq transvversalidad A y B distribucion nu}), for all $\gamma\in\gh\setminus F$ with $\gamma_-\in A$ and $\gamma_+\in B$ one has that $\rho(\gamma^{\pm 1})$ is $(r,\varepsilon)$-proximal. Moreover, since $J^o=(J^o)^{-1}$ preserves $\Vert\cdot\Vert_{\tau}$ we have that $J^o\rho(\gamma)J^o$ is $(r,\varepsilon)$-proximal with $(J^o\rho(\gamma)J^o)_{\pm}=J^o\cdot\rho(\gamma)_{\pm}$. In fact, by (\ref{eq seisr en teo distribucion nu}) we have

\bc
$d_{\tau}(J^o\cdot\rho(\gamma)_{+},\rho(\gamma^{-1})_{-})\geq 6r \textnormal{ and } d_{\tau}(\rho(\gamma^{-1})_{+},J^o\cdot\rho(\gamma)_{-})\geq 6r$.
\ec

\noindent Thanks to Proposition \ref{prop linear alg interpr of bo} (and Corollary \ref{cor gammao in cowmayor}), Proposition \ref{prop computing nu}, Theorem \ref{teo benoist} and the fact that $\lambda_1(\rho(\gamma^{-1}))$ equals $\lambda_1(\rho(\gamma))$ for all $\gamma$, the proof is finished.

\end{proof}

\subsection{The orbital counting functions of Theorems \ref{teorema A} and \ref{teorema B}}\label{subsec orbital counting functions}

\begin{prop}\label{prop counting with nu is well defined}

For every $t\geq 0$ one has

\bc
$\#\left\lbrace \gamma\in\Gamma: \hspace{0,3cm} \vert b^\tau(\rho(\gamma))\vert^{\frac{1}{2}} \leq t\right\rbrace<\infty$.
\ec
\end{prop}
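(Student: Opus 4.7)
The plan is to combine the properness of the polar projection $b^\tau$ (Remark~\ref{rem btau is proper}) with the properly discontinuous action of $\Gamma$ on $\pmb{\Omega}_\rho$ (Proposition~\ref{prop action on omegarho is prop discont and limit set is limit set}).

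First, since the polar decomposition $g=k\exp(b^\tau(g))h$ is right $H^o$-invariant, the map $b^\tau$ descends to a proper map $\overline{b^\tau}:\hpq\cong G/H^o\too\lieb^+$. The preimage
\[
K_t:=(\overline{b^\tau})^{-1}\bigl(\lbrace X\in\lieb^+:\; |X|^{1/2}\leq t\rbrace\bigr)\subset\hpq
\]
is therefore compact, and the condition $|b^\tau(\rho(\gamma))|^{1/2}\leq t$ is equivalent to $\rho(\gamma)\cdot o\in K_t$. The problem thus reduces to showing that the set $\lbrace \gamma\in\Gamma:\rho(\gamma)\cdot o\in K_t\rbrace$ is finite.

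Second, I argue by contradiction. Since the action of $\Gamma$ on $\pmb{\Omega}_\rho$ is properly discontinuous and $o\in\pmb{\Omega}_\rho$, applying the definition of proper discontinuity to the compact set $C=\lbrace o\rbrace$ shows that the stabilizer $\lbrace\gamma\in\Gamma:\rho(\gamma)\cdot o=o\rbrace$ is finite. Hence, if infinitely many $\gamma$ satisfied $\rho(\gamma)\cdot o\in K_t$, the orbit piece $\rho(\Gamma)\cdot o\cap K_t$ would itself be infinite and, by compactness of $K_t\subset\hpq$, would admit an accumulation point $p\in K_t$. But the second half of Proposition~\ref{prop action on omegarho is prop discont and limit set is limit set} forces every accumulation point of $\rho(\Gamma)\cdot o$ in $\hpq\cup\partial\hpq$ to lie in $\Lambda_{\rho(\Gamma)}\subset\partial\hpq$, which is disjoint from $K_t$. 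This contradiction concludes the proof.

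The only non-trivial input is the properness of $\overline{b^\tau}$, which is granted by Remark~\ref{rem btau is proper}; the remaining steps are formal. In particular, the finer estimate of Lemma~\ref{lema jrhojrho prox dos}(5) comparing $|b^\tau(\rho(\gamma))|^{1/2}$ with $\lambda_1(\rho(\gamma))$ is not needed here — it will only enter the actual asymptotic count underlying Theorem~\ref{teorema B}.
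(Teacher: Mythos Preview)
Your proof is correct and follows essentially the same approach as the paper's: both use Remark~\ref{rem btau is proper} to conclude that the sublevel set is compact in $\hpq$, and then invoke Proposition~\ref{prop action on omegarho is prop discont and limit set is limit set} to conclude that only finitely many orbit points $\rho(\gamma)\cdot o$ can lie in it. You simply make explicit the two ingredients (finite stabilizer, accumulation points forced into $\partial\hpq$) that the paper compresses into a single sentence.
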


\begin{proof}

By Remark \ref{rem btau is proper} the map $b^\tau$ descends to a proper map in $\hpq\cong G/H^o$, that we still denote by $b^\tau$. Hence

\bc
$C:=\lbrace o'\in\hpq:\hspace{0,3cm}  \vert b^\tau(o')\vert\leq t^2 \rbrace$
\ec

\noindent is compact. By Proposition \ref{prop action on omegarho is prop discont and limit set is limit set}, apart from possibly finitely many exceptions $\gamma$ in $\Gamma$, we have that $\rho(\gamma)\cdot o$ does not belong to $C$.

\end{proof}

The next proposition follows from a combination of Propositions \ref{prop linear alg interpr of bo} and \ref{prop computing nu}, Lemmas \ref{lema jrhojrho prox uno} and \ref{lema benoist proximal comp vasing y vap}, and the previous proposition.

\begin{prop}\label{prop counting with lambdauno is well defined}

For every $t\geq 0$ one has

\bc
$\#\left\lbrace \gamma\in\Gamma: \hspace{0,3cm} \rho(\gamma)\in\mathscr{C}_{o,G}^> \tn{ and } \vert b^o(\rho(\gamma))\vert^{\frac{1}{2}} \leq t\right\rbrace<\infty$.
\ec
\begin{flushright}
$\square$
\end{flushright}
\end{prop}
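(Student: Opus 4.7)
The plan is to compare the pseudo-Riemannian quantity $\vert b^o(\rho(\gamma))\vert^{1/2}$ with its Riemannian counterpart $\vert b^\tau(\rho(\gamma))\vert^{1/2}$ and to deduce the claim from the previous proposition, which has already settled the finiteness in the Riemannian setting. The link between the two projections is provided by the linear-algebraic interpretations of Propositions \ref{prop linear alg interpr of bo} and \ref{prop computing nu}: when $\rho(\gamma)\in\mathscr{C}_{o,G}^>$ one has
\[
\vert b^o(\rho(\gamma))\vert^{1/2} = \tfrac{1}{2}\lambda_1\bigl(J^o\rho(\gamma)J^o\rho(\gamma^{-1})\bigr),\qquad \vert b^\tau(\rho(\gamma))\vert^{1/2} = \tfrac{1}{2}\log\bigl\Vert J^o\rho(\gamma)J^o\rho(\gamma^{-1})\bigr\Vert_{\tau},
\]
so the task reduces to comparing the spectral radius of the matrix $M_\gamma:=J^o\rho(\gamma)J^o\rho(\gamma^{-1})$ with its operator norm.

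Next, I would invoke Lemma \ref{lema jrhojrho prox uno} to fix $0<\varepsilon\leq r$ such that $M_\gamma$ is $(r,\varepsilon)$-proximal for every $\gamma$ outside some finite exceptional set $F\subset\Gamma$. Lemma \ref{lema benoist proximal comp vasing y vap} then provides a constant $c=c_{r,\varepsilon}>0$ such that for every $\gamma\in\Gamma\setminus F$ one has
\[
\log\Vert M_\gamma\Vert_\tau - c \;\leq\; \lambda_1(M_\gamma) \;\leq\; \log\Vert M_\gamma\Vert_\tau .
\]
Dividing by $2$ and substituting the two equalities above yields, for all $\gamma\in\Gamma\setminus F$ with $\rho(\gamma)\in\mathscr{C}_{o,G}^>$, the bound
\[
\vert b^\tau(\rho(\gamma))\vert^{1/2} - \tfrac{c}{2} \;\leq\; \vert b^o(\rho(\gamma))\vert^{1/2} \;\leq\; \vert b^\tau(\rho(\gamma))\vert^{1/2}.
\]

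It follows that if $\rho(\gamma)\in\mathscr{C}_{o,G}^>$ and $\vert b^o(\rho(\gamma))\vert^{1/2}\leq t$, then $\vert b^\tau(\rho(\gamma))\vert^{1/2}\leq t+c/2$. Up to the finite set $F$, the counting set of the statement is therefore contained in the set
\[
\bigl\{\gamma\in\Gamma:\ \vert b^\tau(\rho(\gamma))\vert^{1/2}\leq t+c/2\bigr\},
\]
which is finite by Proposition \ref{prop counting with nu is well defined}. Strictly speaking there is no real obstacle here: all the substantive work is already buried in Proposition \ref{prop counting with nu is well defined} (properness of $b^\tau$ on $\hpq$ together with proper discontinuity on $\pmb{\Omega}_\rho$) and in Benoist's comparison between spectral radius and operator norm for $(r,\varepsilon)$-proximal matrices; the only thing one must notice is that the uniform proximality of $M_\gamma$ guaranteed by Lemma \ref{lema jrhojrho prox uno} turns the trivial inequality $\lambda_1\leq \log\Vert\cdot\Vert_\tau$ into a two-sided comparison, which is exactly what allows to dominate the pseudo-Riemannian projection by the Riemannian one up to an additive constant.
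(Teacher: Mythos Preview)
Your proof is correct and follows exactly the approach indicated in the paper, which states that the proposition follows from a combination of Propositions \ref{prop linear alg interpr of bo} and \ref{prop computing nu}, Lemmas \ref{lema jrhojrho prox uno} and \ref{lema benoist proximal comp vasing y vap}, and Proposition \ref{prop counting with nu is well defined}. You have simply made explicit the comparison between $\lambda_1(M_\gamma)$ and $\log\Vert M_\gamma\Vert_\tau$ that the paper leaves to the reader.
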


\begin{rem}\label{rem crit exponent coindes with the one of GM}
Assume that $\rho$ is $\hpq$-convex co-compact and the basepoint $o$ belongs to the convex hull of the limit set of $\rho$. By Corollary \ref{cor gammao in cowmayor} and Proposition \ref{prop ell dXG y vertboverto} we have that

\bc
$\displaystyle\limsup_{t\too\infty}\dfrac{\log\#\lbrace \gamma\in\Gamma: \hspace{0,3cm} \rho(\gamma)\in\mathscr{C}_{o,G}^> \tn{ and } \vert b^o(\rho(\gamma))\vert^{\frac{1}{2}}\leq t\rbrace}{t}$
\ec

\noindent coincides with

\bc
$\displaystyle\limsup_{t\too\infty}\dfrac{\log\#\lbrace \gamma\in\Gamma: \hspace{0,3cm} d_{\hpq}(o,\rho(\gamma)\cdot o)\leq t\rbrace}{t}$,
\ec

\noindent where $d_{\hpq}$ is the $\hpq$-distance introduced in \cite{GM}.

\begin{flushright}
$\diamond$
\end{flushright}
\end{rem}

\subsection{Weak triangle inequality}\label{subsec weak triangle}

The following proposition is inspired by \cite[Theorem 3.5]{GM}.

\begin{prop} \label{prop traingle inequality}
There exists a constant $L>0$ such that for every $f\in\Gamma$ there exists $D_f>0$ with the following property:  for every $\gamma\in\Gamma$  with $\vert\gamma\vert_\Gamma>L$ one has

\bc
$\frac{1}{2}\lambda_1(J^o\rho(f)\rho(\gamma)J^o\rho(\gamma^{-1})\rho(f^{-1}))\leq D_f+\frac{1}{2}\lambda_1(J^o\rho(\gamma)J^o\rho(\gamma^{-1}))$.
\ec

\end{prop}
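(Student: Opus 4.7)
The strategy is to route everything through the polar projection $b^{\tau}$, which (unlike $b^{o}$) is defined on all of $G$. Combining Propositions \ref{prop interpretation of btau in symg} and \ref{prop computing nu}, together with $G$-invariance of the Riemannian metric, gives the equality
\[
d_{X_G}(\tau, g \cdot S^o) \;=\; \tfrac{1}{2}\log\|J^o g J^o g^{-1}\|_\tau
\]
for every $g\in G$. The spectral radius bound $\lambda_1(M)\le \log\|M\|_\tau$ will turn the left-hand side of the desired inequality into an honest distance in $X_G$, where the triangle inequality applies; while Lemma \ref{lema jrhojrho prox uno} combined with Lemma \ref{lema benoist proximal comp vasing y vap} will turn the right-hand side back into (essentially) a distance in $X_G$, up to an additive constant.

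For the upper bound on the left-hand side, I would apply $\lambda_1 \le \log\|\cdot\|_\tau$ to $M = J^o\rho(f\gamma)J^o\rho(f\gamma)^{-1}$ and then use the triangle inequality in $X_G$ together with $\tau\in S^o$ and the $G$-invariance of $d_{X_G}$ to obtain
\[
\tfrac{1}{2}\lambda_1\!\left(J^o\rho(f\gamma)J^o\rho(f\gamma)^{-1}\right)
\;\le\; d_{X_G}(\tau,\rho(f\gamma)\cdot S^o)
\;\le\; d_{X_G}(\tau,\rho(f)\cdot\tau) + d_{X_G}(\tau,\rho(\gamma)\cdot S^o).
\]
For the matching lower bound on the right-hand side, Lemma \ref{lema jrhojrho prox uno} produces uniform constants $0<\varepsilon\le r$ and a threshold $L$ such that for every $\gamma$ with $|\gamma|_\Gamma > L$ the matrix $J^o\rho(\gamma)J^o\rho(\gamma^{-1})$ is $(r,\varepsilon)$-proximal; then Lemma \ref{lema benoist proximal comp vasing y vap} yields a constant $c = c_{r,\varepsilon}$ (independent of $\gamma$) with
\[
d_{X_G}(\tau,\rho(\gamma)\cdot S^o) \;=\; \tfrac{1}{2}\log\|J^o\rho(\gamma)J^o\rho(\gamma^{-1})\|_\tau
\;\le\; \tfrac{1}{2}\lambda_1\!\left(J^o\rho(\gamma)J^o\rho(\gamma^{-1})\right) + \tfrac{c}{2}.
\]
Chaining the two chains gives the proposition with $D_f := d_{X_G}(\tau,\rho(f)\cdot\tau) + \tfrac{c}{2}$.

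The only delicate point is that the statement demands $L$ to be chosen \emph{independently of $f$}. This is exactly why the proof is organized asymmetrically: the proximality input is used only on $\rho(\gamma)$ (not on $\rho(f\gamma)$), so the threshold $L$ comes from Lemma \ref{lema jrhojrho prox uno} applied to $\gamma$ alone, which is manifestly $f$-independent. The dependence on $f$ is entirely absorbed into the additive constant $D_f$ through the triangle inequality step. No additional hypothesis such as $\rho(f\gamma)\in\mathscr{C}^{>}_{o,G}$ is needed, because the trivial bound $\lambda_1\le\log\|\cdot\|_\tau$ works without proximality; in particular, it is crucial to use the polar projection $b^\tau$ rather than $b^o$, since the latter is not defined outside $\mathscr{C}^{>}_{o,G}$.
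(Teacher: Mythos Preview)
Your proof is correct and follows essentially the same route as the paper's: bound the left-hand spectral radius by the operator norm, split off the $f$-contribution, then use Lemma \ref{lema jrhojrho prox uno} together with Lemma \ref{lema benoist proximal comp vasing y vap} to pass from norm back to spectral radius on the right, with the threshold $L$ coming only from the proximality of $J^o\rho(\gamma)J^o\rho(\gamma^{-1})$ and hence independent of $f$. The only difference is cosmetic: you phrase the splitting step as the triangle inequality in $X_G$ via the polar-projection identity $d_{X_G}(\tau,g\cdot S^o)=\tfrac{1}{2}\log\|J^ogJ^og^{-1}\|_\tau$, whereas the paper uses operator-norm submultiplicativity directly (after removing the outer $J^o$, which preserves $\|\cdot\|_\tau$), obtaining $D_f'=\tfrac{1}{2}\log\|\rho(f)\|_\tau+\tfrac{1}{2}\log\|\rho(f^{-1})\|_\tau$ in place of your $d_{X_G}(\tau,\rho(f)\cdot\tau)$.
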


We can think about the content of Proposition \ref{prop traingle inequality} as follows. Fix $f\in\Gamma$ such that $\rho(f)\in\mathscr{C}_{o,G}^>$. By Corollary \ref{cor gammao in cowmayor} for every $\gamma$ with $\vert\gamma\vert_\Gamma$ large enough one has $\rho(\gamma)\in\mathscr{C}_{o,G}^>$ and $\rho(f)\rho(\gamma)\in\mathscr{C}_{o,G}^>$. Thanks to Proposition \ref{prop ell dXG y vertboverto} and Proposition \ref{prop linear alg interpr of bo}, the inequality established in Proposition \ref{prop traingle inequality} can be stated as

\bc
$\ell_{o,\rho(f)\rho(\gamma)\cdot o}\leq D_f+\ell_{\rho(f)\cdot o,\rho(f)\rho(\gamma)\cdot o}$,
\ec

\noindent where the constant $D_f$ depends on the choice of $o$ and $f$ (and $\rho$) but not on the choice of $\gamma$. Even though the function $\ell_{\cdot,\cdot}$ is not a distance, we can heuristically think about $D_f$ as the term that replaces $\ell_{o,\rho(f)\cdot o}$ in the usual triangle inequality for distances.

\begin{proof}[Proof of Proposition \ref{prop traingle inequality}]

Take $0<\varepsilon\leq r$ as in Lemma \ref{lema jrhojrho prox uno}. Let $L>0$ such that for every $\gamma$ in $\Gamma$ with $\vert \gamma\vert_\Gamma >L$ the matrix $J^o\rho(\gamma)J^o\rho(\gamma^{-1})$ is $(r,\varepsilon)$-proximal. Fix $f\in\Gamma$ and let $\gamma$ be a element in $\Gamma$ with $\vert \gamma\vert_\Gamma >L$. We have

\bc
$\frac{1}{2}\lambda_1( J^o\rho(f)\rho(\gamma)J^o\rho(\gamma^{-1})\rho(f^{-1}))\leq\frac{1}{2}\log\Vert J^o\rho(f)\rho(\gamma)J^o\rho(\gamma^{-1})\rho(f^{-1})\Vert_\tau$.
\ec

\noindent By Remark \ref{rem J preserva norma} the right side number equals $\frac{1}{2}\log\Vert \rho(f)\rho(\gamma)J^o\rho(\gamma^{-1})\rho(f^{-1})\Vert_\tau$ which is less than or equal to

\bc
$  D_f'+\frac{1}{2}\log\Vert J^o\rho(\gamma)J^o\rho(\gamma^{-1})\Vert_\tau$
\ec

\noindent where $D_f':=\frac{1}{2}\log\Vert \rho(f)\Vert_\tau +\frac{1}{2}\log \Vert\rho(f^{-1})\Vert_\tau$. Since $J^o\rho(\gamma)J^o\rho(\gamma^{-1})$ is $(r,\varepsilon)$-proximal, we conclude by applying Lemma \ref{lema benoist proximal comp vasing y vap}.

\end{proof}

\section{Distribution of the orbit of $o$ with respect to $ b^o$}\label{section distrib wrt bo}
\setcounter{equation}{0}

In this section we prove Theorem \ref{teorema A}. The section is structured as follows: in Subsection \ref{subsec cocycle co} we define a Hölder cocycle on $\bg$ and the corresponding flow. In Subsection \ref{subsec dual and gromov co} we study the associated Gromov product. Theorem \ref{teorema A} in the torsion free case (resp. general case) is proved in Subsection \ref{subsec dist of fixed wrt bo} (resp. Subsection \ref{subsec proof teo A}).

For the rest of the section we fix $\rho:\Gamma\too G$ a $P_1^{p,q}$-Anosov representation and a point $o$  in $\pmb{\Omega}_\rho$.

\subsection{The cocycle $c_o$}\label{subsec cocycle co}

Observe that by definition of $\pmb{\Omega}_\rho$ and equivariance of the curves $\xi$ and $\eta$ the following map is well-defined.

\begin{dfn} \label{dfn cocycle o}

Let

\bc
$c_o:\Gamma\times\bg\too\rr: \hspace{0,3cm} c_o(\gamma,x):=\dfrac{1}{2}\log\left\vert\dfrac{\theta_{x}\left(\rho(\gamma^{-1})J^o\rho(\gamma)\cdot v_x\right)}{\theta_{x}\left(J^o\cdot v_x\right)}\right\vert$,
\ec

\noindent where $\theta_{x}:\rr^d\too\rr$ is a non-zero linear functional whose kernel equals $\eta(x)$ and $v_{x}\neq 0$ belongs to $\xi(x)$.
\end{dfn}

A geometric interpretation of the map $c_o$ is provided by the following remark. This characterization will not be used in the sequel.

\begin{rem} \label{rem co es la buseman de gm}

One can prove that for every $\gamma\in\Gamma$ and $x\in\bg$ one has

\bc
$c_{o}(\gamma,x)=\beta_{\xi(x)}(\rho(\gamma^{-1})\cdot o,o)$
\ec

\noindent where $\beta_\cdot(\cdot,\cdot)$ is the pseudo-Riemannian Busemann function defined by Glorieux-Monclair \cite[Definition 3.8]{GM}.
\begin{flushright}
$\diamond$
\end{flushright}
\end{rem}

Recall that a \textit{Hölder cocycle} is a function $c:\Gamma\times\bg\too\rr$ satisfying that for every $\gamma_0,\gamma_1$ in $\Gamma$ and $x\in\bg$ one has 

\bc
$c(\gamma_0\gamma_1,x)=c(\gamma_0,\gamma_1\cdot x)+c(\gamma_1,x)$
\ec

\noindent and such that the map $c(\gamma_0,\cdot)$ is Hölder (with the same exponent for every $\gamma_0$). The \textit{period} of (an infinite order element) $\gamma\in \gh$ is defined by $\ell_{c}(\gamma):=c(\gamma,\gamma_+)$.

\begin{lema}\label{lema cocycle o and periods}
The map $c_o$ is a Hölder cocycle. The period of $\gamma\in\gh$ is given by

\bc
$\ell_{c_o}(\gamma)=\lambda_1(\rho(\gamma))>0$.
\ec

\end{lema}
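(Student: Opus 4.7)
The proof has three independent pieces: the cocycle identity, the Hölder regularity, and the value of the period. I would handle them in this order.

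\emph{Cocycle identity.} The key point is that the ratio in the definition is independent of the choices of $v_x$ and $\theta_x$, since both numerator and denominator scale linearly in $v_x$ and in $\theta_x$. By equivariance of the limit curves, for any $\gamma_1\in\Gamma$ and $x\in\bg$ one can take $v_{\gamma_1\cdot x}=\rho(\gamma_1)v_x$ and $\theta_{\gamma_1\cdot x}=\theta_x\circ\rho(\gamma_1^{-1})$. Substituting these into the definition of $c_o(\gamma_0,\gamma_1\cdot x)$ and telescoping against $c_o(\gamma_1,x)$ produces
\begin{equation*}
c_o(\gamma_0,\gamma_1\cdot x)+c_o(\gamma_1,x)=\tfrac{1}{2}\log\left\vert\frac{\theta_x(\rho(\gamma_1^{-1})\rho(\gamma_0^{-1})J^o\rho(\gamma_0)\rho(\gamma_1)v_x)}{\theta_x(J^ov_x)}\right\vert,
\end{equation*}
which is exactly $c_o(\gamma_0\gamma_1,x)$.

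\emph{Hölder regularity.} Hölder regularity of $\xi$ and $\eta$ (see \cite[Lemma 2.5]{BCLS}, as recalled in the paper) lets one pick Hölder local sections $x\mapsto v_x$ and $x\mapsto\theta_x$ with exponent depending only on $\rho$. Fix $\gamma_0\in\Gamma$; the denominator $\theta_x(J^ov_x)$ is a Hölder function of $x$, and since $o\in\pmb{\Omega}_\rho$ together with compactness of $\bg$ it is bounded away from zero uniformly in $x$. The numerator rewrites as $\theta_x(J^{\rho(\gamma_0^{-1})\cdot o}\cdot v_x)$ up to a non-vanishing scalar depending on $\gamma_0$; invariance of $\pmb{\Omega}_\rho$ under $\Gamma$ shows $\rho(\gamma_0^{-1})\cdot o\in\pmb{\Omega}_\rho$, so this numerator is again Hölder and bounded away from zero (the Hölder constant will depend on $\gamma_0$, but the exponent does not). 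Passing to the log yields the required Hölder property with uniform exponent.

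\emph{Period.} Take $x=\gamma_+$ and choose $v_{\gamma_+}\in\xi(\gamma_+)=\rho(\gamma)_+$, so that $\rho(\gamma)v_{\gamma_+}=e^{\lambda_1(\rho(\gamma))}v_{\gamma_+}$. The hyperplane $\eta(\gamma_+)=\rho(\gamma^{-1})_-$ is $\rho(\gamma)$-invariant, hence $\theta_{\gamma_+}\circ\rho(\gamma^{-1})$ is proportional to $\theta_{\gamma_+}$. To determine the proportionality constant, evaluate on the bottom eigenvector of $\rho(\gamma)$, which equals $\xi(\gamma_-)$ and is transverse to $\eta(\gamma_+)$ by the transversality condition: this gives $\theta_{\gamma_+}\circ\rho(\gamma^{-1})=e^{\lambda_1(\rho(\gamma))}\theta_{\gamma_+}$. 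Substituting,
\begin{equation*}
c_o(\gamma,\gamma_+)=\tfrac{1}{2}\log\left\vert\frac{e^{\lambda_1(\rho(\gamma))}\theta_{\gamma_+}(\rho(\gamma^{-1})J^ov_{\gamma_+})}{\theta_{\gamma_+}(J^ov_{\gamma_+})}\right\vert=\tfrac{1}{2}\log e^{2\lambda_1(\rho(\gamma))}=\lambda_1(\rho(\gamma)),
\end{equation*}
the denominator being nonzero precisely because $o\in\pmb{\Omega}_\rho$. Finally, $\lambda_1(\rho(\gamma))>0$: since $\rho(\gamma)\in G=\tn{PSO}(p,q)$, non-unit eigenvalues come in reciprocal pairs $(\mu,\mu^{-1})$, so proximality forces the top eigenvalue to have modulus strictly greater than one.

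The only mildly delicate point is the third step, where one needs to correctly identify the scalar by which $\rho(\gamma^{-1})$ acts on $\theta_{\gamma_+}$; everything else is a formal manipulation.
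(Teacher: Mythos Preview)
Your proof is correct and follows the same approach as the paper's own proof, which is extremely terse (it just says ``a direct computation shows that $c_o$ is a H\"older cocycle'' and then gives the one-line observation that $\theta_{\gamma_+}\circ(\pm\rho(\gamma^{-1}))=\pm e^{\lambda_1(\rho(\gamma))}\theta_{\gamma_+}$); you have simply filled in those computations. One tiny point of care: since $G=\tn{PSO}(p,q)$ the lifts are only defined up to sign, so strictly speaking $\rho(\gamma)v_{\gamma_+}=\pm e^{\lambda_1(\rho(\gamma))}v_{\gamma_+}$ and likewise for the functional---the paper mentions this explicitly---but this is harmless since you are inside an absolute value.
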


\begin{proof}

A direct computation shows that $c_o$ is a Hölder cocycle.

On the other hand let $\gamma\in\gh$ and fix a particular choice of a linear functional $\theta_{\gamma_+}$. Since $\lambda_1(\rho(\gamma))=\lambda_1(\rho(\gamma^{-1}))$ one sees that $\theta_{\gamma_+}\circ(\pm\rho(\gamma^{-1}))$ coincides with $ e^{\lambda_1(\rho(\gamma))}\theta_{\gamma_+}$ up to a sign (here $\pm\rho(\gamma^{-1})$ denotes some lift of $\rho(\gamma^{-1})$ to $\tn{SO}(p,q)$). The proof is now complete.

\end{proof}

Set $\bgc:=\lbrace(x,y)\in\bg\times\bg:\hspace{0,3cm} x\neq y\rbrace$ and consider the \textit{translation flow} on $\bgc\times \rr$ defined by
\begin{equation}\label{eq translation flow}
\psi_t(x,y,s):=(x,y,s-t).
\end{equation}
\noindent The group $\Gamma$ acts on $ \bgc\times\rr$ by
\begin{equation} \label{eq action via co}
\gamma\cdot(x,y,s):=(\gamma\cdot x,\gamma\cdot y, s-c_o(\gamma,y)).
\end{equation}
\noindent This action is proper and co-compact and we denote the quotient space by $\tn{U}_o\Gamma$. The flow $\psi_t$ descends to a flow on $\tn{U}_o\Gamma$, still denoted $\psi_t$, which is a Hölder reparametrization of the Gromov geodesic flow of $\Gamma$ \cite{Gro}. This is the analogue of Sambarino's Theorem \cite[Theorem 3.2(1)]{Sam} (see also Lemma \ref{lema conj urhogamma y uogamma}).

We say that an element $\gamma$ in $\Gamma$ is \textit{primitive} if cannot be written as a positive power of another element in $\Gamma$. Periodic orbits of $\psi_t$ are in one-to-one correspondence with conjugacy classes of primitive elements in $\Gamma$. If $[\gamma]$ is such a conjugacy class, the period of the corresponding periodic orbit is 

\bc
$\ell_{c_o}(\gamma)=\lambda_1(\rho(\gamma))$
\ec

\noindent (see Fact \ref{fact urhogamma is anosov} and Lemma \ref{lema conj urhogamma y uogamma}). The topological entropy of $\psi_t$ coincides with the \textit{entropy} of $\rho$ defined by Bridgeman-Canary-Labourie-Sambarino \cite{BCLS}:

\vspace{0,2cm}

\bc
$h_\tn{top}(\psi_t)=h_\rho:=\displaystyle\limsup_{t\too\infty} \dfrac{\log\#\lbrace [\gamma]\in [\Gamma]:\hspace{0,3cm} \gamma \tn{ is primitive and }\lambda_1(\rho(\gamma))\leq t\rbrace}{t}$.

\ec

\vspace{0,2cm}

\noindent It is positive and finite (c.f. Fact \ref{fact entropy, equidistribution and counting for urhogamma}) and will be denoted by $h$ from now on.

\begin{rem}
One can prove that if we \textit{push} all this construction by the limit map $\xi:\bg\too\Lambda_{\rho(\Gamma)}$, we recover the geodesic flow defined in \cite[Subsection 6.1]{GM} for $\hpq$-convex co-compact groups. This remark will not be used in the sequel.
\begin{flushright}
$\diamond$
\end{flushright}
\end{rem}

\subsection{Dual cocycle and Gromov product}\label{subsec dual and gromov co}

Thanks to transversality condition (\ref{eq transv condition}) and the fact that $o$ belongs to $\pmb{\Omega}_\rho$ the following map is well-defined.

\begin{dfn}
Let

\bc
$[\cdot,\cdot]_o:\bgc\too\rr: \hspace{0,3cm} [x,y]_o:=-\dfrac{1}{2}\log\left\vert  \dfrac{\theta_{x}\left(J^o\cdot v_x\right)\theta_{y}\left(J^o\cdot v_y\right)}{\theta_{x}\left(v_y\right)\theta_{y}\left(v_x\right)}\right\vert$,
\ec

\noindent where $\theta_x$ (resp. $\theta_y$) is a non-zero linear functional whose kernel is $\eta(x)$ (resp. $\eta(y)$) and $v_x$ (resp. $v_y$) is a non-zero vector in $\xi(x)$ (resp. $\xi(y)$).
\end{dfn}

\begin{rem}\label{rem gromov o is the one of GM}
The map $[\cdot,\cdot]_o$ coincides, up to a sign, with the Gromov product introduced in \cite[Subsection 3.5]{GM}. The authors give geometric interpretations of this function using pseudo-Riemannian geometry.
\begin{flushright}
$\diamond$
\end{flushright}
\end{rem}

\begin{rem}\label{rem coco are dual}
The cocycle $c_o$ is dual to itself, i.e. $\ell_{c_o}(\gamma)=\ell_{c_{o}}(\gamma^{-1})$ for every $\gamma\in\gh$. Indeed, this follows from Lemma \ref{lema cocycle o and periods} and the fact that $\lambda_1(g)=\lambda_1(g^{-1})$ for all $g$ in $ G$.
\begin{flushright}
$\diamond$
\end{flushright}
\end{rem}

The proof of the following lemma is a direct computation.

\begin{lema}\label{lema gromov o}
The map $[\cdot,\cdot]_o$ is a Gromov product for the pair $\lbrace c_o,c_o\rbrace$, that is, for every $\gamma\in\Gamma$ and every $(x,y)\in\bgc$ one has

\bc
$[\gamma\cdot x,\gamma\cdot y]_o-[x,y]_o=-(c_o(\gamma,x)+c_o(\gamma,y))$.
\ec
\begin{flushright}
$\square$
\end{flushright}
\end{lema}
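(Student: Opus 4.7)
This is a direct computation from the definitions, exploiting two things: (i) the expressions for $c_o(\gamma,\cdot)$ and $[\cdot,\cdot]_o$ are invariant under rescaling of the chosen representatives $v_x$ and $\theta_x$, so one is free to make convenient choices; (ii) the limit maps $\xi,\eta$ are $\rho$-equivariant, so for any $\gamma\in\Gamma$ the choices
\[
v_{\gamma\cdot x}:=\rho(\gamma)\cdot v_x,\qquad \theta_{\gamma\cdot x}:=\theta_x\circ\rho(\gamma^{-1})
\]
are valid representatives of $\xi(\gamma\cdot x)$ and $\eta(\gamma\cdot x)$ respectively (since $\eta(\gamma\cdot x)=\rho(\gamma)\cdot\eta(x)$ is exactly the kernel of $\theta_x\circ\rho(\gamma^{-1})$).

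The plan is to substitute these choices into the definition of $[\gamma\cdot x,\gamma\cdot y]_o$. The ``cross'' terms simplify trivially:
\[
\theta_{\gamma\cdot x}(v_{\gamma\cdot y})=\theta_x\bigl(\rho(\gamma^{-1})\rho(\gamma)\cdot v_y\bigr)=\theta_x(v_y),
\]
and similarly $\theta_{\gamma\cdot y}(v_{\gamma\cdot x})=\theta_y(v_x)$, so the denominator in $[\gamma\cdot x,\gamma\cdot y]_o$ matches the denominator in $[x,y]_o$. The ``diagonal'' terms pick up precisely the $J^o$-twist that appears in $c_o$:
\[
\theta_{\gamma\cdot x}(J^o\cdot v_{\gamma\cdot x})=\theta_x\bigl(\rho(\gamma^{-1})J^o\rho(\gamma)\cdot v_x\bigr),
\]
and analogously for $y$.

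Putting this together yields
\[
[\gamma\cdot x,\gamma\cdot y]_o-[x,y]_o=-\tfrac{1}{2}\log\left\vert\frac{\theta_x(\rho(\gamma^{-1})J^o\rho(\gamma)\cdot v_x)}{\theta_x(J^o\cdot v_x)}\right\vert-\tfrac{1}{2}\log\left\vert\frac{\theta_y(\rho(\gamma^{-1})J^o\rho(\gamma)\cdot v_y)}{\theta_y(J^o\cdot v_y)}\right\vert,
\]
which is exactly $-c_o(\gamma,x)-c_o(\gamma,y)$ by Definition \ref{dfn cocycle o}. No serious obstacle is expected; the only thing to verify carefully is that using the equivariant choices of $v_{\gamma\cdot x}$ and $\theta_{\gamma\cdot x}$ is legitimate, which follows from scaling invariance of both sides of the identity.
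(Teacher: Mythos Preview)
Your proof is correct and is precisely the direct computation the paper alludes to (the paper omits the details entirely). The key observations you make---scaling invariance of both expressions and the equivariant choices $v_{\gamma\cdot x}=\rho(\gamma)\cdot v_x$, $\theta_{\gamma\cdot x}=\theta_x\circ\rho(\gamma^{-1})$---are exactly what is needed, and the algebra goes through as you wrote.
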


The following lemma will be very important in the proof of Theorem \ref{teorema A}. It provides a geometric interpretation of the Gromov product different from the one given in Remark \ref{rem gromov o is the one of GM}.

\begin{lema}\label{lema computing gromov on gammapm o}
Let $\gamma$ be an element of $\gh$. Then

\bc
$[\gamma_-,\gamma_+]_o=-\frac{1}{2} \bb(J^o\cdot\rho(\gamma)_-,J^o\cdot\rho(\gamma)_+,\rho(\gamma^{-1})_-,\rho(\gamma^{-1})_+)$.
\ec
\end{lema}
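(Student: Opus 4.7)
The strategy is to express the right-hand side cross-ratio directly in terms of quantities defined via the boundary maps $\xi,\eta$ using the identifications $\xi(\gamma_+)=\rho(\gamma)_+$, $\eta(\gamma_+)=\rho(\gamma^{-1})_-$, $\xi(\gamma_-)=\rho(\gamma^{-1})_+$, $\eta(\gamma_-)=\rho(\gamma)_-$, and then match it term by term with the definition of $[\gamma_-,\gamma_+]_o$. The whole argument is essentially a book-keeping exercise once we choose the right representatives.

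Set $x:=\gamma_-$ and $y:=\gamma_+$, and fix non-zero vectors $v_x\in\xi(x)=\rho(\gamma^{-1})_+$ and $v_y\in\xi(y)=\rho(\gamma)_+$, together with non-zero linear functionals $\theta_x,\theta_y$ satisfying $\ker\theta_x=\eta(x)=\rho(\gamma)_-$ and $\ker\theta_y=\eta(y)=\rho(\gamma^{-1})_-$. The plan is then to evaluate $\bb(J^o\!\cdot\!\rho(\gamma)_-,J^o\!\cdot\!\rho(\gamma)_+,\rho(\gamma^{-1})_-,\rho(\gamma^{-1})_+)$ using the following representatives for its four arguments:
\begin{itemize}
\item functional $\theta_x\circ J^o$, whose kernel equals $J^o\!\cdot\!\rho(\gamma)_-$, for the first slot;
\item vector $J^o\cdot v_y\in J^o\!\cdot\!\rho(\gamma)_+$ for the second slot;
\item functional $\theta_y$, whose kernel is $\rho(\gamma^{-1})_-$, for the third slot;
\item vector $v_x\in\rho(\gamma^{-1})_+$ for the fourth slot.
\end{itemize}

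Next I would check that these representatives are admissible, i.e.\ that the pairs of (functional, vector) entering the definition of $\bb$ land in $\ppcu$. Two of the four conditions follow from the transversality $\xi(x)\oplus\eta(y)=\rr^d$ (applied to $(x,y)=(\gamma_-,\gamma_+)$) together with $\rho(\gamma)$-proximality, while the remaining two ($v_x\notin \ker(\theta_x\circ J^o)$ and $J^o v_y\notin \ker\theta_y$) translate respectively into $J^o\cdot\xi(\gamma_-)\not\subset\eta(\gamma_-)$ and $J^o\cdot\xi(\gamma_+)\not\subset\eta(\gamma_+)$, both of which hold thanks to $o\in\pmb{\Omega}_\rho$ (as rewritten in Proposition~\ref{prop fijos de Jo en borde} and recalled via \eqref{eq omegarho en introduccion}).

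Finally, a direct substitution using only $(J^o)^2=\mathrm{id}$ gives
\begin{equation*}
\bb=\log\left|\frac{(\theta_x\circ J^o)(v_x)}{(\theta_x\circ J^o)(J^o v_y)}\cdot\frac{\theta_y(J^o v_y)}{\theta_y(v_x)}\right|=\log\left|\frac{\theta_x(J^o\cdot v_x)\,\theta_y(J^o\cdot v_y)}{\theta_x(v_y)\,\theta_y(v_x)}\right|,
\end{equation*}
which is exactly $-2[\gamma_-,\gamma_+]_o$ by the definition of the Gromov product. There is no real obstacle; the only point to be careful about is the admissibility check in the second step, since it is precisely there that the hypothesis $o\in\pmb{\Omega}_\rho$ gets used.
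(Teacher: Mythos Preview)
Your proof is correct and follows the same approach as the paper: record the identifications $\rho(\gamma)_+=\xi(\gamma_+)$, $\rho(\gamma^{-1})_+=\xi(\gamma_-)$, $\rho(\gamma)_-=\eta(\gamma_-)$, $\rho(\gamma^{-1})_-=\eta(\gamma_+)$ and then carry out a direct computation. The paper's own proof simply states these identifications (together with the observation that $(J^o\rho(\gamma)J^o)_\pm=J^o\cdot\rho(\gamma)_\pm$) and then declares that ``the proof finishes by a direct computation''; you have written out that computation explicitly, including the admissibility check in $\ppcu$ where the hypothesis $o\in\pmb{\Omega}_\rho$ enters, which is a welcome addition.
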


\begin{proof}

From Section \ref{sec anosov} we know that $\rho(\gamma^{\pm 1})$ is proximal and that the following holds:

\bc
$\rho(\gamma)_+=\xi(\gamma_{+})$, \hspace{0,5cm} $\rho(\gamma^{-1})_+=\xi(\gamma_{-})$, \hspace{0,5cm} $\rho(\gamma)_-=\eta(\gamma_-)$, \hspace{0,5cm} $\rho(\gamma^{-1})_-=\eta(\gamma_{+})$.

\ec

\noindent Since $J^o=(J^o)^{-1}$, the matrix $J^o\rho(\gamma) J^o$ is proximal and one has the equalities

\bc
$(J^o\rho(\gamma) J^o)_{+}=J^o\cdot \xi(\gamma_+)$ and $(J^o\rho(\gamma) J^o)_{-}=J^o\cdot \eta(\gamma_-)$.
\ec

\noindent The proof finishes by a direct computation.

\end{proof}

\subsection{Distribution of attractors and repellors with respect to $ b^o$} \label{subsec dist of fixed wrt bo}

Recall that $h=h_{\tn{top}}(\psi_t)$ and let $\mu_{o}$ be a \textit{Patterson-Sullivan probability} on $\bg$ associated to $c_{o}$, i.e. $\mu_o$ satisfies

\bc
$\dfrac{d\gamma_*\mu_o}{d\mu_o}(x)=e^{-hc_{o}(\gamma^{-1},x)}$
\ec

\noindent for every $\gamma\in\Gamma$. Such a probability exists (see Subsection \ref{subsub PS}). By Lemma \ref{lema gromov o} the measure
\begin{equation}
e^{-h[\cdot,\cdot]_o}\mu_o\otimes\mu_o\otimes dt
\end{equation}
\noindent on $\bgc\times\rr$ is $\Gamma$-invariant and induces on the quotient $\tn{U}_o\Gamma$ a $\psi_t$-invariant measure. By Sambarino \cite[Theorem 3.2(2)]{Sam} this measure is, up to scaling, the probability of maximal entropy of $\psi_t$ (see Proposition \ref{prop product is of maximal entropy}).

For a metric space $X$ we denote by $C_c^*(X)$ the dual of the space of compactly supported continuous real functions on $X$ equipped with the weak-star topology. If $x$ is a point in $X$, let $\delta_x\in C_c^*(X)$ be the Dirac mass at $x$.

\begin{prop}[Sambarino {\cite[Proposition 4.3]{Sam}}\footnote{For a proof in our setting see Proposition \ref{prop distribution of periodic orbits}.}]\label{prop sambarino distribution wrt periods}

There exists a constant $M=M_{\rho,o}>0$ such that

\bc
$Me^{-ht}\displaystyle\sum_{\gamma\in\gh,\ell_{c_o}(\gamma)\leq t} \delta_{\gamma_-}\otimes\delta_{\gamma_+}\too e^{-h[\cdot,\cdot]_o}\mu_o\otimes\mu_o$
\ec

\noindent as $t\too\infty$ on $C_c^*(\bgc)$.
\begin{flushright}
$\square$
\end{flushright}
\end{prop}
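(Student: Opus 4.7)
The statement coincides with \cite[Proposition 4.3]{Sam} in Sambarino's framework, and the footnote points to a later Proposition \ref{prop distribution of periodic orbits} where a proof in the current setting is given. My plan is therefore to verify that all the hypotheses needed to run Sambarino's argument are in place, and then sketch the classical Parry--Pollicott / Roblin route.

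First I would collect the ingredients. The map $c_o$ is a Hölder cocycle whose periods $\ell_{c_o}(\gamma)=\lambda_1(\rho(\gamma))$ are strictly positive by Lemma \ref{lema cocycle o and periods}; $[\cdot,\cdot]_o$ is a genuine Gromov product for the pair $\{c_o,c_o\}$ by Lemma \ref{lema gromov o}; a Patterson--Sullivan probability $\mu_o$ at the critical exponent $h$ exists (Subsection \ref{subsub PS}); and Proposition \ref{prop geod flow is weak mixing} asserts that the group spanned by $\{\lambda_1(\rho(\gamma))\}_{\gamma\in\Gamma}$ is non--discrete in $\rr$. Using Lemma \ref{lema gromov o} together with the quasi--invariance property of $\mu_o$, one checks that $e^{-h[\cdot,\cdot]_o}\mu_o\otimes\mu_o\otimes dt$ is $\Gamma$-invariant on $\bgc\times\rr$; it descends to a finite $\psi_t$-invariant measure on $\tn{U}_o\Gamma$ which (after normalization) is the measure of maximal entropy of $\psi_t$ by the argument in \cite[Theorem 3.2(2)]{Sam}, reproved later in Proposition \ref{prop product is of maximal entropy}.

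Next, the flow $\psi_t$ is a Hölder reparametrization, with roof function essentially $c_o$, of Gromov's geodesic flow of $\Gamma$, which is metric Anosov; via a Markov coding (adapted in Appendix \ref{appendix distribution utaugamma y uogamma} to arbitrary word hyperbolic groups admitting an Anosov representation) it is realized as a Hölder suspension over a topologically mixing subshift of finite type. Non-arithmeticity of the period spectrum (Proposition \ref{prop geod flow is weak mixing}) then yields topological mixing of the suspension, so Parry--Pollicott's prime orbit theorem \cite{PP} applies: the normalized counting measures on closed orbits of $\psi_t$ of period at most $t$ equidistribute, after multiplication by $h t e^{-ht}$, towards the probability of maximal entropy.

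Finally, I would translate this dynamical equidistribution back to $\bgc$. Primitive closed orbits of $\psi_t$ are in bijection with conjugacy classes of primitive elements of $\Gamma$, with an orbit corresponding to $[\gamma_0]$ parametrized by the $\Gamma$-orbit of $(\gamma_{0-},\gamma_{0+})\in\bgc$; the period is $\ell_{c_o}(\gamma_0)$. Pulling test functions $f\in C_c(\bgc)$ back to $\tn{U}_o\Gamma$ via a fundamental domain, the integral of $f$ along the orbit of $[\gamma_0^n]$ produces a factor of $\ell_{c_o}(\gamma_0^n)=n\,\ell_{c_o}(\gamma_0)$ that cancels the $\ell(\tau)^{-1}$ appearing in Parry--Pollicott, while the sum over $n\geq 1$ and over conjugates reconstitutes the unrestricted sum $\sum_{\gamma\in\gh,\,\ell_{c_o}(\gamma)\leq t}\delta_{\gamma_-}\otimes\delta_{\gamma_+}$. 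The constant $M$ is then $h$ times the total mass of the Bowen--Margulis measure on $\tn{U}_o\Gamma$, and the Gromov-product weight $e^{-h[\cdot,\cdot]_o}$ appears precisely because it is the density (with respect to $\mu_o\otimes\mu_o\otimes dt$) of the Bowen--Margulis lift to $\bgc\times\rr$.

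The main obstacle is the last step: carefully handling the passage from the quotient $\tn{U}_o\Gamma$ to $\bgc$, namely the combinatorial bookkeeping of primitive versus non--primitive conjugacy classes and of $\Gamma$-orbits of attractor/repeller pairs. Everything else is either hypothesis checking or a direct appeal to Parry--Pollicott and to the already established identification of the Bowen--Margulis measure.
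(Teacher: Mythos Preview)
Your proposal is correct and matches the paper's approach: the paper does not give a self-contained proof here but defers to Sambarino's argument, with the appendix assembling exactly the ingredients you list (positive Hölder cocycle, Gromov product, Patterson--Sullivan measure, weak-mixing via Proposition \ref{prop geod flow is weak mixing}, identification of the Bowen--Margulis measure in Proposition \ref{prop product is of maximal entropy}) and then invoking \cite[Proposition 4.3]{Sam} verbatim. One minor streamlining worth noting: rather than viewing $\psi_t$ as a reparametrization of the Gromov geodesic flow and building a Markov coding from scratch, the appendix produces a direct Hölder \emph{conjugacy} between $\psi_t$ on $\tn{U}_o\Gamma$ and the BCLS geodesic flow $\phi_t$ on $\tn{U}_\rho\Gamma$ (Lemma \ref{lema conj urhogamma y uogamma}), so that the equidistribution statement of Fact \ref{fact entropy, equidistribution and counting for urhogamma} for $\phi_t$ transfers immediately---this bypasses any need to redo the symbolic dynamics.
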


From Proposition \ref{prop sambarino distribution wrt periods} we deduce Proposition \ref{prop distribution on bg for length} which directly implies Theorem \ref{teorema A} in the torsion free case.

Fix a point $\tau\in S^o$, a maximal subalgebra $\lieb\subset\liep^\tau\cap\lieq^o$ and a closed Weyl chamber $\lieb^+$ contained in $\lieb$.

\begin{prop} \label{prop distribution on bg for length}

There exists a  constant $M=M_{\rho,o}>0$ such that

\bc
$M e^{-ht}\displaystyle\sum_{\gamma\in\gh, \vert b^o(\rho(\gamma))\vert^{\frac{1}{2}}\leq t} \delta_{\gamma_-}\otimes\delta_{\gamma_+}\too\mu_o\otimes\mu_o$
\ec

\noindent as $t\too\infty$ on $C^*(\bg\times\bg)$.

\end{prop}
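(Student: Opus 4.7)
The plan is to deduce this proposition from Proposition \ref{prop sambarino distribution wrt periods} by exploiting the fact that $\vert b^o(\rho(\gamma))\vert^{\frac{1}{2}}$ and $\ell_{c_o}(\gamma)=\lambda_1(\rho(\gamma))$ differ by approximately the Gromov product $[\gamma_-,\gamma_+]_o$. Precisely, Lemma \ref{lema jrhojrho prox dos}(4) shows that $\vert b^o(\rho(\gamma))\vert^{\frac{1}{2}}-\lambda_1(\rho(\gamma))$ is close to $\tfrac{1}{2}\bb(J^o\cdot\rho(\gamma)_-,J^o\cdot\rho(\gamma)_+,\rho(\gamma^{-1})_-,\rho(\gamma^{-1})_+)$, and Lemma \ref{lema computing gromov on gammapm o} identifies this cross-ratio with $-2[\gamma_-,\gamma_+]_o$. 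The resulting approximation $\vert b^o(\rho(\gamma))\vert^{\frac{1}{2}}\approx \ell_{c_o}(\gamma)-[\gamma_-,\gamma_+]_o$ is the geometric heart of the proof: substituting it into the counting condition introduces a factor $e^{h[\gamma_-,\gamma_+]_o}$ that cancels the weight $e^{-h[\cdot,\cdot]_o}$ appearing on the right-hand side of Proposition \ref{prop sambarino distribution wrt periods}, yielding $\mu_o\otimes\mu_o$ in the limit.

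To make this precise, I would first work on small rectangles. Fix $\delta>0$ and $(x_0,y_0)\in\bgc$, and choose disjoint compact neighborhoods $A\ni x_0$, $B\ni y_0$ so small that $|[x,y]_o-c|<\delta$ on $A\times B$, where $c:=[x_0,y_0]_o$. Applying Lemma \ref{lema jrhojrho prox dos} to the pair $(A,B)$, cofinitely many $\gamma\in\gh$ with $\gamma_-\in A$ and $\gamma_+\in B$ satisfy $\bigl|\vert b^o(\rho(\gamma))\vert^{\frac{1}{2}}-\ell_{c_o}(\gamma)+[\gamma_-,\gamma_+]_o\bigr|<\delta$. Hence, for any nonnegative $\varphi\in C(\bg\times\bg)$ supported in $A\times B$, the inclusion of counting conditions on $A\times B$ gives
\begin{equation*}
M e^{-ht}\!\!\!\sum_{\vert b^o(\rho(\gamma))\vert^{\frac{1}{2}}\leq t}\!\!\!\varphi(\gamma_-,\gamma_+) \;\leq\; e^{h(c+2\delta)}\cdot M e^{-h(t+c+2\delta)}\!\!\!\!\sum_{\ell_{c_o}(\gamma)\leq t+c+2\delta}\!\!\!\!\varphi(\gamma_-,\gamma_+),
\end{equation*}
together with a symmetric lower bound. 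Proposition \ref{prop sambarino distribution wrt periods} shows that the right-hand side converges, as $t\to\infty$, to $e^{h(c+2\delta)}\int\varphi\,e^{-h[\cdot,\cdot]_o}\,d\mu_o\otimes d\mu_o$, which by the bound $e^{-h[x,y]_o}\leq e^{-h(c-\delta)}$ on $A\times B$ is at most $e^{3h\delta}\int\varphi\,d\mu_o\otimes d\mu_o$. Letting $\delta\to 0$ and gluing local estimates with a partition of unity produces weak-star convergence on $C_c^*(\bgc)$.

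The remaining step, which I expect to be the main obstacle, is to upgrade the convergence from $C_c^*(\bgc)$ to $C^*(\bg\times\bg)$: no mass may escape to the diagonal $\Delta\subset\bg\times\bg$. Since $\Gamma$ is non-elementary and $\mu_o$ is a Patterson-Sullivan probability for an Anosov cocycle, $\mu_o$ is atomless (cf.\ Subsection \ref{subsub PS}), so $(\mu_o\otimes\mu_o)(\Delta)=0$; it then suffices to prove a tightness statement of the form
\begin{equation*}
\lim_{\varepsilon\to 0}\,\limsup_{t\to\infty}\, M e^{-ht}\cdot\#\bigl\{\gamma\in\gh:\vert b^o(\rho(\gamma))\vert^{\frac{1}{2}}\leq t,\ d(\gamma_-,\gamma_+)<\varepsilon\bigr\}=0.
\end{equation*}
The difficulty is that $[\gamma_-,\gamma_+]_o$ is unbounded near $\Delta$, so the approximation above cannot be used uniformly there. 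I would handle this by invoking Proposition \ref{prop traingle inequality}: any $\gamma$ with $\gamma_-$ and $\gamma_+$ very close can be factored as $\gamma=f\gamma'$ with $f$ ranging over a finite set and $\gamma'$ having $\gamma'_-$ and $\gamma'_+$ uniformly separated, and then the weak triangle inequality controls $\vert b^o(\rho(\gamma))\vert^{\frac{1}{2}}$ in terms of $\vert b^o(\rho(\gamma'))\vert^{\frac{1}{2}}$ up to the additive constant $D_f$, reducing the near-diagonal mass to a geometrically small tail in the already-established estimate on $C_c^*(\bgc)$. Combining this tightness with the local convergence completes the proof.
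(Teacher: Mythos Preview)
Your proposal is correct and follows essentially the same approach as the paper's proof: both reduce off-diagonal convergence to Proposition \ref{prop sambarino distribution wrt periods} via the approximation $\vert b^o(\rho(\gamma))\vert^{1/2}\approx \ell_{c_o}(\gamma)-[\gamma_-,\gamma_+]_o$ from Lemmas \ref{lema jrhojrho prox dos}(4) and \ref{lema computing gromov on gammapm o}, and both handle the diagonal by invoking atomlessness of $\mu_o$ and the weak triangle inequality of Proposition \ref{prop traingle inequality} to push near-diagonal elements off the diagonal. The paper implements the diagonal step with two fixed hyperbolic elements $\gamma_0,\gamma_1$ without common fixed points and a refined pair of coverings $\mathscr{U},\mathscr{V}$ (following \cite[Theorem 6.5]{Sam}), which is exactly your factoring $\gamma=f\gamma'$ with $f\in\{\gamma_0,\gamma_1\}$ chosen according to which element of the covering contains $\gamma_\pm$.
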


Recall that the generalized Cartan projection $b^o$ is defined in the set $\mathscr{C}_{o,G}^>$. The sum in Proposition \ref{prop distribution on bg for length} is taken then over all elements $\gamma\in\gh$ for which $\rho(\gamma)\in\mathscr{C}_{o,G}^>$ and $ \vert b^o(\rho(\gamma))\vert^{\frac{1}{2}}\leq t$. To make the formula more readable we do not emphasize the fact that $\rho(\gamma)$ must belong to $\mathscr{C}_{o,G}^>$. On the other hand, by Corollary \ref{cor gammao in cowmayor} this condition holds apart from finitely many exceptions $\gamma\in\Gamma$.

\begin{proof}[Proof of Proposition \ref{prop distribution on bg for length}]

Set 

\bc

$\theta_t:=M e^{-ht}\displaystyle\sum_{\gamma\in\gh,  \vert b^o(\rho(\gamma))\vert^{\frac{1}{2}}\leq t} \delta_{\gamma_-}\otimes\delta_{\gamma_+}$.
\ec

We first prove the statement outside the diagonal, that is, on subsets of $\bgc$. Let $\delta>0$ and $A,B\subset\bg$ disjoint open sets. Consider an element $\gamma\in\gh$ such that $\gamma_-\in A$ and $\gamma_+\in B$ and let $s:=[\gamma_-,\gamma_+]_{o}$. By taking $A$ and $B$ smaller we may assume 
\begin{equation} \label{eq aprox grom prod on dist thm nu}
\vert [x,y]_{o}-s\vert<\delta
\end{equation}
\noindent for all $(x,y)\in A\times B$.

By Lemma \ref{lema jrhojrho prox dos}, apart from possibly finitely many exceptions $\gamma\in \gh$ with $(\gamma_-,\gamma_+)\in A\times B$, the following holds:

\bc
$\left\vert  \vert b^o(\rho(\gamma))\vert^{\frac{1}{2}}-\lambda_1(\rho(\gamma))  -\frac{1}{2}\bb(J^o\cdot\rho(\gamma)_-,J^o\cdot \rho(\gamma)_+,\rho(\gamma^{-1})_-,\rho(\gamma^{-1})_+)\right\vert <\delta$.
\ec

\noindent Applying Lemma \ref{lema cocycle o and periods} and Lemma \ref{lema computing gromov on gammapm o} we conclude that

\bc
$\left\vert   \vert b^o(\rho(\gamma))\vert^{\frac{1}{2}}-\ell_{c_o}(\gamma)  +[\gamma_-,\gamma_+]_o\right\vert <\delta$.
\ec

\noindent By (\ref{eq aprox grom prod on dist thm nu}) it follows that

\bc
$ \ell_{c_{o}}(\gamma)-s-2 \delta< \vert b^o(\rho(\gamma))\vert^{\frac{1}{2}}<\ell_{c_{o}}(\gamma) -s+2 \delta$
\ec

\noindent holds apart from finitely many exceptions $\gamma\in\gh$ such that $\gamma_-\in A$ and $\gamma_+\in B$. From now on, the proof of the convergence

\bc
$\theta_t(A\times B)\too \mu_o(A)\mu_o(B)$
\ec

\noindent follows line by line the proof of \cite[Theorem 6.5]{Sam}.

It remains to prove the convergence in the diagonal $\lbrace(x,x): \hspace{0,3cm} x\in\bg\rbrace$, but once again, the proof is the same as the one given in \cite[Theorem 6.5]{Sam}. For completeness we briefly sketch it.

Since $\mu_o$ has no atoms (see Lemma \ref{lema PS non atomic}), for every $\gamma$ in $\Gamma$ the diagonal has $(\mu_o\otimes \gamma_*\mu_o)$-measure equal to zero. We fix two elements $\gamma_0,\gamma_1\in\gh$ with no common fixed point in $\bg$ and let $\varepsilon_0>0$. There exists a finite open covering $\mathscr{U}$ of $\bg$ such that for $i=0,1$ one has

\bc
$\displaystyle\sum_{U\in\mathscr{U}}  \mu_o(U) \mu_o(\gamma_i^{-1}\cdot U)<\varepsilon_0$.
\ec

\noindent We can assume that for every $U\in\mathscr{U}$ there exists $i\in\lbrace 0,1\rbrace$ such that $\gamma_i^{-1}\cdot\overline{U}$ is disjoint from $\overline{U}$. There exists an open covering $\mathscr{V}$ of $\bg$ with the following properties:

\begin{enumerate}

\item $\displaystyle\sum_{V\in\mathscr{V}}  \mu_o(V) \mu_o(\gamma_i^{-1} \cdot V)<\varepsilon_0$ for $i=0,1$.
\item The closure of every element in $\mathscr{U}$ is contained in a unique element of $\mathscr{V}$ and if $\gamma_i^{-1}\cdot\overline{U}$ is disjoint from $\overline{U}$ the same holds for this element in $\mathscr{V}$.
\item Suppose that $\gamma_i^{-1}\cdot\overline{U}\cap \overline{U}=\emptyset$ and let $V\in\mathscr{V}$ be the unique element such that $\overline{U}\subset V$. Then apart from finitely many exceptions $\gamma$ such that $\gamma_{\pm}\in U$ one has $(\gamma_i^{-1}\gamma)_-\in V$ and $(\gamma_i^{-1}\gamma)_+\in \gamma_i^{-1}\cdot V$.

\end{enumerate}

Set $D:=\displaystyle\max_{i=0,1}\lbrace D_{\gamma_i^{-1}}\rbrace$ where $D_{\gamma_i^{-1}}$ is the constant given by Proposition \ref{prop traingle inequality} and take $U\in\mathscr{U}$ as in (3). By Proposition \ref{prop traingle inequality} we have

\begin{equation*}
\begin{split}
\theta_t(U\times U) & \leq Me^{-ht}\displaystyle\sum_{\gamma\in\gh, \vert b^o(\rho(\gamma))\vert^{\frac{1}{2}}\leq t+D} \delta_{\gamma_-}(V)\delta_{\gamma_+}(\gamma_i^{-1}\cdot V)\\ & +Me^{-ht}\# F
\end{split}
\end{equation*}

\noindent where $F$ is a finite set independent of $t$. Since $V\times\gamma_i^{-1}\cdot V$ is far from the diagonal the right side converges to

\bc
$e^D\mu_o(V)\mu_o(\gamma_i^{-1}\cdot V)$
\ec

\noindent as $t\too \infty$. Adding up in $U\in\mathscr{U}$ we conclude

\bc
$\displaystyle\limsup_{t\too\infty}\displaystyle\sum_{U\in\mathscr{U}}\theta_t(U\times U)\leq 2e^D\varepsilon_0$.
\ec

\noindent Hence $\theta_t(\lbrace(x,x):\hspace{0,3cm} x\in\bg\rbrace)$ converges to zero and since the diagonal has measure zero for $\mu_o\otimes\mu_o$ the proof is finished.

\end{proof}

\subsection{Proof of Theorem \ref{teorema A}} \label{subsec proof teo A}

The following is a corollary of Proposition \ref{prop distribution on bg for length}.

\begin{cor}\label{cor distr orbit o in gammah for bo}
There exists a constant $M=M_{\rho,o}>0$ such that

\bc
$M e^{-ht}\displaystyle\sum_{\gamma\in\gh,  \vert b^o(\rho(\gamma))\vert^{\frac{1}{2}} \leq t} \delta_{\rho(\gamma^{-1})\cdot o^{\ppq}}\otimes\delta_{\rho(\gamma)\cdot o}\too \eta_{*}(\mu_o)\otimes\xi_*(\mu_o)$
\ec

\noindent on $C^*(\pp((\rr^d)^*)\times\pp(\rr^d))$ as $t\too\infty$.

\end{cor}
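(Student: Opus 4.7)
The plan is to transport the convergence of Proposition \ref{prop distribution on bg for length} to $C^*(\pp((\rr^d)^*)\times\pp(\rr^d))$ via the product map $\eta\times\xi\colon\bg\times\bg\to\pp((\rr^d)^*)\times\pp(\rr^d)$, and then to correct for the discrepancy between $(\eta(\gamma_-),\xi(\gamma_+))$ and $(\rho(\gamma^{-1})\cdot o^{\ppq},\rho(\gamma)\cdot o)$ using a uniform proximality estimate coming from $o\in\pmb{\Omega}_\rho$.

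First I would use $o\in\pmb{\Omega}_\rho$ together with compactness of $\bg$ to extract $\varepsilon_0>0$ such that $d_\tau(o,\eta(x))\geq\varepsilon_0$ and $d^*_\tau(o^{\ppq},\xi(x))\geq\varepsilon_0$ for every $x\in\bg$. Fixing a small $\varepsilon<\varepsilon_0/4$, Proposition \ref{prop limit with S and U and Uuno cerca gammamas}, Remark \ref{rem complemento de Sdmenosuno va en Uuno} and the Anosov inequality (\ref{eq def anosov}) furnish $L>0$ such that for every $\gamma\in\gh$ with $\vert\gamma\vert_\Gamma>L$, the singular-value decomposition of $\rho(\gamma)$ satisfies $o\in B_\varepsilon(S_{d-1}(\rho(\gamma)))$, and hence
\begin{equation*}
\rho(\gamma)\cdot o\in b_\varepsilon(U_1(\rho(\gamma)))\subset b_{2\varepsilon}(\xi(\gamma_+)).
\end{equation*}
Applying the same argument with $\gamma$ replaced by $\gamma^{-1}$, together with $G$-equivariance of orthogonal complementation (so that $\rho(\gamma^{-1})\cdot o^{\ppq}=(\rho(\gamma^{-1})\cdot o)^{\ppq}$), places $\rho(\gamma^{-1})\cdot o^{\ppq}$ within distance $2\varepsilon$ of $\xi(\gamma_-)^{\ppq}=\eta(\gamma_-)$ in $\pp((\rr^d)^*)$.

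Next I would test the claimed convergence against an arbitrary $f\in C(\pp((\rr^d)^*)\times\pp(\rr^d))$. Feeding $f\circ(\eta\times\xi)\in C(\bg\times\bg)$ into Proposition \ref{prop distribution on bg for length} gives
\begin{equation*}
Me^{-ht}\sum_{\gamma\in\gh,\,\vert b^o(\rho(\gamma))\vert^{1/2}\leq t}f(\eta(\gamma_-),\xi(\gamma_+))\longrightarrow\int f\,d(\eta_*\mu_o\otimes\xi_*\mu_o).
\end{equation*}
To conclude, I would split the sum of interest according to whether $\vert\gamma\vert_\Gamma\leq L$ or not. The short-word part contains at most $\#\{\gamma\in\Gamma\colon \vert\gamma\vert_\Gamma\leq L\}$ terms, a cardinality independent of $t$, so its total contribution is $O(e^{-ht})\to 0$. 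For the long-word part, uniform continuity of $f$ on its compact domain together with the $2\varepsilon$-estimates of the previous paragraph bound the difference between the two sums by the modulus of continuity $\omega_f(2\varepsilon)$ times the total mass of the measures in Proposition \ref{prop distribution on bg for length}; this total mass stays bounded since the measures converge to the probability $\mu_o\otimes\mu_o$. Letting $t\to\infty$ first and $\varepsilon\to 0$ second finishes the proof.

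I do not anticipate a serious obstruction; the only step requiring care is checking that the proximality estimates hold uniformly in $\gamma$ outside a finite set, which is precisely what $o\in\pmb{\Omega}_\rho$ buys and which mirrors the contraction argument already used in the proof of Proposition \ref{prop action on omegarho is prop discont and limit set is limit set}.
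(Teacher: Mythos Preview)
Your proposal is correct and follows essentially the same approach as the paper's proof: push forward the convergence of Proposition \ref{prop distribution on bg for length} by $(\eta,\xi)$, then show that the difference $\nu_t^{\tn{H}}-(\eta,\xi)_*(\theta_t)\to 0$ by using that, outside a finite set, $\rho(\gamma)\cdot o$ is uniformly close to $\xi(\gamma_+)$ and (after applying $\cdot^{\ppq}$) $\rho(\gamma^{-1})\cdot o^{\ppq}$ is uniformly close to $\eta(\gamma_-)$. The paper invokes Proposition \ref{prop limit with S and U and Uuno cerca gammamas} and the contraction argument from the proof of Proposition \ref{prop action on omegarho is prop discont and limit set is limit set} in exactly the way you do; your two-parameter $\varepsilon,t$ argument with uniform continuity of $f$ is just a spelled-out version of the paper's final sentence ``the proof of (\ref{eq en cor orbit o para hiperbolicos}) follows from evaluation on continuous functions''.
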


\begin{proof}

Set

\bc
$\nu_t^{\tn{H}}:= M e^{-ht}\displaystyle\sum_{\gamma\in\gh,  \vert b^o(\rho(\gamma))\vert^{\frac{1}{2}} \leq t} \delta_{\rho(\gamma^{-1})\cdot o^{\ppq}}\otimes\delta_{\rho(\gamma)\cdot o}$
\ec

\noindent and take $\theta_t$ the measure defined in the proof of Proposition \ref{prop distribution on bg for length}. We know that 

\bc
$(\eta,\xi)_*(\theta_t)\too \eta_{*}(\mu_o)\otimes\xi_*(\mu_o)$.
\ec

\noindent Hence we only have to show the following convergence
\begin{equation}\label{eq en cor orbit o para hiperbolicos}
\nu_t^{\tn{H}}-(\eta,\xi)_*(\theta_t)\too 0.
\end{equation}

Take a small positive $\delta$. By Proposition \ref{prop limit with S and U and Uuno cerca gammamas} and the proof of Proposition \ref{prop action on omegarho is prop discont and limit set is limit set} we know that, apart from finitely many exceptions $\gamma$ in $\gh$, one has

\bc
$d(\rho(\gamma)\cdot o,\rho(\gamma)_+)<\delta$ and $d(\rho(\gamma^{-1})\cdot o,\rho(\gamma^{-1})_+)<\delta$.
\ec

By taking $\cdot^{\ppq}$ we can assume further that $d^*(\rho(\gamma^{-1})\cdot o^{\ppq},\rho(\gamma)_-)<\delta$. Now the proof of (\ref{eq en cor orbit o para hiperbolicos}) follows from evaluation on continuous functions of $\pp((\rr^d)^*)\times\pp(\rr^d)$.

\end{proof}

We now include torsion elements to the previous statement and finish the proof of Theorem \ref{teorema A}.

\begin{prop}\label{prop distribution on bg for length with torsion}

There exists a constant $M=M_{\rho,o}>0$ such that

\bc
$M e^{-ht}\displaystyle\sum_{\gamma\in\Gamma,  \vert b^o(\rho(\gamma))\vert^{\frac{1}{2}} \leq t} \delta_{\rho(\gamma^{-1})\cdot o^{\ppq}}\otimes\delta_{\rho(\gamma)\cdot o}\too \eta_{*}(\mu_o)\otimes\xi_*(\mu_o)$
\ec

\noindent on $C^*(\pp((\rr^d)^*)\times\pp(\rr^d))$ as $t\too\infty$.
\end{prop}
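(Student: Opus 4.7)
The proposition differs from Corollary \ref{cor distr orbit o in gammah for bo} only in that the sum is now taken over all of $\Gamma$ rather than over the set $\gh$ of infinite-order elements. Accordingly, the plan is to decompose $\Gamma = \gh \sqcup T$, where $T$ denotes the set of torsion elements, apply the corollary to the $\gh$-part, and show that the contribution of $T$ vanishes in the weak-$*$ limit. Since the target $\pp((\rr^d)^*)\times\pp(\rr^d)$ is compact, weak-$*$ convergence of the torsion contribution to zero is equivalent to vanishing of total mass, so the goal reduces to proving
\begin{equation*}
e^{-ht}\cdot\#\bigl\{f\in T : \rho(f)\in\mathscr{C}^{>}_{o,G},\ |b^o(\rho(f))|^{\frac{1}{2}}\leq t\bigr\}\longrightarrow 0
\quad\text{as }t\to\infty.
\end{equation*}

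To control this torsion count, I would use that a word hyperbolic group has only finitely many conjugacy classes of torsion elements. Fixing representatives $f_1,\ldots,f_k$, the conjugates of $f_i$ are parametrized by cosets in $\Gamma/Z_\Gamma(f_i)$, and the torsion count splits into $k$ orbital-type sums. For each $i$, the element $\rho(f_i)\in G$ is semisimple of finite order, hence by the Cartan fixed point theorem it fixes a nonempty totally geodesic submanifold $F_i\subset X_G$; the fixed set of $\rho(gf_ig^{-1})$ is then $\rho(g)\cdot F_i$. A CAT(0) displacement analysis, exploiting that $\rho(f_i)$ acts as a nontrivial isometry with fixed set $F_i$, allows one to compare $|b^o(\rho(gf_ig^{-1}))|^{\frac{1}{2}}=d_{X_G}(S^o,\rho(gf_ig^{-1})\cdot S^o)$ with the translate distance $d_{X_G}(S^o,\rho(g)\cdot F_i)$, up to bounded error depending only on $f_i$.

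The \textbf{main obstacle} is to show that for each $i$ the resulting translate-counting function
\begin{equation*}
\#\{[g]\in\Gamma/Z_\Gamma(f_i) : d_{X_G}(S^o,\rho(g)\cdot F_i)\leq t\}
\end{equation*}
grows at an exponential rate strictly smaller than $h$. The geometric intuition is that $F_i$ contains a positive-dimensional orbit of the identity component of $Z_G(\rho(f_i))$, so that the count of distinct $\rho(\Gamma)$-translates of $F_i$ lying within distance $t$ of $S^o$ effectively quotients out a continuous family and thereby suppresses the leading exponent below $h$. Making this rigorous would involve adapting the methods of Section \ref{section distrib wrt bo} to a homogeneous counting problem on $G/Z_G(\rho(f_i))$, comparing the Anosov growth of singular values (equation \eqref{eq def anosov}) against the generalized Cartan-projection estimates of Section \ref{sec generalized cartan}, and invoking Riemannian counting results in the spirit of Sambarino \cite{Sam2}.

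With an $o(e^{ht})$ bound on the torsion count in hand, the Dirac-mass contribution from $T$ converges uniformly to zero on the compact space $\pp((\rr^d)^*)\times\pp(\rr^d)$, and Corollary \ref{cor distr orbit o in gammah for bo} then yields the desired equidistribution of the full $\Gamma$-sum toward $\eta_*(\mu_o)\otimes\xi_*(\mu_o)$ with the same normalization constant $M$.
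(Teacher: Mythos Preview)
Your approach has a genuine gap at precisely the point you flag as the ``main obstacle'': you never establish that the torsion count grows at rate strictly below $h$, and the CAT(0) displacement heuristic you sketch does not obviously yield the required saving in the pseudo-Riemannian/higher-rank setting. Even if a factor-of-two gain of the type you suggest is available in rank one, pushing it through the $H\exp(\lieb^+)H$-geometry and the possibly non-compact fixed sets $F_i$ would demand substantial extra work that the paper does not develop.

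The paper bypasses this difficulty entirely by a different decomposition. Instead of splitting $\Gamma=\gh\sqcup T$, it splits the \emph{target} $\pp((\rr^d)^*)\times\pp(\rr^d)$ into the complement of the incidence set $\Delta=\{(\theta,v):\theta(v)=0\}$ and a small neighbourhood of $\Delta$. The key observation (Claim~\ref{claim f suppor far from diagonal implies g in gh}) is that if a continuous test function has support disjoint from $\Delta$, then only \emph{finitely many} elements $\gamma\in\Gamma$ with $(\rho(\gamma^{-1})\cdot o^{\ppq},\rho(\gamma)\cdot o)\in\tn{supp}(f)$ fail to lie in $\gh$: for large $\vert\gamma\vert_\Gamma$ the pair $(U_1(\rho(\gamma)),S_{d-1}(\rho(\gamma)))$ approximates $(\rho(\gamma)\cdot o,\rho(\gamma^{-1})\cdot o^{\ppq})$, so staying away from $\Delta$ forces $d(U_1(\rho(\gamma)),S_{d-1}(\rho(\gamma)))$ to be bounded below, and then the Anosov gap~(\ref{eq def anosov}) together with Lemma~\ref{lema benoist lemma 1.2} makes $\rho(\gamma)$ proximal, hence $\gamma\in\gh$. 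Thus off $\Delta$ the full $\Gamma$-sum and the $\gh$-sum differ by a bounded number of terms, and Corollary~\ref{cor distr orbit o in gammah for bo} applies directly. Near $\Delta$, the limit measure has zero mass (by transversality and Lemma~\ref{lema PS non atomic}), and the paper controls the $\nu_t$-mass there by the same covering-and-translation trick used in Proposition~\ref{prop distribution on bg for length}, applied now to \emph{all} $\gamma\in\Gamma$ at once, with no need to separate torsion from hyperbolic elements. No growth estimate on torsion elements is ever required.
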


\begin{proof}

The structure of the proof is the same as that of Proposition \ref{prop distribution on bg for length}, that is, we first prove the statement outside the diagonal and deduce from that the statement on the diagonal. Here by \textit{diagonal} we mean the set

\bc
$\Delta:=\lbrace (\theta,v)\in \pp((\rr^d)^*)\times\pp(\rr^d): \hspace{0,3cm} \theta(v)=0 \rbrace$.
\ec

Let 

\bc
$\nu_t:= M e^{-ht}\displaystyle\sum_{\gamma\in\Gamma,  \vert b^o(\rho(\gamma))\vert^{\frac{1}{2}} \leq t} \delta_{\rho(\gamma^{-1})\cdot o^{\ppq}}\otimes\delta_{\rho(\gamma)\cdot o}$
\ec

\noindent and take $\nu_t^{\tn{H}}$ as in the proof of Corollary \ref{cor distr orbit o in gammah for bo}.

Consider first a continuous function $f$ on $\pp((\rr^d)^*)\times\pp(\rr^d)$ whose support $\tn{supp}(f)$ is disjoint from $\Delta$.

\begin{cla} \label{claim f suppor far from diagonal implies g in gh}

The following holds

\bc
$\#\lbrace \gamma\in\Gamma: \hspace{0,3cm} (\rho(\gamma^{-1})\cdot o^{\ppq},\rho(\gamma)\cdot o)\in \tn{supp}(f) \tn{ and } \gamma\notin\gh\rbrace<\infty$.
\ec

\end{cla}

\begin{proof}[Proof of Claim \ref{claim f suppor far from diagonal implies g in gh}]

Fix a positive $D$ such that for every $(\theta,v)\in \tn{supp}(f)$ one has $d(\theta,v)> D$. As we saw in the proof of Proposition \ref{prop action on omegarho is prop discont and limit set is limit set}, the distances

\bc
$d(\rho(\gamma)\cdot o,U_1(\rho(\gamma)))$ and $d^*(\rho(\gamma^{-1})\cdot o^{\ppq},S_{d-1}(\rho(\gamma)))$
\ec

\noindent converge to zero as $\gamma\too\infty$. We conclude that, apart from possibly finitely many exceptions $\gamma$ in $\Gamma$ with $(\rho(\gamma^{-1})\cdot o^{\ppq},\rho(\gamma)\cdot o)\in \tn{supp}(f)$, one has

\bc
$d(U_1(\rho(\gamma)),S_{d-1}(\rho(\gamma)))>D$.
\ec

Now apply (\ref{eq def anosov}), Remark \ref{rem complemento de Sdmenosuno va en Uuno} and Benoist's Lemma \ref{lema benoist lemma 1.2} to conclude that for $\vert\gamma\vert_\Gamma$ large enough the matrix $\rho(\gamma)$ is proximal.

\end{proof}

From Claim \ref{claim f suppor far from diagonal implies g in gh} we conclude that 

\bc
$\displaystyle\lim_{t\too\infty}\nu_t(f)=\displaystyle\lim_{t\too\infty}\nu_t^{\tn{H}}(f)$
\ec

\noindent which by Corollary \ref{cor distr orbit o in gammah for bo} equals $(\eta_{*}(\mu_o)\otimes\xi_*(\mu_o))(f)$.

It remains to prove the convergence on the diagonal. It suffices to prove that for every positive $\varepsilon_0$ there exists an open covering $\lbrace U^*\times U\rbrace$ of $\Delta$ such that

\bc
$\displaystyle\limsup_{t\too\infty}\nu_t\left(\displaystyle\bigcup (U^*\times U)\right)\leq \varepsilon_0$.
\ec

The proof is the same as in Proposition \ref{prop distribution on bg for length}. Namely, take two elements $\gamma_0,\gamma_1$ in $\gh$ with no common fixed point in $\bg$ and a coverings $\mathscr{U}=\lbrace U^*\times U\rbrace$ and $\mathscr{V}=\lbrace V^*\times V\rbrace$ of $\Delta$ by open sets with the following properties:

\begin{enumerate}
\item For every $U^*\times U$ in $\mathscr{U}$ there exists $i=0,1$ such that $\rho(\gamma_i^{-1})\cdot\overline{U}$ is transverse to $\overline{U^*}$.
\item $\displaystyle\sum_{V^*\times V\in\mathscr{V}} (\eta_{*}(\mu_o)\otimes\xi_*(\mu_o))(V^*\times\rho(\gamma_i^{-1})\cdot V)<\varepsilon_0$ for $i=0,1$.
\item The closure of every element in $\mathscr{U}$ is contained in a unique element of $\mathscr{V}$ and if $\rho(\gamma_i^{-1})\cdot \overline{U}$ is transverse to $\overline{U^*}$ the same holds for this element in $\mathscr{V}$.
\item Suppose that $\rho(\gamma_i^{-1})\cdot \overline{U}$ is transverse to $\overline{U^*}$ and let $V^*\times V\in\mathscr{V}$ be the unique element such that $\overline{U}\subset V$ and $\overline{U^*}\subset V^*$. Then, apart from possibly finitely many exceptions $\gamma$ such that $(\rho(\gamma^{-1})\cdot o^{\ppq},\rho(\gamma)\cdot o)\in U^*\times U$, one has

\bc
$(\rho((\gamma_i^{-1}\gamma)^{-1})\cdot o^{\ppq},\rho(\gamma_i^{-1}\gamma)\cdot o)\in V^*\times \rho(\gamma_i^{-1})\cdot V$.
\ec
\end{enumerate}

Provided with this construction, the proof finishes in the same way as that of Proposition \ref{prop distribution on bg for length}.

\end{proof}

\begin{rem}\label{rem crit exponent coincides with the entropy}
From Proposition \ref{prop distribution on bg for length with torsion} we deduce that

\bc
$\displaystyle\lim_{t\too\infty}\dfrac{\log\#\lbrace \gamma\in\Gamma: \hspace{0,3cm} \rho(\gamma)\in\mathscr{C}_{o,G}^> \tn{ and } \vert b^o(\rho(\gamma))\vert^{\frac{1}{2}}\leq t\rbrace}{t}$
\ec

\noindent coincides with the entropy $h=h_\rho$ of $\rho$.
\begin{flushright}
$\diamond$
\end{flushright}
\end{rem}

\section{Distribution of the orbit of $o$ with respect to $ b^\tau$}\label{section distrib wrt btau}
\setcounter{equation}{0}

The proof of Theorem \ref{teorema B} follows the same lines of the proof of Theorem \ref{teorema A}, we just have to pick a (slightly) different flow $\psi_t$.

Fix a $P_1^{p,q}$-Anosov representation $\rho:\Gamma\too G$, a point $o$ in $\pmb{\Omega}_\rho$ and $\tau\in S^o$.

\subsection{The cocycle $c_\tau$}\label{subsec cocycle ctau}

Let $\Vert\cdot\Vert_\tau$ be the norm introduced in Subsection \ref{subsec cartan dec in sec anosov}.

\begin{dfn} \label{dfn cocycle tauo}

Let

\bc
$c_{\tau}:\Gamma\times\bg\too\rr: \hspace{0,3cm} c_{\tau}(\gamma,x):=\dfrac{1}{2}\log\left(\dfrac{\Vert\rho(\gamma)\cdot\theta_{x}\Vert_{\tau}\Vert\rho(\gamma)\cdot v_{x}\Vert_{\tau}}{\Vert\theta_{x}\Vert_{\tau}\Vert v_{x}\Vert_{\tau}}\right)$
\ec

\noindent where $\theta_{x}:\rr^d\too\rr$ is a non-zero linear functional whose kernel equals $\eta(x)$ and $v_{x}\neq 0$ belongs to $\xi(x)$.

\end{dfn}

\begin{rem} \label{rem ctau es el betauno}
One can prove that for every $\gamma\in\Gamma$ and $x\in\bg$ one has

\bc
$c_{\tau}(\gamma,x)=\log\dfrac{\Vert\rho(\gamma)\cdot v_{x}\Vert_{\tau}}{\Vert v_{x}\Vert_{\tau}}$,
\ec

\noindent that is, $c_\tau$ coincides with the map $\beta_1(\cdot,\cdot)$ of \cite[Section 5]{Sam}. This remark will not be used in the sequel.
\begin{flushright}
$\diamond$
\end{flushright}
\end{rem}

The following lemma holds by straightforward computations.

\begin{lema}\label{lema cocycle tau and periods}

The function $c_{\tau}$ is a Hölder cocycle. The period of $\gamma$ in $\gh$ is given by

\bc  
$\ell_{c_{\tau}}(\gamma)=\lambda_1(\rho(\gamma))>0$.
\ec
\begin{flushright}
$\square$
\end{flushright}
\end{lema}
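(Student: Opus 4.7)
The plan is to verify four ingredients in order: well-definedness of $c_\tau(\gamma,x)$, the cocycle identity, Hölder continuity in the second variable, and the period formula (together with strict positivity). First, $c_\tau(\gamma, x)$ is independent of the choices of $v_x \in \xi(x)\setminus\{0\}$ and of $\theta_x$ with $\ker \theta_x = \eta(x)$: rescaling $v_x$ by $s \neq 0$ multiplies both $\|v_x\|_\tau$ and $\|\rho(\gamma) v_x\|_\tau$ by $|s|$, leaving the corresponding ratio untouched, and similarly for $\theta_x$. For the cocycle identity, set $y := \gamma_1 \cdot x$ and exploit equivariance of $\xi$ and $\eta$: $v_y := \rho(\gamma_1) v_x$ spans $\xi(y)$, and $\theta_y := \theta_x \circ \rho(\gamma_1)^{-1}$ has kernel $\eta(y)$. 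Substituting these into the definition of $c_\tau(\gamma_0, y)$ and using $\rho(\gamma_0) v_y = \rho(\gamma_0 \gamma_1) v_x$ and $\rho(\gamma_0) \theta_y = \rho(\gamma_0 \gamma_1) \theta_x$, the factors $\|v_y\|_\tau \|\theta_y\|_\tau$ cancel telescopically and one reads off $c_\tau(\gamma_0, y) + c_\tau(\gamma_1, x) = c_\tau(\gamma_0 \gamma_1, x)$.

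Hölder regularity of $c_\tau(\gamma_0, \cdot)$ follows from the fact, recalled in Subsection \ref{subsec deifnition anosov}, that $\xi$ and $\eta$ are Hölder continuous with a common exponent. Choosing local Hölder sections $x \mapsto v_x$ and $x \mapsto \theta_x$ and composing them with the smooth nonvanishing map
$$(v, \theta) \mapsto \tfrac{1}{2} \log \frac{\|\rho(\gamma_0) \theta\|_\tau \|\rho(\gamma_0) v\|_\tau}{\|\theta\|_\tau \|v\|_\tau}$$
produces a Hölder function on $\bg$ with the same exponent, uniformly in $\gamma_0$.

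For the period, I fix $\gamma \in \gh$ and evaluate at $x := \gamma_+$, using $\xi(\gamma_+) = \rho(\gamma)_+$ and $\eta(\gamma_+) = \rho(\gamma^{-1})_-$. Taking $v_{\gamma_+}$ to be a top eigenvector of a lift of $\rho(\gamma)$, one has $\|\rho(\gamma) v_{\gamma_+}\|_\tau = e^{\lambda_1(\rho(\gamma))} \|v_{\gamma_+}\|_\tau$. The hyperplane $\eta(\gamma_+) = \rho(\gamma^{-1})_-$ is $\rho(\gamma^{-1})$-invariant, hence also $\rho(\gamma)$-invariant, so $\theta_{\gamma_+} \circ \rho(\gamma)^{-1} = c\, \theta_{\gamma_+}$ for some scalar $c$. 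Evaluating on a vector $w \in \xi(\gamma_-) = \rho(\gamma^{-1})_+$ (transversality of $\xi$ and $\eta$ at the pair $(\gamma_-, \gamma_+)$ ensures $\theta_{\gamma_+}(w) \neq 0$), one gets $\rho(\gamma^{-1}) w = \pm e^{\lambda_1(\rho(\gamma^{-1}))} w$. Since every element of $\tn{PSO}(p,q)$ preserves $\langle\cdot, \cdot\rangle_{p,q}$, its spectrum is symmetric under $\lambda \mapsto 1/\lambda$, whence $\lambda_1(\rho(\gamma^{-1})) = \lambda_1(\rho(\gamma))$ and $\|\rho(\gamma) \theta_{\gamma_+}\|_\tau = e^{\lambda_1(\rho(\gamma))} \|\theta_{\gamma_+}\|_\tau$. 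Combining,
$$\ell_{c_\tau}(\gamma) = c_\tau(\gamma, \gamma_+) = \tfrac{1}{2} \log\bigl(e^{2\lambda_1(\rho(\gamma))}\bigr) = \lambda_1(\rho(\gamma)).$$
Strict positivity is obtained exactly as for $c_o$ in Lemma \ref{lema cocycle o and periods}: the projective Anosov inequality applied to powers of $\gamma$ forces $\rho(\gamma)$ to be proximal, and the spectral symmetry of $\tn{PSO}(p,q)$ rules out a proximal element with top log-modulus equal to $0$.

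The subtle point I anticipate is the dual factor in the period computation: one cannot evaluate $\theta_{\gamma_+}$ directly on $v_{\gamma_+}$ because $\xi(\gamma_+) \subset \eta(\gamma_+) = \ker \theta_{\gamma_+}$ (isotropic lines are self-orthogonal), so that apparent simplification is unavailable. The clean doubling $2\lambda_1(\rho(\gamma))$ inside the logarithm only emerges once one switches to a transverse eigenvector in $\xi(\gamma_-)$ and invokes the $\lambda \mapsto 1/\lambda$ symmetry of the $\tn{PSO}(p,q)$-spectrum.
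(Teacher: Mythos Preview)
Your proof is correct and complete. The paper does not give a proof of this lemma at all: it merely states that it ``holds by straightforward computations'' and closes with a $\square$. Your direct verification of well-definedness, the cocycle identity, H\"older regularity via the H\"older limit maps, and the period formula via the eigenvector/eigenfunctional computation is precisely the intended straightforward computation. Your care in evaluating $\theta_{\gamma_+}$ on a vector in $\xi(\gamma_-)$ rather than $\xi(\gamma_+)$ (since $\xi(\gamma_+)\subset\eta(\gamma_+)$), and your use of $\lambda_1(\rho(\gamma^{-1}))=\lambda_1(\rho(\gamma))$ for $g\in G$, match exactly the facts the paper uses elsewhere (e.g.\ Remark~\ref{rem coco are dual} and the proof of Lemma~\ref{lema cocycle o and periods}).

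One small remark: the paper records in Remark~\ref{rem ctau es el betauno} that $c_\tau(\gamma,x)=\log\frac{\Vert\rho(\gamma)v_x\Vert_\tau}{\Vert v_x\Vert_\tau}$, which would let you bypass the $\theta$-factor entirely in the period computation; but the paper explicitly declines to use that remark, so your two-factor computation is the honest route. For strict positivity, note that your spectral-symmetry argument is fine, but an even more elementary one works for any proximal element of $\tn{PSL}(d,\rr)$: the $\lambda_i$ sum to zero, so $\lambda_1=0$ would force all $\lambda_i=0$, contradicting $\lambda_1>\lambda_2$.
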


The quotient space of $\bgc\times \rr$ by the action of $\Gamma$ induced by $c_\tau$ will be denoted by $\tn{U}_\tau\Gamma$. It is equipped with a flow that lifts to the translation flow (\ref{eq translation flow}) on $\bgc\times\rr$.

\subsection{Dual cocycle and Gromov product}\label{subsec dual and gromov ctau}

\begin{dfn}

Let
\bc
$[\cdot,\cdot]_{\tau}:\bgc\too\rr: \hspace{0,3cm} [x,y]_{\tau}:=\dfrac{1}{2}\log\left\vert  \dfrac{\theta_{y}\left(v_x\right)\theta_{x}\left(v_y\right)}{\theta_{x}\left(J^o\cdot v_x\right)\Vert\theta_{y}\Vert_{\tau}\Vert v_y\Vert_{\tau}} \right\vert$.
\ec
\end{dfn}

\begin{rem} \label{rem co dual to ctauo} 
Recall that $c_o$ is the cocycle defined in Section \ref{section distrib wrt bo}. The cocycle $c_{\tau}$ is dual to $c_o$, i.e. $\ell_{c_o}(\gamma)=\ell_{c_{\tau}}(\gamma^{-1})$ for every $\gamma\in\gh$.
\begin{flushright}
$\diamond$
\end{flushright}
\end{rem}

The proof of the following lemma is a direct computation.

\begin{lema}
For every $\gamma\in\Gamma$ and every $(x,y)\in\bgc$ one has

\bc
$[\gamma\cdot x,\gamma\cdot y]_{\tau}-[x,y]_{\tau}=-(c_o(\gamma,x)+c_{\tau}(\gamma,y))$.
\ec
\begin{flushright}
$\square$
\end{flushright}
\end{lema}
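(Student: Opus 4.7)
The statement is a direct computation, so the plan is essentially to expand both sides and check that everything cancels. The only subtlety is choosing representatives cleverly so that the $\Gamma$-equivariance of $\xi$ and $\eta$ makes all scaling ambiguities disappear.

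First I would exploit the fact that $[\cdot,\cdot]_\tau$, $c_o(\gamma,\cdot)$ and $c_\tau(\gamma,\cdot)$ are all defined as logarithms of ratios that are invariant under rescaling of $v_x$, $v_y$, $\theta_x$ and $\theta_y$. Therefore, for the points $\gamma\cdot x$ and $\gamma\cdot y$ I may freely pick the representatives
\[
v_{\gamma\cdot x}:=\rho(\gamma)\cdot v_x,\quad v_{\gamma\cdot y}:=\rho(\gamma)\cdot v_y,\quad \theta_{\gamma\cdot x}:=\rho(\gamma)\cdot \theta_x,\quad \theta_{\gamma\cdot y}:=\rho(\gamma)\cdot\theta_y,
\]
which are legitimate by equivariance of $\xi$ and $\eta$ (so that $\rho(\gamma)\cdot v_x\in\xi(\gamma\cdot x)$ and $\ker(\rho(\gamma)\cdot\theta_y)=\eta(\gamma\cdot y)$).

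Next I would substitute these into the definition of $[\gamma\cdot x,\gamma\cdot y]_\tau$ and simplify the four pairings one at a time. Since $(\rho(\gamma)\cdot\theta)(\rho(\gamma)\cdot v)=\theta(v)$ for any $\theta,v$, the two off-diagonal pairings collapse to $\theta_y(v_x)$ and $\theta_x(v_y)$, matching exactly the numerator of $[x,y]_\tau$. The $J^o$-pairing becomes
\[
\theta_{\gamma\cdot x}(J^o\cdot v_{\gamma\cdot x})=\theta_x\bigl(\rho(\gamma^{-1})J^o\rho(\gamma)\cdot v_x\bigr),
\]
and the two norm terms become $\Vert\rho(\gamma)\cdot\theta_y\Vert_\tau$ and $\Vert\rho(\gamma)\cdot v_y\Vert_\tau$.

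Finally, I would form the difference $[\gamma\cdot x,\gamma\cdot y]_\tau-[x,y]_\tau$: the $\theta_y(v_x)\theta_x(v_y)$ factors cancel, leaving
\[
-\tfrac{1}{2}\log\left\vert\tfrac{\theta_x(\rho(\gamma^{-1})J^o\rho(\gamma)\cdot v_x)}{\theta_x(J^o\cdot v_x)}\right\vert\; -\;\tfrac{1}{2}\log\left(\tfrac{\Vert\rho(\gamma)\cdot\theta_y\Vert_\tau\Vert\rho(\gamma)\cdot v_y\Vert_\tau}{\Vert\theta_y\Vert_\tau\Vert v_y\Vert_\tau}\right),
\]
which by Definitions of $c_o$ and $c_\tau$ is precisely $-c_o(\gamma,x)-c_\tau(\gamma,y)$. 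There is really no obstacle here beyond careful bookkeeping: the lemma is a formal identity whose only content is that $c_o$ picks up the $J^o$-pairing in the denominator of $[\cdot,\cdot]_\tau$ while $c_\tau$ picks up the two $\tau$-norms, each variable of the Gromov product being acted upon by the appropriate cocycle.
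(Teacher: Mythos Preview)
Your proposal is correct and is precisely the direct computation the paper alludes to; the paper gives no details beyond the words ``direct computation,'' and your argument fills those in cleanly with the natural choice of equivariant representatives.
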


\begin{lema}\label{lema computing gromov tauo on gammapm}
Let $\gamma$ be an element of $\gh$. Then

\bc
$[\gamma_-,\gamma_+]_{\tau}=-\frac{1}{2}\bb(J^o\cdot \rho(\gamma)_-,J^o\cdot\rho(\gamma)_+,\rho(\gamma^{-1})_-,\rho(\gamma^{-1})_+)+\frac{1}{2}\mathscr{G}_{\tau}(\rho(\gamma^{-1})_-,J^o\cdot \rho(\gamma)_+)$.
\ec
\end{lema}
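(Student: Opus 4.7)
The plan is to prove this by direct computation, exactly in the spirit of Lemma \ref{lema computing gromov on gammapm o}. The main input is the identification of $\xi(\gamma_{\pm})$ and $\eta(\gamma_{\pm})$ with the attractive/repelling lines and hyperplanes of $\rho(\gamma^{\pm 1})$ (via equivariance), together with the fact that $(J^o)^2 = \mathrm{id}$ and that $\|\cdot\|_\tau$ is $J^o$-invariant by Remark \ref{rem J preserva norma}.

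Concretely, I would fix $x=\gamma_-$ and $y=\gamma_+$ and let $v_x, v_y, \theta_x, \theta_y$ be as in the definition of $[\cdot,\cdot]_{\tau}$. By equivariance of $\xi$ and $\eta$ (see Section \ref{sec anosov}) one has $v_y \in \rho(\gamma)_+$, $v_x \in \rho(\gamma^{-1})_+$, $\ker\theta_x = \rho(\gamma)_-$, and $\ker\theta_y = \rho(\gamma^{-1})_-$. Consequently $J^o v_y$ represents $J^o\cdot\rho(\gamma)_+$, $\theta_x\circ J^o$ represents $J^o\cdot\rho(\gamma)_-$ (as an element of $\mathbb{P}((\mathbb{R}^d)^*)$), and so forth.

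I would then expand the right-hand side using the definitions \eqref{eq defi gcursiva} and \eqref{eqdef crossratio}. The cross-ratio becomes
\[
\bb\bigl(J^o\cdot \rho(\gamma)_-,J^o\cdot\rho(\gamma)_+,\rho(\gamma^{-1})_-,\rho(\gamma^{-1})_+\bigr)
= \log\left\vert \frac{(\theta_x\circ J^o)(v_x)}{(\theta_x\circ J^o)(J^o v_y)}\cdot \frac{\theta_y(J^o v_y)}{\theta_y(v_x)}\right\vert.
\]
Applying $(J^o)^2 = \mathrm{id}$ one has $(\theta_x\circ J^o)(J^o v_y) = \theta_x(v_y)$, while $(\theta_x\circ J^o)(v_x)=\theta_x(J^o v_x)$, so the above simplifies to
\[
\log\left\vert \frac{\theta_x(J^o v_x)\,\theta_y(J^o v_y)}{\theta_x(v_y)\,\theta_y(v_x)}\right\vert.
\]
On the other hand, using $\|J^o v_y\|_\tau = \|v_y\|_\tau$ (by Remark \ref{rem J preserva norma}),
\[
\mathscr{G}_\tau(\rho(\gamma^{-1})_-,J^o\cdot\rho(\gamma)_+) = \log\frac{\vert \theta_y(J^o v_y)\vert}{\|\theta_y\|_\tau \|v_y\|_\tau}.
\]

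Combining these two expressions, the factor $\theta_y(J^o v_y)$ cancels and one obtains
\[
-\tfrac{1}{2}\bb + \tfrac{1}{2}\mathscr{G}_\tau = \tfrac{1}{2}\log\left\vert\frac{\theta_y(v_x)\,\theta_x(v_y)}{\theta_x(J^o v_x)\|\theta_y\|_\tau \|v_y\|_\tau}\right\vert = [\gamma_-,\gamma_+]_{\tau},
\]
which is the claimed identity. There is no real obstacle here: the only subtlety is being careful with the action of $J^o$ on $\mathbb{P}((\mathbb{R}^d)^*)$ (namely $\theta\mapsto \theta\circ J^o$, using $(J^o)^{-1}=J^o$) and with keeping track of which points in $\bg$ correspond to the attractive/repelling data of $\rho(\gamma)$ versus $\rho(\gamma^{-1})$; once these identifications are made, the identity is a one-line cancellation.
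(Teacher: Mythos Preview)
Your proof is correct and follows essentially the same approach as the paper: a direct unwinding of the definitions using the identifications $\xi(\gamma_\pm)=\rho(\gamma^{\pm1})_+$, $\eta(\gamma_\pm)=\rho(\gamma^{\mp1})_-$, $(J^o)^2=\mathrm{id}$, and the $J^o$-invariance of $\Vert\cdot\Vert_\tau$. The only organizational difference is that the paper first writes $[\gamma_-,\gamma_+]_\tau=[\gamma_-,\gamma_+]_o+\tfrac{1}{2}\log\frac{|\theta_{\gamma_+}(J^o v_{\gamma_+})|}{\Vert\theta_{\gamma_+}\Vert_\tau\Vert v_{\gamma_+}\Vert_\tau}$ and then invokes Lemma~\ref{lema computing gromov on gammapm o} for the first term, whereas you carry out the full cancellation from scratch.
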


\begin{proof}

Recall the definition of $[\cdot,\cdot]_o$ from Subsection \ref{subsec dual and gromov co}. One has

\bc

$[\gamma_-,\gamma_+]_\tau=[\gamma_-,\gamma_+]_o+\dfrac{1}{2}\log\dfrac{\left\vert \theta_{\gamma_+}(J^o\cdot v_{\gamma_+}) \right\vert}{\Vert \theta_{\gamma_+} \Vert_\tau\Vert  v_{\gamma_+}\Vert_\tau} $.

\ec
\noindent The proof then follows from Lemma \ref{lema computing gromov on gammapm o} and Remark \ref{rem J preserva norma}.

\end{proof}

\subsection{Distribution of attractors and repellors with respect to $ b^\tau$} \label{subsec dist of fixed wrt btau}

Let $\mu_{\tau}$ be a Patterson-Sullivan probability on $\bg$ associated to $c_{\tau}$ and recall that $\mu_o$ is the one associated to $c_o$. The analogue of Proposition \ref{prop sambarino distribution wrt periods} is available for the flow on $\tn{U}_\tau\Gamma$. The limit measure can be written in this case as\footnote{For a proof, see Remark \ref{rem BM for ctau and distribution}.}

\bc
$e^{-h[\cdot,\cdot]_\tau}\mu_o\otimes\mu_\tau $.
\ec

Let $\lieb^+$ be a closed Weyl chamber of a maximal subalgebra $\lieb\subset\liep^\tau\cap\lieq^o$.

\begin{prop} \label{prop distribution on bg nu}
There exists a constant $M'=M'_{\rho,\tau}>0$ such that

\bc
$M' e^{-ht}\displaystyle\sum_{\gamma\in\gh, \vert b^\tau(\rho(\gamma))\vert^{\frac{1}{2}}\leq t} \delta_{\gamma_-}\otimes\delta_{\gamma_+}\too \mu_o\otimes\mu_\tau$
\ec

\noindent as $t\too\infty$ on $C^*(\bg\times\bg)$.

\end{prop}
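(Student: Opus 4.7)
The plan is to mimic the proof of Proposition \ref{prop distribution on bg for length}, systematically replacing the cocycle $c_o$ by $c_\tau$ (and hence the second factor of $\mu_o\otimes\mu_o$ by $\mu_\tau$) and using the Gromov product $[\cdot,\cdot]_\tau$ in place of $[\cdot,\cdot]_o$. The crucial inputs are the period identity $\ell_{c_\tau}(\gamma)=\lambda_1(\rho(\gamma))$ from Lemma \ref{lema cocycle tau and periods}, together with the comparison between $\vert b^\tau(\rho(\gamma))\vert^{\frac{1}{2}}$ and $\lambda_1(\rho(\gamma))$ supplied by Lemma \ref{lema jrhojrho prox dos}(5), combined with the identification of the Benoist error term with $-[\gamma_-,\gamma_+]_\tau$ provided by Lemma \ref{lema computing gromov tauo on gammapm}.

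First I would establish the analog of Proposition \ref{prop sambarino distribution wrt periods} for the flow $\psi_t$ on $\tn{U}_\tau\Gamma$. Since $c_\tau$ is dual to $c_o$ (Remark \ref{rem co dual to ctauo}), the Patterson-Sullivan probabilities $\mu_o$ and $\mu_\tau$ combine to give a $\Gamma$-invariant measure $e^{-h[\cdot,\cdot]_\tau}\mu_o\otimes\mu_\tau\otimes dt$ on $\bgc\times\rr$ (invariance follows from the transformation rule satisfied by $[\cdot,\cdot]_\tau$), which descends to the probability of maximal entropy of $\psi_t$ on $\tn{U}_\tau\Gamma$. Sambarino's equidistribution of periodic orbits, as adapted in Appendix \ref{appendix distribution utaugamma y uogamma}, then yields a constant $M'=M'_{\rho,\tau}>0$ with
\bc
$M'e^{-ht}\displaystyle\sum_{\gamma\in\gh,\ell_{c_\tau}(\gamma)\leq t}\delta_{\gamma_-}\otimes\delta_{\gamma_+}\too e^{-h[\cdot,\cdot]_\tau}\mu_o\otimes\mu_\tau$
\ec
on $C_c^*(\bgc)$.

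Second, I would convert distribution with respect to periods into distribution with respect to $\vert b^\tau(\rho(\gamma))\vert^{\frac{1}{2}}$. Fix $\delta>0$ and disjoint open sets $A,B\subset\bg$; by continuity of $[\cdot,\cdot]_\tau$, shrink $A,B$ so that $\vert [x,y]_\tau - s\vert<\delta$ on $A\times B$ for some constant $s$. Combining Lemma \ref{lema jrhojrho prox dos}(5), Lemma \ref{lema cocycle tau and periods} and Lemma \ref{lema computing gromov tauo on gammapm}, for all but finitely many $\gamma\in\gh$ with $(\gamma_-,\gamma_+)\in A\times B$ one has
\bc
$\left\vert \vert b^\tau(\rho(\gamma))\vert^{\frac{1}{2}} - \ell_{c_\tau}(\gamma) + [\gamma_-,\gamma_+]_\tau \right\vert<\delta$,
\ec
hence $\ell_{c_\tau}(\gamma) - s - 2\delta < \vert b^\tau(\rho(\gamma))\vert^{\frac{1}{2}} < \ell_{c_\tau}(\gamma) - s + 2\delta$. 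The density $e^{-h[\cdot,\cdot]_\tau}$ in the limit measure is exactly the factor that is absorbed when passing from the level sets of the period $\ell_{c_\tau}$ to those of $\vert b^\tau\vert^{\frac{1}{2}}$, as in \cite[Theorem 6.5]{Sam}. This yields
\bc
$M'e^{-ht}\displaystyle\sum_{\gamma\in\gh,\vert b^\tau(\rho(\gamma))\vert^{\frac{1}{2}}\leq t}\delta_{\gamma_-}\otimes\delta_{\gamma_+}(A\times B)\too (\mu_o\otimes\mu_\tau)(A\times B)$.
\ec

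Third, I would handle the diagonal contribution. Since $\mu_o$ and $\mu_\tau$ are non-atomic, the diagonal is $(\mu_o\otimes\mu_\tau)$-null, and the covering/ping-pong argument used in the diagonal part of the proof of Proposition \ref{prop distribution on bg for length} carries over, provided we have a weak triangle inequality for $b^\tau$. Such an inequality is immediate from Proposition \ref{prop interpretation of btau in symg}: since $\vert b^\tau(g)\vert^{\frac{1}{2}}=d_{X_G}(g^{-1}\cdot\tau,S^o)$, the genuine triangle inequality in $X_G$ gives, for any $f\in\Gamma$,
\bc
$\vert b^\tau(\rho(f)\rho(\gamma))\vert^{\frac{1}{2}}\leq D_f+\vert b^\tau(\rho(\gamma))\vert^{\frac{1}{2}}$
\ec
with $D_f:=d_{X_G}(\rho(f^{-1})\cdot\tau,\tau)$ independent of $\gamma$. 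This replaces Proposition \ref{prop traingle inequality} in the diagonal part of the argument, and the rest of the proof proceeds verbatim as in Proposition \ref{prop distribution on bg for length}. The main delicacy is purely bookkeeping: one has to track that the Bowen-Margulis construction in the appendix is stated for the asymmetric cocycle pair $(c_o,c_\tau)$ and produces $\mu_o\otimes\mu_\tau$ in the correct order, which is automatic from the transformation rule for $[\cdot,\cdot]_\tau$.
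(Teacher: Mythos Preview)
Your proposal is correct and follows essentially the same approach as the paper, which simply says to repeat the proof of Proposition \ref{prop distribution on bg for length} with the pair $\{c_o,c_\tau\}$ and the Gromov product $[\cdot,\cdot]_\tau$, invoking item (5) of Lemma \ref{lema jrhojrho prox dos} and Lemma \ref{lema computing gromov tauo on gammapm}. Your observation that the weak triangle inequality for $b^\tau$ follows directly from the Riemannian interpretation (Proposition \ref{prop interpretation of btau in symg}) and the genuine triangle inequality in $X_G$ is a mild simplification over adapting Proposition \ref{prop traingle inequality}, though the latter also adapts immediately via sub-multiplicativity of $\Vert\cdot\Vert_\tau$.
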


\begin{proof}

The proof is the same that the one given in Proposition \ref{prop distribution on bg for length} adapted to the pair $\lbrace c_o,c_\tau \rbrace$ and the Gromov product $[\cdot,\cdot]_\tau$: apply item (5) of Lemma \ref{lema jrhojrho prox dos} and Lemma \ref{lema computing gromov tauo on gammapm}.

\end{proof}

\subsection{Proof of Theorem \ref{teorema B}} \label{subsec proof teo B}

The following proposition, which implies Theorem \ref{teorema B}, can be proved in the same way as Proposition \ref{prop distribution on bg for length with torsion}.

\begin{prop}\label{prop distribution on bg for btau with torsion}
There exists a constant $M'=M'_{\rho,\tau}>0$ such that

\bc
$M' e^{-ht}\displaystyle\sum_{\gamma\in\Gamma, \vert b^\tau(\rho(\gamma))\vert^{\frac{1}{2}} \leq t} \delta_{\rho(\gamma^{-1})\cdot o^{\ppq}}\otimes\delta_{\rho(\gamma)\cdot o}\too \eta_{*}(\mu_o)\otimes\xi_*(\mu_\tau)$
\ec

\noindent on $C^*(\pp((\rr^d)^*)\times\pp(\rr^d))$ as $t\too\infty$.
\begin{flushright}
$\square$
\end{flushright}
\end{prop}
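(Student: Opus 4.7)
The plan is to mirror the argument of Proposition \ref{prop distribution on bg for length with torsion}, starting from Proposition \ref{prop distribution on bg nu} in place of Proposition \ref{prop distribution on bg for length}. The proof proceeds in three steps, exactly analogous to those of Corollary \ref{cor distr orbit o in gammah for bo} followed by the two-step extension to $\Gamma$.

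First, I would prove the $b^\tau$-analogue of Corollary \ref{cor distr orbit o in gammah for bo}: for $\gamma\in\gh$, transfer the weak-$*$ convergence from the fixed-point coordinates $(\gamma_-,\gamma_+)\in\bg\times\bg$ to the orbit coordinates $(\rho(\gamma^{-1})\cdot o^{\ppq},\rho(\gamma)\cdot o)\in\pp((\rr^d)^*)\times\pp(\rr^d)$. Setting
\begin{equation*}
\theta_t':=M'e^{-ht}\sum_{\gamma\in\gh,\,\vert b^\tau(\rho(\gamma))\vert^{\frac{1}{2}}\leq t}\delta_{\gamma_-}\otimes\delta_{\gamma_+}
\end{equation*}
and $\nu_t^{\tn{H}}$ with the same index set but in orbit coordinates, it suffices to show $\nu_t^{\tn{H}}-(\eta,\xi)_*(\theta_t')\to 0$. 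This follows from Proposition \ref{prop limit with S and U and Uuno cerca gammamas}, Remark \ref{rem complemento de Sdmenosuno va en Uuno} and the proof of Proposition \ref{prop action on omegarho is prop discont and limit set is limit set}: the distances $d_\tau(\rho(\gamma)\cdot o,\xi(\gamma_+))$ and $d_\tau^*(\rho(\gamma^{-1})\cdot o^{\ppq},\eta(\gamma_-))$ tend to zero uniformly as $\vert\gamma\vert_\Gamma\to\infty$.

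Second, off the diagonal $\Delta=\lbrace(\theta,v):\theta(v)=0\rbrace$ the passage from $\gh$ to $\Gamma$ is straightforward: the analogue of Claim \ref{claim f suppor far from diagonal implies g in gh} applies verbatim, since it only uses Proposition \ref{prop limit with S and U and Uuno cerca gammamas}, Remark \ref{rem complemento de Sdmenosuno va en Uuno}, the Anosov condition (\ref{eq def anosov}) and Benoist's Lemma \ref{lema benoist lemma 1.2}, none of which involve the choice of cocycle. Hence a test function $f$ with support disjoint from $\Delta$ only receives contributions from finitely many $\gamma\in\Gamma\setminus\gh$, and Step 1 delivers the desired limit $(\eta_{*}\mu_o\otimes\xi_{*}\mu_\tau)(f)$.

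Third, for the contribution near $\Delta$ I would reproduce the two-generator covering/ping-pong argument at the end of Proposition \ref{prop distribution on bg for length with torsion}. The only non-trivial input is a $b^\tau$-triangle inequality: for every $f\in\Gamma$ there exists $D_f'>0$ such that
\begin{equation*}
\vert b^\tau(\rho(f)\rho(\gamma))\vert^{\frac{1}{2}}\leq D_f'+\vert b^\tau(\rho(\gamma))\vert^{\frac{1}{2}}
\end{equation*}
for every $\gamma\in\Gamma$. By Proposition \ref{prop computing nu} this is equivalent to
\begin{equation*}
\tfrac{1}{2}\log\Vert J^o\rho(f)\rho(\gamma)J^o\rho(\gamma^{-1})\rho(f^{-1})\Vert_\tau\leq D_f'+\tfrac{1}{2}\log\Vert J^o\rho(\gamma)J^o\rho(\gamma^{-1})\Vert_\tau,
\end{equation*}
which is immediate from submultiplicativity of the operator norm together with the fact that $J^o$ preserves $\Vert\cdot\Vert_\tau$ (Remark \ref{rem J preserva norma}), taking $D_f':=\tfrac{1}{2}\log(\Vert\rho(f)\Vert_\tau\Vert\rho(f^{-1})\Vert_\tau)$. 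Note this is actually simpler than Proposition \ref{prop traingle inequality}, since no conversion between operator norm and spectral radius via Lemma \ref{lema benoist proximal comp vasing y vap} is needed. Plugging this triangle inequality into the covering scheme of Proposition \ref{prop distribution on bg for length with torsion} bounds $\limsup_{t\to\infty}\nu_t(\bigcup U^*\times U)$ by $2e^{hD'}\varepsilon_0$ for arbitrarily small $\varepsilon_0$.

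The main obstacle, as in the proof of Proposition \ref{prop distribution on bg for length with torsion}, is controlling the diagonal contribution: torsion elements could in principle accumulate near $\Delta$ and a naive bound is insufficient. The covering/ping-pong machinery handles this provided the triangle inequality above is available; unlike the $b^o$-setting, where Benoist's quantified proximality was indispensable, here the required inequality is essentially automatic from submultiplicativity of the operator norm, so the whole argument goes through with only cosmetic changes.
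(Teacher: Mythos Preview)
Your proposal is correct and follows exactly the route the paper intends: the paper's proof of this proposition is simply a reference to the proof of Proposition \ref{prop distribution on bg for length with torsion}, and you have faithfully transcribed that argument with $b^\tau$ in place of $b^o$, starting from Proposition \ref{prop distribution on bg nu} and Corollary \ref{cor distr orbit o in gammah for bo}. Your observation that the needed triangle inequality for $b^\tau$ follows directly from submultiplicativity of $\Vert\cdot\Vert_\tau$ (without the detour through Lemma \ref{lema benoist proximal comp vasing y vap} required in Proposition \ref{prop traingle inequality}) is a nice simplification that the paper does not spell out.
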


\appendix

\section{Distribution of periodic orbits in $\tn{U}_o\Gamma$ and $\tn{U}_\tau\Gamma$} 
\label{appendix distribution utaugamma y uogamma}
\setcounter{equation}{0}

The goal of this appendix is to describe the distribution of periodic orbits of the flows defined in Sections \ref{section distrib wrt bo} and \ref{section distrib wrt btau} (Proposition \ref{prop distribution of periodic orbits} and Remark \ref{rem BM for ctau and distribution}). For the case on which $\Gamma$ is the fundamental group of a closed negatively curved manifold, this result is covered by \cite[Proposition 4.3]{Sam}. Here we treat the case of word hyperbolic groups admitting an Anosov representation.

In \cite[Proposition 4.3]{Sam} the author applies the thermodynamic formalism to re\-para\-me\-tri\-za\-tions of the geodesic flow of the manifold. Here we benefit from the fact that a projective Anosov representation $\rho$ is given and use the \textit{geodesic flow} of $\rho$, introduced by Bridgeman-Canary-Labourie-Sambarino in \cite{BCLS}, as a reference flow. This is a canonical flow associated to a projective Anosov representation  and we show that it is Hölder conjugate to the flows on the spaces $\tn{U}_o\Gamma$ and $\tn{U}_\tau\Gamma$. Since the techniques of the thermodynamic formalism are available for the geodesic flow of the representation (see \cite{BCLS,CLT}), the adaptations needed in our context are straightforward.

The appendix is structured as follows. In Subsection \ref{subsec geod flow def and prop} we recall the definition of the geodesic flow of a representation and its main properties. We are interested in two descriptions of its probability of maximal entropy (Facts \ref{fact entropy, equidistribution and counting for urhogamma} and \ref{fact product measure is BM}). In Subsection \ref{subsec flows uogamma y utaugamma} we translate these results to the flows on $\tn{U}_o\Gamma$ and $\tn{U}_\tau\Gamma$.

\subsection{The geodesic flow $\tn{U}_\rho\Gamma$}\label{subsec geod flow def and prop}

We fix from now on a projective Anosov representation $\rho:\Gamma\too G$.

\subsubsection{\tn{\textbf{Definition and the metric Anosov property}}}\label{subsub geod flow of rho is metric anosov}

The standard reference for this subsection is \cite{BCLS}. Given $(x,y)\in\bgc$ let

\bc
$\tn{M}(x,y):=\lbrace(\theta,v)\in \eta(x)\times\xi(y): \hspace{0,3cm} \theta(v)=1\rbrace/\sim$
\ec

\noindent where $(\theta,v)\sim(-\theta,-v)$. Consider the line bundle over $\bgc$ defined by

\bc
$F_\rho:=\lbrace (x,y,\theta,v): \hspace{0,3cm} (x,y)\in\bgc \tn{ and } (\theta,v)\in  \tn{M}(x,y)\rbrace$.
\ec

\begin{hecho}[Bridgeman-Canary-Labourie-Sambarino {\cite[Sections 4 \& 5]{BCLS}}]\label{fact urhogamma is anosov}
The following holds:

\begin{itemize}

\item The group $\Gamma$ acts naturally on $F_\rho$ and this action is proper and co-compact. The quotient space is denoted by $\tn{U}_\rho\Gamma$.

\item The flow $\phi_t$ on $F_\rho$ defined by

\bc
$\phi_t(x,y,\theta,v):=(x,y,e^{-t}\theta,e^t v)$
\ec

\noindent descends to a flow on $\tn{U}_\rho\Gamma$, still denoted by $\phi_t$, and called the \textit{geodesic flow} of $\rho$. The geodesic flow of $\rho$ is conjugate, by a Hölder homeomorphism, to a Hölder reparametrization of the Gromov geodesic flow of $\Gamma$ (see Mineyev \cite{Min}).

\item Periodic orbits of $\phi_t$ are in one-to-one correspondence with conjugacy classes of primitive elements $\gamma$ in $\Gamma$. The corresponding period is $\lambda_1(\rho(\gamma))$. 

\item The geodesic flow $\phi_t$ is a transitive \textit{metric Anosov flow}. Very informally, this means that there exists laminations $W^{ss}$, $W^{uu}$, $W^{cs}$ and $W^{cu}$ of $\tn{U}_\rho\Gamma$, called respectively \textit{strong stable lamination}, \textit{strong unstable lamination}, \textit{central stable lamination} and \textit{central unstable lamination}, defining a \textit{local product structure} and with the property that $W^{ss}$ (resp. $W^{uu}$) is exponentially contracted by the flow (resp. the inverse flow). For precise definitions see \cite[Subsection 3.2]{BCLS}.

Explicitly, for a point $Z_0=(x_0,y_0,\theta_0,v_0)$ in $\tn{U}_\rho\Gamma$ the strong stable and strong unstable leaves through $Z_0$ are given by:

\bc
$W^{ss}(Z_0)=\lbrace (x,y_0,\theta,v_0)\in\tn{U}_\rho\Gamma:\hspace{0,3cm} \theta\in\eta(x) \tn{ and } \theta(v_0)=1 \rbrace$
\ec

\noindent and

\bc
$W^{uu}(Z_0)=\lbrace (x_0,y,\theta_0,v)\in\tn{U}_\rho\Gamma:\hspace{0,3cm} v\in\xi(y) \tn{ and } \theta_0(v)=1\rbrace$.
\ec

\noindent The central stable and central unstable leaves are given by:

\bc
$W^{cs}(Z_0)=\lbrace (x,y_0,\theta,v)\in\tn{U}_\rho\Gamma:\hspace{0,3cm} \theta\in\eta(x),\hspace{0,1cm} v\in\xi(y_0) \tn{ and } \theta(v)=1 \rbrace$
\ec

\noindent and

\bc
$W^{cu}(Z_0)=\lbrace (x_0,y,\theta,v)\in\tn{U}_\rho\Gamma:\hspace{0,3cm} v\in\xi(y),\hspace{0,1cm} \theta\in\eta(x_0) \tn{ and } \theta(v)=1\rbrace$.
\ec

\end{itemize}
\begin{flushright}
$\diamond$
\end{flushright}
\end{hecho}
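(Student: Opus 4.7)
The plan is to follow the analysis of Bridgeman-Canary-Labourie-Sambarino, verifying each of the four bullets in turn.

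For the first two bullets, I would begin by trivializing $F_\rho$ as a principal $\rr$-bundle over $\bgc$. Fix any Hölder section $(x,y)\mapsto(\theta_{x,y}^0,v_{x,y}^0)\in\tn{M}(x,y)$; then every point of $F_\rho$ is uniquely of the form $(x,y,e^{-s}\theta_{x,y}^0,e^{s}v_{x,y}^0)$ for some $s\in\rr$, and the flow $\phi_t$ corresponds to the ordinary translation $s\mapsto s+t$ in this coordinate. Under this trivialization the $\Gamma$-action acquires the skew-product form
\[\gamma\cdot(x,y,s)=\bigl(\gamma\cdot x,\gamma\cdot y,s+\tilde c(\gamma,x,y)\bigr)\]
for an explicit Hölder cocycle $\tilde c$ built from $\rho$ and the section, whose cohomology class is independent of the section. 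The periods of $\tilde c$ on infinite order elements are $\tilde c(\gamma,\gamma_-,\gamma_+)=\lambda_1(\rho(\gamma))>0$ (using proximality of $\rho(\gamma)$ and that $\rho(\gamma)$ acts on $\xi(\gamma_+)$ by $\pm e^{\lambda_1(\rho(\gamma))}$), so $\tilde c$ defines a Hölder reparametrization of the Gromov geodesic flow on $(\bgc\times\rr)/\Gamma$ (cf.~\cite[Section 3]{Sam}, \cite{Min}). Proper discontinuity and cocompactness of the $\Gamma$-action on $F_\rho$, as well as the Hölder conjugacy of $\phi_t$ with a Hölder reparametrization of the Gromov geodesic flow, both follow from this cocycle identification.

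For the third bullet, an element $\gamma\in\Gamma$ fixes a pair $(x,y)\in\bgc$ if and only if $\gamma\in\gh$ with $x=\gamma_-$ and $y=\gamma_+$. On the fiber $\tn{M}(\gamma_-,\gamma_+)$, proximality of $\rho(\gamma)$ together with $\xi(\gamma_+)=\rho(\gamma)_+$ and $\eta(\gamma_-)=\rho(\gamma)_-$ implies that a lift of $\rho(\gamma)$ to $\tn{SL}(d,\rr)$ acts on any $v\in\xi(\gamma_+)$ by $\pm e^{\lambda_1(\rho(\gamma))}$ and on any linear functional $\theta$ with $\ker\theta=\eta(\gamma_-)$ by $\pm e^{-\lambda_1(\rho(\gamma))}$. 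Hence $\gamma$ acts on $\tn{M}(\gamma_-,\gamma_+)$ as $(\theta,v)\mapsto(e^{-\lambda_1(\rho(\gamma))}\theta,e^{\lambda_1(\rho(\gamma))}v)$ modulo the sign identification, which is precisely $\phi_{-\lambda_1(\rho(\gamma))}$ on this fiber. Thus $\gamma$ descends to a closed orbit of period $\lambda_1(\rho(\gamma))$, minimal exactly when $\gamma$ is primitive.

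For the fourth bullet, the laminations $W^{ss}$, $W^{uu}$, $W^{cs}$, $W^{cu}$ are given by the explicit formulas in the statement, and the transversality condition (\ref{eq transv condition}) makes $(W^{cs},W^{cu})$ into a local product structure, with $W^{ss}$ (resp.~$W^{uu}$) the central fiber of $W^{cs}$ (resp.~$W^{cu}$). The contraction and dilation rates are already visible in the formula $\phi_t(x,y,\theta,v)=(x,y,e^{-t}\theta,e^{t}v)$: the $\theta$-component on $W^{ss}$ is scaled by $e^{-t}$, and the $v$-component on $W^{uu}$ by $e^{t}$. The main obstacle is to upgrade these pointwise scalings into uniform exponential contraction with respect to a fixed Hölder metric on the compact quotient $\tn{U}_\rho\Gamma$; this requires constructing such a metric out of a Hölder trivialization of $F_\rho$ together with the exponential singular value gap (\ref{eq def anosov}), and it is the technical heart of the argument, carried out in detail in \cite[Section 5]{BCLS}. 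Transitivity of $\phi_t$ then follows from density in $\bgc$ of the set of pairs $\lbrace(\gamma_-,\gamma_+):\gamma\in\gh\rbrace$.
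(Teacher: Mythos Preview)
The paper does not prove this statement; it is stated as a Fact with a citation to \cite[Sections 4 \& 5]{BCLS} and closed with a $\diamond$, so there is no ``paper's own proof'' to compare against. Your outline is a faithful sketch of how the cited argument goes: trivialize $F_\rho$ as an $\rr$-bundle via a H\"older section to see the $\Gamma$-action as a skew product by a cocycle with positive periods $\lambda_1(\rho(\gamma))$, read off the periodic orbit structure from the fiberwise action over $(\gamma_-,\gamma_+)$, and then invoke the metric construction of \cite[Section 5]{BCLS} for the metric Anosov property. One minor slip: in your third bullet, the computation gives $\gamma\cdot(\theta,v)=(e^{-\lambda_1}\theta,e^{\lambda_1}v)=\phi_{\lambda_1(\rho(\gamma))}(\theta,v)$ up to sign, not $\phi_{-\lambda_1(\rho(\gamma))}$; this does not affect the period, which is $\lambda_1(\rho(\gamma))$ either way.
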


\subsubsection{\tn{\textbf{Entropy and distribution of periodic orbits}}}\label{subsub entropy and distribution of periodic orbits}

A flow is said to be \textit{topologically weakly-mixing} if all the periods of its periodic orbits are not multiple of a common constant.

\begin{prop}\label{prop geod flow is weak mixing}
The geodesic flow of $\rho$ is topologically weakly-mixing.
\end{prop}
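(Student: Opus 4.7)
The goal is equivalent to showing that the period spectrum of $\phi_t$---which, by Fact \ref{fact urhogamma is anosov}, coincides with $\{\lambda_1(\rho(\gamma)) : \gamma \in \gh \tn{ primitive}\}$---spans a non-discrete subgroup of $\rr$. I will argue by contradiction, assuming $\lambda_1(\rho(\gamma)) \in c\zz$ for every $\gamma \in \gh$ and some $c > 0$.

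The key tool will be the cross-ratio formula of Theorem \ref{teo benoist}(1). Since $\Gamma$ is non-elementary, I can fix $\gamma_1, \gamma_2 \in \gh$ with four pairwise distinct fixed points in $\bg$; then by transversality of $\xi$ and $\eta$, the data $(\rho(\gamma_i)_\pm)_{i=1,2}$ satisfies the hypothesis of Theorem \ref{teo benoist}. Since $\rho(\gamma_1)^n$ shares its attracting line and repelling hyperplane with $\rho(\gamma_1)$ and its $(r,\varepsilon)$-proximality improves as $n\to\infty$ (by Lemma \ref{lema sambarino lemma 5.7} applied to the powers of $\gamma_1$), applying Theorem \ref{teo benoist}(1) to the pair $(\rho(\gamma_1)^n, \rho(\gamma_2))$ will yield, for every $\delta > 0$ and all sufficiently large $n$,
\begin{equation*}
\bigl| \lambda_1(\rho(\gamma_1^n\gamma_2)) - n\,\lambda_1(\rho(\gamma_1)) - \lambda_1(\rho(\gamma_2)) - \bb_{12} \bigr| < \delta,
\end{equation*}
where $\bb_{12} := \bb(\rho(\gamma_1)_-, \rho(\gamma_1)_+, \rho(\gamma_2)_-, \rho(\gamma_2)_+)$ is independent of $n$. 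Under the arithmetic hypothesis, the three $\lambda_1$ terms on the left-hand side are multiples of $c$, so $\bb_{12}$ must lie within $\delta$ of $c\zz$; letting $\delta \to 0$ forces $\bb_{12} \in c\zz$.

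The main obstacle will be to promote this rigidity on a single cross-ratio into an actual contradiction. My plan is to replay the argument with $\gamma_2$ replaced by an arbitrary conjugate $\delta\gamma_2\delta^{-1}$, whose fixed points are $\rho(\delta)\cdot\rho(\gamma_2)_\pm$ by equivariance of $\xi$ and $\eta$; this yields $\bb(\rho(\gamma_1)_-, \rho(\gamma_1)_+, \rho(\delta)\rho(\gamma_2)_-, \rho(\delta)\rho(\gamma_2)_+) \in c\zz$ for all but finitely many $\delta \in \Gamma$. Combined with the standard density of the attracting/repelling pairs of loxodromic elements of $\Gamma$ in $\bg\times\bg$, continuity of $\bb$ will then force the map $(\theta,v)\mapsto \bb(\rho(\gamma_1)_-,\rho(\gamma_1)_+,\theta,v)$ to be $c\zz$-valued, and hence locally constant, on $(\eta\times\xi)(\bg\times\bg)$. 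The hard step will be to combine this local constancy---together with analogous constraints obtained by also varying $\gamma_1$ and by considering longer products $\gamma_1^{n_1}\gamma_2^{n_2}\cdots\gamma_k^{n_k}$ (which, via iterated application of Theorem \ref{teo benoist}(1), relate the spectral radii of arbitrary words to sums of cross-ratios of their building blocks)---with the explicit form of $\bb$ from \eqref{eqdef crossratio}, which diverges to $\pm\infty$ near the degenerate locus where two of its arguments collide, to extract a continuous family of admissible quadruples along which $\bb$ takes more than one value. This will contradict the $c\zz$-valuedness and conclude the proof.
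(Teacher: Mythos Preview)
Your overall strategy coincides with the paper's: assume the periods lie in $c\zz$, use Theorem~\ref{teo benoist}(1) together with the density of $\{(\gamma_-,\gamma_+):\gamma\in\gh\}$ in $\bgc$ to force every cross-ratio $\bb(\eta(x'),\xi(y'),\eta(x),\xi(y))$, for $(x',y',x,y)\in\bgcu$, into $c\zz$, and then derive a contradiction. The divergence lies in how the contradiction is extracted.

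Your plan --- letting one argument tend to the degenerate locus so that $\bb$ diverges, contradicting local constancy --- works when $\bg$ is connected, but has a genuine gap when $\bg$ is totally disconnected (e.g.\ $\Gamma$ free, or a general Schottky group). On a Cantor set a continuous $c\zz$-valued function can perfectly well be unbounded: there is no ``continuous family'' of quadruples to slide along, and nothing prevents the locally constant function from jumping through infinitely many integer multiples of $c$ as the argument approaches the degenerate locus. The paper's endgame is insensitive to the topology of $\bg$: fix three of the four boundary points $x',y',y$ and note that $\bb(\eta(x'),\xi(y'),\eta(x),\xi(y))$ depends on $\eta(x)$ only through the line $\eta(x)\cap(\xi(y)\oplus\xi(y'))$; since the cross-ratio is continuous and $c\zz$-valued, that intersection line is locally constant in $x$. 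If the limit set were contained in the plane $\xi(y)\oplus\xi(y')$ this intersection would equal $\xi(x)$ itself, so $\xi$ would be locally constant --- impossible since $\xi$ is injective and $\bg$ is perfect. Hence there exists $y''\in\bg$ with $\xi(y'')\notin\xi(y)\oplus\xi(y')$; repeating the argument with $y''$ in place of $y'$ and inducting on the dimension of the span of the chosen $\xi$-images yields a contradiction with $d<\infty$.
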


Before proving Proposition \ref{prop geod flow is weak mixing} let us state the main result of this subsection. Indeed, the following fact is a consequence of the existence of a strong Markov coding for $\phi_t$ (see \cite{BCLS,CLT}) together with the weak-mixing property. For Axiom A flows it was originally proved by Bowen \cite{Bow1} (the counting result is due to Parry-Pollicott \cite{PP}). In order to obtain it in our more general context, we need to apply Pollicott's work \cite[Subsection 3.5]{Pol}.

\begin{hecho}\label{fact entropy, equidistribution and counting for urhogamma}
The following holds:

\begin{itemize}

\item 
The topological entropy of $\phi_t$ is positive and finite. It is given by

\bc
$h=h_\rho:=\displaystyle\limsup_{t\too\infty}\dfrac{\log\#\lbrace[\gamma]\in[\Gamma]:\hspace{0,3cm} \gamma \tn{ is primitive and } \lambda_1(\rho(\gamma))\leq t\rbrace}{t}$.
\ec

\item As $t\too\infty$, one has

\bc
$hte^{-ht}\#\lbrace[\gamma]\in[\Gamma]:\hspace{0,3cm}  \gamma \tn{ is primitive and } \lambda_1(\rho(\gamma))\leq t\rbrace\too 1$.
\ec

\item There exists a unique probability $m=m_\rho$ of maximal entropy for $\phi_t$, called the \textit{Bowen-Margulis probability}.

\item Periodic orbits become equidistributed with respect to $m$: if $\tn{Leb}_{[\gamma]}$ denotes the Lebesgue measure of length $\lambda_1(\rho(\gamma))$ supported on the periodic orbit $[\gamma]$, then

\bc
$hte^{-ht}\displaystyle\sum \frac{1}{\lambda_1(\rho(\gamma))}\tn{Leb}_{[\gamma]}\too m$
\ec

\noindent in the weak-star topology as $t\too \infty$. Here the sum is taken over all conjugacy classes of primitive elements $\gamma$ such that $\lambda_1(\rho(\gamma))\leq t$.

\end{itemize}

\end{hecho}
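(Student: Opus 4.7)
The plan is to reduce the four assertions to classical thermodynamic-formalism results for suspension flows over topologically mixing subshifts of finite type. First I would invoke the construction of Bridgeman-Canary-Labourie-Sambarino together with the refinements of \cite{CLT} to equip the geodesic flow $\phi_t$ on $\tn{U}_\rho\Gamma$ with a Hölder strong Markov coding. This produces a topologically transitive subshift of finite type $(\Sigma,\sigma)$ and a strictly positive Hölder roof function $r:\Sigma\too\rr$, such that $\phi_t$ is semi-conjugate to the suspension flow over $(\Sigma,\sigma)$ with roof $r$, the semi-conjugacy being a Hölder bijection outside a negligible set that does not affect any of the asymptotics in the statement, and under which periodic orbits on both sides correspond.

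Positivity and finiteness of $h=h_{\tn{top}}(\phi_t)$ then follow from the variational principle and the theory of equilibrium states for Hölder potentials over transitive subshifts of finite type: one defines $h$ as the unique positive number satisfying $P(-hr)=0$ in the base, and the corresponding equilibrium measure on $\Sigma$ lifts to the unique probability of maximal entropy $m$ of the suspension flow, which is by definition the Bowen-Margulis probability of $\phi_t$. This yields item (3) and the positivity/finiteness statement of item (1).

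For items (2) and (4), and the explicit $\limsup$ formula in item (1), the crucial input is the weak-mixing property of Proposition \ref{prop geod flow is weak mixing}: it translates into the statement that $r$ is not Hölder-cohomologous to a function taking values in a discrete subgroup of $\rr$, which is precisely the aperiodicity hypothesis required by the Parry-Pollicott prime orbit theorem. In the Axiom A setting this is due to Bowen and Parry-Pollicott; in the more general framework of metric Anosov flows admitting a Hölder Markov coding it is established in Pollicott \cite[Subsection 3.5]{Pol}. Applying it to the coded suspension and pushing the result back to $\tn{U}_\rho\Gamma$ via the Markov coding yields both the counting asymptotic of item (2) and the equidistribution of the normalized Lebesgue measures along periodic orbits towards $m$ stated in item (4). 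The formula for $h$ as a $\limsup$ in item (1) is then an immediate consequence of item (2).

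The main obstacle I expect is the translation needed in Step 3: confirming that Proposition \ref{prop geod flow is weak mixing} — which a priori only asserts non-discreteness of the set $\{\lambda_1(\rho(\gamma))\}_{\gamma\in\gh}$ of periods — implies, in the symbolic model, the non-cohomology of $r$ to a discrete-valued function, and checking that Pollicott's machinery applies to the Markov coding coming from \cite{BCLS,CLT} without essential modification. Once these translations are in place, the remainder of the argument is a direct invocation of classical results.
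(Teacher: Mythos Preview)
Your proposal is correct and follows precisely the approach the paper indicates: the paper does not give a detailed proof of this Fact but states that it follows from the strong Markov coding of \cite{BCLS,CLT} together with the weak-mixing property (Proposition \ref{prop geod flow is weak mixing}), by applying Pollicott \cite[Subsection 3.5]{Pol} (and Bowen \cite{Bow1}, Parry-Pollicott \cite{PP} in the Axiom A case). Your concern about translating weak-mixing into the non-cohomology condition on the roof function is a standard equivalence in this setting and is exactly what the cited references handle.
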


\begin{flushright}
$\diamond$
\end{flushright}

We finish this subsection with an elementary proof of Proposition \ref{prop geod flow is weak mixing} inspired by the work of Benoist \cite{Ben1}.

\begin{proof}[Proof of Proposition \ref{prop geod flow is weak mixing}]
Suppose by contradiction that $\phi_t$ is not topologically weakly-mixing. By Fact \ref{fact urhogamma is anosov} this implies that there exists a constant $a>0$ such that the group spanned by  the set $\lbrace\lambda_1(\rho(\gamma))\rbrace_{\gamma\in\Gamma}$ is contained in $a\zz$. 

Set 

\bc
$\bgcu:=\lbrace(x_1,x_2,x_3,x_4)\in(\bg)^4: \hspace{0,3cm} (x_i,x_j)\in\bgc \tn{ for all } i\neq j  \rbrace$.
\ec

\noindent Since $\lbrace(\gamma_-,\gamma_+)\rbrace_{\gamma\in\gh}$ is dense in $\bgc$ (see Gromov \cite[Corollary 8.2.G]{Gro}), Benoist's Theorem \ref{teo benoist} implies that
\begin{equation}\label{eq crosstrio in azz}
\lbrace\bb(\eta(x'),\xi(y'),\eta(x),\xi(y)):\hspace{0,3cm} (x',y',x,y)\in\bgcu \rbrace\subset a\zz.
\end{equation}

Fix three different points $x',y'$ and $y$ in $\bg$. Transversality condition (\ref{eq transv condition}) and the definition of the cross-ratio implies the following: for every $x\in\bg$ such that $(x',y',x,y)\in\bgcu$ there exists a neighbourhood $V$ of $x$ and a point $\xi_{x,y,y'}$ in the projective line $\xi(y)\oplus\xi(y')$ such that
\begin{equation}\label{eq geod flow is weak mixing}
\eta(\tilde{x})\cap (\xi(y)\oplus\xi(y'))=\lbrace \xi_{x,y,y'}\rbrace
\end{equation}
\noindent holds for every $\tilde{x}\in V$.

\begin{cla} \label{claim in geod flow is weak mix}
Assume that (\ref{eq geod flow is weak mixing}) holds. Then the limit set $\Lambda_{\rho(\Gamma)}$ is not contained in $\xi(y)\oplus\xi(y')$.
\end{cla}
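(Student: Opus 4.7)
Proof proposal. The plan is to argue by contradiction and set $\pi := \xi(y) \oplus \xi(y')$. Suppose $\Lambda := \Lambda_{\rho(\Gamma)} \subset \pi$. Since the two distinct isotropic lines $\xi(y), \xi(y') \in \Lambda$ span $\pi$ and $\Lambda$ is $\rho(\Gamma)$-invariant, the $2$-plane $\pi$ itself is $\rho(\Gamma)$-invariant.

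The key object is the auxiliary map $F \colon \bg \setminus \{y,y'\} \to \pp(\pi)$ given by $F(\tilde x) := \eta(\tilde x) \cap \pi$. First I would check that $F$ is well-defined and continuous: off-diagonal transversality (\ref{eq transv condition}) gives $\xi(y), \xi(y') \not\subset \eta(\tilde x)$ for $\tilde x \neq y, y'$, hence $\pi \not\subset \eta(\tilde x)$, and the dimension formula forces $\dim(\eta(\tilde x) \cap \pi) = (d-1) + 2 - d = 1$; continuity of $F$ then follows from continuity of $\eta$.

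The crux is the identification $F(\gamma_+) = \xi(\gamma_+)$ for every proximal $\gamma \in \gh$ with $\gamma_+ \notin \{y,y'\}$. A projective Anosov representation into $G = \tn{PSO}(p,q)$ is automatically $P_1^{p,q}$-Anosov (since for $\rho(\gamma)\in G$ proximal the top eigenline is isotropic), so $\eta = \xi^{\ppq}$; because $\xi(\gamma_+) \in \partial\hpq$ is isotropic, one has $\xi(\gamma_+) \subset \xi(\gamma_+)^{\ppq} = \eta(\gamma_+)$. Combined with $\xi(\gamma_+) \subset \pi$ (coming from $\Lambda \subset \pi$), this forces $\xi(\gamma_+) \subset \eta(\gamma_+) \cap \pi = F(\gamma_+)$, and since both are $1$-dimensional they coincide. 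This is where the bulk of the argument lies, and it is really a $P_1^{p,q}$-Anosov feature: isotropic lines sit inside their own $\langle\cdot,\cdot\rangle_{p,q}$-perpendicular.

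To conclude, hypothesis (\ref{eq geod flow is weak mixing}) gives $F \equiv \xi_{x,y,y'}$ on the non-empty open set $V$. By minimality of the $\Gamma$-action on $\bg$ (valid for any non-elementary word hyperbolic group, cf.\ \cite[Cor.~8.2.G]{Gro}), the $\Gamma$-orbit of any attracting fixed point is dense, so $V \setminus \{y,y'\}$ contains infinitely many pairwise distinct attracting fixed points $\gamma_+$ of proximal elements. Each then satisfies $\xi(\gamma_+) = F(\gamma_+) = \xi_{x,y,y'}$, a single point of $\pp(\pi)$, contradicting the injectivity of $\xi$.
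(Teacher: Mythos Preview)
Your proof is correct and follows essentially the same route as the paper: assume $\Lambda\subset\pi$, observe that $\eta(\tilde x)\cap\pi$ must equal $\xi(\tilde x)$ (since $\xi(\tilde x)\subset\eta(\tilde x)$ and $\xi(\tilde x)\subset\pi$), and conclude that the local constancy of $\tilde x\mapsto\eta(\tilde x)\cap\pi$ on $V$ contradicts injectivity of $\xi$. Two minor remarks: the $\rho(\Gamma)$-invariance of $\pi$ is never used and can be dropped; and the detour through attracting fixed points and density is unnecessary, since the inclusion $\xi(\tilde x)\subset\eta(\tilde x)$ holds for \emph{every} $\tilde x\in\bg$ (this is a general feature of projective Anosov representations, not special to the $P_1^{p,q}$ case), so the paper simply argues directly for all $\tilde x\in V$.
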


\begin{proof}[Proof of Claim \ref{claim in geod flow is weak mix}]

Suppose by contradiction that $\Lambda_{\rho(\Gamma)}\subset\xi(y)\oplus\xi(y')$. Transversality condition (\ref{eq transv condition}) implies that for every $x\in\bg$ different from $y'$ and $y$ one has

\bc
$\eta(x)\cap (\xi(y)\oplus\xi(y'))=\lbrace \xi(x)\rbrace$.
\ec

\noindent Then by (\ref{eq geod flow is weak mixing}) the map $\xi$ is not injective and this is a contradiction.

\end{proof}

Because of Claim \ref{claim in geod flow is weak mix} we can take $y''$ in $\bg$ such that $\xi(y'')$ does not belong to $\xi(y)\oplus\xi(y')$. We can assume further that $y''\neq x'$.

By (\ref{eq crosstrio in azz}) we have again the following: for every $x\notin\lbrace x',y,y',y'' \rbrace$ there exists a neighbourhood $V$ of $x$ and a point $\xi_{x,y,y''}$ in the projective line $\xi(y)\oplus\xi(y'')$ such that

\bc
$\eta(\tilde{x})\cap (\xi(y)\oplus\xi(y''))=\lbrace \xi_{x,y,y''}\rbrace$
\ec

\noindent holds for every $\tilde{x}\in V$.

As in Claim \ref{claim in geod flow is weak mix} we conclude that $\Lambda_{\rho(\Gamma)}$ cannot be contained in $\xi(y)\oplus\xi(y')\oplus\xi(y'')$ and now an inductive argument yields the desired contradiction.

\end{proof}

\subsubsection{\tn{\textbf{The invariant measure of the strong stable lamination}}}\label{subsub transverse measure}

As shown by Margulis \cite{Mar2}, for Anosov flows there exists an invariant measure of the strong stable lamination which is exponentially contracted by the flow. In our context this measure is also available: this follows from the thermodynamic formalism as explained by Bowen-Marcus \cite[Section 4]{BM}. As we shall see in Fact \ref{fact product measure is BM}, the importance for us of this measure relies on the fact that describes the probability of maximal entropy of $\phi_t$ in a different way that the one provided by Fact \ref{fact entropy, equidistribution and counting for urhogamma}.

The statement that we need is the following (for precisions see \cite{BM}).

\begin{hecho}\label{fact existence of transverse measure}
Given any $Z_0\in\tn{U}_\rho\Gamma$ and any small relative neighbourhood $W^{cu}_{\tn{loc}}(Z_0)$ of $Z_0$ in $W^{cu}(Z_0)$, there exists a positive and finite Borel measure $\nu^{cu}_{\tn{loc}}(Z_0)$ on $W^{cu}_{\tn{loc}}(Z_0)$ such that:

\begin{itemize}
\item The family $\lbrace  \nu^{cu}_{\tn{loc}}(Z_0)\rbrace_{Z_0\in\tn{U}_\rho\Gamma}$ is $W^{ss}$-invariant\footnote{The precise definition of a $W^{ss}$-\textit{invariant measure} can be found in \cite[p.43]{BM}. Very informally, this means that if we have a map between two local leaves $W^{cu}_{\tn{loc}}(Z_0)$ and $W^{cu}_{\tn{loc}}(Z_1)$ which is defined \textit{following the leaves of $W^{ss}$}, then this map sends the measure $\nu^{cu}_{\tn{loc}}(Z_0)$ to the measure $\nu^{cu}_{\tn{loc}}(Z_1)$.}.

\item There exists a real number $h^u \geq 0$ such that for every $t$ and every $Z_0\in\tn{U}_\rho\Gamma$ one has

\bc
$(\phi_t)_*(\nu^{cu}_{\tn{loc}}(Z_0))=e^{-h^ut}\nu^{cu}_{\tn{loc}}(\phi_t(Z_0))$.
\ec
\end{itemize}
\begin{flushright}
$\diamond$
\end{flushright}
\end{hecho}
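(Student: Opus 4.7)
The plan is to follow the classical Bowen--Margulis--Marcus construction, adapted to metric Anosov flows via the strong Markov coding for $\phi_t$ provided by \cite{BCLS,CLT}. This coding realises $\phi_t$ as a Hölder finite-to-one extension of a suspension flow $\sigma_r^t$ on the mapping torus of a two-sided subshift of finite type $(\Sigma,\sigma)$ with Hölder roof function $r\colon\Sigma\too\rr_{>0}$. The crucial point for us is that the local product structure $\Sigma\cong\Sigma^+\times\Sigma^-$ of the shift matches the local product $W^{cu}_{\tn{loc}}\times W^{ss}_{\tn{loc}}$ of $\phi_t$, and that $r$ can be taken independent of the past coordinate up to a Hölder coboundary.

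First I would identify $h^u$ symbolically. The topological entropy $h=h_\rho$ of $\phi_t$, provided by Fact \ref{fact entropy, equidistribution and counting for urhogamma}, coincides with the unique real number such that the topological pressure satisfies $P_\sigma(-hr)=0$ on $(\Sigma,\sigma)$. Weak mixing of $\phi_t$ (Proposition \ref{prop geod flow is weak mixing}) rules out that $r$ is cohomologous to a locally constant function valued in a discrete subgroup of $\rr$, so the full Ruelle--Perron--Frobenius theorem applies: one obtains a unique Gibbs probability $\mu_r$ on $\Sigma$ for the potential $-hr$, equivalently a positive Hölder eigenfunction together with an eigenprobability $\mu^+$ of the transfer operator $\mathcal{L}_{-hr}$. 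The measure $\mu^+$ is then $e^{-hr}$-conformal: for any inverse branch of $\sigma$ it transforms by the factor $e^{-hr}$.

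Next I would transfer this conformal data back to $\tn{U}_\rho\Gamma$. On the mapping torus, the product $\mu^+\otimes dt$ descends to a locally finite Borel measure; its pullback to local central unstable leaves via the Markov coding yields the desired family $\nu^{cu}_{\tn{loc}}(Z_0)$. The $W^{ss}$-invariance is built in, since $\mu^+$ lives on the one-sided shift $\Sigma^+$ and is insensitive to the past coordinate $\Sigma^-$ that parametrises the strong stable direction; consequently, holonomy along $W^{ss}$ between two nearby central unstable leaves acts as the identity at the symbolic level and transports $\nu^{cu}_{\tn{loc}}$ accordingly. Constancy of $h^u$ as $Z_0$ varies follows from transitivity of $\phi_t$ together with the fact that the Gibbs measure is unique on the (transitive) coded system, and $h^u\geq 0$ is then forced by $r>0$ and the pressure equation.

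Finally, the scaling $(\phi_t)_*\nu^{cu}_{\tn{loc}}(Z_0)=e^{-h^u t}\nu^{cu}_{\tn{loc}}(\phi_t(Z_0))$ is a direct translation of conformality. Advancing along the flow corresponds, after telescoping, to iterating the inverse branch of $\sigma$ a number of times whose Birkhoff sum $S_n r$ approximates $t$ up to a bounded error; each such iteration contributes a factor $e^{-h\, r\circ\sigma^{-k}}$, and summation yields $e^{-h^u t}$ with $h^u=h$. The main obstacle I anticipate is verifying that Bowen--Marcus's holonomy-invariance argument \cite{BM}, originally stated for smooth Axiom A flows, transfers verbatim to the metric Anosov setting; this requires the Markov coding of \cite{BCLS,CLT} to respect the local product structure with sufficient Hölder regularity so that both the $W^{ss}$-holonomy and the conformal transformation of $\mu^+$ are globally consistent, which is exactly what those references supply.
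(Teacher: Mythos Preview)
Your sketch is essentially the Bowen--Marcus construction that the paper cites, and the paper itself does not give an independent proof of this Fact: it simply invokes \cite[Section 4]{BM} together with the Markov coding of \cite{BCLS,CLT}. So you are aligned with the paper's approach, just spelling out in more detail what the citation contains.

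Two minor points. First, you invoke weak mixing (Proposition \ref{prop geod flow is weak mixing}) to apply the Ruelle--Perron--Frobenius theorem, but this is not needed here: the existence of the Gibbs/eigenmeasure for a H\"older potential on a transitive subshift of finite type holds without any non-lattice condition on $r$; weak mixing only enters later when one wants mixing or the Prime Orbit Theorem. Second, you already identify $h^u=h$ during the construction, whereas the paper keeps $h^u$ as an abstract constant in Fact~\ref{fact existence of transverse measure} and only proves $h^u=h$ in the subsequent Fact~\ref{fact product measure is BM}. This is not an error, just a difference in presentation.
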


\subsubsection{\tn{\textbf{The Bowen-Margulis probability}}}\label{subsub bowenmargulis for Urhogamma}
By reversing time and disintegrating along flow lines, Fact \ref{fact existence of transverse measure} yields a family of measures $\lbrace \nu^{ss}_{\tn{loc}}(Z_0) \rbrace$ on local strong stable leaves which is expanded by the flow. In the case of Anosov flows, Margulis \cite{Mar2} first showed how the families $\lbrace \nu^{cu}_{\tn{loc}}(Z_0) \rbrace$ and $\lbrace \nu^{ss}_{\tn{loc}}(Z_0) \rbrace$ with the above properties combine to produce a $\phi_t$-invariant finite Borel measure $\nu$ in the whole space. This measure coincides, up to scaling, with the Bowen-Margulis probability of the flow.

The statement that we need in our context is the following. Once again, this is a standard fact and the reader is referred for instance to Katok-Hasselblatt's book \cite[Section 5 of Chapter 20]{KH} for a proof in the case of Anosov flows. With obvious adaptations the same proof works in our setting.

\begin{hecho}\label{fact product measure is BM}

Suppose that one has a family of measures $\lbrace \nu^{ss}_{\tn{loc}}(Z_0) \rbrace_{Z_0\in\tn{U}_\rho\Gamma}$ on the local strong stable leaves with the following properties:

\begin{itemize}
\item There exists a real number $h^s \geq 0$ such that for every $t$ and every $Z_0\in\tn{U}_\rho\Gamma$ one has

\bc
$(\phi_t)_*(\nu^{ss}_{\tn{loc}}(Z_0))=e^{h^st}\nu^{ss}_{\tn{loc}}(\phi_t(Z_0))$.
\ec

\item For every $Z_0\in\tn{U}_\rho\Gamma$ and every open set $A$ contained in a neighbourhood of $Z_0$ with local product structure, the map

\bc
$W^{cu}_{\tn{loc}}(Z_0) \too\rr: \hspace{0,3cm} Z\mapsto \nu^{ss}_\tn{loc}(Z)(A\cap W^{ss}_\tn{loc}(Z)) $
\ec

\noindent is upper semi continuous.

\end{itemize}

Consider the family $\lbrace\nu^{cu}_{\tn{loc}}(Z_0)\rbrace_{Z_0\in\tn{U}_\rho\Gamma}$ provided by Fact \ref{fact existence of transverse measure}. Then the following holds:

\begin{itemize}
\item If $A$ is an open set contained in a neighbourhood of $Z_0\in\tn{U}_\rho\Gamma$ with local product structure, set

\bc
$\nu(A):=\displaystyle\int_{Z\in W^{cu}_{\tn{loc}}(Z_0)}\nu^{ss}_{\tn{loc}}(Z)(A\cap W^{ss}_{\tn{loc}}(Z))d\nu^{cu}_{\tn{loc}}(Z_0)(Z)$.
\ec

\noindent Then this measure extends to a finite Borel measure $\nu$ on $\tn{U}_\rho\Gamma$ such that for every $t\in\rr$ the following holds:

\bc
$(\phi_t)_*\nu=e^{(h^s-h^u)t}\nu$.
\ec

\noindent Evaluating the previous equality on $\tn{U}_\rho\Gamma$, we obtain that $h^s=h^u$ and that $\nu $ is $\phi_t$-invariant.

\item The number $h^u$ equals the topological entropy $h$ of the flow and the probability proportional to $\nu$ is the Bowen-Margulis probability of $\phi_t$.

\end{itemize}
\begin{flushright}
$\diamond$
\end{flushright}
\end{hecho}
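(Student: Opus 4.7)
The overall plan is to construct $\nu$ locally on boxes with a product structure, patch the local pieces to a Borel measure on $\tn{U}_\rho\Gamma$, derive the scaling law $(\phi_t)_*\nu=e^{(h^s-h^u)t}\nu$, and finally identify the resulting probability with the Bowen-Margulis measure. The construction is a direct adaptation of Margulis's classical argument for Anosov flows, and I plan to follow the exposition of Katok-Hasselblatt, making sure each step uses only the structural hypotheses available for the metric Anosov flow $\phi_t$ (Fact \ref{fact urhogamma is anosov}) and the two families of transverse measures.

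First I would cover $\tn{U}_\rho\Gamma$ by finitely many open sets $\mathcal{U}_i$, each contained in a neighbourhood of some $Z_i\in\tn{U}_\rho\Gamma$ with local product structure, and for every open $A\subset \mathcal{U}_i$ define
\[
\nu_i(A):=\int_{W^{cu}_{\tn{loc}}(Z_i)}\nu^{ss}_{\tn{loc}}(Z)\bigl(A\cap W^{ss}_{\tn{loc}}(Z)\bigr)\,d\nu^{cu}_{\tn{loc}}(Z_i)(Z).
\]
The upper-semi-continuity hypothesis on the strong-stable family ensures the integrand is measurable, so $\nu_i$ is a Borel measure on $\mathcal{U}_i$. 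The first key point is \emph{consistency}: on $\mathcal{U}_i\cap \mathcal{U}_j$ the two recipes agree. This is where the $W^{ss}$-invariance of the central-unstable family (Fact \ref{fact existence of transverse measure}) is used to relate $\nu^{cu}_{\tn{loc}}(Z_i)$ and $\nu^{cu}_{\tn{loc}}(Z_j)$ along strong-stable holonomy, and a Fubini-type swap then shows the two integrals coincide. Once consistency is established, the local pieces can be glued via a partition of unity to produce a finite Borel measure $\nu$ on $\tn{U}_\rho\Gamma$.

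Next I would verify the scaling law $(\phi_t)_*\nu=e^{(h^s-h^u)t}\nu$. For a local product box $A$, the flow $\phi_t$ sends $W^{ss}_{\tn{loc}}$-plaques into $W^{ss}$-plaques and $W^{cu}_{\tn{loc}}$-transversals into $W^{cu}$-transversals; by the scaling rules for the two families the strong-stable slice mass picks up a factor $e^{h^s t}$ and the central-unstable transversal mass picks up a factor $e^{-h^u t}$. Integrating over the box gives the claimed joint scaling. Evaluating the identity on the compact space $\tn{U}_\rho\Gamma$ forces $h^s=h^u$, so $\nu$ is $\phi_t$-invariant, and normalising produces a probability $\overline{\nu}$.

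The last step is to identify $\overline{\nu}$ with the Bowen-Margulis probability $m_\rho$ of Fact \ref{fact entropy, equidistribution and counting for urhogamma}, and simultaneously to identify the common scaling exponent $h^u=h^s$ with the topological entropy $h$. The strategy is to show that $\overline{\nu}$ has local product structure given by conditional measures expanding/contracting exponentially at rate $h^u$, which via the Ledrappier-Young type formula (or equivalently the variational principle applied through the Markov coding constructed in \cite{BCLS,CLT}) forces $h_{\overline{\nu}}(\phi_1)=h^u$. Combining this with uniqueness of the measure of maximal entropy (Fact \ref{fact entropy, equidistribution and counting for urhogamma}) will give both $\overline{\nu}=m_\rho$ and $h^u=h$. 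The main obstacle, and the place where I expect to spend most care, is the consistency check for the local measures $\nu_i$: although it is standard in the Anosov setting it must be executed with only the weaker metric-Anosov structure of $\phi_t$ at hand, and it is precisely there that the upper-semi-continuity assumption and the $W^{ss}$-invariance of the $\nu^{cu}$-family enter in an essential way.
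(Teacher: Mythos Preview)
Your proposal is correct and matches the paper's approach: the paper does not give its own proof of this Fact but simply refers the reader to Katok--Hasselblatt \cite[Section 5 of Chapter 20]{KH} for the Anosov case and asserts that ``with obvious adaptations the same proof works in our setting.'' Your outline is precisely that adaptation---local construction on product boxes, consistency via $W^{ss}$-invariance of the $\nu^{cu}$-family, the scaling computation, and the identification with the measure of maximal entropy---so there is nothing to add.
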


\subsection{The flows on $\tn{U}_o\Gamma$ and $\tn{U}_\tau\Gamma$}\label{subsec flows uogamma y utaugamma}

\subsubsection{\tn{\textbf{Explicit conjugations between $\tn{U}_\rho\Gamma$, $\tn{U}_o\Gamma$ and $\tn{U}_\tau\Gamma$}}}\label{subsub explicit conjugation}

Recall that $\psi_t=\psi_t^o$ is the flow on $\tn{U}_o\Gamma$ induced by the translation flow (\ref{eq translation flow}).

The following lemma implies in particular that the action of $\Gamma$ on $\bgc\times \rr$ via $c_o$ is proper and co-compact.

\begin{lema}\label{lema conj urhogamma y uogamma}
There exists a Hölder homeomorphism $\tn{U}_\rho\Gamma \too \tn{U}_o\Gamma$ that conjugates the flows $\phi_t$ and $\psi_t$. Further, for every point $(x_0,y_0,t_0)\in\tn{U}_o\Gamma$ the central unstable and strong stable leaves through $(x_0,y_0,t_0)$ are given by

\bc
$W^{cu}(x_0,y_0,t_0)=\lbrace (x_0,y,t)\in\tn{U}_o\Gamma:\hspace{0,3cm} y\in\bg\setminus \Gamma\cdot x_0 \tn{ and } t\in\rr \rbrace$
\ec

\noindent and

\bc
$W^{ss}(x_0,y_0,t_0)=\lbrace (x,y_0,t_0)\in\tn{U}_o\Gamma:\hspace{0,3cm} x\in\bg\setminus \Gamma\cdot y_0\rbrace$.
\ec

\end{lema}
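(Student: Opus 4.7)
The plan is to construct an explicit $\Gamma$-equivariant bijection $F:F_\rho\too\bgc\times\rr$ that intertwines $\phi_t$ with the translation flow, and then transport the leaf descriptions of $\phi_t$ in Fact \ref{fact urhogamma is anosov} through $F$. Since the $\rr$-fiber of $F_\rho$ over $(x,y)$ is parametrized (modulo sign) by the scale of $v\in\xi(y)$, I would define
\[
F(x,y,\theta,v) := \bigl(x,\,y,\,s(x,y,\theta,v)\bigr),\qquad s(x,y,\theta,v) := -\tfrac{1}{2}\log\bigl|\langle v, J^o v\rangle_{p,q}\bigr|.
\]
This is well-defined and finite: since $o\in\pmb{\Omega}_\rho$ and $\eta=\xi^{\ppq}$, Proposition \ref{prop fijos de Jo en borde} gives $\langle w,J^o w\rangle_{p,q}\neq 0$ for every nonzero $w\in\xi(y)$, and $s$ is even in $v$ so descends past the equivalence $(\theta,v)\sim(-\theta,-v)$.

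Two key properties follow by direct computation. For the flow, $\phi_t$ scales $v\mapsto e^t v$, hence $s(\phi_t Z)=s(Z)-t$, which matches the translation flow on $\bgc\times\rr$. For the equivariance, writing $v=\lambda v_y$ with $v_y\in\xi(y)\setminus\{0\}$ fixed, the factor $\lambda^2$ cancels in the ratio $\langle \rho(\gamma)v,J^o\rho(\gamma)v\rangle_{p,q}/\langle v,J^o v\rangle_{p,q}$ (using $\rho(\gamma)\in G$), and the intrinsic expression of $c_o$ given in Subsection \ref{subsec outline} yields
\[
s(\gamma\cdot Z) - s(Z) \;=\; -\tfrac{1}{2}\log\left|\frac{\langle \rho(\gamma)v_y, J^o\rho(\gamma)v_y\rangle_{p,q}}{\langle v_y, J^o v_y\rangle_{p,q}}\right| \;=\; -c_o(\gamma,y),
\]
exactly matching the $\Gamma$-action on $\bgc\times\rr$. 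Consequently $F$ descends to $\overline{F}:\tn{U}_\rho\Gamma\too\tn{U}_o\Gamma$, and it is a bijection: given $(x,y,s)$, the value $|\langle v,J^o v\rangle_{p,q}|=e^{-2s}$ pins down $v\in\xi(y)$ up to sign, and $\theta$ is then the unique functional with $\ker\theta=\eta(x)$ and $\theta(v)=1$; the residual sign ambiguity is precisely the equivalence $\sim$. Hölder continuity of $\overline{F}$ and $\overline{F}^{-1}$ follows from the Hölder regularity of $\xi,\eta$ (c.f. \cite[Lemma 2.5]{BCLS}) together with a compactness argument showing that $|\langle v_y,J^o v_y\rangle_{p,q}|$ stays uniformly bounded away from zero as $y$ varies in $\bg$. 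Properness and cocompactness of the $\Gamma$-action on $\bgc\times\rr$ via $c_o$ are then transported from the corresponding properties on $F_\rho$ recorded in Fact \ref{fact urhogamma is anosov}.

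The leaf formulas follow by pushing the explicit leaves of Fact \ref{fact urhogamma is anosov} through $\overline{F}$. On $W^{cu}(Z_0)$ the coordinate $x_0$ and the condition $\ker\theta=\eta(x_0)$ are frozen while $y\neq x_0$ and the scale of $v\in\xi(y)$ are free; since scaling $v$ moves $s$ through all of $\rr$, the image is $\{(x_0,y,t):y\neq x_0,\,t\in\rr\}$, whose projection to $\tn{U}_o\Gamma$ gives the stated central unstable leaf. On $W^{ss}(Z_0)$ the vector $v=v_0$ is frozen, so $s$ is constant equal to $t_0$ while $x\neq y_0$ varies, yielding the strong stable leaf. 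The only genuinely delicate step in this program is the uniform Hölder regularity of $\overline{F}$ and its inverse; everything else reduces to linear algebra together with the non-vanishing ensured by $o\in\pmb{\Omega}_\rho$.
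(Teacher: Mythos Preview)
Your proof is correct and follows exactly the same approach as the paper: you build the very map $(x,y,\theta,v)\mapsto\bigl(x,y,-\tfrac{1}{2}\log|\langle v,J^o v\rangle_{p,q}|\bigr)$ that the paper uses, and your verifications of well-definedness, flow intertwining, $\Gamma$-equivariance, bijectivity, and the leaf images simply spell out what the paper dismisses as ``easily seen'' and ``straightforward''.
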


\begin{proof}

Consider the map $F_\rho\too\bgc\times\rr$ defined by 

\bc
$(x,y,\theta,v)\mapsto \left(x,y,-\frac{1}{2}\log\vert\langle v,J^o\cdot v\rangle_{p,q}\vert\right)$,
\ec

\noindent which is easily seen to be Hölder continuous, injective and equivariant. Moreover one can prove that it is proper and surjective, hence a homeomorphism.

The statement involving the flows and the laminations is straightforward.

\end{proof}

We now turn our attention to the translation flow on $\tn{U}_\tau\Gamma$. An analogue of Lemma \ref{lema conj urhogamma y uogamma} is also available. In fact, the analogue holds because of the following remark.

\begin{rem}\label{rem co and ctau are cohomologous}

The cocycles $c_o$ and $c_\tau$ are cohomologous. Indeed, this follows from the fact that $c_o$ and $c_\tau$ have the same periods and a theorem due to Livsic \cite{Liv}. Explicitly, let

\bc
$U:\bg\too\rr: \hspace{0,3cm} U(x):=\dfrac{1}{2}\log\dfrac{\Vert v_x\Vert_\tau\Vert \theta_x\Vert_\tau}{\vert \theta_x (J^o\cdot v_x)\vert}$.
\ec

\noindent Then for every $\gamma$ in $\Gamma$ and $x$ in $\bg$ one has

\bc
$c_\tau(\gamma,x)-c_o(\gamma,x)=U(\gamma\cdot x)-U(x)$ .
\ec
\begin{flushright}
$\diamond$
\end{flushright}
\end{rem}

\subsubsection{\tn{\textbf{Patterson-Sullivan probabilities for $c_o$ and $c_\tau$}}}\label{subsub PS}

The goal of this subsection is to show the existence of a \textit{Patterson-Sullivan probability of dimension $h^u$} for the cocycle $c_o$, that is, a probability measure $\mu_o$ on $\bg$ such that
\begin{equation}\label{eq PS for co in appendix}
\dfrac{d\gamma_*\mu_o}{d\mu_o}(x)=e^{-h^uc_{o}(\gamma^{-1},x)}
\end{equation}
\noindent holds for every $\gamma\in\Gamma$\footnote{The constant $h^u$ is the one introduced in Fact \ref{fact existence of transverse measure}.}. We will see in the next subsection that in fact one has $h^u=h$.  The existence of a Patterson-Sullivan probability $\mu_\tau$ for $c_\tau$ follows directly from this one by Remark \ref{rem co and ctau are cohomologous}.

When $\Gamma$ is the fundamental group of a closed negatively curved manifold, the existence (and uniqueness) of such a probability is proved by Ledrappier \cite{Led}. When $\rho(\Gamma)$ is Zariski dense one can apply the work of Quint \cite{Qui2} and for the case of $\hpq$-convex co-compact groups we find also the construction presented by Glorieux-Monclair \cite{GM}.

Even though Patterson's method \cite{Pat} works correctly in our setting and produces directly a Patterson-Sullivan probability of dimension $h$, we choose a shorter approach. Applying Fact \ref{fact existence of transverse measure} and Lemma \ref{lema conj urhogamma y uogamma} we find an invariant measure $\lbrace\nu^{cu}_\tn{loc}(u_0)\rbrace_{u_0\in\tn{U}_o\Gamma}$ for the strong stable lamination of $\psi_t:\tn{U}_o\Gamma \circlearrowleft$ which has the property of being contracted by the flow. Lifting this measure to $\bgc\times\rr$ yields a probability $\mu_o$ on $\bg$ satisfying (\ref{eq PS for co in appendix}). Indeed, for closed negatively curved manifolds and the Busemann cocycle this procedure is explained for instance by Babillot in \cite[Subsection 7.1]{Ba}. With obvious adaptations the procedure equally applies in our setting.

\begin{rem}\label{rem hu positive}
Recall that $h^u\geq 0$. Equality (\ref{eq PS for co in appendix}) shows in fact that $h^u$ is positive. Otherwise the probability $\mu_o$ would be $\Gamma$-invariant but one can see that this is not possible for a non elementary word hyperbolic group.
\begin{flushright}
$\diamond$
\end{flushright}
\end{rem}

We finish this subsection by showing that $\mu_o$ has no atoms (this property is needed in the proof of Proposition \ref{prop distribution on bg for length}). The proof presented here is an adaptation of \cite[Proposition 4.3]{GM}.

\begin{lema}\label{lema PS non atomic}
The measure $\mu_o$ has no atoms.
\end{lema}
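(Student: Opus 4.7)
My plan is to argue by contradiction: suppose $\mu_o(\{x_0\}) > 0$ for some $x_0 \in \bg$. The core observation is that the Radon--Nikodym identity (\ref{eq PS for co in appendix}) interacts badly with the cocycle $c_o$ on a $\Gamma$-invariant singleton, so atoms at attracting fixed points of loxodromic elements can be ruled out immediately; the general case is then reduced to this one.

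\textbf{Step 1 (no atoms at loxodromic fixed points).} Suppose first that $x_0 = \gamma_+$ for some $\gamma \in \gh$. Since the singleton $\{\gamma_+\}$ is $\gamma$-invariant, applying the quasi-invariance identity (\ref{eq PS for co in appendix}) to $\gamma$ yields
\[
\mu_o(\{\gamma_+\}) \;=\; e^{-h^u\, c_o(\gamma^{-1}, \gamma_+)}\,\mu_o(\{\gamma_+\}).
\]
The cocycle relation at $\gamma\gamma^{-1}$ combined with Lemma \ref{lema cocycle o and periods} yields $c_o(\gamma^{-1},\gamma_+) = -\ell_{c_o}(\gamma) = -\lambda_1(\rho(\gamma)) < 0$. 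If $\mu_o(\{\gamma_+\}) > 0$, the display above forces $c_o(\gamma^{-1},\gamma_+) = 0$ (using $h^u > 0$ from Remark \ref{rem hu positive}), a contradiction.

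\textbf{Step 2 (reduction of the general case).} Set $A := \{x \in \bg : \mu_o(\{x\}) > 0\}$ and $m := \sup_{x \in A}\mu_o(\{x\})$. Since $\mu_o$ is a probability, the sets $\{x : \mu_o(\{x\}) \geq m/2\}$ have cardinality at most $\lfloor 2/m \rfloor$, so $m$ is attained at some $x_0$. I would then analyze the stabilizer $\tn{Stab}_\Gamma(x_0)$: in a non-elementary word hyperbolic group, boundary stabilizers are virtually cyclic, hence if this stabilizer is infinite it contains a loxodromic element with $x_0$ as its attracting fixed point, reducing to Step 1. When $\tn{Stab}_\Gamma(x_0)$ is finite, the orbit $\Gamma\cdot x_0$ is countably infinite and the transformation rule gives
\[
m\sum_{[\gamma]\,\in\,\Gamma/\tn{Stab}(x_0)} e^{-h^u\, c_o(\gamma, x_0)} \;=\; \sum_{y\,\in\,\Gamma\cdot x_0}\mu_o(\{y\}) \;\leq\; 1,
\]
so the Poincaré series of $c_o$ based at $x_0$ must converge at the critical exponent $h^u$.

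\textbf{Main obstacle.} The hard part is to obstruct this last possibility: to show that $\sum_{\gamma \in \Gamma} e^{-s\,c_o(\gamma, x_0)}$ diverges at $s = h^u$. I would deduce this from the Hopf--Tsuji--Sullivan dichotomy, using that the Bowen--Margulis probability on $\tn{U}_o\Gamma$ is ergodic and conservative---both consequences of its being the unique maximal-entropy measure of the topologically transitive, weakly mixing flow $\psi_t$ (Proposition \ref{prop geod flow is weak mixing} and Fact \ref{fact entropy, equidistribution and counting for urhogamma}), transported to $\tn{U}_o\Gamma$ via Lemma \ref{lema conj urhogamma y uogamma}. An alternative route closer in spirit to \cite{GM} combines Step 1 with the density of $\{\gamma_+ : \gamma \in \gh\}$ in $\bg$ (Gromov \cite[Corollary~8.2.G]{Gro}): a sequence of loxodromic elements with attracting fixed points approximating a maximal-mass atom, together with an averaging argument exploiting continuity of $c_o(\gamma_0, \cdot)$, should produce an atom located at some $\gamma_+$, once again contradicting Step 1.
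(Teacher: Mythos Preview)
Your overall architecture matches the paper's: argue by contradiction, rule out atoms at loxodromic fixed points via the period formula, bound the orbit-sum of masses by $1$, and then force a contradiction by showing this sum diverges. Where you diverge from the paper is precisely at your ``main obstacle'': the paper does \emph{not} invoke the Hopf--Tsuji--Sullivan dichotomy or any density/averaging argument. Instead it exhibits, for an arbitrary atom $y$, an explicit sequence $\gamma_n\to\infty$ with $c_o(\gamma_n^{-1},y)\to -\infty$. One chooses $\gamma_n\in\gh$ with $(\gamma_n)_+\to y$ and $(\gamma_n)_-\to x\neq y$; then for $0\neq v\in\xi(y)$, standard singular-value estimates for Anosov representations (Proposition~\ref{prop limit with S and U and Uuno cerca gammamas} and the inequality $\Vert\rho(\gamma_n)\cdot w\Vert\geq c\,\Vert\rho(\gamma_n)\Vert\,\Vert w\Vert$ for $w$ uniformly far from $S_{d-1}(\rho(\gamma_n))$) force $\Vert\rho(\gamma_n^{-1})\cdot v\Vert\to 0$, and the explicit formula for $c_o$ gives the divergence. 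Thus individual terms in the sum blow up, a contradiction stronger than mere divergence of the series.

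This buys you a shorter and more self-contained proof: no HTS machinery, and no need for your maximal-atom/stabilizer analysis in Step~2 (the paper works with an arbitrary atom $y$ directly). Your HTS route is in principle valid but would require carefully stating and justifying the dichotomy for the flow on $\tn{U}_o\Gamma$, which the paper has not done and which is heavier than what is actually needed. Your second ``alternative route'' is too vague as stated: it is unclear how continuity of $c_o(\gamma_0,\cdot)$ alone would transport mass from nearby points to a loxodromic fixed point.
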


\begin{proof}

Suppose that there exists an atom $y\in\bg$ for $\mu_o$. Since $h^u$ is positive the point $y$ cannot be fixed by an element of $\Gamma$, hence
\begin{equation}\label{eq en lema PS non atomic}
1=\mu_o(\bg)\geq \displaystyle\sum_{\gamma\in\Gamma}e^{-h^uc_o(\gamma^{-1},y)}\mu_o(y).
\end{equation}
\begin{cla} \label{claim on lema PS non atomic}

There exists a sequence $\gamma_n\too\infty$ such that $c_o(\gamma_n^{-1},y)\too -\infty$. 

\end{cla}

\begin{proof}[Proof of Claim \ref{claim on lema PS non atomic}]
Let $x$ be a point in $\bg$ different from $y$ and $\Vert\cdot\Vert$ be any norm on $\rr^d$. Take a sequence $\gamma_n\too\infty$ such that 

\bc
$(\gamma_n)_+\too y$ and $(\gamma_n)_-\too x$.
\ec

\noindent By taking a subsequence if necessary we may suppose that $\gamma_n$ converges uniformly to $y$ on compact sets of $\bg\setminus\lbrace x\rbrace$ (c.f. Bowditch \cite[Lemma 2.11]{Bowd}). Let $B(x)\subset\bg$ be the complement of a small neighbourhood of $x$ in $\bg$ and $b(y)\subset B(x)$ be a small neighbourhood of $y$. Then we can suppose that the inclusion $\gamma_n\cdot B(x)\subset b(y)$ holds for every $n$.

By Proposition \ref{prop limit with S and U and Uuno cerca gammamas} there exists $\varepsilon >0$ such that for all $n$ one has

\bc
$\xi(B(x))\subset B_\varepsilon(S_{d-1}(\rho(\gamma_n)))$. 
\ec

\noindent Take a positive $c$ with the following property: for every $n$ and every vector $v$ in the set $B_\varepsilon (S_{d-1}(\rho(\gamma_n)))$ one has

\bc
$\Vert\rho(\gamma_n)\cdot v\Vert\geq c\Vert\rho(\gamma_n)\Vert\Vert v\Vert$.
\ec

Let $v\neq 0$ be a vector in $\xi(y)$. We have that $\rho(\gamma_n^{-1})\cdot v$ belongs to $B_\varepsilon(S_{d-1}(\rho(\gamma_n)))$ hence

\bc
$\rho(\gamma_n^{-1})\cdot v\too 0$
\ec

\noindent as $n\too\infty$. The divergence $c_o(\gamma_n^{-1},y)\too -\infty$ follows.

\end{proof}

A combination of (\ref{eq en lema PS non atomic}) and Claim \ref{claim on lema PS non atomic} yields the desired contradiction.

\end{proof}

\subsubsection{\tn{\textbf{The Bowen-Margulis probability on $\tn{U}_o\Gamma$ and $\tn{U}_\tau\Gamma$}}}\label{subsub BM for uogamma and utaugamma}

Recall that $[\cdot,\cdot]_o$ is the Gromov product of the pair $\lbrace c_o,c_o\rbrace$.

\begin{prop}[Sambarino {\cite[Theorem 3.2]{Sam}}]\label{prop product is of maximal entropy}
The number $h^u$ equals the topological entropy $h$ of $\psi_t$ and the measure

\bc
$e^{-h[\cdot,\cdot]_o}\mu_o\otimes\mu_o\otimes dt$
\ec

\noindent induces a measure on the quotient space $\tn{U}_o\Gamma$ proportional to the Bowen-Margulis probability of $\psi_t$.

\end{prop}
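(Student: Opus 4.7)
The plan is to invoke Fact \ref{fact product measure is BM} for the flow $\psi_t$ on $\tn{U}_o\Gamma$, by constructing explicit candidate families of measures on local leaves in the lift $\bgc\times\rr$. Using the description of $W^{ss}$ and $W^{cu}$ from Lemma \ref{lema conj urhogamma y uogamma}, I propose
\[
\nu^{cu}_{\tn{loc}}(x_0,y_0,s_0):=e^{-hs}\,d\mu_o(y)\,ds\quad\text{on }\{x_0\}\times V\times I,
\]
\[
\nu^{ss}_{\tn{loc}}(x_0,y_0,s_0):=e^{hs_0}e^{-h[x,y_0]_o}\,d\mu_o(x)\quad\text{on }U\times\{y_0\}\times\{s_0\}.
\]
The exponential factors in $s$ are rigged so as to produce the scaling identities required by Facts \ref{fact existence of transverse measure} and \ref{fact product measure is BM}.

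The first key step is to check $\Gamma$-invariance of both families so they descend to $\tn{U}_o\Gamma$. For $\nu^{cu}$, the $\Gamma$-action $(x_0,y,s)\mapsto(\gamma x_0,\gamma y,s-c_o(\gamma,y))$, combined with the Patterson-Sullivan rule $d\gamma_*\mu_o=e^{-hc_o(\gamma^{-1},\cdot)}d\mu_o$, produces a factor $e^{h\,c_o(\gamma^{-1},y')}$ from the shift in $s$ and a factor $e^{-h\,c_o(\gamma^{-1},y')}$ from the Jacobian of $\mu_o$; they cancel exactly. For $\nu^{ss}$ the computation is more delicate, since in addition to the Patterson-Sullivan Jacobian and the transformation of $s_0$, one has to use Lemma \ref{lema gromov o} to transport $[x,y_0]_o$ under $\gamma$; all three ingredients combine into a single cancellation. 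It is here that the exponent $h$ (as opposed to any other positive constant) is forced, since only this value makes both families simultaneously $\Gamma$-invariant.

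The remaining hypotheses of Facts \ref{fact existence of transverse measure} and \ref{fact product measure is BM} are immediate: the scaling relations $(\psi_t)_*\nu^{cu}_{\tn{loc}}(Z_0)=e^{-ht}\nu^{cu}_{\tn{loc}}(\psi_t(Z_0))$ and $(\psi_t)_*\nu^{ss}_{\tn{loc}}(Z_0)=e^{ht}\nu^{ss}_{\tn{loc}}(\psi_t(Z_0))$ follow directly from the $s$-shift $s\mapsto s-t$; the $W^{ss}$-invariance of $\nu^{cu}$ holds because the $W^{ss}$-holonomy is the identity on the $(y,s)$-coordinates of a $cu$-leaf; and the upper semi-continuity of $\nu^{ss}$ along $W^{cu}$ follows from continuity of $[\cdot,\cdot]_o$ together with the non-atomicity of $\mu_o$ provided by Lemma \ref{lema PS non atomic}. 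By uniqueness (up to scaling) of the family furnished by Fact \ref{fact existence of transverse measure} for the transitive metric Anosov flow $\psi_t$, our $\nu^{cu}$ must be a constant multiple of Margulis's, whence $h^u=h$.

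Finally, the product measure $\nu$ built by Fact \ref{fact product measure is BM} is computed on a product box $A=U\times V\times I$ to give
\[
\nu(A)=\int_{V\times I}\!\!\left(\int_U e^{hs}e^{-h[x,y]_o}\,d\mu_o(x)\right)e^{-hs}\,d\mu_o(y)\,ds=\int_A e^{-h[x,y]_o}\,d\mu_o(x)\,d\mu_o(y)\,ds,
\]
the factors $e^{hs}$ and $e^{-hs}$ canceling exactly. Hence $\nu$ agrees with the descent of $\tilde m:=e^{-h[\cdot,\cdot]_o}\mu_o\otimes\mu_o\otimes dt$ to $\tn{U}_o\Gamma$, and by Fact \ref{fact product measure is BM} this coincides with the Bowen-Margulis probability up to scaling. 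The main obstacle is the $\Gamma$-invariance of $\nu^{ss}$, where the Patterson-Sullivan rule and Lemma \ref{lema gromov o} must combine in exactly the right way; this is also the step that pinpoints the exponent $h$ in the formulas and, together with the cancellation in the product, simultaneously yields $h^u=h$ and the identification of the Bowen-Margulis probability.
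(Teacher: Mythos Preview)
Your approach is exactly the one the paper intends: disintegrate the candidate measure along the local product structure described in Lemma~\ref{lema conj urhogamma y uogamma} and feed the resulting leafwise families into Fact~\ref{fact product measure is BM}. The explicit formulas and the verifications you outline (scaling under $\psi_t$, $W^{ss}$-holonomy invariance, $\Gamma$-invariance via Lemma~\ref{lema gromov o}, and the cancellation of the $e^{\pm hs}$ factors in the product) are precisely the ``explicit computations'' the paper's one-line proof leaves to the reader.

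There is, however, a circularity in your treatment of the exponent. In the appendix the Patterson--Sullivan probability $\mu_o$ is constructed in Subsection~\ref{subsub PS} with dimension $h^u$ (equation~(\ref{eq PS for co in appendix})), not $h$. Hence the cancellation you invoke for the $\Gamma$-invariance of $\nu^{cu}_{\tn{loc}}$ reads $e^{h\,c_o(\gamma^{-1},y')}\cdot e^{-h^u c_o(\gamma^{-1},y')}$, which vanishes only if $h=h^u$; the same happens in the $\nu^{ss}_{\tn{loc}}$ computation. So your claim that ``only the value $h$ makes both families $\Gamma$-invariant'' presupposes $h^u=h$, which is part of what you must prove. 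The fix is immediate and is what the paper does: write both families with $h^u$ in place of $h$, carry out the same verifications (now genuine), and then the last bullet of Fact~\ref{fact product measure is BM} yields $h^u=h$ together with the identification of the Bowen--Margulis probability. You can also avoid appealing to uniqueness of the Margulis family (which Fact~\ref{fact existence of transverse measure} does not state): since $\mu_o$ was obtained in Subsection~\ref{subsub PS} by lifting the Margulis family $\nu^{cu}_{\tn{loc}}$ to $\bgc\times\rr$, your $\nu^{cu}_{\tn{loc}}$ is already, by construction, the family supplied by Fact~\ref{fact existence of transverse measure}.
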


\begin{proof}
From explicit computations one can show that $e^{-h^u[\cdot,\cdot]_o}\mu_o\otimes\mu_o\otimes dt$ equals the product of measures $\nu_\tn{loc}^{cu}$ and $\nu^{ss}_\tn{loc}$ as in Fact \ref{fact product measure is BM}.
\end{proof}

We now state the desired result of this appendix: the analogue of \cite[Proposition 4.3]{Sam}. Provided with Proposition \ref{prop product is of maximal entropy}, the same proof applies in our setting.

\begin{prop}[Sambarino {\cite[Proposition 4.3]{Sam}}]\label{prop distribution of periodic orbits}

There exists a positive $M=M_{\rho,o}$ such that

\vspace{0,2cm}

\bc
$M e^{-ht}\displaystyle\sum_{\gamma\in\gh, \ell_{c_o}(\gamma)\leq t} \delta_{\gamma_-}\otimes\delta_{\gamma_+}\too e^{-h[\cdot,\cdot]_o}\mu_o\otimes\mu_o$
\ec

\noindent as $t\too\infty$ on $C_c^*(\bgc)$.
\begin{flushright}
$\square$
\end{flushright}
\end{prop}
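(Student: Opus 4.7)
The plan is to deduce the statement from the Bowen--Margulis equidistribution of periodic orbits of $\psi_t$ on $\tn{U}_o\Gamma$, following Roblin \cite{Rob} and Sambarino \cite{Sam}. By Lemma~\ref{lema conj urhogamma y uogamma}, $\psi_t$ is Hölder conjugate to the geodesic flow $\phi_t$, which is a transitive metric Anosov flow (Fact~\ref{fact urhogamma is anosov}) and topologically weakly mixing (Proposition~\ref{prop geod flow is weak mixing}). Proposition~\ref{prop product is of maximal entropy} identifies the Bowen--Margulis probability of $\psi_t$ with $M_0^{-1}\pi_*(e^{-h[\cdot,\cdot]_o}\mu_o\otimes\mu_o\otimes dt)$, where $\pi:\bgc\times\rr\to\tn{U}_o\Gamma$ is the quotient and $M_0>0$ is the total mass. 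Fact~\ref{fact entropy, equidistribution and counting for urhogamma} then yields the Lebesgue equidistribution
$$ht\,e^{-ht}\sum_{\substack{[\gamma]\text{ primitive}\\ \ell_{c_o}(\gamma)\leq t}}\frac{1}{\ell_{c_o}(\gamma)}\,\tn{Leb}_{[\gamma]}\;\longrightarrow\; m.$$

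To extract the distribution of $(\gamma_-,\gamma_+)$ from this, I would test against indicators of small product boxes. Given $f\in C_c(\bgc)$, its support is compact and disjoint from the diagonal, so one can cover it by finitely many relatively compact open sets $U\subset\bgc$ and choose $r>0$ such that $\pi$ restricts to a homeomorphism on each $U\times(0,r)$, using that $\Gamma$ acts freely and properly discontinuously on $\bgc\times\rr$. For $\gamma$ primitive with $(\gamma_-,\gamma_+)\in U$, the periodic orbit $[\gamma]$ meets the box $\pi(U\times(0,r))$ exactly once per coset $\gamma'\langle\gamma\rangle$ with $\gamma'\cdot(\gamma_-,\gamma_+)\in U$, each visit contributing a segment of Lebesgue mass $r$. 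Since conjugacy classes of primitive elements correspond bijectively to such visits, evaluating the above Lebesgue equidistribution at $\mathbf{1}_{\pi(U\times(0,r))}$ gives
$$ht\,e^{-ht}\sum_{\substack{\gamma\text{ primitive},\,\ell_{c_o}(\gamma)\leq t\\ (\gamma_-,\gamma_+)\in U}}\frac{r}{\ell_{c_o}(\gamma)}\;\longrightarrow\; M_0^{-1}\,r\int_{U} e^{-h[x,y]_o}\,d(\mu_o\otimes\mu_o).$$

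The remaining step is to remove the weight $1/\ell_{c_o}(\gamma)$. This is the Abel summation argument of \cite[Proposition~4.3]{Sam}: partition $(0,t]$ into narrow intervals $(t_n,t_{n+1}]$, apply the asymptotic above at each scale $t_n$, and sum by parts; the prefactor $ht\,e^{-ht}/\ell_{c_o}(\gamma)$ telescopes into $h\,e^{-ht}$, yielding
$$h\,e^{-ht}\sum_{\substack{\gamma\text{ primitive},\,\ell_{c_o}(\gamma)\leq t\\ (\gamma_-,\gamma_+)\in U}}1\;\longrightarrow\; M_0^{-1}\int_{U} e^{-h[x,y]_o}\,d(\mu_o\otimes\mu_o).$$
Non-primitive elements $\gamma=\gamma_0^n$ with $n\geq 2$ satisfy $\ell_{c_o}(\gamma)=n\ell_{c_o}(\gamma_0)$, so their total count with period $\leq t$ is $O(e^{ht/2})$ by Fact~\ref{fact entropy, equidistribution and counting for urhogamma} and is killed by the prefactor $e^{-ht}$. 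Approximating $f$ by finite combinations of indicators $\mathbf{1}_U$ of such boxes and setting $M:=hM_0^{-1}$ gives the proposition.

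I expect the main obstacle to be the Abel summation step and its uniformity: one must verify that the asymptotic of the second paragraph holds uniformly enough across the scales $t_n$ so that the partial-summation errors remain $o(e^{ht})$. This uniformity is built into the thermodynamic formalism underlying Fact~\ref{fact entropy, equidistribution and counting for urhogamma} (symbolic coding of $\phi_t$ and Ruelle--Perron--Frobenius theory), but transferring it through the conjugation of Lemma~\ref{lema conj urhogamma y uogamma} and the measure decomposition of Proposition~\ref{prop product is of maximal entropy} requires careful bookkeeping.
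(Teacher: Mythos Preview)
Your approach is precisely the one the paper invokes: it does not give an independent proof but simply states that, provided with the product description of the Bowen--Margulis measure (Proposition~\ref{prop product is of maximal entropy}), Sambarino's proof of \cite[Proposition~4.3]{Sam} applies verbatim; your sketch is a faithful outline of that argument (equidistribution of periodic orbits, testing against product boxes in a fundamental domain, Abel summation to strip the weight $1/\ell_{c_o}(\gamma)$, and discarding non-primitive powers). One small slip: comparing your last display with the statement forces $M=hM_0$, not $hM_0^{-1}$; this matches the description of $M$ given in the introduction of the paper as $h$ times the total mass of $e^{-h[\cdot,\cdot]_o}\mu_o\otimes\mu_o\otimes dt$ on $\tn{U}_o\Gamma$.
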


For the flow on $\tn{U}_\tau\Gamma$ we obtain analogue results.

\begin{rem}\label{rem BM for ctau and distribution}
Let $[\cdot,\cdot]_\tau$ be the Gromov product of the pair $\lbrace c_o,c_\tau\rbrace$ defined in Subsection \ref{subsec dual and gromov ctau}. The same arguments of Proposition \ref{prop product is of maximal entropy} and Proposition \ref{prop distribution of periodic orbits} apply to obtain that

\bc
$e^{-h[\cdot,\cdot]_\tau}\mu_o\otimes\mu_\tau\otimes dt$
\ec

\noindent induces the Bowen-Margulis probability of the translation flow on $\tn{U}_\tau\Gamma$ and that there exists a positive $M'=M'_{\rho,\tau}$ such that 

\bc
$M'  e^{-ht}\displaystyle\sum_{\gamma\in\gh, \ell_{c_\tau}(\gamma)\leq t} \delta_{\gamma_-}\otimes\delta_{\gamma_+}\too e^{-h[\cdot,\cdot]_\tau}\mu_o\otimes\mu_\tau$
\ec

\noindent as $t\too\infty$ on $C_c^*(\bgc)$.
\begin{flushright}
$\diamond$
\end{flushright}
\end{rem}

\renewcommand{\bibname}{Referencias}

\thispagestyle{empty}

\end{document}